\setlist[itemize]{leftmargin=*, itemsep={2pt}}
\setlist[enumerate]{leftmargin=*, itemsep={2pt}}
\numberwithin{equation}{section}
\theoremstyle{theorem}
\newtheorem{dummy}{dummy}[section]
\newtheorem{theorem}[dummy]{Theorem}
\newtheorem{corollary}[dummy]{Corollary}
\newtheorem{lemma}[dummy]{Lemma}
\newtheorem{proposition}[dummy]{Proposition}
\theoremstyle{remark}
\newtheorem{definition}[dummy]{Definition}
\newtheorem{remark}[dummy]{Remark}
\newtheorem{example}[dummy]{Example}
\def\A{\mathbb A}
\def\F{\mathbb F}
\def\P{\mathbb P}
\def\R{\mathbb R}
\def\Z{\mathbb Z}
\def\AA{\mathcal A}
\def\DD{\mathcal D}
\def\FF{\mathcal F}
\def\LL{\mathcal L}
\def\MM{\mathcal M}
\def\OO{\mathcal O}
\def\cT{\mathcal T}
\def\cD{\mathcal D}
\def\fm{\mathfrak m}
\def\fR{\mathfrak R}
\newcommand{\rh}{\mathbf{h}}
\newcommand{\bD}{\mathbf{D}}
\def\Spec{\mathrm{Spec}}
\def\Db{\DD^b}
\def\Dm{\DD^-}
\def\Dperf{\DD^{\mathrm{perf}}}
\def\={\;=\;}
\def\bal{\begin{aligned}}
\def\eal{\end{aligned}}
\def\be{\begin{equation}\label}
\def\ee{\end{equation}}
\def\wt{\widetilde}
\def\ol{\overline}
\def\tors{{\mathrm{tors}}}
\def\perf{{\mathrm{perf}}}
\def\hf{{\mathrm{fd}}}
\def\rB{\mathrm{B}}
\def\rG{\mathrm{G}}
\def\rK{\mathrm{K}}
\def\rM{\mathrm{M}}
\def\rN{\mathrm{N}}
\def\rP{\mathrm{P}}
\def\rT{\mathrm{T}}
\newcommand{\bmu}{\boldsymbol{\mu}}
\newcommand{\md}{\text{-}\mathrm{mod}}
\newcommand{\fd}{\mathrm{\,fd}}
\newcommand{\ac}{\mathrm{\,acycl}}
\newcommand{\op}{\mathrm{op}}
\DeclareMathOperator{\Res}{Res}
\DeclareMathOperator{\Ind}{Ind}
\newcommand{\et}{\textrm{\'et}}
\newcommand{\cA}{\mathcal{A}}
\newcommand{\cF}{\mathcal{F}}
\newcommand{\cG}{\mathcal{G}}
\newcommand{\cH}{\mathcal{H}}
\newcommand{\cL}{\mathcal{L}}
\newcommand{\cM}{\mathcal{M}}
\newcommand{\cO}{\mathcal{O}}
\newcommand{\cP}{\mathcal{P}}
\newcommand{\cR}{\mathcal{R}}
\newcommand{\cV}{\mathcal{V}}
\newcommand{\tx}{{\tilde{x}}}
\newcommand{\tX}{{\widetilde{X}}}
\newcommand{\tV}{{\widetilde{V}}}
\newcommand{\tcA}{{\widetilde{\cA}}}
\newcommand{\tcF}{{\widetilde{\cF}}}
\newcommand{\tcL}{{\widetilde{\cL}}}
\newcommand{\tcR}{{\widetilde{\cR}}}
\newcommand{\tcV}{{\widetilde{\cV}}}
\DeclareMathOperator{\ilim}{\mathop{lim}\limits_{\longrightarrow}}
\DeclareMathOperator{\IP}{IP}
\DeclareMathOperator{\MP}{MP}
\DeclareMathOperator{\id}{id}
\DeclareMathOperator{\rk}{rk}
\DeclareMathOperator{\Ext}{Ext}
\DeclareMathOperator{\rc}{c}
\DeclareMathOperator{\CH}{CH}
\DeclareMathOperator{\Cl}{Cl}
\DeclareMathOperator{\Pic}{Pic}
\DeclareMathOperator{\Br}{Br}
\DeclareMathOperator{\Ker}{Ker}
\let\Im\relax
\DeclareMathOperator{\Im}{Im}
\DeclareMathOperator{\Coker}{Coker}
\DeclareMathOperator{\Sing}{Sing}
\DeclareMathOperator{\Supp}{Supp}
\DeclareMathOperator{\Coh}{Coh}
\newcommand{\Gm}{\mathbb{G}_{\mathrm{m}}}
\newcommand{\dg}[1]{{#1}_{\mathrm{dg}}}
\DeclareMathOperator{\tridiag}{\mathsf{tridiag}}
\DeclareMathOperator{\tridet}{\mathsf{tridet}}
\DeclareMathOperator{\Aut}{Aut}
\DeclareMathOperator{\Hom}{Hom}
\DeclareMathOperator{\RHom}{RHom}
\DeclareMathOperator{\cRHom}{{\mathrm{R}\mathcal{H}\mathit{om}}}
\DeclareMathOperator{\End}{End}
\DeclareMathOperator{\CEnd}{{\mathcal{E}\!\mathit{nd}}}
\newcommand{\leqnomode}{\tagsleft@true}
\newcommand{\reqnomode}{\tagsleft@false}
\title[Derived categories of singular surfaces]{Derived categories of singular surfaces}
\author{Joseph Karmazyn, Alexander Kuznetsov, Evgeny Shinder}
\email{j.h.karmazyn@gmail.com}
\address{{\sloppy
\parbox{0.9\textwidth}{
Algebraic Geometry Section, Steklov Mathematical Institute of Russian Academy of Sciences,
8 Gubkin str., Moscow 119991 Russia
\\[5pt]
Laboratory of Algebraic Geometry, National Research University Higher School of Economics, Russian Federation
}\bigskip}}
\email{akuznet@mi-ras.ru}
\address{School of Mathematics and Statistics, University of Sheffield,
Hounsfield Road, S3 7RH, UK, and
National Research University Higher School of Economics, Russian Federation}
\email{e.shinder@sheffield.ac.uk}
\begin{document}

\maketitle

\begin{abstract}
We develop an approach that allows one to construct semiorthogonal decompositions
of derived categories of surfaces with cyclic quotient singularities
whose components are equivalent to derived categories of local finite dimensional algebras.

We first explain how to induce a semiorthogonal decomposition of a surface~$X$ with rational singularities
from a semiorthogonal decomposition of its resolution.
In the case when~$X$ has cyclic quotient singularities,
we introduce the condition of adherence for the components of the semiorthogonal decomposition of the resolution
that allows one to identify the components of the induced decomposition of~$X$
with derived categories of local finite dimensional algebras.
Further, we present an obstruction in the Brauer group of $X$ to the existence of such a semiorthogonal decomposition,
and show that in the presence of the obstruction a suitable modification of the adherence condition gives
a semiorthogonal decomposition of the twisted derived category of $X$.

We illustrate the theory by exhibiting a semiorthogonal decomposition for the untwisted or twisted derived category
of any normal projective toric surface depending on whether its Weil divisor class group is torsion-free or not. For weighted projective planes we compute the generators of the components explicitly and
relate our results to the results of Kawamata based on iterated extensions of reflexive sheaves of rank~1.
\end{abstract}

\tableofcontents

\section{Introduction}

\subsection{Overview}

In this paper we study bounded derived categories of coherent sheaves on singular surfaces with rational singularities
over an algebraically closed field $\Bbbk$ of characteristic zero.
We are primarily interested in rational surfaces $X$ but for most of the arguments it is sufficient to assume that
\begin{equation}
\label{eq:pg-q-x}
p_g(X) = q(X) = 0.
\end{equation}
Our aim is constructing semiorthogonal decompositions of the form
\begin{equation}\label{intro:main-dec}
\Db(X) = \langle \Db(K_1\md), \Db(K_2\md), \dots, \Db(K_n\md) \rangle,
\end{equation}
where $K_i$ are (possibly noncommutative) local finite-dimensional algebras.
We consider~\eqref{intro:main-dec} as a generalization for singular varieties of the notion of a full exceptional collection.

An instructive example is given by the nodal quadric $X = \P(1,1,2)$,
where a semior\-thogonal decomposition can be constructed using~\cite{K08b}:
$X$ admits an exceptional pair of line bundles
whose orthogonal is equivalent to the derived category of
the even part of a degenerate Clifford algebra which is Morita-equivalent to $\Bbbk[z]/z^2$;
thus we have a decomposition
\begin{equation*}
\Db(\P(1,1,2)) = \langle \Db(\Bbbk), \Db(\Bbbk), \Db(\Bbbk[z]/z^2) \rangle.
\end{equation*}
So, here $K_1 = K_2 = \Bbbk$ and $K_3 = \Bbbk[z]/z^2$.

A new feature that we want to emphasize is the appearance of the Brauer group~$\Br(X)$
as an obstruction to the existence of the decomposition \eqref{intro:main-dec}.
We give a complete answer for existence of such decompositions in the case of normal projective toric surfaces,
that is for a projective toric surface $X$ we construct the decomposition~\eqref{intro:main-dec} as soon as~$\Br(X) = 0$
and describe the algebras $K_i$ in terms of its
singular points explicitly, following \cite{KK}.

Our approach to construct \eqref{intro:main-dec}
is based on descending semiorthogonal decompositions from a resolution~$\wt{X}$ of~$X$.
Before going into details, we would like to mention that some results in this direction
were obtained earlier by Kawamata in~\cite{Kawamata}.
He used a completely different approach based on the study of deformations of so-called simple collections of reflexive sheaves.
In particular, he obtained decompositions of the above type for $\P(1,1,n)$ and $\P(1,2,3)$,
see Examples~\ref{Example:P11n} and~\ref{Example:P123}.
On the other hand, in~\cite{K17} a semiorthogonal decomposition of the same type
was constructed for any normal sextic del Pezzo surface
(there are six isomorphism classes of such, four of them, including~$\P(1,2,3)$ are toric, and two are non-toric).

\subsection{Descent and adherence}

Now let us explain our approach and results in more detail.
Let $X$ be a normal projective surface \emph{with rational singularities}
over an algebraically closed field $\Bbbk$ of characteristic zero and let
\begin{equation*}
\pi \colon \tX \to X
\end{equation*}
be a resolution of $X$.
Note that under this assumption the exceptional locus of $\pi$ is a disjoint union of trees
of smooth rational curves whose intersection matrix is negative definite.
The first result of this paper is a ``descent procedure'' that allows to construct a semiorthogonal decomposition of~$X$
from a semiorthogonal decomposition of~$\tX$ satisfying a certain compatibility condition.

To be more precise, we say (Definition~\ref{definition: compatible}) that a semiorthogonal decomposition
\begin{equation}
\label{intro:dbtx}
\Db(\tX) = \langle \tcA_1,\tcA_2,\dots,\tcA_n \rangle
\end{equation}
is {\sf compatible with $\pi$} if for every component $E$ of the exceptional divisor of $\pi$ (so that~$E$ is a smooth rational curve)
the sheaf $\cO_{E}(-1)$ is contained in one of the components of the decomposition, i.e., $\cO_E(-1) \in \tcA_i$ for some $i$.
Under this assumption we show (Theorem~\ref{theorem:dbx}) that the categories $\cA_i := \pi_*(\tcA_i)$ give a semiorthogonal decomposition
\begin{equation}
\label{intro:dbx}
\Db(X) = \langle \cA_1,\cA_2,\dots,\cA_n \rangle.
\end{equation}
Furthermore, in some cases (for instance, when $X$ is Gorenstein) we prove that the categories $\cA_i^\perf = \cA_i \cap \Dperf(X)$
give a semiorthogonal decomposition
\begin{equation*}
\Dperf(X) = \langle \cA_1^\perf,\cA_2^\perf,\dots,\cA_n^\perf \rangle
\end{equation*}
of the category of perfect complexes on $X$.

The next question we address is an explicit description of each component~$\cA_i$
of the induced semiorthogonal decomposition of~$\Db(X)$, which we provide under some additional hypotheses.
Let $X$ be a normal projective surface satisfying~\eqref{eq:pg-q-x}
with only \emph{cyclic quotient singularities}
and let $\pi \colon \tX \to X$ be its \emph{minimal} resolution.
Let $x_1,\dots,x_n \in X$ be a collection of points such that $\Sing(X) \subset \{x_1,\dots,x_n\}$.
Let $E_{i,1},\dots,E_{i,m_i}$ be the irreducible divisorial components of $\pi^{-1}(x_i)$;
this is a (possibly empty) chain of smooth rational curves.

Our main hypothesis is that $\Db(\tX)$ has a semiorthogonal decomposition~\eqref{intro:dbtx} in which each component
%
%
$\tcA_i$ is {\sf adherent to the chain of curves~$\{E_{i,p}\}_{p=1}^{m_i}$}
(see Definition~\ref{definition: adherent} for a slightly generalized version of this condition),
i.e., morally, the component~$\tcA_i$ is the smallest admissible subcategory in~$\Db(\tX)$
that contains all~$\cO_{E_{i,p}}(-1)$ for \mbox{$1 \le p \le m_i$}.
Explicitly this means that~$\tcA_i$ is generated by all $\cO_{E_{i,p}}(-1)$ and an additional line bundle~$\cL_{i,0}$
that has prescribed intersections with $E_{i,p}$ (depending on $d_{i,p} {} = -E_{i,p}^2$), which guarantees admissibility of $\tcA_i$.
In fact, the category $\tcA_i$ can also be generated by an exceptional collection of line bundles $\cL_{i,p}$, \mbox{$0 \le p \le m_i$},
where for $p \ge 1$ we define~$\cL_{i,p} = \cL_{i,p-1}(E_{i,p})$.
Note that any line bundle on $\wt{X}$ is exceptional
by~\eqref{eq:pg-q-x} and rationality of singularities of $X$.

A result of Hille and Ploog (Theorem~\ref{theorem:hille-ploog}) implies that
under these assumptions the category $\tcA_i$ is equivalent to the derived category of modules
over a certain finite dimensional algebra $\Lambda_i$ of finite global dimension.
The algebra $\Lambda_i$ has exactly $m_i + 1$ simple objects $S_{i,p}$, $0 \le p \le m_i$,
which under the above equivalence correspond to the line bundle $\cL_{i,0}$
and the sheaves $\cO_{E_{i,p}}(-1)$, \mbox{$1 \le p \le m_i$}, respectively.
Denoting by $P_{i,p}$ the corresponding indecomposable projective $\Lambda_i$-modules, we define a finite dimensional algebra
\begin{equation*}
K_i := \End_{\Lambda_i}(P_{i,0}).
\end{equation*}
These algebras, studied by Kalck and Karmazyn in~\cite{KK}, are finite dimensional local noncommutative monomial algebras
(see Lemma~\ref{lemma:kk} for an explicit description) that only depend on the type of the cyclic quotient singularity $(X,x_i)$.
Our second main result (Theorem~\ref{theorem:sod-cai}) is
that under the above assumptions we have an equivalence
\begin{equation*}
\cA_i \cong \Db(K_i\md)
\end{equation*}
of the component $\cA_i$ of the induced semiorthogonal decomposition~\eqref{intro:dbx} of $\Db(X)$
with the derived category of finite dimensional modules over the algebra $K_i$.
In fact, $\Db(\Lambda_i\md)$ is
a categorical resolution of singularities of $\Db(K_i\md)$
in the sense of~\cite{KL}, and moreover, the algebra~$\Lambda_i$
can be recovered from $K_i$ via the Auslander construction.

Combining the above results together we show (Corollary~\ref{corollary:dbx}) that
if $X$ is a normal projective surface satisfying~\eqref{eq:pg-q-x} with cyclic quotient singularities,
$\pi \colon \tX \to X$ is its minimal resolution, and there is a semiorthogonal decomposition~\eqref{intro:dbtx}
in which every component is adherent to a connected component of the exceptional divisor of $\pi$,
then the induced semiorthogonal decomposition~\eqref{intro:dbx} of $\Db(X)$ has the form
\begin{equation}
\label{intro:dbx-kmod}
\Db(X) = \langle \Db(K_1\md), \Db(K_2\md), \dots, \Db(K_n\md) \rangle,
\end{equation}
where all $K_i$ are the Kalck--Karmazyn algebras.
Furthermore, if an additional crepancy condition is satisfied, e.g., if $X$ is Gorenstein,
we check that~\eqref{intro:dbx-kmod} also induces a semiorthogonal decomposition of the category of perfect complexes
\begin{equation}
\label{intro:dperfx-kmod}
\Dperf(X) = \langle \Dperf(K_1\md), \Dperf(K_2\md), \dots, \Dperf(K_n\md) \rangle.
\end{equation}

\subsection{Brauer obstruction}

The next important observation we make is that the Brauer group $\Br(X)$ provides an obstruction to existence of such decompositions.
Indeed, if we have~\eqref{intro:dbx-kmod}, it is easy to see (Lemma~\ref{lemma:semiorth-torsion})
that the Grothendieck group $\rG_0(X)$ of the category $\Db(X)$ is torsion-free.
On the other hand, assuming that $X$ is rational, it is easy to show that $\rG_0(X)_\tors = \Cl(X)_\tors$,
where $\Cl(X)$ is the class group of Weil divisors, and that this group is trivial if and only if $\Br(X) = 0$.
To be more precise, there is a natural isomorphism
\begin{equation*}
\Br(X) \cong \Ext^1(\Cl(X),\Z) \cong \Ext^1(\rG_0(X),\Z),
\end{equation*}
(Proposition \ref{prop:torsion}), and so the Brauer group $\Br(X)$ of the surface $X$ provides an obstruction
to the existence of~\eqref{intro:dbx-kmod}.
One of the simplest examples of a surface $X$ with~$\Br(X) \ne 0$ is the toric cubic surface
\begin{equation}
\label{intro:cubic}
X = \P^2 / \bmu_3 \cong \{ z_0^3 - z_1z_2z_3 = 0 \} \subset \P^3
\end{equation}
that has three $A_2$-singularities, $\Cl(X) \cong \Z \oplus \Z/3$, and $\Br(X) \cong \Z/3$.
Consequently, its minimal resolution does not admit a semiorthogonal decomposition
with components adherent to the components of the exceptional divisor, and $\Db(X)$ does not have~\eqref{intro:dbx-kmod}.

More generally, one can see that for an arbitrary projective toric surface~$X$,
the Brauer group $\Br(X)$ is a finite cyclic group of order $r$,
where $r$ is the greatest common divisor of the orders $r_i$ of the toric points on $X$
(see Lemma~\ref{lemma:toric-br} and Remark~\ref{remark:toric-gcd}),
so that vanishing of $\Br(X)$ is equivalent to the orders $r_1, \dots, r_n$ being coprime.
In particular any weighted projective plane $\P(w_1,w_2,w_3)$ has vanishing Brauer group.

In Section~\ref{section:toric-surfaces} we show that the Brauer group provides \emph{the only} obstruction
to the existence of~\eqref{intro:dbx-kmod} in the toric case.
Specifically, we construct decompositions~\eqref{intro:dbx-kmod}
for any projective toric surface $X$ satisfying $\Br(X) = 0$.
These results can be considered as a generalization to singular projective toric surfaces
of a standard method of constructing
exceptional collections on smooth projective toric surfaces by iterative twisting of a line bundle
by the sequence of boundary divisors~\cite{Hille}.

For instance, for the weighted projective plane $\P(1,2,3)$ we obtain the semiorthogonal decomposition
\begin{align*}
\Db(\P(1,2,3))  &= \textstyle \Big\langle
\Db(\Bbbk),
\Db\left(\frac{\Bbbk[z]}{z^2}\right),
\Db\left(\frac{\Bbbk[z]}{z^3}\right)
\Big\rangle.
\end{align*}
Since $\P(1,2,3)$ is Gorenstein we also obtain a similar semiorthogonal decomposition of~$\Dperf(\P(1,2,3))$.
For a non-Gorenstein example, let us consider $\P(2,3,11)$, where we obtain
\begin{align*}
\Db(\P(2,3,11)) &= \textstyle \Big\langle
\Db\left(\frac{\Bbbk[z]}{z^2}\right),
\Db\left(\frac{\Bbbk[z_1,z_2]}{(z_1^2,z_1z_2,z_2^2)}\right),
\Db\left(\frac{\Bbbk\langle z_1,z_2 \rangle}{(z_1^4,z_1z_2,z_2^2z_1^2,z_2^3)}\right)
\Big\rangle.
\end{align*}
This time it does not induce a semiorthogonal decomposition of~$\Dperf(\P(2,3,11))$.

\subsection{Twisted adherence and twisted derived categories}

As we already observed, for surfaces~$X$ with $\Br(X) \ne 0$ there is
no semiorthogonal decomposition~\eqref{intro:dbx-kmod} with local finite-dimensional algebras $K_i$.
However, there are two things we can say.

First of all, in the toric case, resolving any of the singular points of $X$ will produce
a toric surface with trivial Brauer group, so that its structure can be analyzed with our methods.

On the deeper level, the same Brauer group that obstructs the existence of~\eqref{intro:dbx-kmod},
can be incorporated into the problem providing a generalization to the results described above.
Namely for every $\beta \in \Br(X)$ we can ask about semiorthogonal
decompositions for the \emph{twisted derived category} $\Db(X, \beta)$.
To analyze these we consider semiorthogonal decompositions of $\Db(\tX)$
with components that are adherent to the components of the exceptional divisor
up to a line bundle twist (individual for each component).
We call such decompositions {\sf twisted adherent},
show that they correspond to some explicit elements in the Brauer group~$\Br(X)$,
and prove that they induce semiorthogonal decompositions of the twisted derived category of~$X$:
\begin{equation}
\label{intro:dbx-twisted-kmod}
\Db(X,\beta) = \langle \Db(K_1\md), \Db(K_2\md), \dots, \Db(K_n\md) \rangle,
\end{equation}
(see Theorem~\ref{theorem:dbx-twisted}),
where $K_i$ are finite-dimensional algebras constructed from singularities of $X$ in the same way as in the untwisted case.
As usual, under the additional crepancy assumption there is a decomposition for~$\Dperf(X,\beta)$ analogous
to~\eqref{intro:dperfx-kmod}.
We also obtain a description of the Grothendieck group~$\rG_0(X,\beta)$ of $\Db(X,\beta)$ (Proposition~\ref{prop:g0-quotient}),
and check in Proposition~\ref{prop:g0-twisted} that
\begin{equation*}
\Ext^1(\rG_0(X,\beta),\Z) \cong \Br(X)/\langle \beta \rangle,
\end{equation*}
so the existence of~\eqref{intro:dbx-twisted-kmod} implies that $\beta$ is a generator of $\Br(X)$.

For example, for the cubic surface $X$ defined by~\eqref{intro:cubic} we obtain
\begin{equation*}
\Db(X,\beta) = \Big\langle
\Db\left(\tfrac{\Bbbk[z]}{z^3}\right), \Db\left(\tfrac{\Bbbk[z]}{z^3}\right), \Db\left(\tfrac{\Bbbk[z]}{z^3}\right)
\Big\rangle,
\end{equation*}
where $\beta$ is a generator of the Brauer group $\Br(X) \cong \Z/3$,
and this decomposition induces one for $\Dperf(X,\beta)$ (again, since $X$ is Gorenstein).

Let us point out that we do not have an answer to the following question:
is it true that decomposition \eqref{intro:dbx-twisted-kmod} exists for \emph{any} generator $\beta \in \Br(X)$?
In Lemma~\ref{lemma:delta-functions} we explicitly present the set of all generators $\beta$ of the Brauer group $\Br(X)$
of a toric surface for which we obtain a decomposition~\eqref{intro:dbx-twisted-kmod}
using twisted adherent exceptional collections on $\wt{X}$.
In particular, we check that in the Gorenstein case our construction produces just one $\beta$,
up to sign.

\subsection{Generators}

Finally, we discuss the relation of our approach to the one developed by Kawamata in~\cite{Kawamata}.
We show that our construction in many cases also produces some natural reflexive sheaves on $X$
and their \emph{versal noncommutative thickenings} (in the terminology of Kawamata).
We generalize Kawamata's results to all toric surfaces with torsion-free class group,
see Proposition~\ref{proposition: toric maximal extension}
and illustrate the result in the case on an arbitrary weighted projective plane (Example~\ref{ex:pabc}).

\subsubsection*{Structure of the paper}

In Section~\ref{section:semiorth} we explain how to induce a semiorthogonal decomposition
for a surface with rational singularities from a compatible semiorthogonal decomposition of its resolution.
The main result of this section is Theorem~\ref{theorem:dbx}.

In Section~\ref{section:components} we introduce the notion of (twisted) adherence,
define the algebras of Hille--Ploog and Kalck--Karmazyn, and under the adherence assumption
identify in Theorem~\ref{theorem:sod-cai} and Corollary~\ref{corollary:dbx}
the components of the induced decomposition of~$\Db(X)$.

In Section~\ref{section:brauer} we discuss the Brauer group of a rational surface $X$,
construct the Brauer class~$\beta$ corresponding to a twisted adherent semiorthogonal
decomposition of its resolution~$\tX$,
and describe in Theorem~\ref{theorem:dbx-twisted} the induced semiorthogonal
decomposition of the twisted derived category $\Db(X,\beta)$.

In Section~\ref{section:toric-surfaces} we apply our constructions in the case of toric surfaces.
The main results of this section are Theorem~\ref{theorem:toric} and Corollary~\ref{cor:toric}.

In Section~\ref{section:reflexive} we discuss the relation of our results to the approach of Kawamata,
in particular we investigate under which conditions generators of the constructed above components $\cA_i \subset \Db(X)$
of~\eqref{intro:dbx} are reflexive or locally free sheaves on $X$.
Moreover, we describe explicitly the reflexive generators of the components
of the semiorthogonal decomposition for any weighted projective plane $\P(w_1,w_2,w_3)$.

\subsubsection*{Notation and conventions}

We work over an algebraically closed field $\Bbbk$ of characteristic zero.
All varieties and categories are assumed to be $\Bbbk$-linear.
All surfaces are assumed to be irreducible.

For a $\Bbbk$-scheme $X$ we denote by $\Db(X)$ the bounded derived category of coherent sheaves on $X$
and by $\Dperf(X)$ the category of perfect complexes on $X$, i.e., the full subcategory of $\Db(X)$
consisting of objects that are locally quasi-isomorphic to finite complexes of locally free sheaves of finite rank.

Similarly, for a $\Bbbk$-algebra~$R$ we denote by $R\md$ the category of finitely generated right $R$-modules,
by~$\Db(R\md)$ its bounded derived category,
and by~$\Dperf(R\md)$ the category of bounded complexes of finitely generated projective $R$-modules.

We denote by $\tau^{\le t}$ and $\tau^{\ge t}$ the \emph{canonical truncation functors} at degree~$t$.
For a set~$\{F_i\}$ of objects of a triangulated category $\cT$
we denote by $\langle \{F_i\} \rangle$ the minimal triangulated subcategory of~$\cT$ containing all $F_i$,
and by $\langle \{F_i\} \rangle^{\oplus}$ the minimal triangulated subcategory of $\cT$
closed under all direct sums that exist in $\cT$ and containing all $F_i$.
For a subcategory $\cA \subset \cT$ we denote by $\cA^\perp$ and~${}^\perp\cA$ its \emph{right and left orthogonals}:
\begin{equation*}
\cA^\perp = \{ F \in \cT \mid \Ext^\bullet(\cA,F) = 0 \},
\qquad
{}^\perp\cA = \{ F \in \cT \mid \Ext^\bullet(F,\cA) = 0 \}.
\end{equation*}
For a morphism $f$ we denote by $f_*$ the \emph{derived} pushforward functor, and by $f^*$ the \emph{derived} pullback.
Similarly, $\otimes$ is used for the \emph{derived} tensor product.
If we need underived functors, we use notation $R^0f_*$ and $L_0f^*$ respectively.

For an abelian group $A$ we denote by $A_{\tors}$ its torsion part.

\subsubsection*{Acknowledgements}

We thank Alexander Efimov, Sergey Gorchinskiy, Dima Orlov, Yuri Prokhorov, and Damiano Testa for useful discussions,
and Agnieszka Bodzenta, Martin Kalck and Mykola Shamaiev for their comments about the paper.

J.K. was supported by EPSRC grant EP/M017516/2.
A.K. was partially supported by the HSE University Basic Research Program and
the Russian Academic Excellence Project~\hbox{`5-100'}.
E.S. was partially supported by Laboratory of Mirror Symmetry NRU HSE, RF government grant, ag. N~14.641.31.0001.

\section{Inducing a semiorthogonal decomposition from a resolution}
\label{section:semiorth}

The main goal of this section is to set up a framework in which  a semiorthogonal decomposition of the bounded derived category
of a singular surface $X$ can be constructed from a semiorthogonal decomposition of the bounded derived category of its resolution.
For this we use the approach developed in~\cite{K17}, with a suitable modification.
The main difference between the situation of~\cite{K17} and the situation considered here,
is that~$X$ is not necessarily Gorenstein, and the resolution is not necessarily crepant;
because of that we have to modify some arguments of~\cite{K17} that were using these assumptions.

\subsection{Resolutions of rational surface singularities}
\label{subsection:resolution}

Let $X$ be a normal surface, and let $\pi\colon \tX \to X$ be its resolution of singularities.
We assume that $X$ has {\sf rational singularities}, i.e. we have an isomorphism
$\pi_* \OO_{\wt{X}} \simeq \OO_X$.
In this case every irreducible component of the exceptional divisor of $\pi$ is a smooth rational curve,
and every connected component is a tree of rational curves with transverse intersections;
see e.g. \cite[Lemma 1.3]{BrieskornKomplexerFlachen}.

Below we discuss what such a resolution does on the level of derived categories.
Most conveniently this is expressed on the level of bounded from above categories of
coherent sheaves $\Dm(\tX)$ and $\Dm(X)$.
This could be also done on the level of the unbounded derived category,
but we prefer to work with~$\Dm$.

The derived pushforward and pullback provide an adjoint pair of functors
\begin{equation*}
\pi_* \colon \Dm(\tX) \to \Dm(X)
\qquad\text{and}\qquad
\pi^* \colon \Dm(X) \to \Dm(\tX),
\end{equation*}
and, as $X$ has rational singularities, by projection formula we have
\begin{equation}
\label{eq:pi-pi}
\pi_* \circ \pi^* \cong \id_{\Dm(X)}.
\end{equation}
Consequently, we have a semiorthogonal decomposition
\begin{equation}
\label{eq:dbtx-dbx}
\Dm(\tX) = \langle \Ker \pi_*, \pi^*(\Dm(X)) \rangle.
\end{equation}
Using~\cite[Lemma 3.1]{BridgelandFlops}
and~\cite[Theorem~6.13]{BB-new}
the category $\Ker \pi_*$ can be described as follows.

\begin{lemma}[{\cite[Lemma~2.3]{K17}}]
\label{lemma:ker-pi}
Let $X$ be a normal surface with rational singularities and let $\pi \colon \tX \to X$ be its resolution.
An object $\cF \in \Dm(\tX)$ is contained in~$\Ker \pi_*$ if and only if every cohomology sheaf $\cH^i(\cF)$
is an iterated extension of the sheaves~$\cO_E(-1)$ for irreducible exceptional divisors $E$ of $\pi$.
\end{lemma}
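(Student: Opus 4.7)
The plan is to reduce both implications to a statement about single coherent sheaves via the Grothendieck spectral sequence, and then to invoke the structural description of sheaves annihilated by $\pi_*$ that is classical for resolutions of rational surface singularities.

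The central tool is the convergent spectral sequence
\[
E_2^{p,q} = R^p\pi_*\cH^q(\cF) \Longrightarrow \cH^{p+q}(\pi_*\cF)
\]
available for any $\cF \in \Dm(\tX)$. Since $\pi$ is a birational morphism of surfaces its fibers have dimension at most $1$, so $R^p\pi_* = 0$ for $p \ge 2$; hence only the two rows $p=0,1$ survive and the spectral sequence degenerates at $E_2$, yielding short exact sequences
\[
0 \to R^1\pi_*\cH^{i-1}(\cF) \to \cH^i(\pi_*\cF) \to R^0\pi_*\cH^i(\cF) \to 0.
\]
From this it follows formally that $\pi_*\cF = 0$ if and only if $\pi_*\cH^i(\cF) = 0$ for every $i$. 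This reduces the lemma to the analogous coherent-sheaf statement: a coherent sheaf $F$ on $\tX$ satisfies $\pi_* F = 0$ if and only if $F$ is an iterated extension of sheaves $\cO_E(-1)$.

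The easy direction of this reduced claim is that $\pi_*\cO_E(-1)=0$, because $E\cong\P^1$ and $H^*(\P^1,\cO(-1))=0$; the vanishing of $\pi_*$ is preserved under short exact sequences by the long exact triangle, so it passes to arbitrary iterated extensions. Feeding this back into the spectral sequence proves the "if" direction of the lemma.

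For the converse I would argue as follows. Any $F \in \Coh(\tX)$ with $\pi_*F=0$ is supported on the exceptional locus, since $\pi$ is an isomorphism away from it. Consider the full subcategory
\[
\mathcal{N} := \{F \in \Coh(\tX) \mid \pi_*F = 0\};
\]
the long exact sequence shows that $\mathcal{N}$ is a Serre subcategory of $\Coh(\tX)$. The key structural input, which is precisely the content of the cited \cite[Lemma~3.1]{BridgelandFlops} and \cite[Theorem~6.13]{BB-new} for the resolution of a rational surface singularity, is that every object of $\mathcal{N}$ has finite length and its simple objects are exactly the sheaves $\cO_E(-1)$ for irreducible exceptional components $E$. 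Granting this, an induction on the length of $F$ in $\mathcal{N}$ immediately produces a filtration whose subquotients are of the form $\cO_E(-1)$, finishing the proof. The main obstacle is this classification of simple objects in $\mathcal{N}$: it ultimately relies on the tree structure of the exceptional divisor and the negative-definiteness of its intersection form, so rather than reprove it I would quote the two references, which is the approach taken in \cite{K17}.
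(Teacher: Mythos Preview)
Your approach is essentially the paper's: the paper does not prove this lemma but cites \cite[Lemma~2.3]{K17} together with \cite[Lemma~3.1]{BridgelandFlops} and \cite[Theorem~6.13]{BB-new} for the sheaf-level input, and your spectral-sequence reduction to individual cohomology sheaves is the standard and correct way to fill in the rest.

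One point to correct: the category $\mathcal{N} = \{F \in \Coh(\tX) \mid R\pi_* F = 0\}$ is \emph{not} a Serre subcategory, and the long exact sequence does not show that it is. For an $A_1$ resolution with exceptional $(-2)$-curve $E$, the pushed-forward Euler sequence
\[
0 \to \cO_E(-2) \to \cO_E(-1)^{\oplus 2} \to \cO_E \to 0
\]
has middle term in $\mathcal{N}$ but sub and quotient both outside it (since $R^1\pi_*\cO_E(-2) \cong \Bbbk$ and $R^0\pi_*\cO_E \cong \Bbbk$). What the long exact sequence \emph{does} give is closure under extensions, and the weaker fact that if $F \in \mathcal{N}$ has some $\cO_E(-1)$ as a sub or a quotient then the remaining piece again lies in $\mathcal{N}$ (because $R^0\pi_*\cO_E(-1) = R^1\pi_*\cO_E(-1) = 0$). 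This weaker closure is exactly what your induction needs, once one knows that every nonzero $F \in \mathcal{N}$ admits such a map to or from some $\cO_E(-1)$ --- and that is precisely what the cited references supply. So the fix is purely a matter of phrasing: drop the Serre-subcategory claim and either quote the filtration result directly from the references or run the induction using only this weaker closure property.
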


However, we are mostly interested in the bounded derived category and the category of perfect complexes,
which are not preserved by the adjoint pair of functors $(\pi^*,\pi_*)$;
in fact $\pi^*$ only preserves the category of perfect complexes, while $\pi_*$ only preserves the bounded derived category.
On the level of these categories we have no semiorthogonal decomposition analogous to~\eqref{eq:dbtx-dbx},
but using small dimension effects we can deduce many results for them.


\begin{lemma}[{\cite[Lemma~2.4]{K17}}]
\label{lemma:pis-tau}
Let $X$ be a normal surface with rational singularities and let $\pi \colon \tX \to X$ be its resolution.
If $\cG$ is concentrated in degrees $\ge k$, then
\begin{equation*}
\pi_*(\tau^{\le k-2} \pi^* \cG) = 0
\qquad\text{and}\qquad
\cG \cong \pi_*(\tau^{\ge k-1}\pi^*(\cG)).
\end{equation*}
\end{lemma}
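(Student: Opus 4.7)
The plan is to apply $\pi_*$ to the canonical truncation triangle for $\pi^*\cG$ and use the projection formula isomorphism $\pi_*\pi^* \cong \id$ from~\eqref{eq:pi-pi} to extract both assertions from a single long exact sequence of cohomology.

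First I would form the triangle
\[
\tau^{\le k-2}\pi^*\cG \;\to\; \pi^*\cG \;\to\; \tau^{\ge k-1}\pi^*\cG
\]
in $\Dm(\tX)$ and push it forward along $\pi$; by~\eqref{eq:pi-pi} the middle term becomes $\cG$, which by hypothesis is concentrated in degrees $\ge k$. The key technical input is the cohomological amplitude of $\pi_*$: because the exceptional fibres of $\pi$ are at most one-dimensional, $R^p\pi_* = 0$ for $p \ge 2$, so $\pi_*$ has amplitude $[0,1]$. Consequently $\pi_*(\tau^{\le k-2}\pi^*\cG)$ is placed in degrees $\le k-1$ and $\pi_*(\tau^{\ge k-1}\pi^*\cG)$ in degrees $\ge k-1$.

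With these two ranges pinned down, the long exact sequence of cohomology settles everything. For any $i \le k-1$, the term $\cH^i(\cG)$ vanishes by the hypothesis on $\cG$ and the term $\cH^{i-1}(\pi_*(\tau^{\ge k-1}\pi^*\cG))$ vanishes since $i-1 \le k-2 < k-1$; consequently $\cH^i(\pi_*(\tau^{\le k-2}\pi^*\cG)) = 0$. Combining this with the amplitude upper bound yields the first assertion $\pi_*(\tau^{\le k-2}\pi^*\cG) = 0$, after which the triangle collapses to the desired isomorphism $\cG \cong \pi_*(\tau^{\ge k-1}\pi^*\cG)$.

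I do not anticipate any serious obstacle: the only non-formal ingredient is the amplitude $[0,1]$ of $\pi_*$, which follows from the one-dimensionality of the exceptional fibres and extends from individual coherent sheaves to complexes in $\Dm$ via the hypercohomology spectral sequence $R^p\pi_*\cH^q(-) \Rightarrow \cH^{p+q}\pi_*(-)$; everything else is bookkeeping with the long exact sequence.
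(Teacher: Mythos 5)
Your argument is correct and is essentially the same as the one in the cited source \cite[Lemma~2.4]{K17} (the paper itself gives no proof, only the reference): push forward the canonical truncation triangle, use $\pi_*\pi^*\cong\id$ from~\eqref{eq:pi-pi}, and exploit that $\pi_*$ has cohomological amplitude $[0,1]$ because the fibres of $\pi$ are at most one-dimensional. The bookkeeping with the long exact sequence is carried out correctly, so nothing is missing.
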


The above implies that $\pi_*$ is essentially surjective on bounded derived categories.

\begin{corollary}[{\cite[Corollary~2.5]{K17}}]
\label{lemma:im-pi}
Under the assumptions of Lemma~\textup{\ref{lemma:pis-tau}}
for any object~\mbox{$\cG \in \Db(X)$} there exists $\cF \in \Db(\tX)$ such that $\cG \cong \pi_*(\cF)$.
\end{corollary}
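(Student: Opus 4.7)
The plan is to reduce the corollary to a direct application of Lemma~\ref{lemma:pis-tau}, with the only real task being to verify that the object produced by the lemma's construction lies in~$\Db(\tX)$ and not merely in~$\Dm(\tX)$.

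Concretely, given an arbitrary $\cG \in \Db(X)$, I would first use boundedness of its cohomology to choose an integer~$k$ with $\cH^i(\cG) = 0$ for all $i < k$, so that~$\cG$ is concentrated in degrees~$\ge k$ in the sense of Lemma~\ref{lemma:pis-tau}. Then I would set
\begin{equation*}
\cF \;:=\; \tau^{\ge k-1}\bigl(\pi^* \cG\bigr),
\end{equation*}
which a priori lives in $\Dm(\tX)$ since $\pi^*$ carries $\Dm(X)$ into $\Dm(\tX)$. By the second assertion of Lemma~\ref{lemma:pis-tau} we immediately get $\pi_* \cF \cong \cG$, so essential surjectivity reduces to checking $\cF \in \Db(\tX)$.

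The boundedness of $\cF$ is what needs to be argued: bounded below is automatic from the canonical truncation, which places all cohomology in degrees $\ge k-1$; bounded above follows because $\pi^* \cG \in \Dm(\tX)$ means its cohomology vanishes in degrees $\gg 0$, and the truncation $\tau^{\ge k-1}$ can only reduce the range of nonzero cohomology. Thus $\cF$ has cohomology sheaves supported in a finite interval $[k-1, N]$, hence $\cF \in \Db(\tX)$, and the corollary is proved. I expect no real obstacle here since the statement is essentially a packaging of Lemma~\ref{lemma:pis-tau}; the only conceptual point is the choice to truncate $\pi^* \cG$ from below at exactly the right index~$k-1$, which is already forced by the preceding lemma.
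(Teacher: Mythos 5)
Your proposal is correct and coincides with the argument behind the statement: the paper quotes this as \cite[Corollary~2.5]{K17}, whose proof is exactly to take $\cF = \tau^{\ge k-1}(\pi^*\cG)$, note that $\pi_*\cF \cong \cG$ by Lemma~\ref{lemma:pis-tau}, and observe that $\cF$ is bounded since $\pi^*\cG$ has cohomology bounded above while the truncation kills everything below degree $k-1$. No gaps; nothing further is needed.
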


Lemma \ref{lemma:descent} below is very useful, in particular we will often use the (1)~$\implies$~(3) and~(2)~$\implies$~(3) implications
to descend vector bundles from~$\wt{X}$ to~$X$.
In the proof we need the following standard result in commutative algebra, (cf.~\cite[X.3, Proposition~4]{Bou}).

\begin{lemma}
\label{lemma:comm-alg}
Let $(A,\fm)$ be a noetherian local ring and $\Bbbk = A/\fm$.
If $M \in \Dm(A)$ is a bounded above complex of finitely generated $A$-modules,
then $M$ is a perfect complex if and only if $\Ext^t_A(M, \Bbbk) = 0$ for $|t| \gg 0$.
\end{lemma}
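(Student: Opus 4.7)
The strategy is the classical one via minimal free resolutions: reduce the Ext computation to reading off the ranks of the free modules in such a resolution, and then use the vanishing hypothesis to conclude boundedness.

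The ``only if'' direction is immediate: if $M$ is perfect, it is quasi-isomorphic to a bounded complex $P^\bullet$ of finitely generated projective (hence, since $A$ is local, free) $A$-modules, so $\RHom_A(M,\Bbbk)$ is computed by the bounded complex $\Hom_A(P^\bullet,\Bbbk)$, whose cohomology vanishes outside a bounded range.

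For the ``if'' direction, first I would replace $M$ by a minimal free resolution. Since $A$ is noetherian local and $M \in \Dm(A)$ has finitely generated cohomology, there exists a quasi-isomorphism $F^\bullet \to M$ where $F^\bullet$ is a bounded above complex of finitely generated free $A$-modules whose differentials have matrix entries in $\fm$; this is a standard consequence of Nakayama's lemma, obtained by choosing lifts of minimal generators of cocycles degree by degree. For such a minimal $F^\bullet$, the complex $\Hom_A(F^\bullet,\Bbbk)$ has identically zero differentials, because the differentials of $F^\bullet$ factor through $\fm F^\bullet$, which is annihilated by tensoring with $\Bbbk$. Therefore
\begin{equation*}
\Ext^t_A(M,\Bbbk) \;\cong\; \Hom_A(F^{-t},\Bbbk) \;\cong\; \Bbbk^{\rk F^{-t}}.
\end{equation*}

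The hypothesis $\Ext^t_A(M,\Bbbk)=0$ for $|t| \gg 0$ then forces $F^i = 0$ for $|i| \gg 0$, so $F^\bullet$ is a bounded complex of finitely generated free modules, and $M$ is perfect. The main subtle point is the existence of a minimal free resolution for a complex (not just a module) in $\Dm(A)$; this is the only place where some care is needed, but it is a standard construction proceeding by induction on cohomological degree, lifting minimal generators chosen via Nakayama's lemma. No further structural input from the geometry of $X$ or $\tX$ is required — the lemma is purely local commutative algebra, invoked later to detect perfectness after pushing forward along $\pi$.
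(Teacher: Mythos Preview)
Your proof is correct and follows essentially the same route as the paper: both take a minimal free resolution $F^\bullet$ of $M$, observe that the differentials vanish modulo $\fm$ so that $\Ext^t_A(M,\Bbbk) \cong (F^{-t}\otimes_A\Bbbk)^\vee$, and conclude that the Ext vanishing is equivalent to boundedness of $F^\bullet$. The paper handles both directions simultaneously via this identification, whereas you separate them, but the content is identical.
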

\begin{proof}
Let $F^\bullet$ be the \emph{minimal} free resolution of $M$, i.e.,
a bounded above complex of finitely generated free $A$-modules quasiisomorphic to $M$
such that all its differentials are zero modulo $\fm$
(such resolution can be constructed by a standard procedure, see~\cite[\S1.3]{BH} for the case when $M$ is a module).
Then the complex $\Hom_A(F^\bullet, \Bbbk)$ has zero differentials so that we have
\begin{equation*}
\Ext^t_A(M, \Bbbk) \simeq (F^{-t} \otimes_A \Bbbk)^\vee.
\end{equation*}
Since $F^{-t}$ is free, $F^{-t} \otimes_A \Bbbk = 0$ if and only if $F^{-t} = 0$,
hence $\Ext^t_A(M, \Bbbk) = 0$ for~$|t| \gg 0$ if and only if $F^\bullet$ is bounded,
and since $F^{-t}$ are finitely generated and free this holds if and only if $M$ is perfect.
\end{proof}

Below $\cF\vert_E$ stands for the derived pullback of $\cF$ along the embedding $E \hookrightarrow \tX$.

\begin{lemma}
\label{lemma:descent}
Let $X$ be a normal surface with rational singularities and let~$\pi \colon \tX \to X$ be its resolution.
Let $\cF \in \Db(\tX)$.
The following properties are equivalent:
\begin{enumerate}
\item
for any irreducible exceptional divisor $E$ of $\pi$ one has $\cF\vert_E \in \langle \cO_E \rangle$;
\item
for any irreducible exceptional divisor $E$ of $\pi$ one has $\Ext^\bullet(\cF\vert_E, \cO_E(-1)) = 0$;
\item
there exists $\cG \in \Dperf(X)$ such that $\cF \cong \pi^*\cG$;
\item
one has $\pi_*\cF \in \Dperf(X)$ and $\cF \cong \pi^*(\pi_*\cF)$.
\end{enumerate}
If additionally $\cF$ is a pure sheaf, or a locally free sheaf, then so is $\pi_*\cF$.
\end{lemma}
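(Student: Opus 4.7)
My plan is to establish the cycle $(4)\Rightarrow(3)\Rightarrow(1)\Leftrightarrow(2)\Rightarrow(4)$, then handle the addendum. The implication $(4)\Rightarrow(3)$ is tautological. For $(3)\Rightarrow(1)$, for any irreducible exceptional curve $E$ the composition $\pi\circ i_E\colon E\to X$ factors through the point $\pi(E)\in X$, so $\cF\vert_E = i_E^*\pi^*\cG$ is the pullback to $E\cong\P^1$ of the perfect complex $\cG$ restricted to a point — a bounded complex of finite-dimensional vector spaces — and hence a finite direct sum of shifts of $\cO_E$, which lies in $\langle\cO_E\rangle$. The equivalence $(1)\Leftrightarrow(2)$ then follows from the standard decomposition $\Db(E) = \langle\cO_E(-1),\cO_E\rangle$ together with the adjunction $i_E^*\dashv i_{E*}$: $(2)$ places $\cF\vert_E$ in the left orthogonal of $\cO_E(-1)$ inside $\Db(E)$, which is exactly $\langle\cO_E\rangle$.

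The main step is $(2)\Rightarrow(4)$. I will complete the counit $\pi^*\pi_*\cF\to\cF$ to a distinguished triangle $\pi^*\pi_*\cF\to\cF\to\cK\to$, apply $\pi_*$, and use~\eqref{eq:pi-pi} to conclude $\pi_*\cK = 0$; Lemma~\ref{lemma:ker-pi} then shows that every cohomology sheaf of $\cK$ is an iterated extension of sheaves $\cO_E(-1)$ for irreducible exceptional $E$. Next I will apply $\RHom(-,\cO_E(-1))$ to this triangle: condition $(2)$ gives $\RHom(\cF,\cO_E(-1)) = 0$, and the $(\pi^*,\pi_*)$-adjunction combined with $\pi_*\cO_E(-1) = 0$ (both $H^0$ and $H^1$ of $\cO_{\P^1}(-1)$ vanish) gives $\RHom(\pi^*\pi_*\cF,\cO_E(-1)) = 0$; hence $\RHom(\cK,\cO_E(-1)) = 0$ for every $E$. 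If $\cK$ were nonzero, taking $k$ to be the top degree of nonvanishing cohomology of $\cK$, the iterated-extension structure on $\cH^k(\cK)$ would produce a surjection onto some $\cO_{E'}(-1)$, and composing with the canonical truncation $\cK\to\cH^k(\cK)[-k]$ would yield a nonzero morphism $\cK\to\cO_{E'}(-1)[-k]$ — contradiction. So $\cK = 0$ and $\cF\cong\pi^*\pi_*\cF$.

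The most delicate step, which I expect to be the main obstacle since $X$ may be non-Gorenstein so that $\Bbbk_x$ is typically not perfect on $X$, is showing $\pi_*\cF\in\Dperf(X)$. By Lemma~\ref{lemma:comm-alg} it suffices to bound $\Ext^t_X(\pi_*\cF,\Bbbk_x)$ at each closed point $x$, which is automatic at smooth points. At a singular point I plan to invoke Grothendieck duality and proper base change for $\pi^!$ along the Cartesian square attached to $\{x\}\hookrightarrow X$: this writes $\pi^!\Bbbk_x\cong \iota_*p^!\Bbbk$ with $\iota\colon\pi^{-1}(x)\hookrightarrow\tX$ and $p\colon\pi^{-1}(x)\to\Spec\Bbbk$; the dualizing complex $p^!\Bbbk$ of the one-dimensional projective fiber is bounded, so $\pi^!\Bbbk_x$ is bounded, and $\Ext^t_X(\pi_*\cF,\Bbbk_x)\cong\Ext^t_{\tX}(\cF,\pi^!\Bbbk_x)$ is bounded because $\cF$ is perfect on the smooth surface $\tX$. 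For the addendum, a locally free $\cF$ satisfying $(1)$ restricts on each $E\cong\P^1$ to a locally free object of $\langle\cO_E\rangle$, forcing $\cF\vert_E\cong\cO_E^{\oplus r}$; the projection formula applied to $\cF\cong\pi^*\pi_*\cF$, together with $R^i\pi_*\cO_{\tX} = 0$ for $i\ge 1$ from rationality of singularities, gives $R^i\pi_*\cF = 0$ for $i\ge 1$, so $\pi_*\cF$ is a reflexive sheaf on the normal surface $X$; Auslander--Buchsbaum on the two-dimensional Cohen--Macaulay stalks, combined with the just-established perfectness, then forces projective dimension zero, i.e., local freeness. Purity is treated analogously, by excluding lower-dimensional subsheaves via the adjunction and the isomorphism $\cF\cong\pi^*\pi_*\cF$.
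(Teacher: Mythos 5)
Most of your cycle is sound: $(4)\Rightarrow(3)$, $(3)\Rightarrow(1)$, $(1)\Leftrightarrow(2)$ match the paper's arguments, and your cone argument for $\cF\cong\pi^*\pi_*\cF$ in $(2)\Rightarrow(4)$ is a correct hands-on substitute for the paper's use of the decomposition $\Dm(\tX)=\langle \Ker\pi_*,\pi^*(\Dm(X))\rangle$. The genuine gap is the perfectness claim, i.e.\ the "delicate step''. The isomorphism $\pi^!\cO_x\cong\iota_*p^!\Bbbk$ you invoke is, by uniqueness of right adjoints, equivalent to the derived base change $i_x^*\circ\pi_*\cong p_*\circ\iota^*$ for the fibre square, and this requires the square to be Tor-independent; at a singular point $x$ it is not, since $\pi^*\cO_x$ is unbounded (were it bounded, Corollary~\ref{cor:descent} would force $\cO_x\in\Dperf(X)$). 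In fact $\pi^!\cO_x$ is genuinely unbounded above at a singular point: taking $\tx$ over $x$ one has $\pi_*\cO_\tx\cong\cO_x$, so Grothendieck duality gives $\Ext^t_{\tX}(\cO_\tx,\pi^!\cO_x)\cong\Ext^t_X(\cO_x,\cO_x)\cong\Ext^t_{\cO_{X,x}}(\Bbbk,\Bbbk)$, which is nonzero for \emph{all} $t\ge 0$ because $\cO_{X,x}$ is not regular; since $\cO_\tx$ is perfect on the smooth surface $\tX$, this rules out boundedness of $\pi^!\cO_x$. A decisive internal check: your perfectness argument never uses hypothesis (2), so if it worked it would prove $\pi_*\cF\in\Dperf(X)$ for \emph{every} $\cF\in\Db(\tX)$, contradicting $\pi_*\cO_\tx\cong\cO_x$ at a singular point.

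The repair is cheap and is exactly the paper's move: once $\cF\cong\pi^*(\pi_*\cF)$ is known, use adjunction upstairs instead of duality downstairs. For $\tx$ over $x$ one has $\pi_*\cO_\tx\cong\cO_x$, hence
$\Ext^t_X(\pi_*\cF,\cO_x)\cong\Ext^t_{\tX}(\pi^*\pi_*\cF,\cO_\tx)\cong\Ext^t_{\tX}(\cF,\cO_\tx)$,
which vanishes for $|t|\gg0$ by smoothness of $\tX$, and Lemma~\ref{lemma:comm-alg} then gives perfectness of $\pi_*\cF$ at every point (the paper packages this as $(2)\Rightarrow(3)$, producing $\cG\in\Dm(X)$ with $\cF\cong\pi^*\cG$ first and testing $\cG$ against $\cO_x$). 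The same adjunction also disposes of the addendum in two lines; as written, your addendum has soft spots: the projection-formula step "gives $R^i\pi_*\cF=0$ for $i\ge1$'' is circular (it only restates $\pi_*\cF\cong\cG$), and a relative vanishing argument is unavailable here since $\cF(-K_\tX)$ has degree $2-d_j\le 0$ on the exceptional curves of a minimal resolution, so reflexivity of $\pi_*\cF$ would still need the adjunction (or Lemma~\ref{lemma:reflexivity-criterion}-type) input before your Auslander--Buchsbaum conclusion applies.
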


\begin{proof}
We prove (1) $\implies$ (2) $\implies$ (3) $\implies$ (4) $\implies$ (1).

(1) $\implies$ (2) This implication is trivial, since $E \simeq \P^1$ and
$\Ext^\bullet_{\P^1}(\OO, \OO(-1)) = 0$.

(2) $\implies$ (3)
It follows from~\eqref{eq:dbtx-dbx} and Lemma~\ref{lemma:ker-pi}
that $\cF$ belongs to the image of the functor~$\pi^* \colon \Dm(X) \to \Dm(\tX)$,
that is~$\cF \cong \pi^*\cG$ for some $\cG \in \Dm(X)$.
Let $x \in X$ be a point and let $\tx \in \tX$ be a point over $x$.
Then $\pi_*\cO_\tx \cong \cO_x$, hence by adjunction
\begin{equation*}
\Ext^\bullet(\cG,\cO_x) \cong
\Ext^\bullet(\cG,\pi_*\cO_\tx) \cong
\Ext^\bullet(\pi^*\cG,\cO_\tx) \cong
\Ext^\bullet(\cF,\cO_\tx),
\end{equation*}
which is finite-dimensional by smoothness of $\tX$.
By Lemma~\ref{lemma:comm-alg}, $\cG$ is perfect in a neighborhood of $x$.
Since this holds for each point of $X$, we conclude that $\cG$ is perfect.

(3) $\implies$ (4) This is clear since if $\cF \cong \pi^* \cG$ then
\begin{equation*}
\pi_* \cF \cong \pi_* \pi^* \cG \cong \cG.
\end{equation*}

(4) $\implies$ (1) Let $\cG = \pi_* \cF$. Then $\cF \cong \pi^* \cG$ by assumption.
The restriction $\cF\vert_E \cong \pi^*(\cG)\vert_E$ is isomorphic
to $p_E^*(\cG\vert_x)$ where $p_E\colon E \to \Spec(\Bbbk)$ is the structure morphism
and $x  = \pi(E) \in X$.
We have $p_E^*(\Db(\Bbbk)) {}={} \langle \OO_E \rangle$ and the result follows.

Now assume that $\cF \in \Db(\tX)$ is an object for which all the equivalent conditions hold.
Let $\tx \in \tX$ and $x = \pi(\tx)$.
For every $t \in \Z$ by adjunction and property (4) we have
\begin{multline*}
\Hom(\pi_*\cF,\cO_x[t]) =
\Hom(\pi_*\cF,\pi_*\cO_\tx[t]) \cong
\Hom(\pi^*\pi_*\cF,\cO_\tx[t]) \cong
\Hom(\cF,\cO_\tx[t]).
\end{multline*}
If $\cF$ is a pure sheaf, then the $\Hom$-space in the right-hand side is zero for all $\tx \in \tX$ and $t < 0$.
Therefore, the left side is zero for all $x \in X$ (since $\pi$ is surjective) and~\mbox{$t < 0$},
hence the complex $\pi_*\cF$ is concentrated in nonpositive degrees.
But since $\pi_*$ is left exact, it is a pure sheaf.

If $\cF$ is locally free, then the $\Hom$-space in the right-hand side is also zero for all~\mbox{$\tx \in \tX$} and~\mbox{$t > 0$}.
Therefore, the left side is zero for all $x \in X$ and $t > 0$ as well,
hence $\pi_*\cF$ is locally free by \cite[X.3, Proposition~4]{Bou}.
\end{proof}

We will also use the following corollary.

\begin{corollary}\label{cor:descent}
Let $\wt{X} \to X$ be a resolution of a normal surface with rational singularities.
If $\cG \in \Dm(X)$ and $\pi^* \cG \in \Db(\wt{X})$, then
$\cG \in \Dperf(X)$.
\end{corollary}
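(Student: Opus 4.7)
The plan is to verify perfectness of $\cG$ stalkwise using Lemma~\ref{lemma:comm-alg}, transporting the Ext computation from $X$ to $\tX$ via the same adjunction trick already used in the proof of Lemma~\ref{lemma:descent}. The key point is that on the smooth side $\tX$, the hypothesis ``bounded'' is all one needs to conclude ``perfect''.

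First, I would fix a closed point $x \in X$ and, using surjectivity of $\pi$, pick a closed point $\tx \in \tX$ with $\pi(\tx) = x$. Exactly as in the proof of Lemma~\ref{lemma:descent}, rationality of the singularity implies $\pi_*\cO_\tx \cong \cO_x$, so adjunction gives
\begin{equation*}
\Ext^t_X(\cG,\cO_x) \;\cong\; \Ext^t_X(\cG,\pi_*\cO_\tx) \;\cong\; \Ext^t_{\tX}(\pi^*\cG,\cO_\tx)
\end{equation*}
for every $t \in \Z$.

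Next, since $\pi^*\cG$ lies in $\Db(\tX)$ by hypothesis and $\tX$ is smooth, $\pi^*\cG$ is automatically a perfect complex; consequently $\Ext^t_{\tX}(\pi^*\cG,\cO_\tx) = 0$ for $|t| \gg 0$. Combined with the previous display, this yields $\Ext^t_X(\cG,\cO_x) = 0$ for $|t| \gg 0$. Because $\cG \in \Dm(X)$ has coherent cohomology, the stalk $\cG_x$ is a bounded above complex of finitely generated modules over the local ring $\cO_{X,x}$, and $\cO_x = \Bbbk$ is the residue field, so Lemma~\ref{lemma:comm-alg} applies and shows that $\cG_x$ is perfect over $\cO_{X,x}$. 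Since this holds at every closed point of $X$, we conclude $\cG \in \Dperf(X)$.

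I do not anticipate any real obstacle here: the inputs (identification $\pi_*\cO_\tx \cong \cO_x$, adjunction between $\pi^*$ and $\pi_*$, smoothness of $\tX$, and the local-global criterion for perfectness) have all been set up in the preceding lemmas. The only point that deserves a brief sanity check is that $\pi^*\cG$ has coherent cohomology, which is immediate since $\pi$ is a morphism of finite type between noetherian schemes and $\cG$ is coherent; once bounded, such a complex on the smooth surface $\tX$ is necessarily perfect.
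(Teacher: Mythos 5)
Your proof is correct, but it takes a more direct route than the paper. The paper proves the corollary in two lines by quoting Lemma~\ref{lemma:descent} as a black box: setting $\cF=\pi^*\cG$, it uses the decomposition~\eqref{eq:dbtx-dbx} together with Lemma~\ref{lemma:ker-pi} to verify condition~(2) of Lemma~\ref{lemma:descent} for $\cF$, and then the implication (2)$\implies$(4) plus $\pi_*\pi^*\cG\cong\cG$ (projection formula, i.e.\ rationality via~\eqref{eq:pi-pi}) to conclude that $\cG$ is perfect. You instead inline the engine that sits inside the (2)$\implies$(3) step of that lemma: the pointwise adjunction $\Ext^t_X(\cG,\cO_x)\cong\Ext^t_{\tX}(\pi^*\cG,\cO_\tx)$ via $\pi_*\cO_\tx\cong\cO_x$, smoothness of $\tX$ to get vanishing for $|t|\gg0$, and Lemma~\ref{lemma:comm-alg} to conclude local perfectness. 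What your version buys is economy of hypotheses: it never touches the semiorthogonal decomposition~\eqref{eq:dbtx-dbx}, the sheaves $\cO_E(-1)$, or the isomorphism $\pi_*\pi^*\cong\id$, so it in fact proves the statement for an arbitrary resolution of a normal surface, without the rational-singularities assumption, and it identifies $\cG$ itself as perfect rather than going through $\pi_*\pi^*\cG$. What the paper's version buys is brevity in context: having already established Lemma~\ref{lemma:descent}, the corollary is a purely formal consequence, whereas your argument repeats part of that lemma's proof. Both arguments share the same implicit conventions (passing from $\Ext^t_X(\cG,\cO_x)$ to $\Ext$ over the local ring $\cO_{X,x}$, and from stalkwise perfectness to $\cG\in\Dperf(X)$), at the same level of rigor as the paper.
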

\begin{proof}
Set $\cF = \pi^*\cG$.
By~\eqref{eq:dbtx-dbx} and Lemma~\ref{lemma:ker-pi} the property~(2) of Lemma~\ref{lemma:descent} is satisfied for $\cF$.
Therefore, by property~(4) the object $\pi_*\cF \cong \pi_*\pi^*\cG \cong \cG$ is perfect.
\end{proof}

Later we will state an analog of Lemma~\ref{lemma:descent}
for categorical resolutions of finite-dimensional algebras (see Lemma~\ref{lemma:descent-algebraic})
and twisted derived categories (see Lemma~\ref{lemma:descent-twisted}).

\subsection{Compatibility with contraction}
\label{subsection:exceptional-collection-tx}

Let $X$ be an irreducible normal surface with rational singularities.
Let $\pi \colon \tX \to X$ be its resolution of singularities.
Let $D$ be the exceptional locus of $\pi$.
Recall that each irreducible component of $D$ is a smooth rational curve.

\begin{definition} \label{definition: compatible}
A semiorthogonal decomposition $\Db(\tX) = \langle \tcA_1, \tcA_2, \dots, \tcA_n \rangle$ is
{\sf compatible with the contraction $\pi$} if for each irreducible component $E$ of $D$ one has
\begin{equation*}
\cO_E(-1) \in \tcA_i
\end{equation*}
for one of the components $\tcA_i$ of the decomposition.
\end{definition}

Note that for an irreducible component $E$ of $D$ the component $\tcA_i$ to which $\cO_E(-1)$ belongs is uniquely determined.
Let
\begin{equation}
\label{eq:def-di}
D_i := \bigcup \left\{ E \mid \cO_E(-1) \in \tcA_i \right\}
\end{equation}
be the union of those irreducible components $E \subset D$ for which $\cO_E(-1)$ belongs to $\tcA_i$.

We will need the following simple observation.

\begin{lemma}
\label{lemma:e-e-orthogonal}
Let $\Db(\tX) = \langle \tcA_1, \tcA_2, \dots, \tcA_n \rangle$ be a semiorthogonal decomposition.
If~$E$ and~$E'$ are components of $D$ such that
\begin{equation*}
\cO_E(-1) \in \tcA_i
\qquad\text{and}\qquad
\cO_{E'}(-1) \in \tcA_{i'}
\qquad\text{with $i \ne i'$},
\end{equation*}
then $E \cap E' = \varnothing$ and so $\cO_E(-1)$ and $\cO_{E'}(-1)$ are completely orthogonal.
In particular,
\begin{equation*}
D_i \cap D_{i'} = \varnothing
\qquad
\text{for $i \ne i'$.}
\end{equation*}

\end{lemma}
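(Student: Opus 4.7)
My plan is to argue by contradiction: if $E \cap E' \ne \varnothing$ with $E \ne E'$, I will show that both $\RHom_{\tX}(\cO_E(-1), \cO_{E'}(-1))$ and $\RHom_{\tX}(\cO_{E'}(-1), \cO_E(-1))$ are nonzero (each isomorphic to $\Bbbk[-1]$), whereas semiorthogonality of $\tcA_i$ and $\tcA_{i'}$ forces one of them to vanish.

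First I would recall from Section~\ref{subsection:resolution} that the exceptional divisor of $\pi$ is a disjoint union of trees of smooth rational curves meeting transversely. Hence, whenever $E \ne E'$ and $E \cap E' \ne \varnothing$, the intersection $E \cap E'$ is a single reduced point $p$ with $E \cdot E' = 1$.

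The main task is to establish
\begin{equation*}
\RHom_{\tX}(\cO_E(-1), \cO_{E'}(-1)) \;\cong\; \Bbbk[-1].
\end{equation*}
For this, writing $i' \colon E' \hookrightarrow \tX$ for the closed embedding, adjunction reduces the problem to computing $\RHom_{E'}(Li'^*\cO_E(-1), \cO_{E'}(-1))$. I would use the Koszul resolution $\cO_{\tX}(-E) \to \cO_{\tX}$ of $\cO_E$ to identify $\cO_E \otimes^L_{\cO_{\tX}} \cO_{E'}$ with the two-term complex $\cO_{E'}(-E|_{E'}) \to \cO_{E'}$; since $E|_{E'} = p$ is a single reduced point, this complex is quasi-isomorphic to the skyscraper $\cO_p$ (with vanishing higher Tor). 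As the result is supported at a single point, any line-bundle twist leaves it unchanged, so $Li'^*\cO_E(-1) \cong \cO_p$ as well. Finally, applying $\Hom(-,\cO_{E'}(-1))$ to the standard resolution $\cO(-1) \to \cO \to \cO_p$ on $E' \cong \P^1$ yields $\RHom_{\P^1}(\cO_p, \cO(-1)) \cong \Bbbk[-1]$, as required.

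By the symmetric argument $\RHom_{\tX}(\cO_{E'}(-1), \cO_E(-1)) \cong \Bbbk[-1]$ as well, and semiorthogonality between $\tcA_i$ and $\tcA_{i'}$ forces one of these to vanish, giving the desired contradiction. Thus $E \cap E' = \varnothing$; then $\cO_E(-1)$ and $\cO_{E'}(-1)$ have disjoint supports, which makes them completely orthogonal, and $D_i \cap D_{i'} = \varnothing$ follows immediately from~\eqref{eq:def-di}. I expect the only delicate step to be the derived-tensor calculation $Li'^*\cO_E(-1) \cong \cO_p$; the rest is formal.
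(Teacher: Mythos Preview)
Your proof is correct and follows essentially the same approach as the paper: use semiorthogonality to force one of the $\Ext^\bullet$ groups to vanish, then show this is impossible when $E \cap E' \ne \varnothing$. The paper's proof simply asserts the latter as ``an easy computation'' for irreducible curves on a smooth surface, whereas you spell it out via the Koszul resolution and adjunction; your added detail is accurate and the use of the tree/transversality structure from Section~\ref{subsection:resolution} is legitimate.
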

\begin{proof}
Let $i' > i$.
Then $\Ext^\bullet(\cO_{E'}(-1),\cO_E(-1)) = 0$ by semiorthogonality of~$\tcA_i$ and~$\tcA_{i'}$.
Since $E$ and $E'$ are irreducible curves on a smooth surface $\tX$,
an easy computation shows
that in fact
\begin{equation*}
E \cap E' = \varnothing.
\end{equation*}
But then $\Ext^\bullet(\cO_{E}(-1),\cO_{E'}(-1)) = 0$ as well.
\end{proof}

Thus, a semiorthogonal decomposition compatible with $\pi$ induces a decomposition
\begin{equation}
\label{eq:decomposition-d}
D = D_1 \sqcup \dots \sqcup D_n
\end{equation}
of the exceptional divisor $D$ of $\pi$ into $n$ pairwise disjoint components,
where $n$ is the number of components in the semiorthogonal decomposition (some of $D_i$ may be empty).

Recall that a morphism $\pi \colon \tX \to X$ is {\sf crepant} if the canonical line bundle $\cO_\tX(K_\tX)$
is isomorphic to a pullback from $X$, i.e., by Lemma~\ref{lemma:descent} if $K_\tX \cdot E = 0$
for each irreducible component~$E$ of the exceptional divisor $D$ of $\pi$.
This condition can be reformulated in many ways.

\begin{lemma}
\label{lemma:crepancy}
Let $(X,x)$ be an isolated rational surface singularity and let $\pi\colon \wt{X} \to X$ be its resolution.
The morphism $\pi$ is crepant if and only if $(X,x)$ is Gorenstein and $\pi$ is its minimal resolution.
Furthermore, in this case $K_{\wt X} = \pi^*(K_X)$.
\end{lemma}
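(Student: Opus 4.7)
I would prove the two implications separately, with main inputs the trace isomorphism $\pi_*\omega_{\wt X} \cong \omega_X$ and Grauert--Riemenschneider vanishing $R^1\pi_*\omega_{\wt X} = 0$ (available under our rationality hypothesis), the projection formula, and adjunction on the smooth surface $\wt X$. For the direction \emph{crepant $\Rightarrow$ Gorenstein and minimal}, assuming $\omega_{\wt X} \cong \pi^*\cL$ for some line bundle $\cL$ on $X$, I apply $\pi_*$ and use $\pi_*\cO_{\wt X} \cong \cO_X$ together with the projection formula to get $\cL \cong \pi_*\omega_{\wt X} \cong \omega_X$; this is a line bundle, so $(X,x)$ is Gorenstein, and the iso $\omega_{\wt X} \cong \pi^*\omega_X$ supplies the ``Furthermore'' clause. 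Minimality then follows from adjunction on each exceptional component $E$: crepancy gives $K_{\wt X}\cdot E = 0$, hence $E^2 = -2 - K_{\wt X}\cdot E = -2$, so no $(-1)$-curves occur.

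For the reverse direction, Gorensteinness allows me to write $K_{\wt X} = \pi^*K_X + D$ with $D = \sum a_i E_i$ a Cartier $\pi$-exceptional divisor and integer coefficients $a_i$. Adjunction and minimality ($E_j^2 \le -2$) give $D\cdot E_j = K_{\wt X}\cdot E_j = -2 - E_j^2 \ge 0$, and the inverse of the intersection matrix of a tree of smooth rational curves with negative definite intersection form has non-positive entries, so each $a_i \le 0$ and $D' := -D$ is effective. To conclude $D' = 0$, I would use the canonical inclusion of line bundles $\omega_{\wt X} = \pi^*\omega_X(-D') \hookrightarrow \pi^*\omega_X$ with quotient $\pi^*\omega_X \otimes \cO_{D'}$; pushing forward by $\pi_*$ and applying Grauert--Riemenschneider and the projection formula yields a short exact sequence
\begin{equation*}
0 \to \omega_X \xrightarrow{\alpha} \omega_X \to \omega_X \otimes \pi_*\cO_{D'} \to 0.
\end{equation*}
Since $\alpha$ restricts to the identity over the smooth locus $X \setminus \{x\}$, whose complement has codimension two in the normal surface $X$, the map $\alpha$ is an isomorphism, so $\pi_*\cO_{D'} = 0$; but for $D' \ne 0$ the sheaf $\pi_*\cO_{D'}$ is a nonzero skyscraper at $x$ (its stalk contains the constant function~$1$), forcing $D' = 0$ and hence crepancy.

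I expect the main technical subtlety to be the identification of $\alpha$ with the trace isomorphism --- that is, verifying that pushing forward the inclusion $\omega_{\wt X} \hookrightarrow \pi^*\omega_X$ and composing with the projection-formula iso $\pi_*\pi^*\omega_X \cong \omega_X$ really recovers the trace, rather than the zero map or some other scalar multiple. This reduces to a naturality check on the smooth locus where $\pi$ is an isomorphism and both candidates manifestly coincide with the identity. Everything else --- integrality of the $a_i$, sign control from the intersection matrix, and vanishing of the cokernel of the inclusion --- reduces to standard properties of rational surface singularities and intersection theory on their resolutions.
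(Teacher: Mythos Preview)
Your proof is correct, and for the direction \emph{crepant $\Rightarrow$ Gorenstein and minimal} it is essentially the same as the paper's: the paper uses Lemma~\ref{lemma:descent} to write $\cO_{\wt X}(K_{\wt X}) \cong \pi^*\cL$ and then identifies $\cL$ with $\cO(K_X)$ by restricting to the smooth locus, where you instead apply $\pi_*$ and invoke the trace isomorphism $\pi_*\omega_{\wt X} \cong \omega_X$ together with the projection formula. Both arguments amount to the same identification, and the minimality step via adjunction is identical.

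For the converse, however, your route is genuinely different. The paper does not compute discrepancies at all: it simply observes that a rational Gorenstein surface singularity is Du Val (citing~\cite[Corollary~5.24 and Theorem~4.20]{KM}), and that Du Val singularities by definition admit a crepant resolution, which must then be the minimal one. Your argument instead writes $K_{\wt X} = \pi^*K_X + D$, uses adjunction and negative definiteness of the intersection form (via the sign pattern of the inverse matrix) to force $-D \ge 0$, and then eliminates $D$ by pushing forward the inclusion $\omega_{\wt X} \hookrightarrow \pi^*\omega_X$ and invoking Grauert--Riemenschneider and the trace isomorphism. Your approach is longer but more self-contained --- it avoids the Du Val classification entirely and works directly from the definition of rationality --- whereas the paper's proof is a two-line citation. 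The technical point you flag (that the pushed-forward inclusion agrees with the trace map) is indeed the only place requiring care, and your reduction to the smooth locus plus normality handles it.
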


\begin{proof}
Assume that $\pi$ is crepant.
By Lemma~\ref{lemma:descent} we have $\cO_\tX(K_\tX) \cong \pi^*\cL$ for a line bundle~$\cL$ on $X$.
Furthermore, if $D$ is the exceptional locus of $\pi$, we have
\begin{equation*}
\cO(K_{X\setminus x}) = \cO(K_{\tX \setminus D}) {} \cong {} \pi^*\cL\vert_{\tX \setminus D} = \cL\vert_{X \setminus x},
\end{equation*}
hence $\cO(K_X) \simeq \cL$ and $K_X$ is a Cartier divisor, so $x$ is Gorenstein.
It also follows that~$K_{\wt X} = \pi^*(K_X)$.
Furthermore, if $\pi$ is not minimal, there exists a $(-1)$-component~$E$ of~$D$.
Then by adjunction formula $K_\tX \cdot E = -1$, hence $\pi$ is not crepant.
This proves one direction.

For the other direction just note that if $x$ is Gorenstein
then the singularity $(X,x)$ is Du Val by~\cite[Corollary~5.24 and Theorem~4.20]{KM};
in particular it has a crepant resolution
(see~\cite[Definition~4.24]{KM}).
Since crepant resolution is minimal and the minimal resolution is unique,
we conclude that $\pi$ is crepant.
\end{proof}

\begin{definition}
\label{definition:crepancy}
Assume that a decomposition~\eqref{eq:decomposition-d} of the exceptional divisor of a birational morphism $\pi$ is given.
We will say that $\pi$ is {\sf crepant along $D_i$} if $K_\tX \cdot E = 0$ for each irreducible component $E$ of $D_i$.
\end{definition}

According to Lemma~\ref{lemma:crepancy}, when~$X$ is a surface with rational singularities
its minimal resolution $\pi$ is crepant along a connected component $D'$ of the exceptional divisor $D$ of $X$
if and only if the point \mbox{$x = \pi(D') \in X$} is Gorenstein.

In the next lemma we apply Serre duality on $\wt{X}$ to rotate the components of the
semiorthogonal decomposition; for that we need to assume that $X$ is projective.

\begin{lemma}
\label{lemma:twisted-compatible}
Let $X$ be a normal projective surface and let $\pi \colon \tX \to X$ be a resolution of singularities.
Assume that $\Db(\tX) = \langle \tcA_1, \tcA_2, \dots, \tcA_n \rangle$ is a semiorthogonal decomposition.
For each $1 \le k \le n$ we have semiorthogonal decompositions
\begin{align}
\label{eq:dbtx-ktx}
\begin{aligned}
\Db(\tX) &= \langle \tcA_{{k+1}}(K_\tX), \dots, \tcA_n(K_\tX), \tcA_1, \dots, \tcA_{{k}} \rangle,\\
\Db(\tX) &= \langle \tcA_{{k}}, \dots, \tcA_n, \tcA_1(-K_\tX), \dots, \tcA_{{k-1}}(-K_\tX) \rangle.
\end{aligned}
\end{align}
Assume further that the original decomposition is compatible with $\pi$,
and let~\eqref{eq:decomposition-d} be the induced decomposition of its exceptional divisor.

If $\pi$ is crepant along $D_j$ for all~\mbox{$j > k$} the first
decomposition in~\eqref{eq:dbtx-ktx} is compatible with $\pi$,
and if~$\pi$ is crepant along~$D_j$ for all $j < k$ then so is the second.
In both cases
the induced decomposition~\eqref{eq:decomposition-d} of the exceptional divisor is obtained from the original one by
an appropriate cyclic permutation of indices.
\end{lemma}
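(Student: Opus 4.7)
The plan is to first establish the two mutated semiorthogonal decompositions~\eqref{eq:dbtx-ktx} as a formal consequence of Serre duality on $\tX$, and then to check the compatibility claims separately using the crepancy hypotheses.

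For the existence of~\eqref{eq:dbtx-ktx}, I would rely on the fact that $\tX$ is smooth projective of dimension two, so $\Db(\tX)$ has Serre functor $S_{\tX}(-) = (-) \otimes \cO_{\tX}(K_{\tX})[2]$. Given any semiorthogonal decomposition $\cT = \langle \cA_1,\dots,\cA_n\rangle$ of a category with Serre functor $S$, one has a rotated decomposition $\cT = \langle S(\cA_n), \cA_1, \dots, \cA_{n-1}\rangle$: semiorthogonality follows from $\Hom(S(\cA_n), \cA_i) \cong \Hom(\cA_i, \cA_n)^*$, which vanishes for $i < n$, and generation is automatic. Iterating this rotation $n-k$ times, and using that a triangulated subcategory is closed under shifts so that $S_{\tX}(\tcA_j) = \tcA_j(K_{\tX})$ as a subcategory, produces the first decomposition of~\eqref{eq:dbtx-ktx}. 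The second arises symmetrically by applying $S_{\tX}^{-1}$ to the leftmost component and moving it to the right, $k-1$ times.

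To verify compatibility, fix an irreducible component $E$ of the exceptional divisor of $\pi$ and let $j$ be the index with $\cO_E(-1) \in \tcA_j$, so $E \subset D_j$. Consider the first decomposition in~\eqref{eq:dbtx-ktx}. If $j \le k$, the subcategory $\tcA_j$ appears unchanged there and nothing needs checking. If $j > k$, then by hypothesis $\pi$ is crepant along $D_j$, so $K_{\tX} \cdot E = 0$. Since $\cO_{\tX}(K_{\tX})\vert_E \cong \cO_E(K_{\tX} \cdot E) = \cO_E$, tensoring by $\cO_{\tX}(K_{\tX})$ fixes $\cO_E(-1)$, so that $\cO_E(-1) \in \tcA_j(K_{\tX})$, which is precisely the component occupying position $j-k$ in the mutated decomposition. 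The second decomposition is treated identically: the crepancy assumption $K_{\tX} \cdot E = 0$ for $E \subset D_j$ with $j < k$ gives $\cO_E(-1) \in \tcA_j(-K_{\tX})$.

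Reading off the positions of the $\cO_E(-1)$ in the mutated compatible decomposition shows that the induced partition of $D$ is the cyclic shift $D = D_{k+1} \sqcup \dots \sqcup D_n \sqcup D_1 \sqcup \dots \sqcup D_k$ in the first case, and the reverse cyclic shift in the second. The only nonformal input is the observation that tensoring $\cO_E(-1)$ by $\cO_{\tX}(\pm K_{\tX})$ leaves it unchanged precisely when $E$ lies in a crepant part of the exceptional divisor; this is the sole place the crepancy hypothesis is used, and it is the conceptual reason why exactly these components can be rotated across while preserving compatibility.
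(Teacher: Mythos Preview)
Your proof is correct and follows essentially the same approach as the paper: the rotated decompositions~\eqref{eq:dbtx-ktx} come from Serre duality on the smooth projective surface~$\tX$, and compatibility is checked by observing that $K_\tX \cdot E = 0$ (crepancy along the relevant $D_j$) forces $\cO_E(-1)$ to be fixed under the twist by $\pm K_\tX$. You have simply spelled out the Serre-functor rotation and the cyclic-permutation claim in more detail than the paper does.
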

\begin{proof}
The fact that~\eqref{eq:dbtx-ktx} are semiorthogonal decompositions follows easily from Serre duality.
To prove compatibility with $\pi$ of the first of them,
let $E$ be an irreducible component of the exceptional divisor of $\pi$ and assume that $\cO_E(-1) \in \tcA_j$.
If $j \le {k}$ there is nothing to check.
If~$j \ge {k + 1}$, we twist the containment by $K_\tX$, and since $K_\tX \cdot E = 0$ (by crepancy of $\pi$ along $D_j$),
we conclude that $\cO_E(-1) \in \tcA_j(K_\tX)$.
Compatibility of the second semiorthogonal decomposition is proved analogously.
\end{proof}

Recall that a functor $\Phi \colon \cT_1 \to \cT_2$ between triangulated categories endowed with t-structures
{\sf has finite cohomological amplitude} if there is a pair of integers $a_- \le a_+$ such that for any $k_- \le k_+$ one has
\begin{equation*}
\Phi\Big(\cT_1^{[k_-,k_+]}\Big) \subset \cT_2^{[k_- + a_-, k_+ + a_+]},
\end{equation*}
where $\cT_1^{[k_-,k_+]}$ denotes the subcategory of $\cT_1$ consisting of objects
whose cohomology with respect to the t-structure is supported in degrees between $k_-$ and $k_+$,
and similarly for~$\cT_2^{[k_- + a_-, k_+ + a_+]}$.
Finiteness of cohomological amplitude
is a useful finiteness condition (see, e.g., \cite{kuznetsov2011base}).

Recall that a triangulated subcategory $\cA \subset \cT$ is called {\sf left} (resp. {\sf right}) {\sf admissible}
if the embedding functor of $\cA$ has left (resp.\ right) adjoint functor,
or equivalently there is a semiorthogonal
decomposition $\cT = \langle \cA, {}^\perp \cA \rangle$ (resp. $\cT = \langle \cA^\perp, \cA \rangle$).
If $\cA$ is both left and right admissible it is called {\sf admissible}.

The main result of this section is the next theorem.

\begin{theorem}
\label{theorem:dbx}
Let $X$ be a normal projective surface
with rational singularities
and let $\pi \colon \tX \to X$ be its resolution.
Assume that $\wt{X}$ admits
a semiorthogonal decomposition
\begin{equation}
\label{eq:dbtx}
\Db(\tX) = \langle \tcA_1, \tcA_2, \dots, \tcA_n \rangle
\end{equation}
compatible with $\pi$
and let~\eqref{eq:decomposition-d} be the induced decomposition of the exceptional divisor.

\noindent{$(i)$} There is a unique semiorthogonal decomposition
\begin{equation}
\label{eq:dbx}
\Db(X) = \langle \cA_1, \cA_2, \dots, \cA_n \rangle,
\end{equation}
with $\pi_*(\tcA_i) = \cA_i$.
The projection functors of~\eqref{eq:dbx} have finite cohomological amplitude.
Moreover, the functor $\pi_*\colon \tcA_i \to \cA_i$ induces an equivalence of $\cA_i$ with a Verdier quotient
\begin{equation*}
\cA_i \simeq \tcA_i / \langle \OO_{E}(-1) \rangle_{E \subset D_i},
\end{equation*}
where $E$ runs over the set of irreducible components of $D_i$.

\noindent{$(ii)$}
If $\pi$ is crepant along $D_j$ for $j > i$, $\cA_i$ is right admissible in~$\Db(X)$,
and if $\pi$ is crepant along $D_j$ for $j < i$, $\cA_i$ is left admissible in~$\Db(X)$.

In particular,
if $\pi$ is crepant, then all $\cA_i$ are admissible in $\Db(X)$.

\noindent{$(iii)$}
Setting
$\cA_i^\perf := \cA_i \cap \Dperf(X)$
we have
\begin{equation}
\label{eq:pis-aiperf}
\pi^*(\cA_i^\perf) \subset \langle \tcA_i, \tcA_{i+1} \cap \Ker \pi_*, \dots, \tcA_n \cap \Ker \pi_* \rangle.
\end{equation}

\noindent{$(iv)$}
If $\pi$ is crepant along $D_j$ for $j > i$, we have
\begin{equation*}
\pi^*(\cA_i^\perf) \subset \tcA_i,
\end{equation*}
Furthermore, if $\pi$ is crepant along $D_j$ for all $j \ge 2$, there is a semiorthogonal decomposition
of the category of perfect complexes
\begin{equation}
\label{eq:dpxperf}
\Dperf(X) = \langle \cA_1^\perf, \cA_2^\perf, \dots, \cA_n^\perf \rangle.
\end{equation}
Finally, if $\pi$ is crepant, then all components $\cA_i^\perf$ of \eqref{eq:dpxperf} are admissible.
\end{theorem}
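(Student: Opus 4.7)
The strategy rests on the observation that, by Lemma~\ref{lemma:e-e-orthogonal}, compatibility of~\eqref{eq:dbtx} with~$\pi$ forces the kernel of~$\pi_*$ on $\Db(\tX)$ to split orthogonally:
\begin{equation*}
\Ker\pi_* \cap \Db(\tX) \;=\; \bigoplus_k \wt{\mathcal{K}}_k, \qquad \wt{\mathcal{K}}_k := \tcA_k \cap \Ker\pi_* = \langle \OO_E(-1) \rangle_{E \subset D_k}.
\end{equation*}
Together with the essential surjectivity of $\pi_*$ (Corollary~\ref{lemma:im-pi}) and the canonical-lift construction $\tau^{\ge k-1}\pi^*\cG$ of Lemma~\ref{lemma:pis-tau}, this strongly suggests identifying $\Db(X)$ with the Verdier quotient $\Db(\tX)/(\Ker\pi_*\cap\Db(\tX))$, and the whole argument hinges on transporting SOD information across that identification.

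For part~(i), I would set $\cA_k := \pi_*(\tcA_k)$ and first prove that $\pi_*$ induces an equivalence $\Db(\tX)/(\Ker\pi_*\cap\Db(\tX)) \simeq \Db(X)$; the key $\Hom$-computation reduces to the adjunction $\Hom(\pi_*\wt{F},\pi_*\wt{G}) = \Hom(\pi^*\pi_*\wt{F},\wt{G})$ applied to the counit triangle $\wt{C}\to\pi^*\pi_*\wt{F}\to\wt{F}$ with $\wt{C}\in\Ker\pi_*$, after which standard Verdier-quotient theory descends the SOD of~$\tX$ (which is compatible with $\Ker\pi_*\cap\Db(\tX)$ by the orthogonal decomposition above) to the stated SOD $\cA_k\simeq\tcA_k/\wt{\mathcal{K}}_k$ of~$\Db(X)$. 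Finite cohomological amplitude of the resulting projection functors is inherited from that of $\pi_*$ (which has relative dimension at most~$1$) together with that of the $\tX$-projections. For~(ii), Lemma~\ref{lemma:twisted-compatible} shows that crepancy of $\pi$ along $D_j$ for $j>k$ makes the Serre-duality rotation preserve $\pi$-compatibility; descending the rotated SOD via~(i) puts $\cA_k$ at the extreme position of a new SOD of~$\Db(X)$, exhibiting it as right admissible, and the left case is symmetric.

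For~(iii), the key input is Lemma~\ref{lemma:descent}(2): perfectness of $F$ makes $\pi^*F$ right-orthogonal to every $\OO_E(-1)$, hence to $\Ker\pi_*\cap\Db(\tX)$. Decompose $\pi^*F = (\alpha_1\pi^*F,\dots,\alpha_n\pi^*F)$ via the SOD of~$\tX$; pushing forward recovers the $\Db(X)$-decomposition of $F$ from~(i), so $\pi_*\alpha_k\pi^*F=0$ for $k\ne i$ and hence each $\alpha_k\pi^*F\in\wt{\mathcal{K}}_k$. To upgrade the $k<i$ statement to $\alpha_k\pi^*F=0$, apply $\Hom(\pi^*F_{<i},-)$ to the projection triangle $\pi^*F_{<i}\to\pi^*F\to\pi^*F_{\ge i}$: both neighbouring $\Hom$-groups vanish (by SOD semiorthogonality on one side, by right-orthogonality of $\pi^*F$ to $\Ker\pi_*$ on the other), sandwiching $\End(\pi^*F_{<i})$ between zeros and forcing $\pi^*F_{<i}=0$. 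For~(iv), crepancy along $D_k$ gives $\omega_{\tX}\vert_E=\OO_E$ for $E\subset D_k$, and a Serre-duality computation then shows $\Hom(\OO_E(-1),\pi^*F)=0$ as well, yielding full two-sided orthogonality of $\pi^*F$ with $\wt{\mathcal{K}}_k$; repeating the sandwich argument with $\pi^*F_{>i}$ forces its vanishing, so $\pi^*F\in\tcA_i$. For the SOD of $\Dperf(X)$ under crepancy along $D_j$ for $j\ge 2$, smoothness of~$\tX$ makes each $\alpha_k\pi^*F$ automatically perfect on~$\tX$, and Lemma~\ref{lemma:descent} together with the orthogonality established under crepancy transfers perfectness to the projections $\beta_kF\in\cA_k^\perf$. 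The main obstacle I anticipate is the $\Hom$ computation underpinning the Verdier-quotient equivalence in~(i): matching the decomposition of the counit-triangle cone against the SOD to extract semiorthogonality of the $\cA_k$ in $\Db(X)$ is where the compatibility assumption is most essentially used.
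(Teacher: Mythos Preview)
Your approach via the Verdier quotient $\Db(\tX)/(\Ker\pi_*\cap\Db(\tX))\simeq\Db(X)$ is genuinely different from the paper's, which instead first passes to $\Dm(\tX)$ and $\Dm(X)$ (where the full adjunction $(\pi^*,\pi_*)$ and the decomposition~\eqref{eq:dbtx-dbx} are available), establishes the SOD there with explicit projection functors $\alpha_i=\pi_*\circ\tilde\alpha_i\circ\pi^*$, and only afterwards restricts to the bounded category using truncations.

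The main gap in your argument is the one you anticipated, but it is more serious than your sketch suggests. The counit triangle $\wt C\to\pi^*\pi_*\wt F\to\wt F$ lives in $\Dm(\tX)$, not $\Db(\tX)$: since $\pi^*$ does not preserve boundedness (only perfectness), the object $\pi^*\pi_*\wt F$ is in general unbounded below, so $\wt C$ lies in $\Ker\pi_*$ but not in $\Ker\pi_*\cap\Db(\tX)$. Hence your $\Hom$-computation does not, on its own, identify morphisms in $\Db(X)$ with morphisms in the \emph{bounded} Verdier quotient; one still needs the Verdier criterion (that every morphism from an object of $\Ker\pi_*\cap\Dm(\tX)$ into a bounded object factors through an object of $\Ker\pi_*\cap\Db(\tX)$), and this is exactly how the paper closes the gap in Proposition~\ref{proposition:sod-dbx}$(iii)$. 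The same unboundedness of $\pi^*$ breaks your inheritance argument for finite cohomological amplitude: the composition $\pi_*\circ\tilde\alpha_i\circ\pi^*$ is not a composition of functors of finite amplitude because $\pi^*$ is not one; the paper repairs this via the truncation trick of Lemma~\ref{lemma:pis-tau}, replacing $\pi^*\cG$ by $\tau^{\ge k_--1}\pi^*\cG$ before applying~$\tilde\alpha_i$ (see~\eqref{eq:alpha-cf}).

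Your sandwich arguments for~$(iii)$ and~$(iv)$ are more direct than the paper's route through Proposition~\ref{proposition:sod-dbx-}, but in~$(iii)$ the $\Hom$-functor must be applied contravariantly: applying $\Hom(-,\pi^*F_{<i})$ to the triangle gives $\Hom(\pi^*F_{\ge i},\pi^*F_{<i})=0$ by the SOD orientation and $\Hom(\pi^*F,\pi^*F_{<i})=0$ since $\pi^*F\in{}^\perp(\Ker\pi_*)$ by adjunction, sandwiching $\End(\pi^*F_{<i})$ between zeros. The covariant direction you wrote fails on both sides, because adjunction only gives $\Hom(\pi^*F,\Ker\pi_*)=0$, not $\Hom(\Ker\pi_*,\pi^*F)=0$, and the SOD kills $\Hom$ from higher to lower components, not the reverse.
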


\begin{remark}
In fact, a semiorthogonal decomposition of the category $\Db(X)$ for a projective scheme~$X$
always induces \emph{some} semiorthogonal decomposition of $\Dperf(X)$,
see Theorem~\ref{theorem:efimov}.
However, we are mostly interested in the case when the intersections of the components of $\Db(X)$ with $\Dperf(X)$
form a semiorthogonal decomposition; this is why we state a criterion for this.
\end{remark}

The proof of Theorem~\ref{theorem:dbx} takes Sections~\ref{subsection:dm-decomposition} and~\ref{subsection:db-decomposition}.
This proof is rather technical, and the reader not interested in technicalities can easily bypass it
and go directly to Section~\ref{section:components}.
Before we give the proof,
let us illustrate the theorem in an example,
at the same time motivating results in the further sections.

\begin{example}\label{ex:one-sing}
Let $X$ be a projective surface satisfying~\eqref{eq:pg-q-x} with a single cyclic quotient singularity of
type $\frac1{d}(1,1)$ (see Section~\ref{subsection:cyclic-singularity} for a discussion of cyclic quotient singularities),
and let $\pi \colon \wt{X} \to X$ be its minimal resolution, so that~$\pi$ contracts a
smooth rational curve $E \subset \wt{X}$
with self-intersection~\mbox{$-d$}.
Assume that there exists a line bundle $\cL \in \Pic(\wt{X})$ such that $\cL \cdot E = d - 1$
(this holds if and only if~$E$ is primitive in~$\Pic(\tX)$).
Under these assumptions $(\cL,\cL(E))$ is an exceptional pair on~$\wt{X}$.
Then we have a semiorthogonal decomposition
\begin{equation*}
\Db(\wt{X}) = \langle \tcA_1, \tcA_2 \rangle,
\qquad\text{where}\qquad
\tcA_1 = \langle \cL, \cL(E) \rangle
\text{\quad and\quad }
\tcA_2 = {}^\perp \tcA_1.
\end{equation*}
It is easy to see that $\OO_{E}(-1)$ is isomorphic to the cone of a morphism $\cL \to \cL(E)$,
hence it belongs to~$\tcA_1$, so that the semiorthogonal decomposition is compatible with~$\pi$.
In this case Theorem~\ref{theorem:dbx} gives a semiorthogonal decomposition
\begin{equation*}
\Db(X) = \langle \cA_1, \cA_2 \rangle
\end{equation*}
where $\cA_1 = \pi_*(\tcA_1) = \langle \pi_*(\cL) \rangle {} = \langle \pi_*(\cL(E)) \rangle$
(however, $\pi_*(\cL)$ is not exceptional)
and~$\cA_2 = \pi_*(\tcA_2)$.
Furthermore there is an induced decomposition for $\Dperf(X)$.

In Section~\ref{section:components} we
show that the above category~$\tcA_1$ is the simplest example of a category \emph{adherent} to $E$ and
explain how to describe the constructed category~$\cA_1$ explicitly.
In Section~\ref{section:reflexive} we study under which conditions is $\pi_*(\cL)$ a reflexive sheaf on $X$.

In Section~\ref{section:brauer} we show that
primitivity of $E$, i.e., the existence of a line bundle~$\cL$ as above,
is in general controlled by the Brauer group~$\Br(X)$ of the surface~$X$
(this follows from Remark~\ref{remark:brpic-general}).
See also~\cite[Example 5 in Chapter 4]{Srinivas} for an example of a projective rational surface with a single
$\frac14{(1,1)}$-point and the exceptional curve $E$ of the minimal resolution divisible by~2
(in this case $\Br(X) \cong \Z/(2)$).
Note also that if~$d$ is square-free, $E$ is always primitive.
\end{example}

\subsection{Decomposition of the bounded from above category}
\label{subsection:dm-decomposition}

We keep the assumptions and notation from the previous subsection.
Moreover, we assume that $X$ is projective and fix a semiorthogonal decomposition~\eqref{eq:dbtx} compatible with $\pi$.
Recall the decomposition~\eqref{eq:decomposition-d} of the exceptional divisor $D$ of $\pi$
with components $D_i$ defined by~\eqref{eq:def-di}.
For each $D_i$ we denote by~$E_{i,1}$, \dots, $E_{i,m_i}$ its irreducible components, so that
\begin{equation}
\label{eq:decomposition-di}
D_i = E_{i,1} \cup \dots \cup E_{i,m_i}.
\end{equation}
Recall that by definition of $D_i$ we have
\begin{equation}
\label{eq:oe-ai-other}
\cO_{E_{i,p}}(-1) \in \tcA_i
\qquad
\text{for each $1 \le p \le m_i$}.
\end{equation}
Note that a combination of~\eqref{eq:decomposition-d} and~\eqref{eq:decomposition-di} shows that
any irreducible component of the exceptional divisor of $\pi$ is equal to one of $E_{i,p}$.

Since $\tX$ is smooth and projective, every component $\tcA_i$ of $\Db(\tX)$ is admissible,
hence~\eqref{eq:dbtx} is a strong semiorthogonal decomposition in the sense of~\cite[Definition~2.6]{kuznetsov2011base}.
Therefore, by~\cite[Proposition~4.3]{kuznetsov2011base} it extends to a semiorthogonal decomposition
of the bounded from above derived category
\begin{equation}
\label{eq:dbtx-}
\Dm(\tX) = \langle \tcA_1^-, \tcA_2^-, \dots, \tcA_n^- \rangle,
\end{equation}
where $\tcA_i \subset \tcA_i^-$, $\tcA_i^-$ is closed under arbitrary direct sums that exist in~$\Dm(\tX)$,
and~$\tcA_i = \tcA_i^- \cap \Db(\wt{X})$.
We define a sequence of subcategories $\cA_i^- \subset \Dm(X)$ by
\begin{equation}
\label{eq:def-caim}
\cA_i^- = \pi_*(\tcA_i^-).
\end{equation}
In Proposition~\ref{proposition:sod-dbx-} below we will show that these subcategories
are triangulated and form a semiorthogonal decomposition of $\Dm(X)$.
After that, in Proposition~\ref{proposition:sod-dbx} we will check that this decomposition induces a decomposition of $\Db(X)$.

We start with a lemma that describes the intersections of the categories $\tcA_i^-$
with the kernel category of the pushforward functor $\pi_*$.

\begin{lemma}
\label{lemma:ker-pis}
{$(i)$} For each $1 \le k \le n$ we have
\begin{equation}
\label{eq:tcaim-kerpi}
\tcA_k^- \cap \Ker \pi_* = \langle \cO_{E_{k,1}}(-1), \dots, \cO_{E_{k,m_k}}(-1) \rangle^\oplus,
\end{equation}
where $\langle - \rangle^\oplus$ denotes the minimal triangulated subcategory
closed under arbitrary direct sums that exist in $\Dm(\tX)$.

\noindent{$(ii)$} For any $\cF \in \Ker\pi_* \subset \Dm(\tX)$ there is a canonical direct sum decomposition
\begin{equation*}
\cF = \bigoplus_{i=1}^n \cF_i,
\qquad \text{where} \quad
\cF_i \in \tcA_i^- \cap \Ker \pi_*.
\end{equation*}
\noindent{$(iii)$} We have a semiorthogonality
\begin{equation}
\label{eq:semiorthogonality-tcai-tcaj-ker}
\Ext^\bullet(\tcA_i^-,\tcA_j^- \cap \Ker \pi_*) = 0
\end{equation}
either if
$i < j$ and $\pi$ is crepant along $D_j$, or if $i > j$.
\end{lemma}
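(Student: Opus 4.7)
For part (i), the inclusion $\supseteq$ is immediate: compatibility places each $\cO_{E_{k,p}}(-1)$ in $\tcA_k$, while $\pi_* \cO_{E_{k,p}}(-1) = 0$ since $E_{k,p} \cong \P^1$ and $\cO(-1)$ has no cohomology, so the generators lie in $\tcA_k^- \cap \Ker \pi_*$, which is closed under cones and arbitrary direct sums. For the reverse inclusion and for part~(ii) simultaneously, my plan is to start with any $\cF \in \Ker \pi_* \subset \Dm(\tX)$ and invoke Lemma~\ref{lemma:ker-pi}, which says that each $\cH^j(\cF)$ is an iterated extension of sheaves $\cO_E(-1)$ for $E$ an irreducible exceptional component. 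Lemma~\ref{lemma:e-e-orthogonal} guarantees that the loci $D_1,\dots,D_n$ are pairwise disjoint, so each $\cH^j(\cF)$ splits canonically as a direct sum indexed by $i$ according to which $D_i$ supports its constituents. Because coherent sheaves with disjoint supports are completely $\Ext$-orthogonal, this decomposition lifts from the cohomology sheaves to the complex itself, giving $\cF \cong \bigoplus_i \cF_i$ with each $\cF_i$ built only from the sheaves $\cO_E(-1)$ for $E \subset D_i$; this proves part~(ii).

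To finish part~(i), observe that $\cF_i$ lies in $\tcA_i^-$, since the generators $\cO_E(-1)$ with $E\subset D_i$ lie in $\tcA_i$ and $\tcA_i^-$ is closed under cones and direct sums. Thus the decomposition $\cF=\bigoplus \cF_i$ is the canonical decomposition with respect to the extended semiorthogonal decomposition~\eqref{eq:dbtx-}. If moreover $\cF \in \tcA_k^-$, uniqueness of the canonical decomposition forces $\cF_i = 0$ for $i \ne k$, leaving $\cF = \cF_k \in \langle \cO_{E_{k,1}}(-1),\dots,\cO_{E_{k,m_k}}(-1) \rangle^\oplus$.

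For part~(iii), the case $i > j$ is just the semiorthogonality of~\eqref{eq:dbtx-}. The interesting case is $i < j$ with $\pi$ crepant along $D_j$, which I would handle by rotating the SOD. Namely, apply Lemma~\ref{lemma:twisted-compatible} with $k = i$ to obtain
\[
\Db(\tX) = \langle \tcA_{i+1}(K_\tX), \dots, \tcA_n(K_\tX), \tcA_1, \dots, \tcA_i \rangle
\]
and extend it to $\Dm(\tX)$ by~\cite[Proposition~4.3]{kuznetsov2011base}. Now crepancy along $D_j$ means $\omega_\tX\vert_E \cong \cO_E$ for every irreducible $E \subset D_j$, so that $\cO_E(-1) \otimes \omega_\tX \cong \cO_E(-1)$; together with $\cO_E(-1) \in \tcA_j$ this places $\cO_E(-1)$ inside $\tcA_j(K_\tX) \subset \tcA_j^-(K_\tX)$, which in the rotated $\Dm$-SOD lies strictly to the left of $\tcA_i^-$. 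Semiorthogonality then yields $\Ext^\bullet(\tcA_i^-,\cO_E(-1))=0$, and since $\Ext^\bullet(\tcA_i^-,-)$ converts cones to long exact sequences and direct sums in $\Dm(\tX)$ to products, part~(i) lets this vanishing propagate to all of $\tcA_j^- \cap \Ker \pi_*$. The main technical obstacle is precisely that naive Serre duality is formulated for bounded complexes, whereas $\tcA_i^-$ can contain objects unbounded below; rotating the bounded SOD first and then extending to $\Dm$, rather than invoking Serre duality directly on $\tcA_i^-$, is what makes the argument go through cleanly.
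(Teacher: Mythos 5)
Parts (i) and (ii) of your argument are essentially the paper's proof: the canonical splitting of an object of $\Ker\pi_*$ over the pairwise disjoint components $D_1,\dots,D_n$ of its support, the observation that the generators $\cO_{E_{k,p}}(-1)$ lie in $\tcA_k^-\cap\Ker\pi_*$ and that this intersection is closed under cones and the direct sums existing in $\Dm(\tX)$, and the use of semiorthogonality of~\eqref{eq:dbtx-} to kill the summands $\cF_i$ with $i\ne k$ are exactly the steps used there. The case $i>j$ of (iii) and the idea of rotating via Lemma~\ref{lemma:twisted-compatible} with $k=i$ and extending to $\Dm(\tX)$ also match the paper.

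The gap is in the final step of (iii). Having obtained $\Ext^\bullet(\tcA_i^-,\cO_E(-1))=0$ for each component $E\subset D_j$, you propagate the vanishing to all of $\tcA_j^-\cap\Ker\pi_*$ by asserting that $\Hom(\cF,-)$, for $\cF\in\tcA_i^-$, takes direct sums in $\Dm(\tX)$ to products. This is not true: $\Hom$ out of a fixed object takes products to products, and it commutes with coproducts only when the source is compact; objects of $\tcA_i^-$ are in general unbounded below, hence not perfect and not compact, and vanishing of all $\Hom^\bullet(\cF,G_\alpha)$ gives no formal control of $\Hom^\bullet(\cF,\bigoplus G_\alpha)$ (a map into an infinite coproduct is not detected by its projections to the summands). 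So the vanishing against the generators does not automatically pass through the $\langle-\rangle^\oplus$-closure. The repair is to avoid propagating a Hom-vanishing at all, which is what the paper does: crepancy gives $\cO_E(-1)\otimes\cO_\tX(K_\tX)\cong\cO_E(-1)$ for every $E\subset D_j$, so by part (i) any $\cF'\in\tcA_j^-\cap\Ker\pi_*$ satisfies $\cF'(-K_\tX)\in\langle\cO_{E_{j,1}}(-1),\dots,\cO_{E_{j,m_j}}(-1)\rangle^\oplus\subset\tcA_j^-$, i.e.\ $\cF'\in\tcA_j^-(K_\tX)$, which is a component of the $\Dm$-extension (via \cite[Proposition~4.3]{kuznetsov2011base}) of the rotated decomposition~\eqref{eq:dbtx-ktx} lying to the left of $\tcA_i^-$. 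The required vanishing is then the semiorthogonality of two whole components of that extended decomposition, and no commutation of $\Hom$ with infinite direct sums is needed.
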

\begin{proof}
Let $\cF \in \Ker \pi_* \subset \Dm(\tX)$.
By Lemma~\ref{lemma:ker-pi}
every cohomology sheaf $\cH^t(\cF)$ is an iterated extension of sheaves $\cO_{E_{i,p}}(-1)$, where \mbox{$1 \le i \le n$} and~\mbox{$1 \le p \le m_i$}.
In particular,
\begin{equation*}
\Supp(\cF) {}\subset{} \bigcup E_{i,p} = D = D_1 \sqcup \dots \sqcup D_n.
\end{equation*}
We denote by $\cF_i$ the summand of $\cF$ supported on $D_i$.
Then $\cF = \oplus \cF_i$, and the summands are completely orthogonal.
Moreover, the summand $\cF_i$ is an iterated extension of shifts of sheaves~$\cO_{E_{i,p}}(-1)$,
where \mbox{$1 \le p \le m_i$}, so in view of~\eqref{eq:oe-ai-other} we have~\mbox{$\cF_i \in \tcA_i^- \cap \Ker \pi_*$}.
This proves~$(ii)$.

Since $\pi_*$ commutes with infinite direct sums,
we have
\begin{equation*}
\langle \cO_{E_{i,p}}(-1) \rangle^\oplus_{1 \le p \le m_i} \subset \tcA^{-}_k \cap \Ker(\pi_*).
\end{equation*}
Conversely, if $\cF \in \tcA_i^- \cap \Ker \pi_*$ then in the direct sum decomposition of part~$(ii)$
all summands of $\cF$ distinct from $\cF_i$ vanish (because of semiorthogonality of~\eqref{eq:dbtx-}).
Thus~\mbox{$\cF = \cF_i$} and from the above argument we conclude that $\cF$ is an iterated extension of sheaves~$\cO_{E_{i,p}}(-1)$,
where \mbox{$1 \le p \le m_i$},
hence belongs to $\langle \cO_{E_{i,p}}(-1) \rangle^\oplus_{1 \le p \le m_i}$.
This proves~$(i)$.

Finally, let us prove~$(iii)$.
If $i > j$, semiorthogonality of $\tcA_i^-$ and $\tcA_j^- \cap \Ker\pi_*$ follows from semiorthogonality of~\eqref{eq:dbtx-}.
So, let $i < j$, $\cF \in \tcA_i^-$, and $\cF' \in \tcA_{j}^- \cap \Ker \pi_*$.
Using~\eqref{eq:tcaim-kerpi} for $k = j$ and crepancy of $\pi$ along $D_j$, we deduce
\begin{equation*}
\cF' \cong \cF'(K_\tX) \in \tcA_j^-(K_\tX).
\end{equation*}
Therefore the required vanishing $\Ext^\bullet(\cF,\cF') = 0$ follows
from semiorthogonality of the decomposition of~$\Dm(\tX)$ obtained from the first line in~\eqref{eq:dbtx-ktx} with $k = i$
by an application of~\cite[Proposition~4.3]{kuznetsov2011base}.
\end{proof}

Denote by $\tilde\alpha_i\colon \Dm(\tX) \to \Dm(\tX)$
the projection functors of the decomposition~\eqref{eq:dbtx-};
so that the essential image of each $\tilde\alpha_i$ is $\tcA_i^- \subset \Dm(\tX)$.

\begin{remark}
\label{remark:tilde-alpha}
By~\cite[Proposition~4.3 and Lemma~3.1]{kuznetsov2011base} the projection functors of~\eqref{eq:dbtx}
are given by the restrictions of $\tilde\alpha_i$ to $\Db(\tX)$.
In particular, the functors $\tilde\alpha_i$ preserve boundedness.
\end{remark}

\begin{proposition}
\label{proposition:sod-dbx-}
{$(i)$} The subcategories $\cA_i^- \subset \Dm(X)$ defined by~\eqref{eq:def-caim} are triangulated and
form a semiorthogonal decomposition
\begin{equation*}
\Dm(X) = \langle \cA_1^-, \cA_2^-, \dots, \cA_n^- \rangle
\end{equation*}
with projection functors given by
\begin{equation}
\label{eq:alpha}
\alpha_i := \pi_* \circ \tilde\alpha_i \circ \pi^*.
\end{equation}
In particular, $\alpha_i\vert_{\cA_i^-} \cong \id_{\cA_i^-}$.

\noindent{$(ii)$}
For each $1 \le i \le n$ we have
\begin{equation}
\label{eq:pis-ai}
\pi^*(\cA_i^-) \subset \langle \tcA_i^-, \tcA_{i+1}^- \cap \Ker \pi_*, \dots, \tcA_n^- \cap \Ker \pi_* \rangle.
\end{equation}

\noindent{$(iii)$}
If
$\pi$ is crepant along $D_j$ for all $j > i$,
a stronger property~$\pi^*(\cA_i^-) \subset \tcA_i^-$ is fulfilled.
\end{proposition}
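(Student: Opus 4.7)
The plan is to prove (ii) first, then bootstrap it to get both (i) and (iii). For $\cG \in \cA_i^- = \pi_*(\tcA_i^-)$ I choose the canonical lift $\tilde F := \tilde\alpha_i(\pi^*\cG) \in \tcA_i^-$ and apply the SOD~\eqref{eq:dbtx-dbx} to $\tilde F$, obtaining a triangle $\pi^*\pi_*\tilde F \to \tilde F \to C_{\tilde F}$ with $C_{\tilde F} \in \Ker\pi_*$. A short calculation using Lemma~\ref{lemma:ker-pis}(ii) to split $\Ker\pi_*$ shows $\pi_*\tilde F = \cG$, so $\pi^*\pi_*\tilde F = \pi^*\cG$; the same lemma decomposes $C_{\tilde F} = \bigoplus_k C_{\tilde F,k}$ with $C_{\tilde F,k} \in \tcA_k^- \cap \Ker\pi_*$, and applying $\tilde\alpha_k$ to the triangle identifies $\tilde\alpha_k\pi^*\cG \cong C_{\tilde F,k}[-1]$ for every $k \ne i$.

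The heart of (ii) is to force $C_{\tilde F, k} = 0$ whenever $k < i$. By the unconditional ``$i > k$'' case of Lemma~\ref{lemma:ker-pis}(iii), $\Hom^\bullet(\tilde F, C_{\tilde F, k}) = 0$ for $k < i$, so the corresponding component of the map $\tilde F \to C_{\tilde F}$ vanishes. A standard triangulated-categorical splitting observation --- if $B \to C_1 \oplus C_2$ in a distinguished triangle $A \to B \to C_1 \oplus C_2$ has zero component onto $C_1$, then $A \cong A' \oplus C_1[-1]$ --- applied summand by summand gives
\begin{equation*}
\pi^*\cG \;\cong\; A' \oplus \bigoplus_{k < i} C_{\tilde F, k}[-1].
\end{equation*}
But the second summand lies in $\Ker\pi_*$ while $\pi^*\cG \in \pi^*\Dm(X)$; the admissible subcategory $\pi^*\Dm(X)$ is closed under direct summands, and $\pi^*\Dm(X) \cap \Ker\pi_* = 0$ (immediate from $\pi_*\pi^* = \id$). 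Hence $C_{\tilde F, k} = 0$ for $k < i$, proving~(ii). Part~(iii) follows by the identical argument, using the ``crepant'' clause of Lemma~\ref{lemma:ker-pis}(iii) in place of its unconditional clause to kill $C_{\tilde F, k}$ also for $k > i$, leaving $\pi^*\cG \in \tcA_i^-$.

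For (i), generation of $\Dm(X)$ by the $\cA_i^-$ and the identity $\alpha_i|_{\cA_i^-} \cong \id$ follow from pushing the SOD tower of $\pi^*\cG$ down along $\pi_*$, since Lemma~\ref{lemma:ker-pis}(ii) makes the off-diagonal pieces land in $\Ker\pi_*$ and thus die under $\pi_*$. With (ii) in hand, semiorthogonality $\Hom^\bullet(\cA_i^-, \cA_j^-) = 0$ for $i > j$ is immediate: by adjunction $\Hom^\bullet(\cG_1, \cG_2) = \Hom^\bullet(\pi^*\cG_1, \tilde F_2)$ for any lift $\tilde F_2 \in \tcA_j^-$ of $\cG_2$, and (ii) places $\pi^*\cG_1$ in the subcategory generated by the $\tcA_l^-$ with $l \ge i > j$, each of which is semiorthogonal to $\tcA_j^- \ni \tilde F_2$ in $\Dm(\tX)$. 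That $\cA_i^-$ is triangulated then follows from its identification with the essential image of the exact functor $\alpha_i$, equivalently with the Verdier quotient $\tcA_i^-/(\tcA_i^- \cap \Ker\pi_*)$.

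The main technical obstacle I anticipate is the splitting-of-summands step used in (ii) and (iii), together with the verification $\pi^*\Dm(X) \cap \Ker\pi_* = 0$: once these are in place the rest is routine triangulated bookkeeping, with rational singularities, the SOD compatibility, and the optional crepancy hypothesis entering exactly where one expects.
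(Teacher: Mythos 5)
Your argument is essentially the paper's: you use the decomposition triangle of~\eqref{eq:dbtx-dbx}, split its third vertex via Lemma~\ref{lemma:ker-pis}$(ii)$, kill the components with $k<i$ (resp.\ $k\ne i$ under crepancy) using the semiorthogonality~\eqref{eq:semiorthogonality-tcai-tcaj-ker}, and then obtain $(i)$ by pushing the filtration of $\pi^*\cG$ forward and using adjunction for semiorthogonality. The only cosmetic difference is that you work with the specific lift $\tilde\alpha_i(\pi^*\cG)$ while the paper works with an arbitrary $\cF\in\tcA_i^-$ and the triangle $\pi^*\pi_*\cF\to\cF\to\cF'$, and that you split the triangle as a direct sum where the paper argues that a summand of $\cF'$ receiving no maps must vanish; these are equivalent. (Incidentally, your summand-killing step can be done even more directly: a direct summand $S\in\Ker\pi_*$ of $\pi^*\cG$ satisfies $\Hom(\pi^*\cG,S)=\Hom(\cG,\pi_*S)=0$, so the split projection onto $S$ vanishes and $S=0$; this avoids invoking closure of $\pi^*(\Dm(X))$ under summands, although that closure is also true.)

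The one weak point is your justification that $\cA_i^-$ is triangulated. The essential image of an exact functor is \emph{not} in general a triangulated subcategory, so "essential image of $\alpha_i$" is not by itself a reason; and the identification of $\cA_i^-$ with the Verdier quotient $\tcA_i^-/(\tcA_i^-\cap\Ker\pi_*)$ is not free at this stage --- it amounts to the full faithfulness of $\pi_*$ on $\tcA_i^-$ modulo the kernel, which in the paper is only established afterwards (Corollary~\ref{cor:ap-p-adjunction} and Proposition~\ref{proposition:sod-dbx}$(iii)$, via the adjunction and Verdier's criterion). The fix is immediate from what you have already proved: $\alpha_i$ is exact, its image lies in $\cA_i^-$ by~\eqref{eq:def-caim}, and $\alpha_i\vert_{\cA_i^-}\cong\id$, so the cone of any morphism in $\cA_i^-$ is isomorphic to its image under $\alpha_i$ and hence lies in $\cA_i^-$; this is exactly the paper's argument, and with it your proof is complete.
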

\begin{proof}
Let us prove~\eqref{eq:pis-ai}.
First, take any object $\cF \in \tcA_i^-$ and define $\cF'$ from the standard triangle
\begin{equation*}
\pi^*(\pi_*\cF) \to \cF \to \cF'.
\end{equation*}
Then by~\eqref{eq:pi-pi} we have $\cF' \in \Ker\pi_*$.
By Lemma~\ref{lemma:ker-pis}$(ii)$ we have $\cF' = \oplus \cF'_j$ with
\begin{equation*}
\cF'_j \in \tcA_j^- \cap \Ker \pi_*.
\end{equation*}
If~$j < i$ we have~$\Ext^\bullet(\cF,\cF'_j) = 0$ by~\eqref{eq:semiorthogonality-tcai-tcaj-ker}.
On the other hand, by adjunction we have~$\Ext^\bullet(\pi^*(\pi_*\cF),\cF'_j) = 0$ since $\cF'_j \in \Ker  \pi_*$.
Combining these vanishings with the above triangle we deduce~$\Hom(\cF',\cF'_j) = 0$ for $j < i$.
Therefore,~$\cF'_j = 0$ for $j < i$ since it is a direct summand of~$\cF'$.

This proves that $\cF' \in \langle \tcA_i^- \cap \Ker \pi_*, \tcA_{i+1}^- \cap \Ker \pi_*, \dots, \tcA_n^- \cap \Ker \pi_* \rangle$,
and from the triangle we conclude that
$\pi^*(\pi_*\cF) \in \langle \tcA_i^-, \tcA_{i+1}^- \cap \Ker \pi_*, \dots, \tcA_n^- \cap \Ker \pi_* \rangle$.
Since~$\cA_i^-$ is formed by the pushforwards $\pi_*\cF$ for $\cF \in \tcA_i^-$, this proves~\eqref{eq:pis-ai}
and part $(ii)$.

Further,
if $\pi$ is crepant along $D_j$ for $j > i$, then $\Ext^\bullet(\cF,\cF'_j) = 0$ for all $j \ne i$
by~\eqref{eq:semiorthogonality-tcai-tcaj-ker},
hence the above argument shows that $\cF'_j = 0$ for all $j \ne i$.
Thus, in this case we have $\cF' = \cF'_i \in \tcA_i^- \cap \Ker \pi_*$, hence also $\pi^*(\pi_*\cF) \in \tcA_i^-$,
and arguing as above we conclude that $\pi^*(\cA_i^-) \subset \tcA_i^-$.
This proves part $(iii)$.

To prove $(i)$ we take $\cG \in \cA_i^-$.
It follows from~\eqref{eq:pis-ai} that there is a distinguished triangle
\begin{equation*}
\cG' \to \pi^*\cG \to \tilde\alpha_i(\pi^*\cG)
\end{equation*}
with $\cG' \in \langle \tcA_{i+1}^- \cap \Ker \pi_*, \dots, \tcA_n^- \cap \Ker \pi_* \rangle \subset \Ker \pi_*$.
Applying~$\pi_*$ to the triangle we obtain
\begin{equation*}
\cG \cong \pi_*(\pi^*\cG) \cong \pi_*(\tilde\alpha_i(\pi^*\cG)) = \alpha_i(\cG).
\end{equation*}
Thus, the functor~\eqref{eq:alpha} is isomorphic to the identity functor when restricted to $\cA_i^-$.
Since the functor $\alpha_i$ is triangulated, identity on $\cA_i^-$,
and by~\eqref{eq:def-caim} the image of~$\alpha_i$ is contained in $\cA_i^-$,
it follows that the subcategory~$\cA_i^- \subset \Dm(X)$ is triangulated as well.

Next, note that for $\cF \in \cA_i^-$ and $\cG \in \tcA_j^-$ with $i > j$ we have
\begin{equation*}
\Hom(\cF,\pi_*\cG) = \Hom(\pi^*\cF,\cG) = 0
\end{equation*}
by~\eqref{eq:pis-ai}.
Since $\cA_j^-$ is formed by the pushforwards $\pi_*\cG$ for $\cG \in \tcA_j^-$,
this proves that the subcategories $\cA_i^-$ and $\cA_j^-$ are semiorthogonal for $i > j$.

Finally, take any $\cF \in \Dm(X)$.
Then $\tcF = \pi^*\cF \in \Dm(\tX)$, so we can decompose it with respect to~\eqref{eq:dbtx-}.
This means that there is a chain of maps
\begin{equation*}
0 = \tcF_n \rightarrow \dots \rightarrow \tcF_2 \rightarrow \tcF_1 \rightarrow \tcF_0 = \tcF,
\end{equation*}
whose cones are $\tilde\alpha_i(\tcF) \in \tcA_i^-$.
Pushing this forward to $X$, we obtain a chain of maps
\begin{equation*}
0 = \pi_*(\tcF_n) \rightarrow \dots \rightarrow \pi_*(\tcF_2) \rightarrow \pi_*(\tcF_1) \rightarrow \pi_*(\tcF_0) = \pi_*(\tcF) \cong \cF,
\end{equation*}
whose cones are $\pi_*(\tilde\alpha_i(\tcF)) \cong \pi_*(\tilde\alpha_i(\pi^*\cF)) = \alpha_i(\cF) \in \cA_i^-$.
This proves the semiorthogonal decomposition and shows that its projection functors are given by~$\alpha_i$.
\end{proof}

\begin{corollary}
\label{cor:ap-p-adjunction}
The functor $\tilde\alpha_i \circ \pi^* \colon \cA_i^- \to \tcA_i^-$ is fully faithful
and is left adjoint to the functor $\pi_* \colon \tcA_i^- \to \cA_i^-$.
Moreover, we have a semiorthogonal decomposition
\begin{equation}
\label{eq:semiorth-A-minus}
\tcA_i^{-} = \langle \tcA_i^{-} \cap \Ker \pi_*,
\tilde\alpha_i(\pi^*(\cA_i^{-}) \rangle
\end{equation}
and in particular
\begin{equation}
\label{eq:image-ap}
\tilde\alpha_i(\pi^*(\cA_i^-)) = {}^\perp\langle \cO_{E_{i,1}}(-1), \dots, \cO_{E_{i,m_i}}(-1) \rangle \subset \tcA_i^-.
\end{equation}
\end{corollary}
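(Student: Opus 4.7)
The plan is to derive everything formally from Proposition~\ref{proposition:sod-dbx-}$(i)$ and Lemma~\ref{lemma:ker-pis}$(i)$ via standard adjunction manipulations; the proof is mostly bookkeeping once the adjunction is in place.

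First I would establish the adjunction by composing the two ambient adjunctions: the global $\pi^* \dashv \pi_*$ between $\Dm(X)$ and $\Dm(\tX)$, and $\tilde\alpha_i$ regarded as the left adjoint of the inclusion $\tcA_i^- \hookrightarrow \Dm(\tX)$ arising from the sod~\eqref{eq:dbtx-}. For $\cG \in \cA_i^-$ and $\tcF \in \tcA_i^-$ this gives
\[
\Hom_{\tcA_i^-}(\tilde\alpha_i \pi^* \cG, \tcF) \cong \Hom_{\Dm(\tX)}(\pi^* \cG, \tcF) \cong \Hom_{\Dm(X)}(\cG, \pi_* \tcF).
\]
Full faithfulness of $\tilde\alpha_i \circ \pi^*$ on $\cA_i^-$ is then equivalent to the unit $\cG \to \pi_*(\tilde\alpha_i \pi^* \cG) = \alpha_i(\cG)$ being an isomorphism, which is precisely Proposition~\ref{proposition:sod-dbx-}$(i)$.

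Next I would construct the semiorthogonal decomposition. Semiorthogonality in the stated order is immediate from the adjunction: for $\cG \in \cA_i^-$ and $K \in \tcA_i^- \cap \Ker \pi_*$,
\[
\Hom(\tilde\alpha_i \pi^* \cG, K) \cong \Hom(\cG, \pi_* K) = 0.
\]
The filtration triangle is produced by the counit: for any $\tcF \in \tcA_i^-$ the counit triangle
\[
\tilde\alpha_i \pi^* \pi_* \tcF \xrightarrow{\epsilon_\tcF} \tcF \to C
\]
has $C \in \tcA_i^-$ automatically, and applying $\pi_*$ produces a triangle whose first arrow identifies with $\alpha_i(\pi_* \tcF) \to \pi_* \tcF$; by Proposition~\ref{proposition:sod-dbx-}$(i)$ this is the identity, so $\pi_* C = 0$ and $C \in \tcA_i^- \cap \Ker \pi_*$. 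This is the decomposition triangle for $\langle \tcA_i^- \cap \Ker \pi_*,\, \tilde\alpha_i(\pi^*(\cA_i^-)) \rangle$ under the convention in which the right component of an sod appears as the subobject.

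Finally, the identification $\tilde\alpha_i(\pi^*(\cA_i^-)) = {}^\perp \langle \cO_{E_{i,p}}(-1) \rangle \subset \tcA_i^-$ follows by combining the sod just established with Lemma~\ref{lemma:ker-pis}$(i)$: the right component of any sod equals the left orthogonal of the left component inside the ambient category, and Lemma~\ref{lemma:ker-pis}$(i)$ gives $\tcA_i^- \cap \Ker \pi_* = \langle \cO_{E_{i,p}}(-1) \rangle^\oplus$, whose left orthogonal coincides with that of $\langle \cO_{E_{i,p}}(-1) \rangle$ because the vanishing of $\Hom$ from $\tcF$ propagates through the shifts, cones and direct sums (of bounded objects) used to build the completion. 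I do not foresee a real obstacle; the only delicate point is tracking the direction of the filtration triangle in the sod convention, which is why I would verify explicitly that the counit of the adjunction puts the image on the correct side.
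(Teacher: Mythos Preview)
Your approach is essentially the paper's own, but there is one technical slip in the first step that you should fix. The projection functor $\tilde\alpha_i$ is \emph{not} in general left adjoint to the inclusion $\tcA_i^- \hookrightarrow \Dm(\tX)$: in the semiorthogonal decomposition~\eqref{eq:dbtx-} only the leftmost component has this property, while for a middle component the projection is neither a left nor a right adjoint of the full inclusion. What is true is that $\tilde\alpha_i$ restricted to $\langle \tcA_i^-, \tcA_{i+1}^-, \dots, \tcA_n^- \rangle$ is left adjoint to the inclusion $\tcA_i^- \hookrightarrow \langle \tcA_i^-, \dots, \tcA_n^- \rangle$. Your displayed isomorphism
\[
\Hom_{\tcA_i^-}(\tilde\alpha_i \pi^* \cG, \tcF) \cong \Hom_{\Dm(\tX)}(\pi^* \cG, \tcF)
\]
therefore requires knowing that $\pi^*\cG$ already lies in $\langle \tcA_i^-, \dots, \tcA_n^- \rangle$, which is precisely the content of Proposition~\ref{proposition:sod-dbx-}$(ii)$ (equation~\eqref{eq:pis-ai}). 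Once you invoke that, the rest of your argument---full faithfulness via the unit, the decomposition triangle via the counit, and the identification via Lemma~\ref{lemma:ker-pis}$(i)$---goes through exactly as you wrote it and matches the paper's proof.
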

\begin{proof}
The adjunction follows from the adjunction between the pullback $\pi^*$ and the pushforward $\pi_*$,
and between the embedding $\tcA_i^- \hookrightarrow \langle \tcA_i^-, \tcA_{i+1}^-, \dots, \tcA_n^- \rangle$
and the projection functor $\tilde\alpha_i \colon \langle \tcA_i^-, \tcA_{i+1}^-, \dots, \tcA_n^- \rangle \to \tcA_i^-$.
Full faithfulness follows from the adjunction and the isomorphism
$\pi_* \circ \tilde\alpha_i \circ \pi^* \cong \operatorname{id}\vert_{\tcA_i^-}$ on $\tcA_i^-$
proved in Proposition~\ref{proposition:sod-dbx-}$(i)$.
This proves~\eqref{eq:semiorth-A-minus}, and~\eqref{eq:image-ap} follows from~\eqref{eq:semiorth-A-minus} and~\eqref{eq:tcaim-kerpi}.
\end{proof}

Note that if
$\pi$ is crepant along $D_j$ for $j > i$,
we have $\tilde\alpha_i \circ \pi^* \cong \pi^*$ on~$\cA_i^-$ by Proposition~\ref{proposition:sod-dbx-}$(iii)$.

\subsection{Decomposition of the bounded category}
\label{subsection:db-decomposition}

Now we show that the semiorthogonal decomposition of $\Dm(X)$ constructed in Proposition~\ref{proposition:sod-dbx-}
induces a semiorthogonal decomposition of $\Db(X)$.

\begin{proposition}
\label{proposition:sod-dbx}
{$(i)$}
The subcategories
\begin{equation*}
\cA_i := \cA_i^- \cap \Db(X)
\end{equation*}
provide a semiorthogonal decomposition~\eqref{eq:dbx} with the projection functors $\alpha_i$ given by~\eqref{eq:alpha}.

\noindent{$(ii)$}
The functors~$\alpha_i$ preserve boundedness and have finite cohomological amplitude.

\noindent{$(iii)$
We have an equality}
$\cA_i = \pi_*(\tcA_i)$ and $\pi_*$ induces an equivalence of triangulated categories
\begin{equation*}
\cA_i \simeq \tcA_i \big/ (\tcA_i \cap \Ker \pi_*),
\end{equation*}
where the right hand side is a Verdier quotient.
\end{proposition}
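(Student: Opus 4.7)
My strategy is to descend the semiorthogonal decomposition of $\Dm(X)$ from Proposition~\ref{proposition:sod-dbx-} to the bounded category $\Db(X)$ by showing that the projection functors $\alpha_i = \pi_* \tilde\alpha_i \pi^*$ preserve boundedness and have finite cohomological amplitude. The key observation enabling the whole argument is that the endofunctor $\tilde\alpha_i$ of $\Dm(\tX)$ preserves the subcategory $\Ker\pi_*$: by Lemma~\ref{lemma:ker-pis}$(ii)$ any object of $\Ker\pi_*$ decomposes canonically into summands lying in the subcategories $\tcA_j^- \cap \Ker\pi_*$, and this decomposition must agree with the one induced by the semiorthogonal decomposition~\eqref{eq:dbtx-}.

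For boundedness, take $\cG \in \Db(X)$ concentrated in degrees $[a,b]$. Since the underived pullback $L_0\pi^*$ is right exact, $\pi^*\cG$ is concentrated in degrees $\le b$ (although it need not be bounded below). Set
\begin{equation*}
\cF := \tau^{\ge a-1}(\pi^*\cG) \in \Db(\tX),
\end{equation*}
so that $\tau^{\le a-2}\pi^*\cG \in \Ker\pi_*$ by Lemma~\ref{lemma:pis-tau}; by the key observation, $\tilde\alpha_i(\tau^{\le a-2}\pi^*\cG)$ also lies in $\Ker\pi_*$. Applying $\pi_*\tilde\alpha_i$ to the truncation triangle $\tau^{\le a-2}\pi^*\cG \to \pi^*\cG \to \cF$ therefore yields
\begin{equation*}
\alpha_i(\cG) \;\cong\; \pi_*\tilde\alpha_i(\cF).
\end{equation*}
By Remark~\ref{remark:tilde-alpha}, $\tilde\alpha_i(\cF) \in \tcA_i$, and the cohomological amplitude of $\tilde\alpha_i$ on $\Db(\tX)$ is finite since $\tX$ is smooth projective. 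Combined with the amplitude bound $[0,1]$ for $\pi_*$ (coming from the one-dimensional fibers of $\pi$), this proves $\alpha_i(\cG) \in \Db(X)$ with amplitude bounded uniformly in $\cG$, which gives part~$(ii)$, and simultaneously shows that $\alpha_i(\cG) \in \pi_*(\tcA_i)$.

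Parts $(i)$ and $(iii)$ follow. Since each $\alpha_i$ preserves $\Db(X)$, the semiorthogonal decomposition $\Dm(X) = \langle \cA_1^-, \dots, \cA_n^-\rangle$ restricts to one on $\Db(X)$ with components $\cA_i = \cA_i^- \cap \Db(X)$ and the same projection functors $\alpha_i$, proving~$(i)$. The equality $\cA_i = \pi_*(\tcA_i)$ follows from $\cG \cong \alpha_i(\cG) = \pi_*\tilde\alpha_i(\cF) \in \pi_*(\tcA_i)$ for $\cG \in \cA_i$, combined with the opposite inclusion $\pi_*(\tcA_i) \subset \cA_i^- \cap \Db(X) = \cA_i$. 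For the Verdier quotient description I restrict the semiorthogonal decomposition~\eqref{eq:semiorth-A-minus} to bounded objects: full faithfulness of $\tilde\alpha_i\pi^*\colon \cA_i^- \to \tcA_i^-$ (Corollary~\ref{cor:ap-p-adjunction}) with inverse $\pi_*$ means that $\tilde\alpha_i\pi^*\cG$ is bounded iff $\cG$ is, so intersecting both summands with $\Db(\tX)$ produces the bounded semiorthogonal decomposition $\tcA_i = \langle \tcA_i \cap \Ker\pi_*, \tilde\alpha_i\pi^*(\cA_i)\rangle$. The standard equivalence $\cT/\cT_1 \simeq \cT_2$ for a semiorthogonal decomposition $\cT = \langle \cT_1, \cT_2\rangle$ then gives $\tcA_i/(\tcA_i \cap \Ker\pi_*) \simeq \tilde\alpha_i\pi^*(\cA_i) \simeq \cA_i$, with the composite equivalence realized by $\pi_*$.

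The main obstacle is the very first step: $\pi^*$ does not preserve $\Db(X)$, so naively applying $\tilde\alpha_i\pi^*$ to a bounded object produces something potentially unbounded below. The clean resolution is that only the portion of $\tilde\alpha_i\pi^*\cG$ outside $\Ker\pi_*$ contributes to $\alpha_i(\cG)$, and truncation together with the $\tilde\alpha_i$-invariance of $\Ker\pi_*$ replaces the unbounded tail by zero. A secondary subtlety is verifying that the intersection $\tilde\alpha_i\pi^*(\cA_i^-) \cap \Db(\tX)$ coincides with $\tilde\alpha_i\pi^*(\cA_i)$; this is what makes the bounded restriction of~\eqref{eq:semiorth-A-minus} compatible with the desired Verdier quotient.
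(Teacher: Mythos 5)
Your arguments for parts $(i)$ and $(ii)$, and for the equality $\cA_i=\pi_*(\tcA_i)$, coincide with the paper's: truncate $\pi^*\cG$, use that $\tilde\alpha_i$ preserves $\Ker\pi_*$ (Lemma~\ref{lemma:ker-pis}$(ii)$ together with Lemma~\ref{lemma:pis-tau}), and combine the finite cohomological amplitude of $\tilde\alpha_i$ on $\Db(\tX)$ with the amplitude $[0,1]$ of $\pi_*$. Up to that point the proposal is correct.

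The Verdier-quotient step in $(iii)$, however, has a genuine gap. You assert that $\tilde\alpha_i\pi^*\cG$ is bounded if and only if $\cG$ is, and hence that~\eqref{eq:semiorth-A-minus} restricts to a semiorthogonal decomposition $\tcA_i=\langle\tcA_i\cap\Ker\pi_*,\ \tilde\alpha_i\pi^*(\cA_i)\rangle$. Only one direction holds: if $\tilde\alpha_i\pi^*\cG$ is bounded then so is $\cG\cong\pi_*\tilde\alpha_i\pi^*\cG$. The converse fails, i.e.\ $\tilde\alpha_i\pi^*$ does \emph{not} preserve boundedness, so $\tilde\alpha_i\pi^*(\cA_i)\not\subset\tcA_i$ in general. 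Indeed, by the second diagram in~\eqref{eq:gamma-tgamma} and Lemma~\ref{lemma:descent-algebraic}, in the adherent situation one has $\tilde\alpha_i\pi^*(\gamma_iN)\cong\tilde\gamma_i(\rho_i^*N)$, which is bounded precisely when $N$ is a perfect $K_i$-module; for instance in Example~\ref{ex:one-sing} the bounded object $\cG=\pi_*\cL\in\cA_1$ corresponds (Theorem~\ref{theorem:sod-cai}, Proposition~\ref{prop:ri-mi}$(i)$) to the simple module $\Bbbk$, whose minimal projective resolution over the local non-semisimple algebra $K_1$ is infinite, so $\tilde\alpha_1\pi^*\cG$ is unbounded. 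Structurally, your claimed decomposition would realize $\cA_i\simeq\tcA_i/(\tcA_i\cap\Ker\pi_*)$ as an admissible subcategory of $\tcA_i\subset\Db(\tX)$, which is impossible whenever $\pi(D_i)$ is actually singular: total $\Ext$-spaces in $\Db(\tX)$ are finite-dimensional, while $\cA_i$ then contains objects with self-$\Ext$'s in infinitely many degrees (this is exactly why $\tcA_i$ is a categorical resolution of $\cA_i$, and why the quotient cannot be split off inside $\tcA_i$). The "secondary subtlety" you flagged about $\tilde\alpha_i\pi^*(\cA_i^-)\cap\Db(\tX)$ versus $\tilde\alpha_i\pi^*(\cA_i)$ is precisely this failure and cannot be repaired along these lines. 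What is actually needed, and what the paper proves, is full faithfulness of the induced functor $\pi_*\colon\tcA_i/(\tcA_i\cap\Ker\pi_*)\to\cA_i$: using the equivalence $\tcA_i^-/(\tcA_i^-\cap\Ker\pi_*)\simeq\cA_i^-$ from Corollary~\ref{cor:ap-p-adjunction} and the argument of \cite[Lemma~2.31]{Pavic-Shinder}, this reduces to Verdier's criterion \cite[Theorem~2.4.2]{verdier}, namely that every morphism $\cG\to\cG'$ with $\cG\in\tcA_i^-\cap\Ker\pi_*$ and $\cG'\in\tcA_i$ factors through an object of $\tcA_i\cap\Ker\pi_*$ (a suitable truncation of $\cG$). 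Some argument of this kind is missing from your proof of $(iii)$.
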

\begin{proof}
For $(i)$ it is enough to check that the projection functors~$\alpha_i$ preserve boundedness.
Take any object $\cF \in \DD^{[k_-,k_+]}(X)$.
Then $\pi^*(\cF) \in \DD^{(-\infty,k_+]}(\tX)$.
By~\eqref{eq:pi-pi} we have~$\pi_*(\pi^*(\cF)) \cong \cF$,
hence by Lemma~\ref{lemma:pis-tau} we have $\tau^{\le {k_--2}}(\pi^*(\cF)) \in \Ker \pi_*$.
Consider the triangle
\begin{equation*}
\tilde\alpha_i(\tau^{\le {k_--2}}(\pi^*(\cF))) \to \tilde\alpha_i(\pi^*(\cF)) \to \tilde\alpha_i(\tau^{\ge {k_- - 1}}(\pi^*(\cF)))
\end{equation*}
obtained by applying the projection functor $\tilde\alpha_i$ to the canonical truncation triangle.
By Lemma~\ref{lemma:ker-pis}$(ii)$ the functor $\tilde\alpha_i$ preserves~$\Ker\pi_*$, hence the first term of the triangle is in $\Ker \pi_*$.
Therefore, applying the pushforward and using~\eqref{eq:alpha} we obtain an isomorphism
\begin{equation}
\label{eq:alpha-cf}
\alpha_i(\cF)  = \pi_*(\tilde\alpha_i(\pi^*(\cF))) \cong \pi_*(\tilde\alpha_i(\tau^{\ge {k_- - 1}}(\pi^*(\cF))).
\end{equation}
It remains to note that we have~$\tau^{\ge {k_- - 1}}(\pi^*(\cF)) \in \DD^{[{k_- - 1},k_+]}(\tX)$,
hence the object in the right-hand side of~\eqref{eq:alpha-cf} is bounded, since both~$\tilde\alpha_i$
(see Remark~\ref{remark:tilde-alpha}) and~$\pi_*$ preserve boundedness.
This completes the proof of~$(i)$.

Moreover, if the cohomological amplitude of $\tilde\alpha_i$ is $(a_-,a_+)$ (it is finite since $\tX$ is smooth,
see~\cite[Proposition~2.5]{K08}), then from~\eqref{eq:alpha-cf} and from cohomological amplitude of $\pi_*$ being $(0,1)$,
we deduce
\begin{equation*}
\alpha_i(\cF) \cong \pi_*(\tilde\alpha_i(\tau^{\ge {k_- - 1}}(\pi^*(\cF))) \in \DD^{[{k_- + a_- - 1},k_++a_++1]}(X).
\end{equation*}
In particular, $\alpha_i$ has finite cohomological amplitude.
This proves~$(ii)$.

Let us prove~$(iii)$.
By~\eqref{eq:def-caim} we have $\pi_*(\tcA_i) \subset \cA_i^-$, and since $\pi_*$ preserves boundedness, we have $\pi_*(\tcA_i) \subset \cA_i$.
To check that this inclusion is an equality, take any~$\cF \in \cA_i$.
By Corollary~\ref{lemma:im-pi} there exists $\tcF \in \Db(\tX)$ such that $\cF \cong \pi_*(\tcF)$.
Let~$\cG$ be the cone of the natural morphism $\pi^*\cF \to \tcF$.
Then $\cG \in \Ker  \pi_*$.
Moreover, by Lemma~\ref{lemma:ker-pis}($ii$) we have $\tilde\alpha_i(\cG) \in \Ker \pi_*$, hence applying the functor $\pi_* \circ \tilde\alpha_i$
to the distinguished triangle~$\pi^*\cF \to \tcF \to \cG$, we deduce an isomorphism $\cF \cong \alpha_i(\cF) \cong \pi_*(\tilde\alpha_i(\tcF))$,
and it remains to note that $\tilde\alpha_i(\tcF) \in \tcA_i$.

Finally, as we have already shown that $\pi_*\colon \tcA_i \to \cA_i$
is essentially surjective, it remains to show that the induced functor
$\pi_* \colon \tcA_i \big/ (\tcA_i \cap \Ker \pi_*)
\to \cA_i$ is fully faithful; to show this
we use the argument from~\cite[Lemma 2.31]{Pavic-Shinder}
which goes as follows.
From the commutative diagram
\begin{equation*}
\xymatrix@C=4em{
\tcA_i^- \big/ (\tcA_i^- \cap \Ker \pi_*)\ar[r]^-{\simeq} & \cA_i^- \\
\tcA_i \big/ (\tcA_i \cap \Ker \pi_*) \ar[u] \ar[r] & \cA_i \ar@{^{(}->}[u]  \\
}
\end{equation*}
(the top arrow is an equivalence by Corollary~\ref{cor:ap-p-adjunction})
we deduce that bottom horizontal functor is fully faithful if and only if the left vertical functor
\begin{equation*}
\tcA_i \big/ (\tcA_i \cap \Ker \pi_*) \to \tcA_i^- \big/ (\tcA_i^- \cap \Ker \pi_*)
\end{equation*}
is fully faithful. To show that this is the case, we use the Verdier criterion
\cite[Theorem 2.4.2]{verdier}:
it suffices to show that every
morphism $\cG \to \cG'$ in $\tcA_i^-$ with
\mbox{$\cG \in \tcA_i^- \cap \Ker \pi_*$} and $\cG' \in \tcA_i$
factors through an object from $\tcA_i \cap \Ker \pi_*$.
It is easily seen that for such an object we can take
$\alpha_i (\tau^{\ge -N} \cG)$, for $N$ large enough.
\end{proof}

Now we can prove the theorem.

\begin{proof}[Proof of Theorem~\textup{\ref{theorem:dbx}}]
Part $(i)$ follows from Proposition~\ref{proposition:sod-dbx}.

$(ii)$
Let $1 \le i \le n$ and assume that $\pi$ is crepant along~$D_j$ for~$j > i$.
By Lemma~\ref{lemma:twisted-compatible} besides the semiorthogonal decomposition~\eqref{eq:dbtx}
also the first of the semiorthogonal decompositions~\eqref{eq:dbtx-ktx} for $k = i$ is compatible with~$\pi$.
Hence, as we proved in part~$(i)$ of the theorem, it gives rise to a semiorthogonal decomposition
\begin{align*}
\Db(X) &= \langle \pi_*(\tcA_{i+1}(K_\tX)), \dots, \pi_*(\tcA_n(K_\tX)), \pi_*(\tcA_1), \dots, \pi_*(\tcA_{i}) \rangle,
\end{align*}
of $\Db(X)$.
The category $\cA_i = \pi_*(\tcA_i)$ is its rightmost component,
hence is right admissible in $\Db(X)$.

Similarly, assuming that $\pi$ is crepant along $D_j$ for $j < i$ and using
the second of the semiorthogonal decompositions~\eqref{eq:dbtx-ktx} for $k = i$,
we deduce that~$\cA_i$ is left admissible.
Thus, if $\pi$ is crepant for all $j \ne i$, $\cA_i$ is an admissible subcategory in~$\Db(X)$,
and if $\pi$ is crepant, all $\cA_i$ are admissible.

$(iii)$ Follows from Proposition~\ref{proposition:sod-dbx-}$(ii)$.

$(iv)$ The inclusion $\pi^*(\cA_i^{\perf}) \subset \tcA_i$ under the appropriate
crepancy assumptions
follows from Proposition~\ref{proposition:sod-dbx-}$(iii)$.

Assume now that $\pi$ is crepant along all $D_j$ for $j \ge 2$.
Let $\cF \in \Dperf(X)$ and let~$\cF_1,\dots,\cF_n \in \Db(X)$ be its components with respect to the semiorthogonal
decomposition~\eqref{eq:dbx}.
By Proposition~\ref{proposition:sod-dbx-} we have $\pi^*\cF_i \subset \tcA_i^-$,
hence $\pi^*\cF_i$ are the components of $\pi^*\cF$ with respect to the semiorthogonal decomposition~\eqref{eq:dbtx-}.
But~$\pi^*\cF \in \Db(\tX)$, hence by Remark~\ref{remark:tilde-alpha} we conclude that $\pi^*\cF_i \in \Db(\tX)$,
and Corollary~\ref{cor:descent} finally shows that~$\cF_i \in \Dperf(X)$.
This proves that the projection functors $\alpha_i$ of~\eqref{eq:dbx} preserve perfectness,
hence we obtain~\eqref{eq:dpxperf}.

Finally, if $\pi$ is crepant, then rotating the decomposition left or right as in the proof of~$(ii)$
we show that each $\cA_i^\perf \subset \Dperf(X)$ is left and right admissible.
\end{proof}

Note that for each $i$ the category $\tcA_i$ provides (via the functors $\pi_*$ and $\tilde\alpha_i \circ \pi^*$)
a categorical resolution for the category $\cA_i$ in the sense of~\cite{KL}.

\section{Components of the induced semiorthogonal decomposition}
\label{section:components}

In this section we will provide a description of the components
of the semiorthogonal decomposition $\Db(X)= \langle \cA_1, \cA_2, \dots, \cA_n \rangle$
constructed in Theorem~\ref{theorem:dbx} under an additional assumption
on the components of the decomposition $\Db(\tX)= \langle \tcA_1, \tcA_2, \dots, \tcA_n \rangle$ we started with.
If this assumption holds for a component $\tcA_i$, then we will show that this component
$\tcA_i$ is equivalent to the bounded derived category $\Db(\Lambda_i\md)$ of finitely generated right modules
for a noncommutative algebra~$\Lambda_i$ and the corresponding component $\cA_i$ can be realized as $\Db(K_i\md)$
for a finite dimensional algebra~$K_i$, which we describe explicitly.

The additional assumptions we impose will require the exceptional divisor $D_i$ associated with the component $\tcA_i$ by~\eqref{eq:def-di}
to be a \emph{chain} of rational curves,
and this restricts us to the situation where the singularity obtained by the contraction of~$D_i$ is
a cyclic quotient singularity (see Proposition~\ref{prop:cyclic}).

\subsection{Cyclic quotient singularities}
\label{subsection:cyclic-singularity}

An isolated singularity $(X,x)$ is a cyclic quotient singularity if it is \'etale-locally isomorphic to the quotient $\A^n/\bmu_r$,
where $\bmu_r$ is the group of roots of unity of order $r$ that acts on $\A^n$ linearly and freely away from the origin.
The latter condition means that the action is given by a collection $(a_1,\dots,a_n)$ of characters
\begin{equation*}
a_i \in \Hom(\bmu_r,\Gm) \cong \Z/r
\end{equation*}
such that each $a_i$ is invertible in $\Z/r$.
Thus we can think of the~$a_i$ as integers such that~\mbox{$0 \le a_i < r$} and~\mbox{$\gcd(a_i,r) = 1$}.
Moreover, the $a_i$ are only well-defined up to the action of $\Aut(\bmu_r) \cong (\Z/r)^\times$,
so we can assume that $a_1 = 1$.
The singularity corresponding to the $\bmu_r$-action with weights $(1,a_2,\dots,a_n)$ will be denoted by
\begin{equation*}
\tfrac1r(1,a_2,\dots,a_n).
\end{equation*}

In this paper we stick to the case of dimension 2.
Accordingly, any cyclic quotient singularity of a surface is isomorphic to one of the singularities
\begin{equation*}
\tfrac1r(1,a),
\qquad
0 \le a < r,
\qquad
\gcd(a,r) = 1.
\end{equation*}
The following well-known result is crucial for us.

\begin{proposition}[{See e.g.~\cite[Satz 2.10 and 2.11]{BrieskornKomplexerFlachen}}]
\label{prop:cyclic}
Let $(X,x)$ be a rational singularity of a surface and let $\pi \colon \tX \to X$ be its minimal resolution.
The following properties are equivalent:
\begin{enumerate}
\item $(X,x)$ is a cyclic quotient singularity;
\item the irreducible components $E_i$ of the exceptional divisor of $\pi$
are smooth rational curves forming a chain,
i.e., after a possible reordering each $E_i$ intersects $E_{i+1}$ transversally at a single point
and $E_i \cap E_j = \varnothing$ for $|i - j| \ge 2$.
\end{enumerate}
\end{proposition}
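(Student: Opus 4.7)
The plan is to exploit the Hirzebruch--Jung continued fraction expansion of $r/a$, which yields an explicit bijection between cyclic quotient singularities $\tfrac{1}{r}(1,a)$ of surfaces and chains of smooth rational curves with self-intersections $\le -2$.

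For the implication $(1) \Rightarrow (2)$, I would pass through the toric description: $\tfrac{1}{r}(1,a)$ is the affine toric surface associated to a certain $2$-dimensional cone $\sigma \subset \Z^2$ (determined explicitly by the pair $(r,a)$). Its minimal resolution is obtained by inserting the rays through the lattice points lying on the boundary of the convex hull of the non-zero lattice points contained in $\sigma$; these rays are indexed by the Hirzebruch--Jung continued fraction
\[
\frac{r}{a} \;=\; b_1 - \cfrac{1}{b_2 - \cfrac{1}{\ddots - \cfrac{1}{b_k}}}, \qquad b_i \ge 2,
\]
and each inserted ray gives rise to a component $E_i \cong \P^1$ of the exceptional divisor with~$E_i^2 = -b_i$. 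By general principles of toric geometry, adjacent rays correspond to components meeting transversally at a single torus-fixed point, while non-adjacent rays give disjoint components. This produces the desired chain.

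For the converse $(2) \Rightarrow (1)$, suppose the exceptional divisor is a chain $E_1, \dots, E_k$ of smooth rational curves with $E_i^2 = -b_i$. Minimality of $\pi$ forces $b_i \ge 2$, since otherwise a $(-1)$-curve could be contracted. I would define $(r, a)$ via the continued fraction $r/a = [b_1, \dots, b_k]$, so that by the forward implication the minimal resolution of $\tfrac{1}{r}(1,a)$ has the same weighted dual graph as $\pi$. The key input is then the classical fact that the formal germ of a \emph{rational} surface singularity is determined by the weighted dual graph of its minimal resolution; this combines Grauert's contractibility criterion with the rationality condition $R^1\pi_*\OO_{\tX} = 0$ and is precisely what is proved in~\cite{BrieskornKomplexerFlachen}. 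It then follows that $(X, x)$ is formally isomorphic to $\tfrac{1}{r}(1,a)$, hence is a cyclic quotient singularity.

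The main obstacle is the converse direction, which rests on this uniqueness statement for rational surface singularities with a prescribed weighted dual graph. The cleanest route is to invoke the classical result directly; a more self-contained approach would construct the toric contraction from the chain explicitly and match the formal germ of $(X,x)$ with that of $\tfrac{1}{r}(1,a)$ using the uniqueness of the minimal resolution together with the negative-definiteness of the intersection form on the chain.
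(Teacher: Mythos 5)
The paper offers no proof of this proposition at all: it is stated with the citation to Brieskorn's Satz~2.10 and~2.11, and your outline in fact follows the same classical route that citation encodes. Your treatment of $(1)\Rightarrow(2)$ is fine: the toric resolution of $\tfrac1r(1,a)$ obtained by inserting the rays through the boundary lattice points of the cone is a chain of smooth rational curves with self-intersections $-d_i$, $d_i\ge 2$, read off from the Hirzebruch--Jung expansion of $r/a$; since no $(-1)$-curves occur, this resolution is minimal, and uniqueness of the minimal resolution identifies it with $\pi$ (one should also note that the cyclic quotient singularity is only \'etale-locally or formally toric, but the minimal resolution and its exceptional configuration are determined by the formal germ, so this is harmless).

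The problem is in $(2)\Rightarrow(1)$: the ``classical fact'' you invoke --- that the formal germ of a rational surface singularity is determined by the weighted dual graph of its minimal resolution --- is false in general, and it is not what \cite{BrieskornKomplexerFlachen} proves. Rational surface singularities need not be taut: by Laufer's classification of taut two-dimensional singularities, the only taut graphs are chains and certain star-shaped graphs with three branches, so for instance a star with a central $(-5)$-curve and four $(-2)$-tails is rational (its fundamental cycle has arithmetic genus $0$, so Artin's criterion applies) yet supports non-isomorphic analytic structures. What your argument actually needs is the far weaker statement that singularities whose minimal resolution is a Hirzebruch--Jung chain of rational curves with weights $\le -2$ are taut, i.e.\ formally isomorphic to the cyclic quotient $\tfrac1r(1,a)$ determined by the continued fraction $r/a=[d_1,\dots,d_m]$; this special case is true and is precisely the content of Brieskorn's Satz~2.10/2.11 (note also that rationality is automatic for such chains, so it need not be assumed). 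If you replace your general claim by this chain-specific tautness statement --- and, to match the paper's \'etale-local definition of a cyclic quotient singularity, pass from the formal isomorphism to an \'etale-local one by Artin approximation --- the proof is complete and coincides with the intended reference.
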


If $(X,x)$ is a cyclic quotient singularity of a surface and $\pi \colon \tX \to X$ is its minimal resolution,
the self-intersections of the components $E_i$ (ordered as in Proposition~\ref{prop:cyclic}(2))
of the exceptional divisor of $\pi$ are encoded in a {\sf Hirzebruch--Jung continued fraction}.

For a collection of positive integers $d_1,\dots,d_m \ge 2$ we denote
\begin{equation}
\label{eq:hj-fraction}
[d_1, \dots, d_m]:= d_1 - \frac1{d_{2} - \frac1{d_{3} - \dots}}.
\end{equation}
This is a rational number greater than~1.
Conversely, every rational number greater than~1 can be written as a Hirzebruch--Jung continued fraction $[d_1,\dots,d_m]$
with $d_i \ge 2$ in a unique way.

\begin{proposition}[{See e.g.~\cite[Satz 2.11]{BrieskornKomplexerFlachen}}]
\label{proposition:sing-frac}
Assume that $(X,x)$ is a cyclic quotient singularity of type~$\frac1r(1,a)$ and let
\begin{equation}
\label{eq:ra-hj-cf}
\frac{r}{a} = [d_1,\dots,d_m]
\end{equation}
be the Hirzebruch--Jung continued fraction representation for $r/a$.
Then the intersection matrix of\/ \textup(ordered in a chain\textup) irreducible components of the exceptional divisor
of a minimal resolution $\pi \colon \tX \to X$ of $(X,x)$ is the tridiagonal matrix
\begin{equation}
\label{eq:tridiag}
\tridiag(d_1,d_2,\dots,d_m) :=
\left[\begin{array}{ccccc}
-d_1 & 1 & 0 & \dots & 0 \\
1 & -d_2 & 1 & \dots & 0 \\
& & \dots & & \\
0 & \dots & 1 & -d_{m-1} & 1 \\
0 & \dots & 0 & 1 & -d_m \\
\end{array}
\right].
\end{equation}
\end{proposition}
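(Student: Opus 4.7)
My plan is to realize the cyclic quotient singularity $(X,x)$ of type $\frac{1}{r}(1,a)$ as an affine toric variety and construct its minimal resolution torically. I would fix a lattice $N = \Z^2$ and take $\sigma \subset N \otimes \R$ to be the two-dimensional cone generated by an appropriate pair of primitive vectors $v_0$ and $v_{m+1}$ (for instance $v_0 = e_2$ and $v_{m+1} = r e_1 - a e_2$); a direct check shows that the affine toric variety $U_\sigma$ is isomorphic to $\A^2/\bmu_r$ with the action of weights $(1,a)$, so $U_\sigma \cong (X,x)$ locally analytically.

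Next, I would construct the minimal toric resolution of $U_\sigma$ by inserting the unique maximal sequence of primitive lattice vectors $v_1, \dots, v_m \in \sigma \cap N$ between $v_0$ and $v_{m+1}$ such that every consecutive pair $(v_{i-1}, v_i)$ forms a $\Z$-basis of $N$. This produces a smooth refinement of $\sigma$, and the exceptional divisor of the induced morphism is the union of the toric divisors $E_i$ corresponding to the newly inserted rays $\R_{\ge 0} v_i$ for $1 \le i \le m$. Minimality of the resolution translates into the condition that no $E_i$ is a $(-1)$-curve, which in turn amounts to saying that the integers $d_i$ appearing in the primitive relations $v_{i-1} + v_{i+1} = d_i v_i$ all satisfy $d_i \ge 2$.

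The main combinatorial step will be the identification of these integers with the terms of the Hirzebruch--Jung continued fraction expansion~\eqref{eq:ra-hj-cf} of $r/a$. I expect this to follow from the primitive-vector recursion itself, which is essentially the Euclidean-type algorithm implicit in~\eqref{eq:hj-fraction}: starting from $v_0$ and $v_1$ one recovers the sequence $v_2, \dots, v_{m+1}$ inductively via $v_{i+1} = d_i v_i - v_{i-1}$, and comparing the resulting expression for $v_{m+1}$ with its defining coordinates yields the identity $r/a = [d_1, \dots, d_m]$.

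Finally, I would compute the intersection matrix by applying standard toric formulas. On a smooth toric surface, the self-intersection of an invariant divisor $E_i$ corresponding to a ray $v_i$ flanked by $v_{i-1}$ and $v_{i+1}$ with $v_{i-1} + v_{i+1} = d_i v_i$ equals $-d_i$; adjacent divisors $E_i$ and $E_{i+1}$ meet transversally at the torus-fixed point attached to the two-dimensional cone spanned by $v_i$ and $v_{i+1}$, giving $E_i \cdot E_{i+1} = 1$; and $E_i \cap E_j = \varnothing$ whenever $|i - j| \ge 2$, since no two-dimensional cone in the refinement contains both corresponding rays. Collecting these values yields the tridiagonal matrix~\eqref{eq:tridiag}. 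The hard part will be the continued fraction identification; once that is in place, the rest is a mechanical toric-geometry computation.
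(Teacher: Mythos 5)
The paper does not actually prove this proposition --- it is quoted as a classical fact with a reference to Brieskorn \cite[Satz~2.11]{BrieskornKomplexerFlachen} --- so there is no internal argument to compare against. Your toric (Hirzebruch--Jung) route is the standard modern proof, and its overall structure is sound: realize $\frac1r(1,a)$ as $U_\sigma$ for the cone $\sigma$ spanned by $e_2$ and $re_1-ae_2$, resolve by a smooth subdivision, read off $E_i^2=-d_i$ from the relations $v_{i-1}+v_{i+1}=d_iv_i$, get $E_i\cdot E_{i+1}=1$ from the common two-dimensional cone, and recover $r/a=[d_1,\dots,d_m]$ from the recursion $v_{i+1}=d_iv_i-v_{i-1}$ by induction (this is exactly the continuant recursion behind Lemma~\ref{lemma:frac-sing}, so the "hard part" you flag is a short induction).

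One step is misstated and should be fixed: there is no ``unique maximal sequence of primitive vectors such that consecutive pairs form a $\Z$-basis.'' Any smooth subdivision has that property, and you can always refine it further (insert $v_{i-1}+v_i$ between $v_{i-1}$ and $v_i$ and consecutive pairs still form bases --- this corresponds to blowing up a point of the resolution), so ``maximal'' picks out nothing. What you want is the \emph{minimal} smooth subdivision, equivalently the rays through the lattice points on the compact part of the boundary of $\operatorname{Conv}\bigl((\sigma\cap N)\setminus\{0\}\bigr)$ (the construction the paper itself uses in Section~\ref{section:toric-surfaces}); strict convexity of that boundary at each inserted $v_i$ is what forces $d_i\ge 2$, i.e.\ no $(-1)$-curves, and conversely uniqueness of the minimal resolution identifies it with your toric model. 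You should also be explicit about conventions at the end: depending on the orientation and on which end of the chain you start the recursion, the comparison of coordinates yields $r/a=[d_1,\dots,d_m]$ or $r/a'=[d_m,\dots,d_1]$ with $aa'\equiv 1 \pmod r$; since $\frac1r(1,a)\cong\frac1r(1,a')$ (as noted in Remark~\ref{remark:kk-duality} and the remark after Lemma~\ref{lemma:frac-sing}), this is harmless, but the bookkeeping deserves a sentence. Finally, since the proposition is stated for an \'etale-local quotient, add the (standard) remark that the minimal resolution and the intersection numbers of its exceptional curves are determined \'etale-locally, so computing them on the toric model suffices.
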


Conversely, the type of the singularity can be recovered from the intersection matrix.
Denote by
\begin{equation}
\label{eq:tridet}
\tridet(d_1,d_2,\dots,d_m) := (-1)^m \det(\tridiag(d_1,d_2,\dots,d_m)),
\end{equation}
the determinant of a tridiagonal matrix (\emph{continuant}) with the sign.
Note that the assumption $d_i \ge 2$ implies that $\tridet(d_1,d_2,\dots,d_m) > 0$.

\begin{lemma}
\label{lemma:frac-sing}
For any $i \le j$ we have
\begin{equation*}
[d_i,\dots,d_j] = \frac{\tridet(d_i,\dots,d_j)}{\tridet(d_{i+1},\dots,d_j)}\,,
\end{equation*}
where the numerator and denominator in the right side are  coprime.
In particular, if~$\gcd(r,a) = 1$ and $r/a = [d_1,\dots,d_m]$ then
\begin{equation}
\label{eq:r-a-d}
r = \tridet(d_1,\dots,d_m)
\qquad\text{and}\qquad
a = \tridet(d_2,\dots,d_m).
\end{equation}
\end{lemma}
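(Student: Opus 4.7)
The plan is to prove the continued-fraction identity by induction on the length $j-i+1$, using the standard cofactor-expansion recursion for tridiagonal determinants, and then deduce coprimality and the ``in particular'' statement as quick corollaries.

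First I would set up the three-term recursion for $\tridet$. Expanding the determinant of \eqref{eq:tridiag} along the first row (or equivalently the first column) yields
\[
\tridet(d_i,d_{i+1},\dots,d_j) \= d_i \cdot \tridet(d_{i+1},\dots,d_j) - \tridet(d_{i+2},\dots,d_j),
\]
with the conventions $\tridet(\varnothing)=1$ and $\tridet(d_j)=d_j$. This is the only non-formal ingredient needed, and follows from a standard $2 \times 2$ block expansion taking care that the off-diagonal entries of \eqref{eq:tridiag} are all equal to~$1$.

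Next I would prove the main formula by induction on $k = j-i$. The base case $k=0$ is immediate from $[d_i]=d_i$ and $\tridet(d_i)/\tridet(\varnothing)=d_i/1$. For the inductive step, the definition \eqref{eq:hj-fraction} gives $[d_i,\dots,d_j] = d_i - 1/[d_{i+1},\dots,d_j]$; substituting the inductive hypothesis for $[d_{i+1},\dots,d_j]$ and clearing denominators produces
\[
[d_i,\dots,d_j] \= \frac{d_i\cdot \tridet(d_{i+1},\dots,d_j) - \tridet(d_{i+2},\dots,d_j)}{\tridet(d_{i+1},\dots,d_j)},
\]
and the numerator is $\tridet(d_i,\dots,d_j)$ by the recursion above.

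For coprimality I would again induct on $k=j-i$, using the same recursion: any common prime divisor of $\tridet(d_i,\dots,d_j)$ and $\tridet(d_{i+1},\dots,d_j)$ would, by the recursion, also divide $\tridet(d_{i+2},\dots,d_j)$, contradicting the inductive coprimality of the pair $(\tridet(d_{i+1},\dots,d_j),\tridet(d_{i+2},\dots,d_j))$. Finally, the ``in particular'' statement follows at once: taking $i=1$ and $j=m$ in the main formula gives $r/a = \tridet(d_1,\dots,d_m)/\tridet(d_2,\dots,d_m)$ as fractions in lowest terms, and since $\gcd(r,a)=1$ by hypothesis and both pairs of integers are positive (using $d_i \ge 2$), the equalities in \eqref{eq:r-a-d} follow from uniqueness of the reduced representation.

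I do not foresee a serious obstacle; the only thing to watch is bookkeeping of the base cases (the empty continuant equals~$1$ and a length-one continuant equals $d_j$) so that the recursion and induction align cleanly, and the positivity of all the continuants involved, which follows from a separate easy induction using $d_i \ge 2$.
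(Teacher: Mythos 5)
Your proof is correct and follows exactly the route the paper intends: the paper dismisses this lemma with ``elementary induction on $j-i$'', and your argument (the three-term continuant recursion $\tridet(d_i,\dots,d_j)=d_i\,\tridet(d_{i+1},\dots,d_j)-\tridet(d_{i+2},\dots,d_j)$, induction for the identity and for coprimality, then uniqueness of the reduced positive fraction) is precisely that induction spelled out. No gaps; the bookkeeping of the base cases and the positivity of the continuants, which you flag, are indeed the only points to check.
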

\begin{proof}
This can be proved by an elementary induction on $j - i$.
\end{proof}

\begin{remark}
Inverting the order of the chain of exceptional curves
leads to the continued fraction
\begin{equation*}
\frac{r}{a'} = [d_{m}, \dots, d_1],
\end{equation*}
where $0 \le a' < r$ satisfies $a \cdot a' \equiv 1 \mod{n}$.
Note that $\frac1{r}(1,a)$
and $\frac1{r}(1,a')$ are isomorphic singularities.
\end{remark}

\subsection{Adherent components}
\label{subsection:adherent}

Assume we are in the setup of Theorem~\ref{theorem:dbx}, fix some component $\tcA_i$ of~\eqref{eq:dbtx},
and assume that the corresponding divisor~$D_i \subset \tX$ defined by the condition~\eqref{eq:def-di}
is a chain $E_{i,1},\dots,E_{i,m_i}$ of rational curves.
Recall that this means that~$\cO_{E_{i,p}}(-1) \in \tcA_i$, and $E_{i,p}$ are the only exceptional curves with this property.
Note that the category generated by the sheaves $\cO_{E_{i,p}}(-1)$, $1 \le p \le m_i$ is \emph{not} admissible in~$\Db(\tX)$.
The assumption we want to make is that $\tcA_i$ is, in a sense,
the smallest possible admissible subcategory of~$\Db(\tX)$ containing all $\cO_{E_{i,p}}(-1)$.

For convenience we slightly generalize this setup:
we fix a sequence $b_{i,p}$, $1 \le p \le m_i$, of integers,
and instead of considering line bundles $\cO_{E_{i,p}}(-1)$,
replace them by the twisted collection $\{\cO_{E_{i,p}}(-1+b_{i,p})\}_{1 \le p \le m_i}$.
We denote
\begin{equation}
\label{eq:dip}
d_{i,p} := -E_{i,p}^2,
\end{equation}
so that the intersection matrix of $E_{i,p}$ is the tridiagonal matrix $\tridiag(d_{i,1},\dots,d_{i,m_i})$ as in~\eqref{eq:tridiag}.
We set $x := \pi(D_i) \subset X$.
A combination of Proposition~\ref{proposition:sing-frac} and Lemma~\ref{lemma:frac-sing}
shows that $(X,x)$ is a cyclic quotient singularity of type $\frac1{r_i}(1,a_i)$, where
\begin{equation*}
r_i = \tridet(d_{i,1},\dots,d_{i,m_i}),
\qquad\text{and}\qquad
a_i = \tridet(d_{i,2},\dots,d_{i,m_i}).
\end{equation*}

The main definition of this section will be given in terms of the following lemma.

\begin{lemma}
\label{lemma:adherence-equivalence}
Let $\tX$ be a smooth projective surface such that
\begin{equation}
\label{eq:hotx}
H^\bullet(\tX,\cO_\tX) \cong \Bbbk.
\end{equation}
Let $D = \cup_{p=1}^{m} E_{p}$ be a chain of smooth rational curves on $\tX$ with $E_p^2 = -d_p \le -2$
and let~$\tcA \subset \Db(\tX)$ be a triangulated subcategory.
The following conditions
are equivalent:

\begin{enumerate}
\item
There exist a line bundle $\cL_0$ on $\tX$ and integers $b_1,\dots,b_m$ such that
\begin{equation}
\label{eq:tcai-l-o}
\tcA
\quad\text{is generated by sheaves}\quad
\cL_{0}, \cO_{E_{1}}(-1 + b_{1}), \dots, \cO_{E_{m}}(-1 + b_{m}),
\end{equation}
and
\begin{equation}
\label{eq:cli0-oeip}
\cL_{0} \cdot E_{p} =
\begin{cases}
d_{1} + b_{1} - 1, & \text{if $p = 1$},\\
d_{p} + b_{p} - 2, & \text{if $2 \le p \le m$}.
\end{cases}
\end{equation}
\item
The category $\tcA$ is generated by an exceptional collection of line bundles
\begin{equation}
\label{eq:tcai-l-l}
\tcA = \langle \cL_{0}, \cL_{1}, \dots, \cL_{m} \rangle,
\end{equation}
where for all $1 \le p \le m$ we have
\begin{equation}
\label{eq:sequence-lip}
\cL_{p} := \cL_{0}(E_{1} + \dots + E_{p}).
\end{equation}
\end{enumerate}
\end{lemma}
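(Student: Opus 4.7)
The plan is to show that both conditions describe the same subcategory $\tcA$ by exhibiting a chain of short exact sequences on $\tX$ that relates the two sets of generators. Given any line bundle $\cL_0$ and setting $\cL_p := \cL_0(E_1 + \dots + E_p)$ as in~\eqref{eq:sequence-lip}, I would first compute the intersection $\cL_p \cdot E_p$ using the tridiagonal structure $E_j \cdot E_p = \delta_{|j-p|,1}$ (for $j \ne p$) together with $E_p^2 = -d_p$. This gives $\cL_1 \cdot E_1 = \cL_0 \cdot E_1 - d_1$ for $p=1$ and $\cL_p \cdot E_p = \cL_0 \cdot E_p + 1 - d_p$ for $p \ge 2$. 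Comparing with~\eqref{eq:cli0-oeip}, the intersection condition on $\cL_0$ appearing in~(1) is therefore equivalent to the single uniform requirement $\cL_p \cdot E_p = b_p - 1$ for every $1 \le p \le m$.

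Next, I would tensor the structure sequence $0 \to \cO_\tX(-E_p) \to \cO_\tX \to \cO_{E_p} \to 0$ with $\cL_p$ to obtain
$$0 \to \cL_{p-1} \to \cL_p \to \cL_p|_{E_p} \to 0,$$
and identify $\cL_p|_{E_p} = \cO_{E_p}(\cL_p \cdot E_p) = \cO_{E_p}(-1 + b_p)$. This short exact sequence shows simultaneously that each $\cL_p$ is an extension of $\cO_{E_p}(-1+b_p)$ by $\cL_{p-1}$, and that each $\cO_{E_p}(-1+b_p)$ is the cone of the natural morphism $\cL_{p-1} \to \cL_p$. By induction on $p$, the triangulated subcategories generated by $\cL_0$ together with either $\{\cO_{E_q}(-1+b_q)\}_{q=1}^{m}$ or $\{\cL_q\}_{q=1}^{m}$ therefore coincide, giving the equivalence of the two generation conditions in~(1) and~(2) once the $b_p$ are matched as above.

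The remaining point is to verify that in condition~(2) the ordered collection $\cL_0, \cL_1, \dots, \cL_m$ really is exceptional, for which I would invoke~\eqref{eq:hotx}. For $0 \le q < p \le m$ one has $\Ext^\bullet(\cL_p, \cL_q) = H^\bullet(\tX, \cO_\tX(-C))$ where $C = E_{q+1} + \dots + E_p$ is a connected sub-chain. From the structure sequence $0 \to \cO_\tX(-C) \to \cO_\tX \to \cO_C \to 0$, combined with $H^\bullet(\tX, \cO_\tX) = \Bbbk$ and the standard cohomology $H^0(\cO_C) = \Bbbk$, $H^1(\cO_C) = 0$ for a tree of smooth rational curves, one deduces $H^\bullet(\tX, \cO_\tX(-C)) = 0$; together with $\Ext^\bullet(\cL_p,\cL_p) = H^\bullet(\tX, \cO_\tX) = \Bbbk$ this gives exceptionality. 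The main obstacle is really just the bookkeeping in the first step: once the identity $\cL_p \cdot E_p = b_p - 1$ is recognized as the common content of the two intersection conditions, both directions of the equivalence drop out of the same short exact sequence.
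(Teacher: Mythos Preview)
Your proof is correct and follows essentially the same route as the paper: both arguments hinge on the short exact sequence $0 \to \cL_{p-1} \to \cL_p \to \cO_{E_p}(\cL_p \cdot E_p) \to 0$ and the identification $\cL_p \cdot E_p = b_p - 1$ with the intersection condition~\eqref{eq:cli0-oeip}. The only difference is that for exceptionality the paper simply cites \cite[Lemma~2.1]{HP}, whereas you supply the direct cohomological argument via $H^\bullet(\tX,\cO_\tX(-C))$; this is precisely the content of that lemma, so your version is a little more self-contained but otherwise identical.
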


\begin{proof}
Assume~(2).
The standard exact sequence $0 \to \cO_\tX(-E_{p}) \to \cO_\tX \to \cO_{E_{p}} \to 0$ after tensoring by $\cL_{p}$ gives
\begin{equation}
\label{eq:lip-lip-bip}
0 \to \cL_{p-1} \to \cL_{p} \to \cO_{E_{p}}(-1 + b_{p}) \to 0,
\end{equation}
where $b_{p}$ are defined by
\begin{equation}
\label{eq:bip-lip}
b_{p} = \cL_{p} \cdot E_{p} + 1.
\end{equation}
This shows that the subcategory~\eqref{eq:tcai-l-l}
is at the same time generated by the line bundle~$\cL_{0}$ and the sheaves $\cO_{E_{p}}(-1 + b_{p})$.
Furthermore,
\begin{equation*}
\cL_{0} \cdot E_{1} = \cL_{1} \cdot E_{1} - E_{1}^2 = -1 + b_{1} + d_{1},
\end{equation*}
and for each $2 \le p \le m$
\begin{align*}
\cL_{0} \cdot E_{p} &=
\cL_{p} \cdot E_{p} - E_{p-1} \cdot E_{p} - E_{p}^2 =
-1 + b_{p} - 1 + d_{p}.
\end{align*}
This proves~(1).

Conversely, assume~(1) and define the sequence of line bundles by~\eqref{eq:sequence-lip}.
A computation similar to the above shows that $\cL_{p} \cdot E_{p} = -1 + b_{p}$,
hence we have an exact sequence~\eqref{eq:lip-lip-bip} for each $1 \le p \le m$.
This proves that $\tcA$ is generated by the sequence~$\cL_{p}$ with $0 \le p \le m$.
Finally, the sequence of line bundles
is exceptional by~\cite[Lemma~2.1]{HP}, hence~(2) holds.
\end{proof}

\begin{definition}
\label{definition: adherent}
Let $\tX$ be a smooth projective surface such that~\eqref{eq:hotx} holds.
We will say that a triangulated subcategory $\tcA \subset \Db(\tX)$ is {\sf twisted adherent}
to a chain $D {} = \cup_{p=1}^{m} E_{p}$ of smooth rational curves if
any of the equivalent conditions of Lemma~\ref{lemma:adherence-equivalence} hold.

The sequence $(b_1,\dots,b_m)$ is called the {\sf twist} of an adherent category.
When we do not want to specify the twist we will just say that $\tcA$ is {\sf adherent} to $D$,
and in the case when all~$b_{p}$ are zero, we will say that $\tcA$ is {\sf untwisted adherent} to $D$.
\end{definition}

In~\cite{HP} Hille and Ploog suggest to think of an adherent category $\tcA$
as of a ``categorical neighborhood'' of the sheaves $\cO_{E_{p}}(-1+b_p)$.

\begin{example}
Let us illustrate the concept of untiwsted adherence for the singularities~$\frac1{r}(1,1)$ and $\frac1{r}(1,r-1)$:
\begin{itemize}
\item
in the $\frac1{r}(1,1)$ case the exceptional divisor is a single $(-r)$-curve $E$,
and the adherence condition for the line bundle $\cL_0$ from Lemma~\ref{lemma:adherence-equivalence}
is $\cL_0 \cdot E = r - 1$ (cf.\ Example~\ref{ex:one-sing});
\item
in the $\frac1{r}(1,r-1)$ case the exceptional divisor is a chain
of $(-2)$-curves $E_1, \dots, E_{r-1}$, and the adherence condition is
$\cL_0 \cdot E_1 = 1$, $\cL_0 \cdot E_2 = \dots = \cL_0 \cdot E_{r-1} = 0$.
\end{itemize}
\end{example}

Actually, for the main results of this section (see subsection~\ref{subsection:cai}) we use only untwisted adherence.
However, the twisted version will become important in Section~\ref{section:brauer},
so in this subsection we work in the more general case of twisted adherence.

\begin{remark}
\label{remark:adherent-empty}
We want to consider the case of \emph{empty} $D$ as a special case of the above situation:
explicitly, a category adherent to the empty divisor is just a category generated
by a single line bundle $\cL_{0}$ with no conditions imposed.
\end{remark}

Below we provide
a reformulation of a result of Hille and Ploog
that is crucial for us;
it allows to relate adherent exceptional collections to
finite-dimensional noncommutative algebras.

When $\Lambda$ is a noncommutative algebra we write $\Lambda\md$ for the abelian
category of finitely generated right $\Lambda$-modules.
Recall that a finite-dimensional algebra $\Lambda$ is called {\sf basic} if~$\Lambda/\fR \cong \Bbbk \times \dots \times \Bbbk$,
where $\fR$ is the Jacobson radical of $\Lambda$.
Assume the copies of the field $\Bbbk$ in the right-hand side of the above equality are indexed by the integers $0,1,\dots,m$.
The corresponding idempotents $\bar{e}_0,\bar{e}_1,\dots,\bar{e}_m$ in $\Lambda/\fR$
lift to unique idempotents $e_0,e_1,\dots,e_m$ in $\Lambda$.
Then for each $0 \le p \le m$
\begin{equation*}
S_p = \bar{e}_p(\Lambda/\fR)
\end{equation*}
is a simple (one-dimensional) $\Lambda$-module, and every finite-dimensional $\Lambda$-module has a filtration with factors $S_p$;
in particular every simple $\Lambda$-module is isomorphic to one of~$S_p$.
Similarly, for each $0 \le q \le m$
\begin{equation*}
P_q = e_q\Lambda
\end{equation*}
is a projective $\Lambda$-module and we have
\begin{equation}
\label{eq:ext-p-s}
\Ext^\bullet(P_q,S_p) = S_p e_q = \bar{e}_p (\Lambda/\fR) \bar{e}_q =
\begin{cases}
\Bbbk, & \text{if $p = q$},\\
0, & \text{if $p \ne q$}.
\end{cases}
\end{equation}
In particular, all $P_q$ are indecomposable, pairwise non-isomorphic, and every projective $\Lambda$-module is a sum of $P_q$.

\begin{theorem}[{\cite[Theorem~2.5]{HP}}]
\label{theorem:hille-ploog}
Let $\tX$ be a smooth projective surface satisfying~\eqref{eq:hotx},
let $D_i = E_{i,1} \cup E_{i,2} \cup \dots \cup E_{i,m_i} \subset \tX$ be
a chain of rational curves of length~$m_i$ with $E_{i,p}^2 \le -2$ for each $1 \le p \le m_i$,
and assume that a subcategory $\tcA_i \subset \Db(\tX)$ is adherent to~$D_i$.
Then

\noindent{$(i)$}
The subcategory $\tcA_i \subset \Db(\tX)$ is admissible.

\noindent{$(ii)$}
There is a sequence $\cP_{i,p}$, $0 \le p \le m_i$ of vector bundles on $\tX$ such that
\begin{equation}
\label{eq:def-lambda}
\Lambda_i := \End_\tX(\cP_{i,0} \oplus \cP_{i,1} \oplus \dots \oplus \cP_{i,m_i})
\end{equation}
is a finite dimensional basic
algebra of finite global dimension and
\begin{equation}
\label{eq:def-tgamma}
\tilde\gamma_i \colon \Db(\Lambda_i\md) \xrightarrow{\ \simeq\ } \tcA_i,
\qquad
M \mapsto M \otimes_{\Lambda_i} \left( \bigoplus_{p=0}^{m_i} \cP_{i,p} \right)
\end{equation}
is an equivalence of categories.

\noindent{$(iii)$}
The algebra $\Lambda_i$ is quasi-hereditary
with $m_i+1$ simple modules $S_{i,p}$ and indecomposable projective modules
$P_{i,p}$, $0 \le p \le m_i$,
and
\begin{equation}
\label{eq:tgamma-p-s}
\begin{aligned}
\tilde\gamma_i(P_{i,p}) &\cong \cP_{i,p},
&& 0 \le p \le m_i,
\\
\tilde\gamma_i(S_{i,0}) &\cong \cL_{i,0},
\\
\tilde\gamma_i(S_{i,p}) &\cong \cO_{E_{i,p}}(-1 + b_{i,p}),\quad && 1 \le p \le m_i,
\end{aligned}
\end{equation}
where $(b_{i,p})$ is the twist of $\tcA_i$.
\end{theorem}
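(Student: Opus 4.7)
The plan is to translate the adherence hypothesis into the precise input required by the Hille--Ploog construction \cite[Theorem~2.5]{HP} and then read off the consequences. First, by Lemma~\ref{lemma:adherence-equivalence}(2) the adherence assumption provides an exceptional collection of line bundles $\cL_{i,0}, \cL_{i,1}, \dots, \cL_{i,m_i}$ generating $\tcA_i$, satisfying the inductive relation $\cL_{i,p} = \cL_{i,p-1}(E_{i,p})$. Part~$(i)$ follows at once: any triangulated subcategory of $\Db(\tX)$ generated by a finite exceptional collection on a smooth projective variety is admissible, since the embedding has both a left and a right adjoint produced by iterated mutation against the exceptional objects (and $\tX$ is smooth projective, so Serre duality applies).

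For part~$(ii)$ the goal is to upgrade the exceptional generation above to a tilting generation. The sum of line bundles $\cL_{i,0} \oplus \dots \oplus \cL_{i,m_i}$ is typically \emph{not} itself tilting (higher $\Ext$'s between the $\cL_{i,p}$ need not vanish), so following Hille--Ploog one constructs replacements $\cP_{i,p}$ as iterated universal extensions of the $\cL_{i,q}$ (equivalently, by suitable mutations of $\cL_{i,0}$ through the simples $\cO_{E_{i,q}}(-1+b_{i,q})$). The key numerical input is the condition $E_{i,p}^2 \le -2$ together with the intersection numbers~\eqref{eq:cli0-oeip}, which guarantee that these extensions exist, remain in degree zero, and clean up all higher Ext's between the $\cP_{i,p}$. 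Once $\bigoplus \cP_{i,p}$ is verified to be a tilting object of $\tcA_i$, the classical tilting equivalence yields~\eqref{eq:def-tgamma} with $\Lambda_i = \End_\tX(\bigoplus \cP_{i,p})$, and finite global dimension of $\Lambda_i$ follows because each of the candidate simples in~\eqref{eq:tgamma-p-s} has a finite resolution by the $\cL_{i,p}$'s (hence by the $\cP_{i,p}$'s).

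For part~$(iii)$ the quasi-hereditary structure is a feature of the Hille--Ploog construction: the exceptional collection $\cL_{i,0}, \dots, \cL_{i,m_i}$ supplies a standard filtration, with the $\cL_{i,p}$ playing the role of standard modules $\Delta_{i,p}$ and the short exact sequences~\eqref{eq:lip-lip-bip} providing the filtration of each $\Delta_{i,p}$ by the simples. To identify $\tilde\gamma_i(S_{i,p})$ explicitly, I would compute $\Ext^\bullet_\tX(\cP_{i,q}, -)$ against each candidate simple: the adherence numerology~\eqref{eq:cli0-oeip} together with $\cP_{i,p}|_{E_{i,q}} \cong \cO_{E_{i,q}}(-b_{i,q})^{\oplus ?}$ (read off from the extension construction) forces the orthogonality~\eqref{eq:ext-p-s} with $\cL_{i,0}$ and $\cO_{E_{i,p}}(-1+b_{i,p})$ taking the role of simples, which pins down the identifications in~\eqref{eq:tgamma-p-s}.

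The main obstacle, and indeed the content of the Hille--Ploog theorem itself, is the explicit construction of the tilting vector bundles $\cP_{i,p}$ and the verification that their endomorphism algebra is basic, quasi-hereditary, and of finite global dimension. The negativity $E_{i,p}^2 \le -2$ and the precise intersection conditions coming from adherence enter this verification in an essential way; everything else in the statement is either a tautology of the tilting formalism or a matter of matching simples to their geometric images.
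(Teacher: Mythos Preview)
Your proposal is correct and follows essentially the same approach as the paper: both use Lemma~\ref{lemma:adherence-equivalence} to produce the exceptional collection $\cL_{i,0},\dots,\cL_{i,m_i}$ (giving admissibility at once), then defer to the Hille--Ploog construction of the $\cP_{i,p}$ as iterated universal extensions of $\cL_{i,p}$ by $\cL_{i,p+1},\dots,\cL_{i,m_i}$, and finally read off the identification of projectives and simples from~\cite{HP}. The paper is slightly more precise in pointing to the specific locations in~\cite{HP} (Proposition~1.3, Corollary~1.9, and the discussion before Theorem~2.5) where each piece is established, and notes that basicness of $\Lambda_i$ follows simply because it is the endomorphism algebra of a sum of pairwise non-isomorphic indecomposable projectives.
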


\begin{proof}
By Lemma~\ref{lemma:adherence-equivalence}
the category $\tcA_i$ is generated by an exceptional collection
formed by~$\cL_{i,0}$ and
$\cL_{i,p} = \cL_{i,0}(E_{i,1} + \dots + E_{i,p})$, $1 \le p \le m_i$, for an appropriate line bundle~$\cL_{i,0}$,
hence is admissible.
Furthermore, the sequence of line bundles $\cL_{i,p}$
is obtained from the sequence in~\cite[Theorem~2.5]{HP}
by a line bundle twist, and so all the properties~\mbox{$(i)$--$(iii)$} for the category $\tcA_i$ have been established in~\cite{HP}.
Namely, the vector bundle~$\cP_{i,p}$ is constructed as the iterated universal extension
of~$\cL_{i,p}$ by~$\cL_{i,p+1}, \dots, \cL_{i,m_i}$,
and the functor
\begin{equation}
\label{eq:tgamma-radjoint}
\RHom\left(\bigoplus_{p=0}^{m_i} \cP_{i,p}, - \right) \colon \tcA_i \xrightarrow{\ \simeq\ } \Db(\Lambda_i\md)
\end{equation}
is shown to be an equivalence of categories.
The functor $\tilde\gamma_i$ is its left adjoint, hence defines the inverse equivalence.
Furthermore, in the proof of~\cite[Proposition~1.3]{HP} it is shown
that the sheaves $\cP_{i,p}$ correspond to indecomposable projective modules~$P_{i,p}$
and the fact that the sheaves $\cL_{i,0}$ and $\cO_{E_{i,p}}(-1+b_{i,p})$ correspond to simple modules~$S_{i,0}$ and~$S_{i,p}$
was explained just before~\cite[Theorem~2.5]{HP} (see also the proof of~\cite[Corollary~1.9]{HP}).

The algebra $\Lambda_i$ is basic because it is isomorphic to the endomorphism algebra
of the sum of its indecomposable projective modules~$P_{i,p}$.
\end{proof}

\begin{remark}
\label{remark:tgammai}
Note that the pair of functors defined by~\eqref{eq:def-tgamma} and~\eqref{eq:tgamma-radjoint} extend to an adjoint pair of functors
between the bounded above categories~$\Dm(\Lambda_i\md)$ and~$\tcA_i^-$ (defined by~\eqref{eq:dbtx-})
and a standard argument shows that they induce an equivalence~$\Dm(\Lambda_i\md) \cong \tcA_i^-$.
\end{remark}

In what follows we call $\Lambda_i$ the {\sf Hille--Ploog algebra}.
In case of a category adherent to the empty divisor as in Remark~\ref{remark:adherent-empty}, we have~$\Lambda_i \cong \Bbbk$.

\subsection{Hille--Ploog algebras as resolutions of singularities}
\label{subsection:hille-ploog}

Following~\cite[Definition~4.6]{KK} from the algebra $\Lambda_i$ we construct the algebra
\begin{equation}
\label{eq:def-k}
K_i := \End_\tX(\cP_{i,0}) \cong \End_{\Lambda_i}(P_{i,0}).
\end{equation}
We call $K_i$ the {\sf Kalck--Karmazyn algebra}.
Every $K_i$ is a local finite dimensional algebra
with explicit generators and relations (see Lemma~\ref{lemma:kk} for its explicit description).

The projective $\Lambda_i$-module $P_{i,0}$ is a $K_i$-$\Lambda_i$-bimodule, hence defines a pair of functors
\begin{align}
\label{eq:def-rho}
\begin{aligned}
\rho_{i*} &\colon \Dm(\Lambda_i\md) \to \Dm(K_i\md), \qquad&
M &\mapsto \RHom_{\Lambda_i}(P_{i,0},M),\\
\rho_i^* &\colon \Dm(K_i\md) \to \Dm(\Lambda_i\md), &
N &\mapsto N \otimes_{K_i} P_{i,0},
\end{aligned}
\end{align}
(where the tensor product is derived).
In case of a category adherent to the empty divisor as in Remark~\ref{remark:adherent-empty}, we have $K_i \cong \Lambda_i \cong \Bbbk$
and the functors $\rho_i^*$ and $\rho_{i*}$ are equivalences.

In general, we suggest to think about the pair of functors $(\rho_i^*,\rho_{i*})$ as
making $\Lambda_i$ into a noncommutative, or rather categorical,
resolution of $K_i$, see~\cite{KL} for a discussion of this concept.
We expect that the algebra $\Lambda_i$ is the Auslander resolution of the algebra~$K_i$ as defined in~\cite[\S5]{KL}.
Below we show that the pair of functors $(\rho_i^*,\rho_{i*})$ satisfies the same properties as
the resolution~$(\pi^*,\pi_*)$ of a rational surface singularities does, see Section~\ref{subsection:resolution}.

First of all, by~\eqref{eq:def-rho}
the functor $\rho_i^*$ is the left adjoint of $\rho_{i*}$ and preserves perfectness, while the functor $\rho_{i*}$ preserves boundedness:
\begin{equation*}
\rho_i^*(\Dperf(K_i\md)) \subset \Dperf(\Lambda_i\md),
\qquad
\rho_{i*}(\Db(\Lambda_i\md)) \subset \Db(K_i\md).
\end{equation*}
Moreover, on the bounded above categories these functors satisfy the property
\begin{equation}
\label{eq:rho-rho}
\rho_{i*} \circ \rho_i^* \cong \id;
\end{equation}
in particular the functor $\rho_i^*$ is fully faithful.
It follows immediately from~\eqref{eq:ext-p-s} and the definition~\eqref{eq:def-rho} of $\rho_{i*}$ that
the kernel of the functor~$\rho_{i*}$ is spanned by the simple modules
(except $S_{i,0}$), hence
\begin{align}
\label{eq:ker-rho}
\begin{aligned}
\Ker \rho_{i*} &= \langle S_{i,1},\dots,S_{i,m} \rangle^{\oplus} \subset \Dm(\Lambda_i\md),\\
\Im \rho_i^* &= {}^\perp\langle S_{i,1},\dots,S_{i,m} \rangle \subset \Dm(\Lambda_i\md),
\end{aligned}
\end{align}
where $\langle - \rangle^\oplus$ denotes the minimal triangulated subcategory
closed under arbitrary direct sums that exist in $\Dm(\Lambda_i\md)$.

The following lemma is an analog of Lemma~\ref{lemma:descent} and can be proved by a similar (in fact, even simpler) argument.
\begin{lemma}
\label{lemma:descent-algebraic}
Let $M \in \Db(\Lambda_i\md)$.
The following properties are equivalent:
\begin{enumerate}
\item
one has $M \in \langle P_{i,0} \rangle$;
\item
for any $1 \le p \le m_i$ one has $\Ext^\bullet(M, S_{i,p}) = 0$;
\item
there exists $N \in \Dperf(K_i\md)$ such that $M \cong \rho_i^*N$;
\item
one has $\rho_{i*}M \in \Dperf(K_i\md)$ and $M \cong \rho_i^*(\rho_{i*}M)$.
\end{enumerate}
\end{lemma}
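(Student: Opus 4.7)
The plan is to establish the cycle (1)$\Rightarrow$(2)$\Rightarrow$(3)$\Rightarrow$(4)$\Rightarrow$(1), closely mirroring the proof of Lemma~\ref{lemma:descent}: the role of the resolution morphism $(\pi^*,\pi_*)$ is played by $(\rho_i^*,\rho_{i*})$, the role of $\cO_{E_{i,p}}(-1)$ is played by the simple modules $S_{i,p}$ with $p\ge 1$, and the role of the skyscraper $\cO_{\tx}$ is played by $S_{i,0}$. The three easy implications are as follows. For (1)$\Rightarrow$(2), note that $\Ext^\bullet(P_{i,0},S_{i,p}) = 0$ for $p\ge 1$ by~\eqref{eq:ext-p-s}, so the vanishing extends to the full triangulated subcategory $\langle P_{i,0}\rangle$ by the two-out-of-three property. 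For (3)$\Rightarrow$(4), apply $\rho_{i*}$ and use~\eqref{eq:rho-rho} to get $\rho_{i*} M \cong N$ and hence $\rho_i^*(\rho_{i*} M) \cong \rho_i^* N \cong M$. For (4)$\Rightarrow$(1), use $\rho_i^* K_i \cong K_i\otimes_{K_i} P_{i,0} \cong P_{i,0}$ so that $\rho_i^*\bigl(\Dperf(K_i\md)\bigr) = \rho_i^*\langle K_i\rangle \subseteq \langle P_{i,0}\rangle$.

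The crux is (2)$\Rightarrow$(3). Since $M$ is a bounded complex of finitely generated modules over the noetherian ring $\Lambda_i$, the functor $\Ext^\bullet(M,-)$ commutes with arbitrary direct sums; therefore assumption~(2) places $M$ in the full subcategory ${}^\perp\langle S_{i,1},\dots,S_{i,m_i}\rangle$ taken inside $\Dm(\Lambda_i\md)$. By the second line of~\eqref{eq:ker-rho} this is exactly $\Im\rho_i^*$, so $M \cong \rho_i^* N$ for some $N\in\Dm(K_i\md)$, and applying $\rho_{i*}$ to this isomorphism together with~\eqref{eq:rho-rho} shows $N\cong \rho_{i*} M$, which lies in $\Db(K_i\md)$ since $\rho_{i*}$ preserves boundedness.

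It remains to upgrade $N$ from bounded to perfect. The noncommutative analogue of Lemma~\ref{lemma:comm-alg}, which holds by the very same minimal-projective-resolution argument applied to the local noetherian ring $K_i$ (whose unique simple right module is $\Bbbk$), reduces this to showing $\Ext^t_{K_i}(N,\Bbbk) = 0$ for $|t|\gg 0$. Now~\eqref{eq:ext-p-s} gives $\rho_{i*}(S_{i,0}) = \RHom_{\Lambda_i}(P_{i,0},S_{i,0}) \cong \Bbbk$, so the adjunction between $\rho_i^*$ and $\rho_{i*}$ yields
\[
\RHom_{K_i}(N,\Bbbk) \;\cong\; \RHom_{K_i}(N,\rho_{i*}S_{i,0}) \;\cong\; \RHom_{\Lambda_i}(\rho_i^* N, S_{i,0}) \;\cong\; \RHom_{\Lambda_i}(M, S_{i,0}).
\]
The right-hand side is bounded because $\Lambda_i$ has finite global dimension by Theorem~\ref{theorem:hille-ploog} and both $M$ and $S_{i,0}$ are bounded complexes of finitely generated modules, which provides the required vanishing.

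The main obstacle I anticipate is this perfectness upgrade: unlike in the geometric setting, where smoothness of $\tX$ was used implicitly, here one must explicitly invoke the finite global dimension of $\Lambda_i$ and verify the noncommutative version of Lemma~\ref{lemma:comm-alg}. The only other subtle step is the passage from the $\Ext$-vanishing in~(2) to membership in $\Im\rho_i^*$ via~\eqref{eq:ker-rho}, which requires the compactness-type remark that $\Ext^\bullet(M,-)$ commutes with direct sums for $M\in\Db(\Lambda_i\md)$; this is routine but worth a line in the write-up.
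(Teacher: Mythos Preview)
Your proof is correct and follows exactly the route the paper intends: the paper omits the proof entirely, saying only that it ``can be proved by a similar (in fact, even simpler) argument'' to Lemma~\ref{lemma:descent}, and you have faithfully carried out that analogy, including the perfectness upgrade via the noncommutative form of Lemma~\ref{lemma:comm-alg} and the finite global dimension of~$\Lambda_i$. The two subtleties you flag (compactness of~$M$ to pass from~(2) to $\Im\rho_i^*$, and the minimal-projective-resolution argument over the local algebra~$K_i$) are genuine but routine, and your treatment of them is adequate.
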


Finally, we will need the following simple consequence of the above facts.

\begin{lemma}
\label{lemma:rho-surjective}
The restriction of the functor $\rho_{i*}$ to the bounded derived category is essentially surjective, i.e.,
\begin{equation*}
\rho_{i*}(\Db(\Lambda_i\md)) = \Db(K_i\md).
\end{equation*}
Moreover,
\begin{equation}
\label{eq:rho-s0}
\rho_{i*}(S_{i,0}) \cong \Bbbk,
\qquad
\rho_{i*}(P_{i,0}) \cong K_i.
\end{equation}
\end{lemma}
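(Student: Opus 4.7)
The plan is to handle the two computations in \eqref{eq:rho-s0} by a direct application of the definitions of $\rho_{i*}$, $K_i$, and the Ext--computation \eqref{eq:ext-p-s}, and then to deduce essential surjectivity via a canonical truncation argument using the $t$-exactness of $\rho_{i*}$.

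First I would establish \eqref{eq:rho-s0}. Since $P_{i,0}$ is a projective right $\Lambda_i$-module, the functor $\rho_{i*} = \RHom_{\Lambda_i}(P_{i,0},-)$ agrees with the underived $\Hom_{\Lambda_i}(P_{i,0},-)$ on $\Lambda_i\md$. Applying this to $P_{i,0}$ gives $\End_{\Lambda_i}(P_{i,0}) = K_i$ by the definition \eqref{eq:def-k}, and applying it to $S_{i,0}$ gives $\Bbbk$ by~\eqref{eq:ext-p-s} with $p = q = 0$.

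Next, the essential surjectivity will follow from the fact that $\rho_{i*}$ is $t$-exact and commutes with canonical truncations. The $t$-exactness again comes from projectivity of $P_{i,0}$ as a right $\Lambda_i$-module, which forces
\begin{equation*}
\cH^j(\rho_{i*} M) \cong \rho_{i*}(\cH^j M) \qquad \text{for every } M \in \Dm(\Lambda_i\md) \text{ and every } j \in \Z.
\end{equation*}
Now take any $N \in \Db(K_i\md)$, with cohomology concentrated in some range $[a,b]$. Form $\rho_i^*(N) \in \Dm(\Lambda_i\md)$; since $\rho_i^*$ is a (derived) tensor product with $P_{i,0}$, the object $\rho_i^*(N)$ has cohomology concentrated in degrees $\le b$. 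Set
\begin{equation*}
M := \tau^{\ge a}(\rho_i^*(N)) \in \Db(\Lambda_i\md);
\end{equation*}
this lies in the bounded derived category because its cohomology is supported in $[a,b]$ and each cohomology module is finitely generated. Applying $\rho_{i*}$ and using both its commutation with canonical truncations and~\eqref{eq:rho-rho} yields
\begin{equation*}
\rho_{i*}(M) \cong \tau^{\ge a}(\rho_{i*}\rho_i^* N) \cong \tau^{\ge a}(N) \cong N,
\end{equation*}
which proves essential surjectivity.

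The only point requiring care is verifying that the canonical truncation of $\rho_i^*(N)$ indeed lies in $\Db(\Lambda_i\md)$ — boundedness above is by construction and boundedness below follows from $\rho_i^*(N) \in \Dm(\Lambda_i\md)$, while finite generation of the cohomology modules is automatic because $N$ is finitely generated over $K_i$ and $P_{i,0}$ is finitely generated over $\Lambda_i$ (and finite dimensional over $K_i$). No genuine obstacle appears; the main thing to get right is the bookkeeping of cohomological amplitudes that makes the truncation land in the bounded region where $\rho_{i*}\rho_i^* \cong \id$ can be applied termwise.
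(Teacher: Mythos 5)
Your proposal is correct and follows essentially the same route as the paper: $t$-exactness of $\rho_{i*}$ from projectivity of $P_{i,0}$, the identities \eqref{eq:rho-s0} via \eqref{eq:ext-p-s} and the definition \eqref{eq:def-k}, and essential surjectivity by truncating $\rho_i^*(N)$ as $M=\tau^{\ge a}(\rho_i^*(N))$ and using commutation with truncations together with \eqref{eq:rho-rho}. The only cosmetic difference is that you spell out the boundedness and finite-generation bookkeeping slightly more explicitly than the paper does.
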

\begin{proof}
As before for $a \in \Z$ let $\tau^{\ge a}$ denote the canonical truncation of a complex in degree~$a$.
The functor $\rho_{i*}$ is exact with respect to the standard t-structures,
because the module $P_{i,0}$ is projective over $\Lambda_i$, hence
\begin{equation*}
\rho_{i*}(\tau^{\ge a}(M)) \cong \tau^{\ge a}(\rho_{i*}(M))
\end{equation*}
for any $M \in \Dm(\Lambda_i\md)$ and any $a \in \Z$.

If $N \in \Db(K_i\md)$ then $\tau^{\ge a}(N) = N$ for~$a \ll 0$
and if we set $M = \tau^{\ge a}(\rho_i^*(N))$ then
\begin{equation*}
\rho_{i*}(M) {} =
\rho_{i*}(\tau^{\ge a}(\rho_i^*(N))) \cong
\tau^{\ge a}(\rho_{i*}(\rho_i^*(N)) \cong
\tau^{\ge a}(N) = N.
\end{equation*}
It remains to note that $M \in \Db(\Lambda_i\md)$ since $\rho_i^*$ is right exact.

The first isomorphism in~\eqref{eq:rho-s0} follows from~\eqref{eq:ext-p-s}
and $\RHom_{\Lambda_i}(P_{i,0},P_{i,0}) = K_i$ by definition of $K_i$.
\end{proof}

For completeness we also include a brief explicit description of the algebras $K_i$
using the intersection data of the corresponding chain $D_i$ of rational curves $E_{i,1}, \dots, E_{i,m_i}$
(and for details we refer to~\cite{KK}).
We look at one such algebra, and to ease notation we drop the $i$ subscripts.
Recall that the intersection data of the chain is encoded in the Hirzebruch--Jung continued
fraction~{\eqref{eq:ra-hj-cf}},
where $E_{p}^2 = -d_{p}$, and there is also an associated ``dual''
(see~\cite[Section~3]{R74} or~\cite[Section~6]{KK})
Hirzebruch--Jung fraction
\begin{equation*}
\frac r{r-a} = [c_{1}, \dots, c_{l}]
\end{equation*}
determining the integers $l$, $c_1, \dots, c_l$.
This data can be used to give a presentation of the algebra $K=K(r,a)$.

\begin{lemma}[{\cite[Corollary 6.27, Proposition~6.28]{KK}}]
\label{lemma:kk}
The Kalck--Karmazyn algebra $K=K(r,a)$ associated to the cyclic surface singularity $\frac1{r}(1,a)$
is a local, monomial algebra,
$\dim_\Bbbk(K(r,a)) = r$, and
\begin{equation*}
K(r,a) \cong \frac{\Bbbk \langle z_1, \dots, z_l \rangle}
{\left\langle
\begin{array}{rl}
z_j^{c_j} = 0 & \text{for all $j$} \\
z_j z_k=0 & \text{for all $j<k$} \\
\Big(  z_j^{c_j-1} \Big)
\Big(  z_{j-1}^{c_{j-1}-2} \Big)
 {\dots} 
 \Big(  z_{k+1}^{c_{k+1}-2} \Big)
 \Big(  z_k^{c_k-1} \Big)
 =0 & \text{for all $j > k$}
\end{array}
\right\rangle}\ ,
\end{equation*}
where $\Bbbk \langle z_1, \dots, z_l \rangle$ is the free associative algebra on generators~$z_1,\dots,z_l$ and
the parameters $l \ge 1$ and the $c_j \ge 2$ are  defined by
the dual Hirzebruch--Jung continued fraction expansion $r/(r-a)=[c_1, \dots, c_l]$.
\end{lemma}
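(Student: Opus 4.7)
The plan is to unwind the construction $K_i = \End_\tX(\cP_{i,0}) \cong \End_{\Lambda_i}(P_{i,0})$ and identify both the generators and relations directly. Since the statement is attributed to Kalck--Karmazyn, I would follow their two-step strategy: first produce a surjection from the free algebra onto $K_i$ by constructing enough explicit endomorphisms of $P_{i,0}$, and then verify that the listed relations are complete by a dimension count.

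First, recall the structure of $\cP_{i,0}$: by the Hille--Ploog construction (Theorem~\ref{theorem:hille-ploog}) it is the iterated universal extension starting from $\cL_{i,0}$ and extending successively by $\cL_{i,1}, \dots, \cL_{i,m_i}$, so $P_{i,0}$ is the corresponding indecomposable projective $\Lambda_i$-module. Each generator $z_j$ of the presentation should correspond to a distinguished endomorphism of $P_{i,0}$; these are constructed in~\cite{KK} from the radical filtration of $P_{i,0}$, whose composition factors are the simple modules $S_{i,p}$ indexed by the integers $0, 1, \dots, m_i$ in a specific order determined by the dual Hirzebruch--Jung fraction $r/(r-a) = [c_1, \dots, c_l]$. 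Concretely, the number $l$ of generators equals the number of ``special'' composition factors given by Riemenschneider's point diagram, and $c_j$ records the length of the $j$-th run within this filtration.

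Next I would verify the relations. The monomial relations $z_j^{c_j} = 0$ reflect that iterating the $j$-th endomorphism exactly $c_j$ times lands past the associated piece of the radical filtration. The relations $z_j z_k = 0$ for $j < k$ are a triangularity statement: the chosen endomorphisms form a partially ordered system in which any composition ``in the wrong direction'' automatically vanishes. The long mixed relations encode the precise combinatorial interaction between the two continued fractions $r/a = [d_1, \dots, d_m]$ and $r/(r-a) = [c_1, \dots, c_l]$, corresponding to the compatibility required by the iterated extension structure along the chain of exceptional curves.

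The main obstacle I expect is confirming that no further relations are needed, i.e., that the surjection from the presented algebra onto $K_i$ is an isomorphism. For this I would prove independently that $\dim_\Bbbk K_i = r$ (for instance by computing $\pi_*\cP_{i,0}$ as a rank-one reflexive module on $(X,x)$ of length equal to the order of the local class group, or by using that $\Lambda_i$ is the Auslander algebra of $K_i$ so that the multiplicity of $P_{i,0}$ in $\Lambda_i$ as a right $K_i$-module encodes the dimension), and then check combinatorially that the algebra presented by the given generators and relations has the same $\Bbbk$-dimension. Locality and monomiality of the presented algebra are then immediate: the first because every generator $z_j$ lies in the radical by the nilpotency relations $z_j^{c_j} = 0$, the second by inspection of the relations.
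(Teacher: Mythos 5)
The paper does not actually prove this lemma: it is imported wholesale from Kalck--Karmazyn (\cite{KK}, Corollary~6.27 and Proposition~6.28), and the surrounding text explicitly refers the reader there for details. So the relevant comparison is between your sketch and the argument in \cite{KK}, and as written your proposal is a plan rather than a proof. Every substantive step is deferred: the endomorphisms $z_j$ of $P_{i,0}$ are never constructed (you only say they ``are constructed in \cite{KK}'' from the radical filtration), none of the three families of relations is verified (you describe what they ``should reflect'', but verifying them requires computing the quiver with relations of the Hille--Ploog algebra $\Lambda_i$, i.e.\ the $\Hom$-spaces and compositions among the line bundles $\cL_{i,p}$ of Lemma~\ref{lemma:adherence-equivalence} and the universal extensions $\cP_{i,p}$ of Theorem~\ref{theorem:hille-ploog} --- this is the actual content of \cite[\S 6]{KK}), and the completeness of the relations rests on two dimension counts, neither of which is carried out: that the presented monomial algebra has dimension $r$ (a nontrivial combinatorial identity in the $c_j$, ultimately equivalent to the continuant recursion behind $r=\tridet(d_1,\dots,d_m)$ and its dual), and that $\dim_\Bbbk\End(\cP_{i,0})=r$.

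Moreover, the one concrete route you propose for the latter is flawed: $\pi_*\cP_{i,0}$ is not a rank-one reflexive module --- by Proposition~\ref{prop:ri-mi} it is the sheaf $M_i$, which has rank $r_i$; it is $\pi_*\cL_{i,0}=R_i$ that is rank-one reflexive, and in neither case is ``length equal to the order of the local class group'' meaningful, so this step would fail as stated. The alternative you mention (reading the dimension off the Auslander-algebra structure of $\Lambda_i$ over $K_i$) is circular in the form given, since identifying $\Lambda_i$ as the Auslander resolution of $K_i$ is itself part of what \cite{KK} establishes. In short, the strategy you describe is the right shape of the Kalck--Karmazyn argument, but the proposal contains no verified computation, and its only explicit suggestion for the key dimension count is incorrect; to make this self-contained you would need the explicit quiver presentation of $\Lambda_i$ and the dual continued-fraction combinatorics, exactly the material for which the paper cites \cite{KK}.
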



\begin{example}
\label{example:k}
The Hirzebruch--Jung continued fractions
\begin{equation*}
\frac{r}{r-1}=\underbrace{[2,\dots, 2]}_{r-1} \quad \quad \quad \quad
\frac{r}{1} = [r].
\end{equation*}
are dual.
\begin{enumerate}
\item
The intersection data $[2, \dots, 2]$ corresponds to the $\frac1{r}(1,r-1)$ singularity,
the dual continued fraction is~$[r]$, and the corresponding Kalck--Karmazyn algebra is
\begin{equation*}
K(r,r-1)\cong \Bbbk[z]/z^{r}.
\end{equation*}
\item
The intersection data $[r]$ corresponds to the $\frac1{r}(1,1)$ singularity,
the dual continued fraction is~$[2, \dots, 2]$ and the algebra is
\begin{equation*}
K(r,1) \cong \Bbbk[z_1, \dots, z_{r-1}]/(z_1, \dots, z_{r-1})^2.
\end{equation*}
\end{enumerate}
In all other cases the algebra $K(r,a)$ is not commutative. One example is the following:
\begin{enumerate}
\item[(3)] The intersection data $7/5=[2,2,3]$ corresponds to the $\frac1{7}(1,5)$ singularity;
it has dual fraction $7/(7-5)=[4,2]$, and the algebra is
\begin{equation*}
K(7,5) \cong \frac{\Bbbk\langle z_1, z_2\rangle}{\langle z_1^4, z_2^2,  z_1z_2, z_2z_1^3 \rangle}.
\end{equation*}
\end{enumerate}
\end{example}

\begin{remark}
\label{remark:kk-duality}
As can be seen from its presentation, the algebra $K(r,a)$ depends on the directional ordering
of the chain of divisors $E_{1}, \dots, E_{m}$ determining the intersection data $r/a = [d_1, \dots, d_m]$
and dual fraction $r/(r-a)=[c_1, \dots, c_l]$.
Swapping the direction of the chain yields the fraction $r/a'=[d_m, \dots, d_1]$ and dual fraction~$r/(r-a')=[c_l, \dots, c_1]$
where $a a' \equiv 1 \mod r$, and it is explicit from the algebra presentation that~$K(r,a) \cong K(r,a')^{\textnormal{opp}}$.
Note that the corresponding cyclic quotient singularities~$\frac{1}{r}(1,a)$ and $\frac{1}{r}(1,a')$ are isomorphic;
at the same time the vector spaces duality induces an equivalence
of abelian categories
\begin{equation*}
K(r,a')\md \cong K(r,a')^{\textnormal{opp}}\md \cong  K(r,a)\md.
\end{equation*}
\end{remark}

\subsection{Kalck--Karmazyn algebras and the components $\cA_i$}
\label{subsection:cai}

Assuming that a component $\tcA_i \subset \Db(\tX)$ of a semiorthogonal
decomposition~\eqref{eq:dbtx} compatible with $\pi$
is \emph{untwisted} adherent to a chain of rational curves $D_i$,
we describe the corresponding subcategory $\cA_i \subset \Db(X)$.
Recall that the functors $\tilde\gamma_i$ from~\eqref{eq:def-tgamma}
are well-defined on the category $\Dm(\Lambda_i\md)$, see Remark~\ref{remark:tgammai}.

\begin{theorem}
\label{theorem:sod-cai}
Let $X$ be a normal projective surface satisfying~\eqref{eq:hotx} with rational singularities
and let $\pi \colon \tX \to X$ be its minimal resolution.
Let~\eqref{eq:dbtx} be a semiorthogonal decomposition compatible with $\pi$
and let~\eqref{eq:decomposition-d} be the corresponding decomposition of the exceptional divisor~$D$ of~$\pi$.
Assume that one of its components~$D_i \subset \tX$ is a chain of rational curves
and the corresponding subcategory $\tcA_i \subset \Db(\tX)$ is untwisted adherent to $D_i$.
Let~$\Lambda_i$, $K_i$, $\tilde\gamma_i$, $\rho_i^*$, and~$\rho_{i*}$ be the corresponding algebras and functors
defined by~\eqref{eq:def-lambda}, \eqref{eq:def-k}, \eqref{eq:def-tgamma}, and~\eqref{eq:def-rho}.
Then the functor
\begin{equation}
\label{def:gamma}
\gamma_i := \pi_* \circ \tilde\gamma_i \circ \rho_i^* \colon \Dm(K_i\md) \to \Dm(X).
\end{equation}
is fully faithful, preserves boundedness, and
induces an equivalence
\begin{equation*}
\gamma_i \colon \Db(K_i\md) \xrightarrow{\ \simeq\ } \cA_i := \pi_*(\tcA_i) \subset \Db(X)
\end{equation*}
onto the component $\cA_i$ of the induced semiorthogonal decomposition of~$\Db(X)$.
\end{theorem}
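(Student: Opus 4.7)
The plan is to factor $\gamma_i$ as a composition of three equivalences. By Remark~\ref{remark:tgammai} the functor $\tilde\gamma_i$ extends to an equivalence $\Dm(\Lambda_i\md) \xrightarrow{\ \simeq\ } \tcA_i^-$, and I want to identify what it does on the subcategory $\Im \rho_i^* \subset \Dm(\Lambda_i\md)$. Since the adherence assumption is \emph{untwisted} (so that $b_{i,p} = 0$), Theorem~\ref{theorem:hille-ploog}(iii) gives $\tilde\gamma_i(S_{i,p}) \cong \cO_{E_{i,p}}(-1)$ for all $1 \le p \le m_i$. Combined with~\eqref{eq:ker-rho} and Lemma~\ref{lemma:ker-pis}(i), this shows that $\tilde\gamma_i$ restricts to equivalences
\[
\tilde\gamma_i(\Ker \rho_{i*}) \ = \ \tcA_i^- \cap \Ker \pi_*
\qquad\text{and}\qquad
\tilde\gamma_i(\Im \rho_i^*) \ = \ \tilde\alpha_i(\pi^*(\cA_i^-)),
\]
the second identification using~\eqref{eq:image-ap} from Corollary~\ref{cor:ap-p-adjunction}.

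With this in hand, I would express $\gamma_i$ as the composition $\rho_i^* \colon \Dm(K_i\md) \xrightarrow{\ \simeq\ } \Im \rho_i^*$ (fully faithful by~\eqref{eq:rho-rho}), then $\tilde\gamma_i \colon \Im \rho_i^* \xrightarrow{\ \simeq\ } \tilde\alpha_i(\pi^*(\cA_i^-))$ as just identified, and finally $\pi_* \colon \tilde\alpha_i(\pi^*(\cA_i^-)) \xrightarrow{\ \simeq\ } \cA_i^-$; the last arrow is inverse to the fully faithful functor $\tilde\alpha_i \circ \pi^*$ of Corollary~\ref{cor:ap-p-adjunction}, because $\pi_* \circ \tilde\alpha_i \circ \pi^* \cong \id_{\cA_i^-}$ by Proposition~\ref{proposition:sod-dbx-}(i). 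This already yields that $\gamma_i \colon \Dm(K_i\md) \to \cA_i^- \subset \Dm(X)$ is fully faithful with essential image $\cA_i^-$.

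To promote this to an equivalence onto $\cA_i$, the remaining task is to check that $\gamma_i$ sends $\Db(K_i\md)$ into $\cA_i$ and is essentially surjective onto $\cA_i$. Given $N \in \Db(K_i\md)$, Lemma~\ref{lemma:rho-surjective} furnishes some $M \in \Db(\Lambda_i\md)$ with $\rho_{i*}(M) \cong N$; the counit $\rho_i^* \rho_{i*}(M) \to M$ has cone in $\Ker \rho_{i*}$, and by the first identification of the previous paragraph this cone maps under $\tilde\gamma_i$ into $\tcA_i^- \cap \Ker \pi_*$, so it is killed by $\pi_*$. Hence
\[
\gamma_i(N) \ = \ \pi_* \tilde\gamma_i \rho_i^* \rho_{i*}(M) \ \cong \ \pi_* \tilde\gamma_i(M) \ \in \ \pi_*(\tcA_i) \ = \ \cA_i
\]
by Proposition~\ref{proposition:sod-dbx}(iii), which establishes preservation of boundedness. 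Running the same computation with $M := \tilde\gamma_i^{-1}(\tilde\cF)$ for any $\tilde\cF \in \tcA_i$ with $\pi_* \tilde\cF \cong \cF$ (which exists by Proposition~\ref{proposition:sod-dbx}(iii)) yields $\gamma_i(\rho_{i*}M) \cong \cF$ and thus essential surjectivity onto $\cA_i$.

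The main technical nuisance I anticipate is the mismatch between $\Dm$ and $\Db$: since $P_{i,0}$ may have infinite Tor-dimension as a left $K_i$-module (indeed, $K_i$ typically has infinite global dimension), the functor $\rho_i^*$ does not preserve boundedness on the nose, so $\gamma_i(N) \in \Db(X)$ for $N \in \Db(K_i\md)$ is not visible from its definition and must be extracted by rewriting $\gamma_i(N)$ through a bounded lift $M$ of $N$ as above.
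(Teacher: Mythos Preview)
Your proposal is correct and follows essentially the same approach as the paper. The paper packages the argument as the commutativity of two squares~\eqref{eq:gamma-tgamma} --- one expressing $\gamma_i \circ \rho_{i*} \cong \pi_* \circ \tilde\gamma_i$ and the other $\tilde\gamma_i \circ \rho_i^* \cong \tilde\alpha_i \circ \pi^* \circ \gamma_i$ --- but the logical content is identical to yours: identifying $\tilde\gamma_i(\Im\rho_i^*)$ with $\tilde\alpha_i(\pi^*(\cA_i^-))$ via~\eqref{eq:image-ap} to get full faithfulness as a composition of three fully faithful functors, and then lifting $N \in \Db(K_i\md)$ through $\rho_{i*}$ (Lemma~\ref{lemma:rho-surjective}) to handle boundedness and essential surjectivity.
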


Note that in case of empty $D_i$ (see Remark~\ref{remark:adherent-empty}),
so that $\tcA_i$ is generated by an exceptional line bundle $\cL_{i,0}$,
the theorem just says that $\cA_i$ is generated by an exceptional object $\pi_*(\cL_{i,0})$.

\begin{proof}
We keep the notation introduced in the proof of Theorem~\ref{theorem:dbx}.
In particular, we denote by $\tilde\alpha_i$ the projection functor to $\tcA_i^-$.
Consider the diagrams
\begin{equation}
\label{eq:gamma-tgamma}
\vcenter{\xymatrix@C=3em{
\Dm(\Lambda_i\md) \ar[r]^-{\tilde\gamma_i} \ar[d]_{\rho_{i*}} &
\Dm(\tX) \ar[d]^{\pi_{*}}
\\
\Dm(K_i\md) \ar[r]^-{\gamma_i} &
\Dm(X)
}}
\quad\text{and}\quad
\vcenter{\xymatrix@C=3em{
\Dm(\Lambda_i\md) \ar[r]^-{\tilde\gamma_i} &
\Dm(\tX)
\\
\Dm(K_i\md) \ar[r]^-{\gamma_i} \ar[u]^{\rho_i^{*}} &
\Dm(X) \ar[u]_{\tilde\alpha_i \circ \pi^{*}}
}}
\end{equation}
We will show that both of them are commutative.

By definition $\gamma_i \circ \rho_{i*} = \pi_* \circ \tilde\gamma_i \circ \rho_i^* \circ \rho_{i*}$,
so for commutativity of the first diagram it is enough to check that for any object $M \in \Dm(\Lambda_i\md)$
the cone $M'$ of the canonical morphism $\rho_i^*(\rho_{i*}M) \to M$ is killed by the functor $\pi_* \circ \tilde\gamma_i$.
Indeed, by~\eqref{eq:rho-rho} we have~$\rho_{i*}(M') = 0$,
hence by~\eqref{eq:ker-rho} we have $M' \in \langle S_{i,1}, \dots, S_{i,m_i} \rangle^{\oplus}$,
hence by~\eqref{eq:tgamma-p-s} we have $\tilde\gamma_i(M') \in \langle \cO_{E_{i,1}}(-1), \dots, \cO_{E_{i,m_i}}(-1) \rangle^{\oplus}$,
and hence by~\eqref{eq:tcaim-kerpi} we finally have~$\pi_*(\tilde\gamma_i(M')) = 0$.

For commutativity of the second diagram note that by~\eqref{eq:ker-rho} the
image of the functor~$\rho_i^*$
is the left orthogonal ${}^\perp\langle S_{i,1},\dots, S_{i,m_i} \rangle \subset \Dm(\Lambda_i\md)$
of the subcategory generated by the simple modules~$S_{i,1},\dots, S_{i,m_i}$.
Therefore, from~\eqref{eq:tgamma-p-s} and full faithfulness of the functors $\tilde\gamma_i$
we deduce that the image of the functor $\tilde\gamma_i \circ \rho_i^*$
is contained in the left orthogonal ${}^\perp \langle \cO_{E_{i,1}}(-1), \dots, \cO_{E_{i,m_i}}(-1) \rangle \subset \tcA_i^-$,
which by~\eqref{eq:image-ap} is equal to~$\tilde\alpha_i(\pi^*(\cA_i^-))$.
Since $\pi_* \circ \tilde\alpha_i \circ \pi^* \cong \id$
on~$\cA_i^-$ by Proposition~\ref{proposition:sod-dbx-}$(i)$,
we conclude that $\tilde\alpha_i \circ \pi^* \circ \pi_* \cong \id$ on $\tilde\alpha_i(\pi^*(\cA_i^-))$, hence
\begin{equation*}
\tilde\gamma_i \circ \rho_i^* \cong
\tilde\alpha_i \circ \pi^* \circ \pi_* \circ \tilde\gamma_i \circ \rho_i^* \cong
\tilde\alpha_i \circ \pi^* \circ \gamma_i,
\end{equation*}
so the second diagram commutes.

Now let us show that the functor $\gamma_i$ is fully faithful.
Indeed, by commutativity of the second diagram in~\eqref{eq:gamma-tgamma}, this follows from full faithfulness
of the functors $\rho_i^*$ (see~\eqref{eq:rho-rho}), $\tilde\gamma_i$ (Theorem~\ref{theorem:hille-ploog}),
and $\tilde\alpha_i \circ \pi^*$ (Corollary~\ref{cor:ap-p-adjunction}).

Next, let us check that $\gamma_i$ preserves boundedness.
Indeed, take any $N \in \Db(K_i\md)$.
By Lemma~\ref{lemma:rho-surjective} there exists $M \in \Db(\Lambda_i\md)$ such that~$N \cong \rho_{i*}(M)$.
Then by commutativity of the first diagram in~\eqref{eq:gamma-tgamma} we have
$\gamma_i(N) \cong \gamma_i(\rho_{i*}(M)) \cong \pi_*(\tilde\gamma_i(M))$.
But~$\tilde\gamma_i(M)$ is bounded by Theorem~\ref{theorem:hille-ploog}, hence so is $\pi_*(\tilde\gamma_i(M)) {{}\cong{}} \gamma_i(N)$.

It remains to show that $\gamma_i(\Db(K_i\md)) = \cA_i$.
For this we restrict the first commutative diagram to the bounded derived categories,
and note that $\tilde\gamma_i$ is essentially surjective onto $\tcA_i$ by Theorem~\ref{theorem:hille-ploog},
and $\pi_*$ restricted to $\tcA_i$ is essentially surjective onto~$\cA_i$ by Theorem~\ref{theorem:dbx}.
\end{proof}

Now with this new technique we develop Example~\ref{ex:one-sing} in a special case.

\begin{example}\label{ex:hirzebruch}
Let $X = \P(1,1,d)$ be a weighted projective plane and let
\begin{equation*}
\pi \colon \tX = \F_d \to \P(1,1,d) = X
\end{equation*}
be its minimal resolution by the Hirzebruch surface $\F_d = \P_{\P^1}(\cO \oplus \cO(d))$.
The summands of~$\cO \oplus \cO(d)$ provide two disjoint divisors $E,C \subset \F_d$
with self-intersections~$-d$ and~$d$ respectively (so that $E$ is the exceptional divisor of $\pi$).
Let also $H$ be the pullback of the point class from $\P^1$.
Using the projective bundle formula and Lemma~\ref{lemma:twisted-compatible} (note that~$K_{\F_d} = -E-C-2H$)
we can write the following full exceptional collection on~$\F_d$:
\begin{equation*}
\Db(\F_d) = \langle \OO(-H-E), \OO(-H), \OO, \OO(C) \rangle.
\end{equation*}
We use this exceptional collection to describe the derived category of $\P(1,1,d)$.
We let
\begin{equation*}
\tcA_1 = \langle \OO(-H-E), \OO(-H) \rangle,
\qquad
\tcA_2 = \langle \OO \rangle,
\qquad
\tcA_3 = \langle \OO(C) \rangle;
\end{equation*}
then the semiorthogonal decomposition
\begin{equation*}
\Db(\F_d) = \langle \tcA_1, \tcA_2, \tcA_3 \rangle
\end{equation*}
is compatible with $\pi$ and untwisted adherent to the components $D_1 = E$, $D_2 = D_3 = \varnothing$ of the exceptional divisor of~$\pi$.
Using Theorem \ref{theorem:sod-cai} we deduce that
\begin{equation*}
\Db(\P(1,1,d)) = \langle \cA_1, \cA_2, \cA_3 \rangle,
\end{equation*}
with components $\cA_1 \simeq \Db(K(d,1)\md)$, $\cA_2 \simeq \cA_3 \simeq \Db(\Bbbk)$.
Explicitly, $\cA_2$ and~$\cA_3$ are generated by the exceptional line bundles $\OO$ and $\OO(d)$ on $\P(1,1,d)$
and $\cA_1$ is generated by~$R = \pi_*(\OO(-H-E)) \simeq \pi_*(\OO(-H)) {} \cong \cO(-1)$,
a reflexive sheaf of rank~1
(this agrees up to a line bundle twist with the decomposition described in Example~\ref{ex:p1ab}).
\end{example}

When we have a semiorthogonal decomposition of $\Db(\tX)$ into adherent components
a twist by a line bundle keeps the adherence property but changes the adherence twists.
It is a natural question whether in this way all components may be made untwisted,
and hence compatible with the contraction, thus providing a finite dimensional algebra description for all of the $\cA_i$.
This question is addressed in Section~\ref{section:brauer},
and now we summarize the results of Theorems~\ref{theorem:dbx} and~\ref{theorem:sod-cai}
in the case when such untwisting is possible.

\begin{corollary}
\label{corollary:dbx}
Let $X$ be a normal projective surface satisfying~\eqref{eq:hotx} with cyclic quotient singularities
and let $\pi \colon \tX \to X$ be its minimal resolution of singularities.
Let
\begin{equation*}
\Db(\tX) = \langle \tcA_1, \tcA_2, \dots, \tcA_n \rangle
\end{equation*}
be a semiorthogonal decomposition compatible with $\pi$,
such that every component $\tcA_i$ is untwisted adherent to a chain of rational curves $D_i$
where $D = \sqcup D_i$ is the exceptional locus of $\pi$.
Let $K_i$ and $\gamma_i$ be the corresponding algebras and functors defined by~\eqref{eq:def-k} and~\eqref{def:gamma}.
Then the functors~$\gamma_i$ induce a semiorthogonal decomposition
\begin{equation}
\label{eq:dbx-dbk}
\Db(X) = \langle \Db(K_1\md), \Db(K_2\md), \dots, \Db(K_n\md) \rangle.
\end{equation}
Let $\cA_i = \pi_*(\tcA_i) = \gamma_i(\Db(K_i\md))$ and $\cA_i^\perf = \cA_i \cap \Dperf(X)$.
For each $1 \le i \le n$ if~$\pi$ is crepant along $D_j$ for $j > i$ then $\gamma_i$ also induces an equivalence
\begin{equation*}
\Dperf(K_i\md) \xrightarrow{\ \simeq\ } \cA_i^\perf.
\end{equation*}
In particular, if $\pi$ is crepant along $D_j$ for $j \ge 2$
there is a semiorthogonal decomposition
\begin{equation}
\label{eq:perfx-perfk}
\Dperf(X) = \langle \Dperf(K_1\md), \Dperf(K_2\md), \dots, \Dperf(K_n\md) \rangle.
\end{equation}
\end{corollary}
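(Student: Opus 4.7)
The proof is essentially a concatenation of Theorem~\ref{theorem:dbx} and Theorem~\ref{theorem:sod-cai}, enhanced with a descent argument on the perfect level via Lemma~\ref{lemma:descent-algebraic}. The plan is as follows.

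First, the hypothesis that each $\tcA_i$ is untwisted adherent to a chain of rational curves $D_i$, together with the fact that $D=\sqcup D_i$ is exactly the exceptional locus of $\pi$, places us simultaneously in the setup of both theorems. So I would begin by invoking Theorem~\ref{theorem:dbx}$(i)$ to obtain the semiorthogonal decomposition $\Db(X)=\langle\cA_1,\dots,\cA_n\rangle$ with $\cA_i=\pi_*(\tcA_i)$, and then apply Theorem~\ref{theorem:sod-cai} to each $i$ separately to produce the equivalences $\gamma_i\colon \Db(K_i\md)\xrightarrow{\simeq}\cA_i$. Substituting yields~\eqref{eq:dbx-dbk}.

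The second half, regarding perfect complexes, is the main point requiring work. Fix $i$ and assume $\pi$ is crepant along $D_j$ for $j>i$. To see that $\gamma_i(\Dperf(K_i\md))\subset\cA_i^\perf$, take $N\in\Dperf(K_i\md)$. By Lemma~\ref{lemma:descent-algebraic} the pullback $\rho_i^*(N)$ is perfect over $\Lambda_i$, hence $\tilde\gamma_i(\rho_i^*(N))\in\Db(\tX)$ since $\tilde\gamma_i$ is an equivalence and $\tX$ is smooth. Under the crepancy assumption, Proposition~\ref{proposition:sod-dbx-}$(iii)$ identifies $\tilde\alpha_i\circ\pi^*\cong\pi^*$ on $\cA_i^-$, and the second commutative square in~\eqref{eq:gamma-tgamma} then yields $\pi^*\gamma_i(N)\cong\tilde\gamma_i(\rho_i^*N)\in\Db(\tX)$. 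Corollary~\ref{cor:descent} concludes that $\gamma_i(N)\in\Dperf(X)$. For the converse, given any $N\in\Db(K_i\md)$ with $\gamma_i(N)\in\cA_i^\perf$, the same identification shows $\tilde\gamma_i(\rho_i^*N)\cong\pi^*\gamma_i(N)\in\Dperf(\tX)$, so $\rho_i^*(N)\in\Dperf(\Lambda_i\md)$ via the equivalence $\tilde\gamma_i$. Lemma~\ref{lemma:descent-algebraic} then produces $N'\in\Dperf(K_i\md)$ with $\rho_i^*(N')\cong\rho_i^*(N)$, and applying $\rho_{i*}$ combined with~\eqref{eq:rho-rho} gives $N\cong N'\in\Dperf(K_i\md)$. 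Thus $\gamma_i$ restricts to an equivalence $\Dperf(K_i\md)\xrightarrow{\simeq}\cA_i^\perf$.

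Finally, assuming crepancy along $D_j$ for all $j\ge 2$, the condition ``crepant along $D_j$ for $j>i$'' holds for every $i\ge 1$, so the previous paragraph delivers equivalences $\Dperf(K_i\md)\cong\cA_i^\perf$ for all $i$. Combining this with the semiorthogonal decomposition $\Dperf(X)=\langle\cA_1^\perf,\dots,\cA_n^\perf\rangle$ already supplied by Theorem~\ref{theorem:dbx}$(iv)$ immediately yields~\eqref{eq:perfx-perfk}. The one delicate step, and the place where a reader might get stuck, is verifying that $\pi^*\gamma_i\cong\tilde\gamma_i\circ\rho_i^*$ rather than merely $\tilde\alpha_i\circ\pi^*\circ\gamma_i\cong\tilde\gamma_i\circ\rho_i^*$; this is precisely where the crepancy hypothesis is consumed, through Proposition~\ref{proposition:sod-dbx-}$(iii)$, and it is the crux of transporting perfectness across the descent.
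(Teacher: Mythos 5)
Your proposal is correct and follows essentially the same route as the paper's proof: the decomposition~\eqref{eq:dbx-dbk} is obtained by the same concatenation of Theorem~\ref{theorem:dbx} and Theorem~\ref{theorem:sod-cai}, and your two-directional argument for the perfect level is exactly the paper's chain of equivalences, using Lemma~\ref{lemma:descent-algebraic}, the equivalence $\tilde\gamma_i$, the crepancy-based identification $\pi^*\circ\gamma_i \cong \tilde\gamma_i\circ\rho_i^*$ via Proposition~\ref{proposition:sod-dbx-}$(iii)$ and the diagrams~\eqref{eq:gamma-tgamma}, and Corollary~\ref{cor:descent}, before concluding with Theorem~\ref{theorem:dbx}$(iv)$. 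You also correctly isolate the place where crepancy is consumed, which the paper highlights in the same way.
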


Note that we allow some components $D_i$ to be empty, see Remark~\ref{remark:adherent-empty} and Example~\ref{ex:hirzebruch}.

\begin{proof}
The semiorthogonal decomposition~\eqref{eq:dbx-dbk} follows from
a combination of Theorem~\ref{theorem:dbx} together with Theorem~\ref{theorem:sod-cai}
applied separately for each component $\cA_i$.

Now assume that $\pi$ is crepant along $D_j$ for $j > i$.
For any $N \in \Db(K_i\md)$ we have a chain of equivalences
\begin{align*}
N \in \Dperf(K_i\md)
&\iff \rho_i^*N \in \Db(\Lambda_i\md)
&& \text{(by Lemma~\ref{lemma:descent-algebraic})}\\
&\iff \tilde\gamma_i(\rho_i^*N) \in \Db(\tX)
&& \text{(by Theorem~\ref{theorem:hille-ploog})}\\
&\iff \pi^*(\gamma_iN) \in \Db(\tX)
&& \text{(by~\eqref{eq:gamma-tgamma} and Proposition~\ref{proposition:sod-dbx-})}\\
&\iff \gamma_iN \in \Dperf(X)
&& \text{(by Corollary~\ref{cor:descent}).}
\end{align*}
To be more precise, in the second line we use the fact that the functor $\tilde\gamma_i$ and its adjoint
provide an equivalence between $\Dm(\Lambda_i\md)$ and $\tcA_i^-$, see Remark~\ref{remark:tgammai},
restricting to an equivalence between $\Db(\Lambda_i\md)$ and $\tcA_i$.
Similarly,
in the third line we use the fact that $\pi^*(\gamma_iN) \subset \tcA_i^-$ by
Proposition~\ref{proposition:sod-dbx-}$(iii)$,
hence $\pi^*(\gamma_iN) \cong \tilde\alpha_i(\pi^*(\gamma_iN)) \cong \tilde\gamma_i(\rho_i^*N)$
by commutativity of~\eqref{eq:gamma-tgamma},
and in the fourth line we use that~$\gamma_iN \in \Db(X)$ by Theorem~\ref{theorem:sod-cai}.

Since~$\gamma_i$ is an equivalence of $\Db(K_i\md)$ and $\cA_i$,
we conclude that it also induces an equivalence of $\Dperf(K_i\md)$ and $\cA_i^\perf$.
Thus,~\eqref{eq:dpxperf} gives~\eqref{eq:perfx-perfk}.
\end{proof}

\begin{remark}
Assume that $\pi \colon \tX \to X$ is a minimal resolution of a normal projective surface~$X$
satisfying~\eqref{eq:hotx} with cyclic quotient singularities,
and that $D = \sqcup_{i=1}^n D_i$ is the decomposition of its exceptional divisor such that $D_i = E_{i,1} \cup \dots \cup E_{i,m_i}$
is a chain of smooth rational curves with self-intersections $E_{i,p}^2 = -d_{i,p}$.
To apply Corollary~\ref{corollary:dbx} and get the required semiorthogonal decomposition of $\Db(X)$
we need to construct a semiorthogonal decomposition of $\Db(\tX)$ with components $\tcA_i$ adherent to $D_i$.
For this we need to find a sequence of line bundles $\{\cL_{i,0}\}_{i=1}^n$ on $\tX$ with prescribed intersections with~$E_{i,p}$
such that the concatenation of the exceptional collections~$\cL_{i,0}, \cL_{i,1}, \dots, \cL_{i,m_i}$, where
\begin{equation*}
\cL_{i,p} = \cL_{i,0}(E_{i,1} + \dots + E_{i,p})
\end{equation*}
is a full exceptional collection on $\tX$.
These conditions (in a slightly generalized from to include twisted adherence) can be spelled out as
\begin{enumerate}
\item $\Ext^\bullet(\cL_{i,0},\cL_{j,0}) = 0$ {for $i > j$};
\item
$\cL_{i,0} \cdot E_{j,p} =
\begin{cases}
d_{j,p} + b_{j,p} - 2, & \text{if $i < j$};\\
d_{j,p} + b_{j,p} - 2 + \delta_{p,1}, & \text{if $i = j$};\\
b_{j,p}, & \text{if $i > j$};\\
\end{cases}$
\item the collection of line bundles $\{\{\cL_{i,p}\}_{p=0}^{m_i}\}_{i=1}^n$ is full.
\end{enumerate}
As we will see in Corollary~\ref{corollary:restriction-exists} below,
the second condition can be always fulfilled if the class group $\Cl(X)$ is torsion-free.
Moreover, the collection of line bundles $\cL_{i,0}$ satisfying this condition is unique
up to a twist of each $\cL_{i,0}$ by a line bundle pulled back from~$X$.
The hard question is to choose these twists in a way to satisfy the first and the last conditions above.
In Section~\ref{section:toric-surfaces} (see Proposition~\ref{proposition:toric-sequence})
we will explain how to do this when $X$ is a toric surface.
\end{remark}


\section{Brauer group of singular rational surfaces}
\label{section:brauer}

In this section we show that for a projective normal rational surface with rational singularities
the torsion subgroup of $\rG_0(X) = \rK_0(\Db(X))$ is dual to the Brauer group~$\Br(X)$
and give an explicit identification of elements of the Brauer group in terms of vector bundles
on the resolution~$\wt{X}$ of~$X$.
We also explain that semiorthogonal decompositions of~$\Db(\tX)$ with components
twisted adherent to connected components of the exceptional divisor
give rise to semiorthogonal decompositions of twisted derived categories~$\Db(X, \beta)$
for $\beta \in \Br(X)$ depending on the twist~$(b_{i,p})$.

\subsection{Grothendieck groups}
\label{subsection:grothendieck-preliminaries}

Let $\cT$ be a triangulated category.
The Grothendieck group $\rK_0(\cT)$ is defined as the quotient of the free abelian group on isomorphism classes of objects
by the subgroup generated by relations $[\cF_2] = [\cF_1] + [\cF_3]$ for all distinguished triangles
\begin{equation*}
\cF_1 \to \cF_2 \to \cF_3
\end{equation*}
in $\cT$.
Note that these relations imply that~\mbox{$[\cF \oplus \cF'] = [\cF] + [\cF']$} and $[\cF[1]] = -[\cF]$
for all objects $\cF$, $\cF' \in \cT$.

For a $\Bbbk$-scheme $X$ we write
\begin{equation*}
\rG_0(X) = \rK_0(\Db(X)),
\qquad
\rK_0(X) = \rK_0(\Dperf(X)).
\end{equation*}
When $X$ is smooth we have $\Db(X) = \Dperf(X)$ and $\rG_0(X) = \rK_0(X)$.

The Grothendieck group is additive with respect to semiorthogonal decompositions:
if $\cT = \langle \cA_1, \cA_2, \dots, \cA_n \rangle$, then
\begin{equation*}
\rK_0(\cT) = \rK_0(\cA_1) \oplus \rK_0(\cA_2) \oplus \dots \oplus \rK_0(\cA_n).
\end{equation*}
This implies a very simple necessary condition for the existence of~\eqref{eq:dbx-dbk}.

\begin{lemma}
\label{lemma:semiorth-torsion}
If a $\Bbbk$-scheme $X$ admits a semiorthogonal decomposition
\begin{equation*}
\Db(X) = \langle \Db(K_{1}\md), \Db(K_{2}\md), \dots, \Db(K_{n}\md) \rangle
\end{equation*}
into derived categories of finitely generated modules over local finite-dimensional algebras,
then $\rG_0(X) \cong \Z^n$.
In particular, $\rG_0(X)$ is torsion-free.
\end{lemma}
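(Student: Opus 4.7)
The plan is to invoke additivity of Grothendieck groups under semiorthogonal decompositions, which was recalled just above the lemma statement: from the hypothesis we immediately obtain
\[
\rG_0(X) = \rK_0(\Db(X)) \cong \bigoplus_{i=1}^n \rK_0(\Db(K_i\md)),
\]
so the conclusion will follow from the claim that $\rK_0(\Db(K\md)) \cong \Z$ for any local finite-dimensional $\Bbbk$-algebra $K$.

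To prove this claim I would first pass from the derived to the abelian Grothendieck group via the natural homomorphism $\rK_0(K\md) \to \rK_0(\Db(K\md))$, $[M] \mapsto [M]$. Since $K$ is finite dimensional, every object of $K\md$ has finite length, so the assignment $[\cF^\bullet] \mapsto \sum_j (-1)^j [\cH^j(\cF^\bullet)]$ is well defined on the free abelian group on isomorphism classes and respects the distinguished-triangle relations (via the long exact cohomology sequence of $\cH^\bullet$); this provides an inverse. Hence it suffices to show $\rK_0(K\md) \cong \Z$.

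Now I would exploit locality of $K$. Let $\fR$ denote the Jacobson radical of $K$; then $K/\fR$ is a finite-dimensional division algebra over $\Bbbk$, and since $\Bbbk$ is algebraically closed this forces $K/\fR \cong \Bbbk$. Consequently $S := K/\fR$ is the unique simple object of $K\md$, and every finitely generated $K$-module $M$ admits a composition series whose factors are all isomorphic to $S$. Setting $\ell(M) = \dim_\Bbbk M$, this yields the relation $[M] = \ell(M) \cdot [S]$ in $\rK_0(K\md)$. On the other hand, $\ell$ is additive in short exact sequences and hence descends to a homomorphism $\rK_0(K\md) \to \Z$ carrying $[S]$ to $1$. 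Together these identify $\rK_0(K\md)$ with $\Z$, generated by $[S]$.

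There is no serious obstacle here. The only point deserving care is the identification of $\rK_0(K\md)$ with $\rK_0(\Db(K\md))$, which is a standard fact about finite-length abelian categories but relies essentially on the finite-dimensionality of $K$.
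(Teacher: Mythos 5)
Your proof is correct and follows essentially the same route as the paper: additivity of $\rK_0$ under semiorthogonal decompositions plus the observation that the Grothendieck group of a local finite-dimensional algebra is $\Z$ via the dimension function. You merely spell out in more detail the standard facts (the isomorphism $\rK_0(K\md)\cong\rK_0(\Db(K\md))$ and the uniqueness of the simple module over a local algebra) that the paper takes for granted.
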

\begin{proof}
Since the Grothendieck group is additive for semiorthogonal decompositions,
and the Grothendieck group of finitely-generated modules over a finite-dimensional local algebra is isomorphic to $\Z$
(via the dimension function),
the lemma follows.
\end{proof}

Below we relate torsion in $\rG_0(X)$ with more basic invariants of $X$,
namely with the class group $\Cl(X)$ of Weil divisors.
Recall that there is a natural group homomorphism
\begin{equation*}
\rc_1 \colon \rK_0(X) \to \Pic(X)
\end{equation*}
and if $X$ is normal, then we can also define the homomorphism $\rc_1 \colon \rG_0(X) \to \Cl(X)$ as a composition
\begin{equation}
\label{eq:c1-G0}
\rG_0(X) \to \rG_0(X \setminus \Sing(X)) {} = \rK_0(X \setminus \Sing(X)) \xrightarrow{\ \rc_1\ } \Pic(X  \setminus \Sing(X)) = \Cl(X)
\end{equation}
where $\Sing(X) \subset X$ is the singular locus of $X$ and the first map is induced by the restriction.

For a coherent sheaf $\cF$ we denote by $\chi(\cF)$ its Euler characteristic;
for a complete variety it gives homomorphisms $\chi \colon \rG_0(X) \to \Z$ and $\chi \colon \rK_0(X) \to \Z$.

\begin{lemma}
\label{lem:k0-g0-rational}
Let $X$ be a quasi-projective rational normal surface.
If $X$ is complete, then we have an isomorphism
\begin{equation}
\label{eq:g0}
(\rk, \rc_1, \chi)\colon \rG_0(X) {}\xrightarrow{\ \cong\ }{} \Z \oplus \Cl(X) \oplus \Z,
\end{equation}
and if $X$ is not complete, then we have an isomorphism
\begin{equation}
\label{eq:g0-open}
(\rk, \rc_1)\colon \rG_0(X) {}\xrightarrow{\ \cong\ }{} \Z \oplus \Cl(X).
\end{equation}
\end{lemma}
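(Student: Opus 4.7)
The plan is to reduce to the smooth case via a resolution of singularities $\pi \colon \tX \to X$ and then apply a direct description of the Grothendieck group of smooth rational surfaces. I would split the argument into two halves.

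First I would establish the claim for a smooth quasi-projective rational surface $Y$. When $Y$ is projective, it is obtained by a sequence of blow-ups from~$\P^2$ or a Hirzebruch surface, hence admits a full exceptional collection of line bundles; this realizes $\rG_0(Y) = \rK_0(Y)$ as a free abelian group of rank $2 + \rk \Pic(Y)$, and $(\rk, \rc_1, \chi)$ is easily seen to induce an isomorphism $\rG_0(Y) \cong \Z \oplus \Pic(Y) \oplus \Z$. When $Y$ is smooth but not complete, I would embed it as an open subset of a smooth projective rational surface $\bar Y$ and run the localization exact sequence
\begin{equation*}
\rG_0(\bar Y \setminus Y) \to \rG_0(\bar Y) \to \rG_0(Y) \to 0;
\end{equation*}
since $\bar Y \setminus Y$ has dimension at most $1$, the image of the first arrow is generated by $[\cO_C]$ for components~$C$ of the boundary together with the class of a closed point, and quotienting gives $\rG_0(Y) \cong \Z \oplus \Pic(\bar Y)/\langle [C]\rangle \cong \Z \oplus \Pic(Y)$.

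Next I would pass from $\tX$ to $X$ via the resolution. By Corollary~\ref{lemma:im-pi} the derived pushforward $\pi_* \colon \rG_0(\tX) \to \rG_0(X)$ is surjective. For each irreducible exceptional curve $E$ the class $[\cO_E(-1)] \in \rG_0(\tX)$ lies in $\Ker \pi_*$, and under the first-step isomorphism it corresponds to $(0,[E],0)$ (using $\chi(\cO_{\P^1}(-1)) = 0$ and that $\rc_1$ of a rank-$1$ sheaf on a divisor equals the divisor class). Since $\Cl(X) = \Cl(X \setminus \Sing(X)) = \Pic(\tX \setminus D)$ equals $\Pic(\tX)/\langle [E] : E \subset D\rangle$, quotienting gives a surjection $\Z \oplus \Cl(X) \oplus \Z \twoheadrightarrow \rG_0(X)$ in the complete case, and $\Z \oplus \Cl(X) \twoheadrightarrow \rG_0(X)$ otherwise. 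To show injectivity I would define $(\rk, \rc_1, \chi) \colon \rG_0(X) \to \Z \oplus \Cl(X) \oplus \Z$ directly: rank and Euler characteristic are immediate, and $\rc_1$ is given by \eqref{eq:c1-G0}. The composition with the above surjection agrees with the isomorphism from the first step because $\pi_*$ preserves rank and Euler characteristic and the two $\rc_1$'s are compatible after restriction to the smooth locus $X \setminus \Sing(X) \cong \tX \setminus D$; this forces both maps to be isomorphisms.

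The main obstacle is the non-complete smooth case: one needs to know that every closed point of a smooth projective rational surface is rationally equivalent to a point of any preassigned positive-dimensional subvariety, so that in the localization sequence for $Y \subset \bar Y$ the class of a point in $\rG_0(\bar Y)$ already lies in the image of $\rG_0(\bar Y \setminus Y)$; this is a consequence of $\CH_0(\bar Y) = \Z$ for rational surfaces and is what guarantees that the $\chi$-factor is killed in passing from $\bar Y$ to $Y$.
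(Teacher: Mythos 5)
Your proposal is essentially correct, but it takes a genuinely different route from the paper. The paper never passes through a resolution: it works directly on the singular surface $X$, using the coniveau filtration $F^\bullet\rG_0(X)$, the surjection $\CH_*(X)\to\operatorname{gr}\rG_0(X)$ from \cite[Example 15.1.5]{Fulton-Chow}, the identifications $\CH_2(X)=\Z$, $\CH_1(X)=\Cl(X)$, and $\CH_0(X)=\Z$ (complete case) or $\CH_0(X)=0$ (non-complete case, via the localization sequence for Chow groups); the explicit classes $[\cO_X]$, $[\cO_D]$, $[\cO_x]$ then split the filtration and show $(\rk,\rc_1,\chi)$ is an isomorphism. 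You instead first treat the smooth case (full exceptional collections of line bundles for projective rational surfaces, then $\rG_0$-localization for an open $Y\subset\bar Y$) and then descend along $\pi\colon\tX\to X$, using surjectivity of $\pi_*$ on $\rG_0$, the relation $[\cO_E(-1)]\mapsto(0,[E],0)$, the sequence $0\to\Cl(\tX/X)\to\Pic(\tX)\to\Cl(X)\to 0$, and the retraction given by $(\rk,\rc_1,\chi)$ on $X$ itself. Both arguments are sound; the paper's filtration argument is shorter and intrinsic to $X$, while yours makes the role of the resolution and of the kernel classes $[\cO_E(-1)]$ explicit, which fits well with the rest of the paper (e.g.\ Proposition~\ref{prop:g0-quotient}, where exactly this sublattice reappears). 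Note also that in your smooth projective step the exceptional collection only gives freeness and the rank count; you still need surjectivity of $(\rk,\rc_1,\chi)$, which is immediate from the classes $[\cO_Y]$, $[\cO_D]$, $[\cO_x]$, so spell that half-line out.

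One caveat to flag: the lemma assumes only that $X$ is a normal rational surface, whereas Corollary~\ref{lemma:im-pi}, which you invoke for surjectivity of $\pi_*\colon\rG_0(\tX)\to\rG_0(X)$, is proved under the hypothesis of rational singularities (a normal rational surface may well have non-rational, e.g.\ elliptic, singular points). This is repairable without that hypothesis: by the $\rG$-theory localization sequence for $\Sing(X)\subset X$ and for the exceptional locus in $\tX$, any class on $X$ differs from a pushforward by a class supported on the finitely many singular points, and such classes are $\Z$-combinations of skyscrapers $[\cO_x]=\pi_*[\cO_{\tx}]$; so the needed surjectivity holds in general. Alternatively, restricting the lemma's hypotheses to rational singularities would cover all uses in the paper. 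A second, harmless, imprecision: $\tX\setminus D$ is identified with $X\setminus\pi(D)$, which may be strictly smaller than $X\setminus\Sing(X)$ if $\pi$ is not minimal; since the complement is finite in either case, $\Cl$ and the compatibility of the two $\rc_1$'s are unaffected, but the identification should be stated over $X\setminus\pi(D)$.
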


If $X$ is a smooth projective surface, we call the right side of~\eqref{eq:g0} the {\sf Mukai lattice of $X$}.
In Subsection~\ref{subsec:Groth-twisted} we introduce the Mukai pairing on this lattice.

\begin{proof}
Let us first assume that $X$ is complete.
We compare $\rG_0(X)$ to the Chow groups~$\CH_i(X)$ of $i$-dimensional cycles on~$X$, see~\cite{Fulton-Chow}.
We have $\CH_2(X) = \Z$, $\CH_1(X) = \Cl(X)$ by definition, and since $X$ is rationally connected, $\CH_0(X) = \Z$.

Consider the morphism~\eqref{eq:g0}.
Both its source and target come with a three-step filtration
(the filtration $F^\bullet \rG_0(X)$ by the codimension of support on $\rG_0(X)$
and the filtration induced by the direct sum decomposition on the target).
The map is compatible with these filtrations (since the rank vanishes on objects supported in codimension~1,
and $\rc_1$ vanishes on objects supported in codimension~2).
Therefore, we obtain maps between the factors
\begin{align*}
\rG_0(X) / F^1 \rG_0(X) 	&\xrightarrow{\hbox to 5em{\hfill$\scriptstyle\rk$\hfill}} \Z \\
F^1 \rG_0(X) / F^2 \rG_0(X) &\xrightarrow{\hbox to 5em{\hfill$\scriptstyle\rc_1$\hfill}} \Cl(X) \\
F^2 \rG_0(X) 		&\xrightarrow{\hbox to 5em{\hfill$\scriptstyle\chi$\hfill}} \Z. \\
\intertext{On the other hand, we have maps in the opposite direction defined by}
\Z 	&\xrightarrow{\hbox to 5em{\hfill$\scriptstyle[\cO_X]$\hfill}} \rG_0(X) / F^1 \rG_0(X) \\
\Cl(X) 	&\xrightarrow{\hbox to 5em{\hfill$\scriptstyle D \mapsto [\cO_D]$\hfill}} F^1 \rG_0(X) / F^2 \rG_0(X) \\
\Z	&\xrightarrow{\hbox to 5em{\hfill$\scriptstyle[\cO_x]$\hfill}} F^2 \rG_0(X)
\end{align*}
(in the middle row the map takes the class of an effective Weil divisor $D$ to the class of its structure sheaf;
this map when taken modulo $F^2\rG_0(X)$ respects linear equivalence of Weil divisors).
The second collection of maps is surjective~\cite[Example 15.1.5]{Fulton-Chow}.
On the other hand, it is evident that $\rk(\cO_X) = 1$, $\rc_1(\cO_D) = D$ and $\chi(\cO_x) = 1$,
hence it is also injective.
Therefore, the maps on the factors of the filtration are isomorphisms.
It follows that the original map~\eqref{eq:g0}
is also an isomorphism.

Assume now that $X$ is not complete, and let $X \subset \ol{X}$ be the closure of $X$
in a projective embedding. Let $Z = \ol{X} \setminus X$ be the closed complement of $X$ in $\ol{X}$.
We have the localization sequence of Chow groups \cite[Proposition 1.8]{Fulton-Chow}
\begin{equation*}
\CH_0(Z) \to \CH_0(\ol{X}) \to \CH_0(X) \to 0
\end{equation*}
and since $\ol{X}$ is a rational projective surface we know that $\CH_0(\ol{X}) = \Z$, generated
by any rational point.
As $Z$ is nonempty, the first map in the exact sequence is surjective and we have $\CH_0(X) = 0$.

Thus we have $\CH_2(X) = \Z$, $\CH_1(X) = \Cl(X)$, $\CH_0(X) = 0$ and the same argument as we used
to show~\eqref{eq:g0} proves~\eqref{eq:g0-open}.
\end{proof}

\begin{remark}
Using~\cite[Corollary~1.5]{KS} one can also prove an analogue of this result for~$\rK_0(X)$
under the additional assumption that $X$ has rational singularities:
if $X$ is a quasi-projective rational surface with rational singularities,
then if $X$ is complete, there is an isomorphism
\begin{equation*}
\label{eq:k0}
(\rk, \rc_1, \chi)\colon \rK_0(X) {}\xrightarrow{\ \cong\ }{} \Z \oplus \Pic(X) \oplus \Z.
\end{equation*}
and if $X$ is not complete, then
\begin{equation*}
\label{eq:k0-open}
(\rk, \rc_1)\colon \rK_0(X) {}\xrightarrow{\ \cong\ }{} \Z \oplus \Pic(X).
\end{equation*}
\end{remark}

\subsection{Torsion in rational surfaces}
\label{subsection:torsion-surfaces}

In this subsection we interpret the condition that~$\rG_0(X)$ is torsion-free in terms of
the Brauer group of $X$.

Recall that the Brauer group $\Br(X)$ of a scheme $X$ is the group of
Morita-equivalence classes of Azumaya algebras on $X$ with the operation of tensor product, see~\cite{Milne}.
The Brauer group $\Br(X)$ is closely related to the {\sf cohomological Brauer group} of $X$
\begin{equation*}
\Br'(X) := H^2_\et(X,\Gm);
\end{equation*}
in fact $\Br(X)$ is isomorphic to the subgroup of torsion elements in $\Br'(X)$:
\begin{equation*}
\Br(X) = \Br'(X)_\tors,
\end{equation*}
see~\cite[Cor. 9]{Hoobler} in the surface case and~\cite{deJong} for the general situation.
In particular, if $\Br'(X)$ is a finite group, there is no difference between
the Brauer group and the cohomological Brauer group: $\Br(X) = \Br'(X)$.


\begin{proposition}
\label{prop:torsion}
Let $X$ be a normal projective rational surface with rational singularities.
Then we have the following isomorphisms
\begin{equation*}
\rG_0(X)_{\tors} \simeq \Cl(X)_{\tors}
\end{equation*}
and
\begin{equation*}
\Br'(X) {}={} \Br(X) \simeq \Ext^1(\Cl(X), \Z) {} \simeq \Ext^1(\rG_0(X), \Z).
\end{equation*}
Furthermore, $\Pic(X)$ is free of finite rank and the intersection pairing gives an isomorphism
\begin{equation*}
\Pic(X) \simeq \Cl(X)^\vee.
\end{equation*}
\end{proposition}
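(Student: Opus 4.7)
My plan is to derive all four statements from the fundamental short exact sequence
\[
0 \to \bigoplus_i \Z[E_i] \to \Pic(\tX) \to \Cl(X) \to 0
\]
associated to the resolution $\pi\colon \tX \to X$, where $E_i$ runs over the irreducible exceptional divisors of $\pi$. Surjectivity on the right uses that $\Sing(X)$ has codimension $2$ in~$X$, so $\Cl(X) \cong \Pic(\tX\setminus D)$ and every Weil divisor on $X$ lifts to $\tX$ via proper transform; injectivity on the left is negative definiteness of the intersection form on the exceptional divisors.

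First I would apply $\Hom(-,\Z)$ to this sequence. Since $\tX$ is a smooth projective rational surface, it is simply connected, so $\Pic(\tX) \cong H^2(\tX,\Z)$ is a free finitely generated abelian group equipped with a unimodular intersection pairing by Poincar\'e duality. Therefore $\Ext^1(\Pic(\tX),\Z) = 0$ and, identifying $\Pic(\tX)^\vee \cong \Pic(\tX)$ through the intersection pairing, the dualized sequence takes the form
\[
0 \to \Cl(X)^\vee \to \Pic(\tX) \xrightarrow{\ \cL \mapsto (\cL\cdot E_i)_i\ } \bigoplus_i \Z \to \Ext^1(\Cl(X),\Z) \to 0.
\]
By Lemma~\ref{lemma:descent} the kernel of the middle arrow is precisely $\pi^*\Pic(X)$, which yields both $\Pic(X) \cong \Cl(X)^\vee$ via intersection and (since $\Pic(X)$ sits inside the free group $\Pic(\tX)$) freeness of $\Pic(X)$ of finite rank.

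For the Brauer group, I would invoke the Leray spectral sequence for $\pi_*\Gm$, whose low-degree terms give
\[
0 \to \Pic(X) \to \Pic(\tX) \to H^0(X, R^1\pi_*\Gm) \to \Br'(X) \to \Br'(\tX).
\]
The vanishing $\Br'(\tX) = 0$ follows from the Kummer sequence: torsion-freeness of $H^2(\tX,\Z)$ forces $\Pic(\tX)/n \xrightarrow{\ \sim\ } H^2_\et(\tX,\mu_n)$ for every $n$, hence $\Br'(\tX)[n] = 0$ and $\Br'(\tX) = 0$ by Grothendieck's torsion theorem. The sheaf $R^1\pi_*\Gm$ is a skyscraper on $\Sing(X)$ whose stalk at a singular point $x$ computes $\Pic$ of the formal neighborhood of $\pi^{-1}(x)$; by formal GAGA this agrees with $\Pic$ of the exceptional fiber, and since the latter is a tree of $\P^1$'s its $\Pic^0$ vanishes, so $H^0(X,R^1\pi_*\Gm) \cong \bigoplus_i \Z$ via multidegrees, with the edge map from $\Pic(\tX)$ being $\cL \mapsto (\cL\cdot E_i)_i$. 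Matching this with the dualized sequence of the previous paragraph produces $\Br'(X) \cong \Ext^1(\Cl(X),\Z)$.

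Finally, $\Cl(X)$ is finitely generated as a quotient of $\Pic(\tX)$, so $\Ext^1(\Cl(X),\Z)\cong \Cl(X)_\tors$ is finite; consequently $\Br'(X)$ is torsion and coincides with $\Br(X)$ by the Hoobler theorem quoted in the paper. The remaining identifications $\rG_0(X)_\tors \cong \Cl(X)_\tors$ and $\Ext^1(\rG_0(X),\Z) \cong \Ext^1(\Cl(X),\Z)$ then follow at once from Lemma~\ref{lem:k0-g0-rational}. The step I expect to require the most care is the explicit identification of $R^1\pi_*\Gm$ together with the Leray edge map as the intersection map $\cL \mapsto (\cL\cdot E_i)$; once this geometric input is secured, the rest of the argument is pure diagram chasing between two exact sequences that a priori arise from very different considerations.
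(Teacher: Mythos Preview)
Your approach is essentially the same as the paper's: both arguments juxtapose the dual of the exact sequence $0 \to \bigoplus_i \Z[E_i] \to \Pic(\tX) \to \Cl(X) \to 0$ with the $\Pic$/Brauer exact sequence coming from Leray, and identify the two via unimodularity of the intersection form on $\Pic(\tX)$. The only difference is cosmetic: the paper cites \cite[Proposition~1]{Bright} for the Leray sequence~\eqref{eq:pic-sequence} as a black box, whereas you unpack it (and, as you correctly anticipate, the identification of $H^0(X,R^1\pi_*\Gm)$ with $\bigoplus_i \Z$ via intersection numbers is exactly the content of that citation).
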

\begin{proof}
The isomorphism~\eqref{eq:g0} implies that $\Cl(X)_{\tors} \simeq \rG_0(X)_{\tors}$.

Let $\pi \colon \tX \to X$ be a resolution of singularities.
Denote by
\begin{equation}
\label{eq:cl-tx-x}
\Cl(\tX/X) := \Ker(\Cl(\tX) \xrightarrow{\ \pi_*\ } \Cl(X))
\end{equation}
the kernel of the pushforward map.
Since $\tX$ is smooth, $\Cl(\tX/X)$ is the subgroup of~$\Cl(\tX) = \Pic(\tX)$.
If $E_1, \dots E_m$ are the exceptional curves of $\pi$, then
\begin{equation}
\label{eq:cl-rel}
\Cl(\tX/X) = \bigoplus_{i=1}^m \Z [E_i].
\end{equation}
We consider $\Cl(\tX/X)$ as a sublattice of $\Cl(\tX)$ with respect to the intersection pairing.
We also consider the dual abelian group $\Cl(\tX/X)^\vee$ and denote by $\delta_{E_i}$ the delta-function
corresponding to the generator $E_i$ of $\Cl(\tX/X)$, so that $\delta_{E_i}(E_j) = \delta_{ij}$.

By~\cite[Proposition 1]{Bright} we have an exact sequence
\begin{equation}
\label{eq:pic-sequence}
0 \to \Pic(X) \xrightarrow{\ \pi^*\ } \Pic(\tX) \xrightarrow{\ \IP\ } \Cl(\tX/X)^\vee \to \Br'(X) \xrightarrow{\ \pi^*\ } \Br'(\tX) = 0,
\end{equation}
where $\IP$ is the intersection pairing morphism, that takes a line bundle~$\tilde\cL$ on~$\tX$
to the linear function on $\Cl(\tX/X)$ defined by
\begin{equation}
\label{eq:f_tcl}
f_{\tilde\cL} {} := \IP(\tcL) = \sum_{i=1}^m (\tilde\cL \cdot E_i) \delta_{E_i}.
\end{equation}

On the other hand, by~\eqref{eq:cl-tx-x} we have the following exact sequence
\begin{equation}
\label{eq:cl-pic-cl}
0 \to \Cl(\tX/X) \to \Pic(\tX) \xrightarrow{\ \pi_*\ } \Cl(X) \to 0.
\end{equation}
Its first two terms are free abelian groups (the first is free by~\eqref{eq:cl-rel},
and the second is free since $\tX$ is a smooth rational surface).
Dualizing~\eqref{eq:cl-pic-cl} we obtain an exact sequence
\begin{equation}
\label{eq:cl-sequence}
0 \to \Cl(X)^\vee \to \Pic(\tX)^\vee \to \Cl(\tX/X)^\vee \to \Ext^1(\Cl(X), \Z) \to 0.
\end{equation}

The intersection pairing defines a linear map $\IP \colon \Pic(\tX) \to \Pic(\tX)^\vee$,
which is an isomorphism, since the surface is rational.
Moreover, its composition with the map~\mbox{$\Pic(\tX)^\vee \to \Cl(\tX/X)^\vee$} in~\eqref{eq:cl-sequence}
coincides with the map~$\IP$ in~\eqref{eq:pic-sequence}.
Thus, the exact sequence~\eqref{eq:cl-sequence} can be identified with the sequence~\eqref{eq:pic-sequence}
and in particular, we obtain isomorphisms
\begin{equation*}
\Cl(X)^\vee \simeq \Pic(X)
\qquad\text{and}\qquad
\Ext^1(\Cl(X), \Z) \simeq \Br'(X),
\end{equation*}
where the first isomorphism is induced by the intersection pairing.
In particular, the group~$\Pic(X)$ is free of finite rank, and $\Br'(X)$ is a finite abelian group.
Thus, by~\cite[Cor. 9]{Hoobler} we have
$\Br(X) = \Br'(X) \cong \Ext^1(\Cl(X),\Z)$.
\end{proof}

\begin{remark}
\label{remark:brpic-general}
Exact sequence~\eqref{eq:pic-sequence} gives an isomorphism
\begin{equation*}
\Br(X) \simeq \Coker(\Pic(\wt{X}) \xrightarrow{\ \IP\ } \Cl(\wt{X}/X)^\vee).
\end{equation*}
In Proposition~\ref{proposition:br-cl} below we construct this isomorphism explicitly.
\end{remark}

The following corollary is an immediate consequence of Proposition~\ref{prop:torsion}.

\begin{corollary}
\label{corollary:tf}
If $X$ is a normal projective rational surface with rational singularities then
the following conditions are equivalent:
\begin{enumerate}
\item $\Br(X) = 0$;
\item $\Cl(X)_\tors = 0$;
\item $\rG_0(X)_\tors = 0$.
\end{enumerate}
\end{corollary}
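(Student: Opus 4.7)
The plan is to deduce all three equivalences directly from Proposition~\ref{prop:torsion}, exactly as the text predicts. The equivalence $(2) \iff (3)$ is the most immediate: Proposition~\ref{prop:torsion} already contains the isomorphism $\rG_0(X)_\tors \simeq \Cl(X)_\tors$, so there is literally nothing to do for this pair.

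For $(1) \iff (2)$ I would use the second isomorphism from Proposition~\ref{prop:torsion}, namely $\Br(X) \simeq \Ext^1(\Cl(X), \Z)$, combined with the observation that $\Cl(X)$ is a finitely generated abelian group. The finite generation follows from the exact sequence \eqref{eq:cl-pic-cl}, which expresses $\Cl(X)$ as a quotient of $\Pic(\tX)$, and the latter is free of finite rank because $\tX$ is a smooth projective rational surface. Once $\Cl(X)$ is known to be finitely generated, the standard structure theorem gives a decomposition $\Cl(X) \cong \Z^r \oplus \Cl(X)_\tors$, so that
\begin{equation*}
\Ext^1(\Cl(X), \Z) \cong \Ext^1(\Cl(X)_\tors, \Z) \cong \Cl(X)_\tors,
\end{equation*}
the last isomorphism being the usual Pontryagin-type duality for finite abelian groups (applying $\Ext^1(\Z/n, \Z) \cong \Z/n$ summand by summand). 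Substituting into the formula from Proposition~\ref{prop:torsion} yields $\Br(X) \cong \Cl(X)_\tors$, which gives $(1) \iff (2)$.

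There is no real obstacle here, since every ingredient is supplied by Proposition~\ref{prop:torsion} and elementary homological algebra; the only point worth mentioning is that one must invoke the finite generation of $\Cl(X)$ before converting the $\Ext^1$ into a torsion subgroup. Combining $(1) \iff (2)$ with $(2) \iff (3)$ completes the proof.
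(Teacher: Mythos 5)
Your proof is correct and follows exactly the route the paper intends: the corollary is stated there as an immediate consequence of Proposition~\ref{prop:torsion}, with $(2)\iff(3)$ coming from $\rG_0(X)_\tors \simeq \Cl(X)_\tors$ and $(1)\iff(2)$ from $\Br(X)\simeq\Ext^1(\Cl(X),\Z)$ together with finite generation of $\Cl(X)$ (visible from~\eqref{eq:cl-pic-cl}), which gives $\Ext^1(\Cl(X),\Z)\cong\Cl(X)_\tors$. Your explicit mention of the finite-generation step is the only substantive detail the paper leaves implicit, and it is handled correctly.
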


Based on this observation, we suggest the following.

\begin{definition}
Let $X$ be a normal projective rational surface with rational singularities.
We call $X$ {\sf torsion-free} if any of the equivalent conditions of Corollary~\textup{\ref{corollary:tf}} hold.
\end{definition}

Of course, every smooth projective rational surface is torsion-free.
See~\cite{Bright} for a classification of del Pezzo surfaces with du Val singularities that are not torsion-free.
In Section~\ref{section:toric-surfaces} (see Lemma~\ref{lemma:toric-br}) we will explain which toric surfaces are torsion-free.

Summarizing the above discussion, we obtain the following criterion.

\begin{corollary}
Let $X$ be a normal projective rational surface with cyclic quotient singularities
and let $\pi \colon \tX \to X$ be its minimal resolution of singularities.
If~$\Db(\tX)$
admits a semi\-orthogonal decomposition
with components untwisted adherent to the connected components of the exceptional divisor $D$ of $\pi$, then $X$ is torsion-free.
\end{corollary}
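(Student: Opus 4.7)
The plan is to chain together three results already established in the paper. The hypotheses are exactly those of Corollary~\ref{corollary:dbx}: we have a semiorthogonal decomposition $\Db(\tX) = \langle \tcA_1, \dots, \tcA_n \rangle$ compatible with $\pi$, in which each component $\tcA_i$ is untwisted adherent to the chain of rational curves $D_i$ forming a connected component of the exceptional divisor of the minimal resolution of a cyclic quotient singularity. Thus Corollary~\ref{corollary:dbx} applies and produces a semiorthogonal decomposition
\begin{equation*}
\Db(X) = \langle \Db(K_1\md), \Db(K_2\md), \dots, \Db(K_n\md) \rangle,
\end{equation*}
where each $K_i$ is a Kalck--Karmazyn algebra, hence in particular a local finite-dimensional $\Bbbk$-algebra (see Lemma~\ref{lemma:kk}).

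Next I would invoke Lemma~\ref{lemma:semiorth-torsion}: since each $K_i$ is a local finite-dimensional algebra, the resulting decomposition forces
\begin{equation*}
\rG_0(X) \cong \Z^n,
\end{equation*}
and in particular $\rG_0(X)_\tors = 0$.

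Finally, since $X$ is a normal projective rational surface with rational singularities (cyclic quotient singularities are rational), Corollary~\ref{corollary:tf} applies and gives the equivalence of $\rG_0(X)_\tors = 0$ with torsion-freeness of $X$ (equivalently, $\Br(X) = 0$ or $\Cl(X)_\tors = 0$). This completes the proof. There is no real obstacle here: the statement is essentially a formal consequence of the accumulated machinery, serving as a clean summary of the Brauer-theoretic obstruction to the existence of the untwisted adherent decomposition. The only thing worth double-checking is that the hypothesis of $X$ being rational (used in Corollary~\ref{corollary:tf}) is indeed assumed in the corollary's statement, which it is.
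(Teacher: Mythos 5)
Your proposal is correct and follows the paper's own proof essentially verbatim: apply Corollary~\ref{corollary:dbx} to get the decomposition of $\Db(X)$ into derived categories of the local algebras $K_i$, then Lemma~\ref{lemma:semiorth-torsion} to see $\rG_0(X)$ is torsion-free, then Corollary~\ref{corollary:tf} to conclude. The only point worth noting is that rationality of $X$ ensures the hypothesis~\eqref{eq:hotx} of Corollary~\ref{corollary:dbx} (since $\tX$ is a smooth rational surface), which you implicitly use and which the paper also treats as immediate.
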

\begin{proof}
Assume that $\Db(\tX)$ has such decomposition.
By Corollary~\ref{corollary:dbx} we have a semiorthogonal decomposition~\eqref{eq:dbx-dbk} for~$\Db(X)$,
hence by Lemma~\ref{lemma:semiorth-torsion} the Grothendieck group~$\rG_0(X)$ is torsion-free.
Therefore, $X$ is torsion-free.
\end{proof}

\subsection{Explicit identification of the Brauer group}
\label{subsection:explicit-brauer}

In this section we construct the isomorphism of Remark~\ref{remark:brpic-general} explicitly.
Let $X$ be a normal projective rational surface with rational singularities
and let $\pi \colon \tX \to X$ be its resolution of singularities.
As in the proof of Proposition~\ref{prop:torsion}, we consider the lattice~\eqref{eq:cl-rel}
and denote by $\delta_{E_i}$ the basis of the dual group $\Cl(\tX/X)^\vee$.

Let $\cR$ be an Azumaya algebra on the surface $X$ of rank $r^2$ for some $r > 0$.
Then its pullback~$\tcR := \pi^*\cR$ to the resolution $\tX$ of $X$ is also an Azumaya algebra.
But $\tX$ is a smooth rational surface, so~$\Br(\tX) = 0$, hence~$\tcR$ is Morita-trivial.
This means that there exists a vector bundle~$\tcV$ on $\tX$ such that
\begin{equation}
\label{eq:cr-ce}
\tcR \cong \CEnd(\tcV) \cong \tcV \otimes \tcV^\vee.
\end{equation}
In this case we say that $\tcV$ {\sf splits} or {\sf trivializes} the Azumaya algebra $\tcR$.
Note that such a bundle $\tcV$ has rank $r$ and is defined up to a line bundle twist.

For each irreducible exceptional divisor $E_i$ of the resolution $\pi$ consider the restriction~$\tcV\vert_{E_i}$.
Since $\tcR\vert_{E_i} \cong (\pi^*\cR)\vert_{E_i}$ is a trivial bundle
and $E_i \cong \P^1$, it follows from~\eqref{eq:cr-ce} that
\begin{equation*}
\tcV\vert_{E_i} \cong \cO_{E_i}(b_{E_i})^{\oplus r}
\end{equation*}
for some $b_{E_i} \in \Z$.
Consider the linear function
\begin{equation}
\label{eq:fr}
f_\cR := \sum b_{E_i} \delta_{E_i} \in \Cl(\tX/X)^\vee.
\end{equation}
Denote by $\bar{f}_\cR$ the image of $f_\cR$ in the group $\Coker(\Pic(\tX) \xrightarrow{\ \IP\ } \Cl(\tX/X)^\vee)$.

\begin{proposition}
\label{proposition:br-cl}
The map
\begin{equation}
\label{eq:map-br-coker}
\Br(X) \to \Coker(\Pic(\wt{X}) \xrightarrow{\ \IP\ } \Cl(\tX/X)^\vee),
\qquad
[\cR] \mapsto \bar{f}_\cR
\end{equation}
is well defined, and is an isomorphism of groups.
\end{proposition}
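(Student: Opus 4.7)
My plan is to verify in sequence that $[\cR] \mapsto \bar f_\cR$ is well-defined on $\Br(X)$, is a group homomorphism, is injective, and is surjective.

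For well-definedness, two independent checks are required. First, since $\Br(\tX) = 0$ (as used implicitly in~\eqref{eq:pic-sequence}), a splitting bundle $\tcV$ exists, and any two splittings of $\tcR = \pi^*\cR$ differ by a line bundle twist $\tcV' \cong \tcV \otimes \tcL$ by Morita theory. Restricting to each irreducible exceptional curve $E_i$, the integer $b_{E_i}$ is shifted by $\tcL \cdot E_i$, so $f_\cR$ is altered by exactly $\IP(\tcL)$ and the class $\bar f_\cR$ in the cokernel is unchanged. Second, for Morita invariance: if $\cR' \cong \cR \otimes_{\cO_X} \CEnd(\cW)$ for some vector bundle $\cW$ on $X$, then $\tcV \otimes \pi^*\cW$ splits $\pi^*\cR'$, and since each $E_i$ contracts to a point of $X$ the restriction $(\pi^*\cW)\vert_{E_i}$ is a trivial bundle, leaving the integers $b_{E_i}$ unchanged.

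The homomorphism property is immediate: if $\tcV$ splits $\pi^*\cR$ and $\tcV'$ splits $\pi^*\cR'$, then $\tcV \otimes \tcV'$ splits $\pi^*(\cR \otimes \cR')$ and each $b_{E_i}$ adds, so $f_{\cR \otimes \cR'} = f_\cR + f_{\cR'}$. The key step is injectivity. Suppose $f_\cR = \IP(\tcL)$ for some line bundle $\tcL$ on $\tX$. Then $\tcV \otimes \tcL^{-1}$ also splits $\tcR$, and its restriction to every irreducible exceptional curve $E_i$ is $\cO_{E_i}^{\oplus r}$, which lies in $\langle \cO_{E_i} \rangle$. By Lemma~\ref{lemma:descent} (implication $(1) \Rightarrow (4)$, together with the locally free addendum), there is a vector bundle $\cW$ on $X$ with $\tcV \otimes \tcL^{-1} \cong \pi^*\cW$. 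Hence $\pi^*\cR \cong \CEnd(\pi^*\cW) \cong \pi^*\CEnd(\cW)$ as Azumaya algebras, and applying $\pi_*$ together with~\eqref{eq:pi-pi} yields $\cR \cong \CEnd(\cW)$, so $[\cR] = 0$ in $\Br(X)$.

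Finally, surjectivity follows by comparing orders: both $\Br(X)$ and $\Coker(\IP)$ are finite groups of the same cardinality since they are identified by the exact sequence~\eqref{eq:pic-sequence} established in Proposition~\ref{prop:torsion}, so an injective homomorphism between them is automatically bijective. The main obstacle is the injectivity step, where the descent Lemma~\ref{lemma:descent} carries essentially all the geometric content by reducing the vanishing of a Brauer class to the question of whether a given splitting bundle on $\tX$ is pulled back from $X$.
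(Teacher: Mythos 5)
Your proposal is correct and follows essentially the same route as the paper: well-definedness under change of splitting bundle, additivity, injectivity via the descent Lemma~\ref{lemma:descent} applied to a splitting bundle restricting trivially to every exceptional curve, and surjectivity by comparing the (equal, finite) orders coming from~\eqref{eq:pic-sequence}. The only cosmetic difference is that you check Morita-invariance by tensoring with $\CEnd(\cW)$ for $\cW$ on $X$ (which suffices once combined with additivity), whereas the paper works directly with the Morita bimodule $\cP$ and its pullback.
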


\begin{proof}
First, note that if we replace the vector bundle $\tcV$ in~\eqref{eq:cr-ce} by a line bundle twist~$\tcV \otimes \tcL$,
then the function $f_\cR$ will change by the function $f_\tcL = \IP(\tcL)$ defined by~\eqref{eq:f_tcl},
hence its image~$\bar{f}_\cR$ in $\Coker(\Pic(\wt{X}) \xrightarrow{\ \IP\ } \Cl(\tX/X)^\vee)$ will not change.

Next, let us replace $\cR$ by a Morita-equivalent Azumaya algebra $\cR'$ and let $\cP$ be the bimodule providing a Morita-equivalence.
Then $\pi^*\cP$ provides a Morita equivalence
between the Morita-trivial Azumaya algebras $\pi^*\cR \cong \CEnd(\tcV)$ and~$\pi^*\cR' \cong \CEnd(\tcV')$, hence
\begin{equation*}
\pi^*\cP \cong \tcV \otimes \tcL \otimes (\tcV')^\vee
\end{equation*}
for a line bundle $\tcL$.
Replacing $\tcV$ by $\tcV \otimes \tcL$, we may assume there is no $\tcL$ factor.
If~$\tcV\vert_{E_i} = \cO_{E_i}(b_{E_i})^{\oplus r}$ and $\tcV'\vert_{E_i} = \cO_{E_i}(b'_{E_i})^{\oplus r'}$,
from triviality of $\pi^*\cP$ on $E_i$ it follows that $b_{E_i} = b'_{E_i}$, hence $f_\cR = f_{\cR'}$.
This proves that the map~\eqref{eq:map-br-coker} is well defined.

Similarly, if $\cR \cong \cR_1 \otimes \cR_2$ and the bundles $\tcV_1$ and $\tcV_2$ trivialize the Azumaya algebras~$\pi^*\cR_1$ and $\pi^*\cR_2$,
then the bundle $\tcV = \tcV_1 \otimes \tcV_2$ trivializes $\pi^*\cR$, and
\begin{equation*}
\tcV\vert_{E_i} \cong \tcV_1\vert_{E_i} \otimes \tcV_2\vert_{E_i}
\cong \cO_{E_i}(b^1_{E_i})^{\oplus r_1} \otimes \cO_{E_i}(b^2_{E_i})^{\oplus r_2}
\cong \cO_{E_i}(b^1_{E_i} + b^2_{E_i})^{\oplus r_1r_2},
\end{equation*}
which shows that $f_\cR = f_{\cR_1} + f_{\cR_2}$, hence the map~\eqref{eq:map-br-coker} is linear.

Since both $\Br(X)$ and $\Coker(\Pic(\wt{X}) \to \Cl(\tX/X)^\vee)$ are finite groups of the same order (by Remark~\ref{remark:brpic-general}),
to show that the map~\eqref{eq:map-br-coker} is an isomorphism it is enough to check its injectivity.
So, assume that $\cR$ is an Azumaya algebra on $X$ such that $\bar{f}_\cR = 0$.
This means that the algebra $\pi^*\cR$ can be trivialized by a vector bundle $\tcV$ on~$\tX$
that restricts trivially to each exceptional divisor $E_i$ of $\pi$.
By Lemma~\ref{lemma:descent} we conclude that
\begin{equation*}
\tcV \cong \pi^*\cV
\end{equation*}
for a vector bundle $\cV$ on $X$.
Therefore,
\begin{equation*}
\pi^*\cR \cong \CEnd(\tcV) \cong \CEnd(\pi^*\cV) \cong \pi^*\CEnd(\cV),
\end{equation*}
and since $\pi^*$ is fully faithful, we conclude that $\cR \cong \CEnd(\cV)$.
Clearly, this is an isomorphism of algebras, hence $\cR$ is Morita-trivial.
\end{proof}

The following simple consequence of this result will be very useful later.

\begin{corollary}
\label{corollary:restriction-exists}
Let $X$ be a normal projective rational surface with rational singularities,
let~$\pi \colon \tX \to X$ be its resolution,
and let $f = \sum b_{E_i} \delta_{E_i} \in \Cl(\tX/X)^\vee$.
\begin{enumerate}
\item
There is an integer $r > 0$ and a rank $r$ vector bundle $\tcV$ on $\tX$ such that for all~$i$
\begin{equation*}
\tcV\vert_{E_i} \cong \cO_{E_i}(b_{E_i})^{\oplus r}.
\end{equation*}
\item
If $X$ is torsion-free, there is a line bundle $\tcL$ such that for all~$i$
\begin{equation*}
\tcL\vert_{E_i} \cong \cO_{E_i}(b_{E_i});
\end{equation*}
such line bundle is unique up to a twist by the pullback of a line bundle on $X$.
\end{enumerate}
\end{corollary}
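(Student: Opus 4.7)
The plan is to deduce both parts of the corollary directly from Proposition~\ref{proposition:br-cl}, the exact sequence~\eqref{eq:pic-sequence}, and Lemma~\ref{lemma:descent}; no new geometric input should be required.

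For part~(1), I would first apply Proposition~\ref{proposition:br-cl}: the map
$\Br(X) \xrightarrow{\ \simeq\ } \Coker(\Pic(\tX) \xrightarrow{\IP} \Cl(\tX/X)^\vee)$
is an isomorphism, so the class $\bar f$ of the given $f$ in the cokernel is the image of some Brauer class $[\cR]$, represented by an Azumaya algebra $\cR$ of some rank $r^2$ on $X$. By the very construction of the map, any vector bundle $\tcV_0$ of rank $r$ on $\tX$ splitting $\pi^*\cR$ (which exists because $\Br(\tX) = 0$) satisfies
\begin{equation*}
\tcV_0\vert_{E_i} \cong \cO_{E_i}(c_i)^{\oplus r}, \qquad f_\cR = \sum c_i\,\delta_{E_i},
\end{equation*}
and the condition $\bar f = \bar f_\cR$ means there is a line bundle $\tcM \in \Pic(\tX)$ with $f - f_\cR = \IP(\tcM)$, that is $b_{E_i} - c_i = \tcM \cdot E_i$ for every $i$. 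Setting $\tcV := \tcV_0 \otimes \tcM$, the standard computation of restrictions to $E_i \cong \P^1$ gives
\begin{equation*}
\tcV\vert_{E_i} \cong \cO_{E_i}(c_i + \tcM \cdot E_i)^{\oplus r} = \cO_{E_i}(b_{E_i})^{\oplus r},
\end{equation*}
as required.

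For part~(2), torsion-freeness of $X$ means $\Br(X) = 0$ by Corollary~\ref{corollary:tf}, so the exact sequence~\eqref{eq:pic-sequence} collapses to a surjection $\IP \colon \Pic(\tX) \twoheadrightarrow \Cl(\tX/X)^\vee$. Hence $f$ lifts to a line bundle $\tcL$ with $\tcL \cdot E_i = b_{E_i}$ for all $i$, and since $E_i \cong \P^1$ this forces $\tcL\vert_{E_i} \cong \cO_{E_i}(b_{E_i})$. For uniqueness, if $\tcL'$ is another such line bundle, then $\tcL \otimes (\tcL')^{-1}$ restricts to $\cO_{E_i}$ on every irreducible component of the exceptional divisor; this is precisely condition~(1) of Lemma~\ref{lemma:descent}, so by the equivalence (1)~$\Rightarrow$~(3) of that lemma it is the pullback of an object on $X$, which must itself be a line bundle (e.g.\ since $\pi^*$ is fully faithful on $\Pic$ by the injectivity part of~\eqref{eq:pic-sequence}, or by the ``locally free'' clause of Lemma~\ref{lemma:descent}).

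The proof is therefore essentially bookkeeping on top of Proposition~\ref{proposition:br-cl}; there is no genuine obstacle. The only point that deserves a moment of care is the sign/twist calculation relating the splitting type of $\tcV_0 \otimes \tcM$ to $f_\cR + \IP(\tcM)$, but this is exactly the identity used in the well-definedness argument within the proof of Proposition~\ref{proposition:br-cl} and can simply be invoked.
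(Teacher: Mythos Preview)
Your proof is correct and follows essentially the same approach as the paper: both use Proposition~\ref{proposition:br-cl} to find an Azumaya algebra with $\bar f_\cR = \bar f$, then twist a splitting bundle by the line bundle accounting for the difference $f - f_\cR$, and both deduce part~(2) from the exact sequence~\eqref{eq:pic-sequence}. The only minor difference is that for uniqueness the paper reads off $\Ker(\IP) = \pi^*(\Pic(X))$ directly from~\eqref{eq:pic-sequence}, whereas you route through Lemma~\ref{lemma:descent}; both are fine, the paper's is slightly more direct.
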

\begin{proof}
Let $\cR$ be the Azumaya algebra corresponding to the image of the function $f$ in the group $\Br(X) = \Coker(\Pic(X) \to \Cl(\tX/X)^\vee)$.
Then $f_\cR$ differs from $f$ by $f_\tcL$ for a line bundle $\tcL$ on $\wt{X}$,
hence we can take $\tcV$ to be an appropriate vector bundle trivializing the algebra $\pi^*\cR$.

If $X$ is torsion-free the map $\IP \colon \Pic(\tX) \to \Cl(\tX/X)^\vee$ is surjective by~\eqref{eq:pic-sequence},
which means the existence of $\tcL$ as required.
Moreover, $\Ker (\IP) = \pi^*(\Pic(X))$, again by~\eqref{eq:pic-sequence}, which gives the uniqueness of $\tcL$.
\end{proof}

The proof of Proposition~\ref{proposition:br-cl} shows that the map
\begin{equation}
\label{eq:def-rb}
\rB \colon \Cl(\tX/X)^\vee \to \Br(X),
\qquad
f = \sum b_{E_i}\delta_{E_i} \mapsto \cR = \pi_*\CEnd(\tcV),
\end{equation}
where $\tcV$ is a vector bundle from Corollary~\ref{corollary:restriction-exists}(1),
is a well-defined surjective homomorphism whose kernel is the image $\IP(\Pic(\tX))$ of $\Pic(\tX)$
under the intersection pairing map.
We call $\rB$ the {\sf Brauer class map}.

\subsection{Resolutions of twisted derived categories}
\label{subsection:resolution-twisted}

Given an Azumaya algebra $\cR$ on a scheme~$X$ we denote by~$\Coh(X,\cR)$
the abelian category of sheaves of right $\cR$-modules on $X$
which are coherent as $\cO_X$-modules.
By definition, the category~$\Coh(X,\cR)$
up to equivalence depends only on the Brauer class $\beta$ of $\cR$.
Accordingly to this, we will usually denote this category by $\Coh(X,\beta)$.

For any class $\beta \in \Br(X)$ if $\cR$ is an Azumaya algebra on $X$ representing it we denote
\begin{equation*}
\Db(X,\beta) = \Db(X,\cR),\qquad
\Dperf(X,\beta) = \Dperf(X,\cR),\qquad
\Dm(X,\beta) = \Dm(X,\cR)
\end{equation*}
the corresponding bounded, perfect
(note that any sheaf of $\cR$-modules which is locally free over $\cO_X$
is automatically locally projective over $\cR$ (see~\cite[Lemma~10.4]{K06}),
hence a complex of $\cR$-modules is perfect if and only if it is perfect as a complex of $\cO_X$-modules),
and bounded above twisted derived categories respectively.

Now let $X$ be a normal projective rational surface with rational singularities and
let~$\pi \colon \tX \to X$ be its minimal resolution.
As in Section~\ref{subsection:explicit-brauer} we denote by $\tcV$ a vector bundle on $\tX$ trivializing the Azumaya algebra $\tcR = \pi^*\cR$.
Note that $\tcV$ is a left $\tcR$-module and $\tcV^\vee$ is a right $\tcR$-module.

We consider the following pair of functors defined as compositions
\begin{align}
\begin{aligned}
\pi_\tcV^*  &\colon \Dm(X,\cR) \to \Dm(\tX, \tcR) \simeq  \Dm(\tX), \qquad & \cG & \mapsto (\pi^*\cG) \otimes_\tcR \tcV,\\
\pi^\tcV_* &\colon \Dm(\tX) \simeq \Dm(\tX, \tcR) \to \Dm(X,\cR), \qquad & \cF & \mapsto \pi_*(\cF \otimes _{\cO_\tX} \tcV^\vee).
\end{aligned}
\end{align}
As before, we consider the pair of functors $(\pi_\tcV^*,\pi^\tcV_*)$ as a categorical resolution
of the twisted derived category of $X$ by the derived category of $\tX$.
Below we check that it has similar properties to the resolution~$(\pi^*,\pi_*)$ of the untwisted derived category.

\begin{lemma}
The pullback functor $\pi_\tcV^*$ is left adjoint to the pushforward functor $\pi^\tcV_*$.
Furthermore, the pullback $\pi_\tcV^*$ preserves the category of perfect complexes
and the pushforward $\pi^\tcV_*$ preserves the bounded category.
Finally,
\begin{equation*}
\pi^\tcV_* \circ \pi_\tcV^* \cong \id_{\Dm(X,\cR)}.
\end{equation*}
In particular, the pullback functor $\pi_\tcV^*$ is fully faithful.
\end{lemma}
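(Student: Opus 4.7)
My plan is to decompose each of the two functors into a composition of a more classical operation and an equivalence coming from Morita theory, and then to deduce all four assertions piece by piece.

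The key observation is that since $\tcV$ trivializes $\tcR$, we have a canonical bimodule isomorphism $\tcV \otimes_{\cO_\tX} \tcV^\vee \cong \tcR$ and a dual isomorphism $\tcV^\vee \otimes_\tcR \tcV \cong \cO_\tX$, which together assert that the functors $(-)\otimes_\tcR \tcV$ and $(-)\otimes_{\cO_\tX} \tcV^\vee$ define mutually inverse equivalences between the categories of right $\tcR$-modules and $\cO_\tX$-modules. Because $\tcV$ is a vector bundle (hence locally free over $\cO_\tX$) and, by the Azumaya hypothesis, also locally projective over $\tcR$, these operations are exact and pass to the corresponding derived categories $\Dm(\tX,\tcR)$ and $\Dm(\tX)$, preserving bounded complexes and perfect complexes in both directions. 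Using this I would rewrite
\begin{equation*}
\pi_\tcV^* \= \bigl((-) \otimes_\tcR \tcV\bigr) \circ \pi^*,
\qquad
\pi^\tcV_* \= \pi_* \circ \bigl((-) \otimes_{\cO_\tX} \tcV^\vee\bigr),
\end{equation*}
where $\pi^*$ and $\pi_*$ are the usual derived pullback and pushforward on $\tX$ (the first one interpreted on twisted derived categories after applying the Morita equivalence on the source).

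The adjunction is then formal: the equivalence $(-)\otimes_\tcR \tcV$ is both left and right adjoint to its inverse $(-)\otimes_{\cO_\tX}\tcV^\vee$, and $\pi^*$ is left adjoint to $\pi_*$, so their composition $\pi^*_\tcV$ is left adjoint to $\pi^\tcV_*$. The preservation statements likewise reduce to the corresponding facts for each factor: $\pi^*$ preserves perfect complexes and tensoring with a vector bundle preserves perfectness, giving (b); dually, $(-)\otimes\tcV^\vee$ is an equivalence (in particular exact), and $\pi_*$ preserves boundedness since $\pi$ is projective, giving (c). For the composition identity I would compute directly, using associativity of the tensor product, the Morita bimodule isomorphism, and the isomorphism~\eqref{eq:pi-pi}:
\begin{equation*}
\pi^\tcV_*(\pi_\tcV^*\cG) \cong \pi_*\bigl((\pi^*\cG \otimes_\tcR \tcV) \otimes_{\cO_\tX} \tcV^\vee\bigr) \cong \pi_*\bigl(\pi^*\cG \otimes_\tcR \tcR\bigr) \cong \pi_*(\pi^*\cG) \cong \cG.
\end{equation*}
Finally, the fully faithfulness of $\pi_\tcV^*$ is a standard consequence: given the adjunction, $\pi^\tcV_* \circ \pi_\tcV^* \cong \id$ says exactly that the unit of adjunction is an isomorphism, which is equivalent to fully faithfulness of the left adjoint.

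I do not expect any genuine obstacle here; the only care needed is to check that all tensor products behave correctly on the derived level. This is automatic because $\tcV$ and $\tcV^\vee$ are locally free as $\cO_\tX$-modules and locally projective as $\tcR$-modules, so the underived tensor functors already agree with their derived versions, and no flatness hypothesis on the arguments is required.
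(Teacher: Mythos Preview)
Your proposal is correct and follows essentially the same approach as the paper: the paper's proof is terser (it calls the adjunction ``standard'', notes $\cF \otimes_{\cO_\tX} \tcV^\vee \cong \cRHom(\tcV,\cF)$, and writes out the identical chain of isomorphisms for the composition), but your more explicit decomposition via the Morita equivalence $(-)\otimes_\tcR \tcV$ and its inverse is exactly the mechanism underlying those one-line remarks.
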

\begin{proof}
The adjunction is standard (note that $\cF \otimes _{\cO_\tX} \tcV^\vee \cong \cRHom(\tcV,\cF)$).
The second statement is evident from the definition of the functors.
For the last one note that
\begin{equation*}
\pi^\tcV_*(\pi_\tcV^*(\cG))
\cong \pi_*((\pi^*\cG) \otimes_\tcR \tcV \otimes_{\cO_\tX} \tcV^\vee)
\cong \pi_*((\pi^*\cG) \otimes_\tcR \tcR)
\cong \pi_*(\pi^*\cG)
\cong \cG
\end{equation*}
and we are done.
\end{proof}

Let $E_1, \dots, E_m$ denote the irreducible components of the exceptional divisor of~$\pi$.
We use the notation from Section~\ref{subsection:explicit-brauer}.

\begin{lemma}
\label{lemma:ker-pi-twisted}
Assume that we have $\tcV\vert_{E_i} \cong \cO_{E_i}(b_{E_i})^{\oplus r}$, so that $f_\cR = \sum b_{E_i}\delta_{E_i}$.
An object~\mbox{$\cF \in \Dm(\tX)$} is contained in $\Ker \pi^\tcV_*$ if and only if every cohomology sheaf of~$\cH^t(\cF)$
is an iterated extension of sheaves~$\cO_{E_i}(-1 + b_{E_i})$.
\end{lemma}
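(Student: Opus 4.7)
The plan is to reduce to the untwisted Lemma~\ref{lemma:ker-pi} by exactness and then handle the residual coherent statement by working on the formal neighborhood of the exceptional divisor. Since $\tcV^\vee$ is locally free, the endofunctor $-\otimes\tcV^\vee$ of $\Dm(\tX)$ is t-exact and commutes with taking cohomology sheaves. By the definition $\pi^\tcV_*(\cF) = \pi_*(\cF \otimes \tcV^\vee)$, we have $\cF \in \Ker \pi^\tcV_*$ if and only if $\cF \otimes \tcV^\vee \in \Ker \pi_*$, which by Lemma~\ref{lemma:ker-pi} amounts to each $\cH^t(\cF) \otimes \tcV^\vee$ being an iterated extension of sheaves $\cO_{E_i}(-1)$. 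The statement therefore reduces to the key claim that for any coherent sheaf $\cG$ on $\tX$, the sheaf $\cG \otimes \tcV^\vee$ is an iterated extension of $\cO_{E_i}(-1)$ if and only if $\cG$ is an iterated extension of $\cO_{E_i}(-1+b_{E_i})$.

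The ``if'' direction is immediate: tensoring a filtration of $\cG$ by $\tcV^\vee$ identifies each successive quotient $\cO_{E_i}(-1+b_{E_i}) \otimes \tcV^\vee$ with $\cO_{E_i}(-1)^{\oplus r}$ via $\tcV|_{E_i} \cong \cO_{E_i}(b_{E_i})^{\oplus r}$, which is visibly an iterated extension of $\cO_{E_i}(-1)$.

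For the ``only if'' direction, which is the main obstacle, the plan is to work on the formal completion $\wh{\tX}_j$ of $\tX$ along each connected component $D_j$ of the exceptional divisor. The local Brauer group vanishes on $\wh{\tX}_j$ (as $D_j$ is a tree of $\P^1$'s contracted to a point), and by the analogue of Corollary~\ref{corollary:restriction-exists} there exists a line bundle $\wh{\tcL}_j$ on $\wh{\tX}_j$ with $\wh{\tcL}_j|_{E_i} \cong \cO_{E_i}(b_{E_i})$ for each $E_i \subset D_j$; moreover $\tcV \otimes \wh{\tcL}_j^{-1}$ restricts trivially to each $E_i$ and so descends via Lemma~\ref{lemma:descent} to a vector bundle on $\wh{X}_j$, which is free by locality, giving $\tcV|_{\wh{\tX}_j} \cong \wh{\tcL}_j^{\oplus r}$. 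Decomposing $\cG = \bigoplus_j \cG_j$ according to the disjoint supports, we then have $\cG_j \otimes \tcV^\vee|_{\wh{\tX}_j} \cong (\cG_j \otimes \wh{\tcL}_j^{-1})^{\oplus r}$. Since $\Ker \pi_* \cap \Coh(\tX)$ is closed under direct summands (each $R^i\pi_*$ being additive), this forces $\cG_j \otimes \wh{\tcL}_j^{-1} \in \Ker\pi_*$, and Lemma~\ref{lemma:ker-pi} furnishes a filtration with quotients $\cO_{E_i}(-1)$; tensoring back by $\wh{\tcL}_j$ yields the required filtration of $\cG_j$ with quotients $\cO_{E_i}(-1+b_{E_i})$, which, being supported on a finite thickening of $D_j$, descends from $\wh{\tX}_j$ to $\tX$, and assembling over $j$ gives the global filtration of $\cG$. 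The principal technical step will be the local splitting $\tcV|_{\wh{\tX}_j} \cong \wh{\tcL}_j^{\oplus r}$, which combines the vanishing of the formal Brauer group with the descent argument for a vector bundle with trivial exceptional restrictions.
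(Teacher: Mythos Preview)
Your approach is correct but genuinely different from the paper's. The paper argues directly on~$\tX$: after reducing to a pure sheaf~$\cF$, it produces a nonzero map $\cF \to \tcV \otimes \cO_E(-1)$, shows the image is~$\cO_E(-1+b_E)^{\oplus s/r}$ (using that $\cO_\tX$ is a summand of $\tcV^\vee \otimes \tcV$), and inducts on the length of~$\cF$ at the generic points of the exceptional curves. Your route instead trivializes~$\tcV$ locally: over $\wh{\tX}_j = \tX \times_X \Spec(\wh{\cO}_{X,x_j})$ the vanishing $\Br(\wh{\cO}_{X,x_j})=0$ yields a line bundle with the prescribed degrees, and Lemma~\ref{lemma:descent} (which applies verbatim to the local resolution $\wh{\tX}_j \to \wh{X}_j$) forces $\tcV|_{\wh{\tX}_j}$ to split as $r$ copies of that line bundle, reducing everything to the untwisted Lemma~\ref{lemma:ker-pi}. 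Your argument is conceptually cleaner---it explains the twisted statement as a genuine twist of the untwisted one---at the cost of importing the local Brauer vanishing; the paper's argument is more elementary and entirely self-contained. Two small points to tighten: it is $\Br(\wh{X}_j)$, not $\Br(\wh{\tX}_j)$, that you need to vanish, and the reason is that $\wh{\cO}_{X,x_j}$ is a complete (or Henselian) local ring with algebraically closed residue field, not the shape of~$D_j$; and when you invoke Lemma~\ref{lemma:descent} and Corollary~\ref{corollary:restriction-exists} locally you should remark that $\wh{\tX}_j \to \wh{X}_j$ is still a resolution of a normal surface with rational singularities, so those statements apply without change.
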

\begin{proof}

It follows immediately from Lemma~\ref{lemma:ker-pi} that an object  $\cF$ is contained in $\Ker \pi^\tcV_*$
if and only if every cohomology sheaf of~$\cH^t(\cF)$ is.
So from now on we will assume that~$\cF$ is a pure sheaf in~\mbox{$\Ker \pi^\tcV_*$}
and we must show that $\cF \in \langle \cO_{E_i}(-1+b_{E_i}) \rangle$.

Using again Lemma~\ref{lemma:ker-pi} we see that $\cF \otimes \tcV^\vee \in \langle \cO_{E_i}(-1) \rangle$.
Let $\cF \otimes \tcV^\vee \to \cO_E(-1)$ be an epimorphism, where $E$ is one of the $E_i$.
By adjunction we obtain a nonzero morphism $\cF \to \tcV \otimes \cO_E(-1)$.
Let $\cF'$ be its cone.
Using the defining triangle of $\cF'$
\begin{equation*}
\cF \to \tcV \otimes \cO_E(-1) \to \cF'
\end{equation*}
it is easy to see that $\pi^\tcV_*(\cF') = 0$, hence from the observation at the beginning of the proof
we conclude that $\cH^t(\cF') \in \Ker \pi^\tcV_*$ for each $t$.

Now let $\cG$ be the image of the (nonzero) morphism of sheaves $\cF \to \tcV \otimes \cO_E(-1)$.
Note that~$\cG$ is a nonzero subsheaf in $\tcV \otimes \cO_E(-1)$,
hence $\cG$ is supported scheme-theoretically on the smooth rational curve~$E$.
Furthermore, we have an exact sequence
\begin{equation}
\label{eq:sequence-ch-fprime}
0 \to \cH^{-1}(\cF') \to \cF \to \cG \to 0
\end{equation}
which implies that $\cG \in \Ker \pi^\tcV_*$.
It follows that $\cG \otimes \tcV^\vee \cong \cO_E(-1)^{\oplus s}$ for some~$s > 0$, hence
\begin{equation*}
\cG \cong \cO_E(-1+b_E)^{\oplus s/r}.
\end{equation*}
On the other hand, the sum of lengths of $\cH^{-1}(\cF')$ at generic points of $E_i$ is less than that for~$\cF$
(since $s > 0$),
hence by induction we have the containment
\begin{equation*}
\cH^{-1}(\cF') \in \langle \cO_{E_i}(-1+b_{E_i}) \rangle.
\end{equation*}
Now the statement follows from~\eqref{eq:sequence-ch-fprime}.
\end{proof}

\begin{lemma}
\label{lemma:im-pi-twisted}
For any $\cG \in \Db(X,\cR)$ there exists $\cF \in \Db(\tX)$ such that $\cG \cong \pi^\tcV_*(\cF)$.
\end{lemma}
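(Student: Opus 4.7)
The plan is to imitate the proof of Corollary \ref{lemma:im-pi} in the twisted setting, with the key subtlety being that $\pi_\tcV^*$ may send bounded complexes to unbounded ones, so an appropriate truncation must be performed and shown to be invisible to the pushforward.

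First, I would establish a twisted analog of Lemma \ref{lemma:pis-tau}: if $\cG \in \Db(X,\cR)$ is concentrated in degrees $\ge k$, then $\pi^\tcV_*(\tau^{\le k-2} \pi_\tcV^* \cG) = 0$ and $\cG \cong \pi^\tcV_*(\tau^{\ge k-1} \pi_\tcV^* \cG)$. The main computation here uses that tensoring with the locally free sheaf $\tcV^\vee$ is exact, and that tensoring over $\tcR$ with $\tcV$ is a Morita equivalence (hence exact). Starting from the left-hand side,
\begin{equation*}
\pi^\tcV_*(\tau^{\le k-2} \pi_\tcV^* \cG)
= \pi_*\!\left(\tau^{\le k-2}(\pi^*\cG \otimes_\tcR \tcV) \otimes_{\cO_\tX} \tcV^\vee\right)
= \pi_*\!\left(\tau^{\le k-2}(\pi^*\cG \otimes_\tcR \tcR)\right)
= \pi_*(\tau^{\le k-2} \pi^*\cG),
\end{equation*}
where in the last step $\pi^*\cG$ is viewed via its underlying $\cO_\tX$-structure. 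Since the forgetful functor $\Coh(X,\cR) \to \Coh(X)$ is exact, $\cG$ sits in degrees $\ge k$ as an $\cO_X$-complex as well, so the untwisted Lemma \ref{lemma:pis-tau} gives the required vanishing. The isomorphism $\cG \cong \pi^\tcV_*(\tau^{\ge k-1} \pi_\tcV^* \cG)$ then follows by applying $\pi^\tcV_*$ to the canonical truncation triangle and using $\pi^\tcV_* \pi_\tcV^* \cong \id$.

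Given this, the lemma itself is immediate. If $\cG \in \DD^{[k_-,k_+]}(X,\cR)$, set $\cF := \tau^{\ge k_--1}(\pi_\tcV^*\cG)$. Since derived pullback does not raise cohomological degrees, $\pi_\tcV^*\cG \in \DD^{\le k_+}(\tX)$, hence $\cF \in \DD^{[k_--1,\,k_+]}(\tX) \subset \Db(\tX)$. Applying the twisted Lemma \ref{lemma:pis-tau} with $k=k_-$ gives $\pi^\tcV_*(\cF) \cong \cG$, as required.

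The main obstacle is the exactness/truncation bookkeeping in Step 1 — specifically, justifying the interchange of $\tau^{\le k-2}$ with the Morita-equivalence tensoring, and making sure that ``zero as an $\cO_X$-complex'' implies ``zero as an $\cR$-module complex'' in the conclusion. Both follow from the fact that the forgetful functor $\Coh(X,\cR) \to \Coh(X)$ and the Morita functor $-\otimes_\tcR \tcV$ are exact equivalences (respectively, a faithful exact functor), so they preserve $t$-structures and detect the zero object; no new geometric input is needed beyond the untwisted lemma.
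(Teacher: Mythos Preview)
Your proposal is correct and is precisely the argument the paper has in mind: the paper's proof consists of the single line ``Analogous to the proof of Corollary~\ref{lemma:im-pi}'', and you have spelled out exactly that analogy, including the twisted version of Lemma~\ref{lemma:pis-tau} and the truncation argument. The bookkeeping you flag (exactness of tensoring by the locally free $\tcV^\vee$, exactness of the Morita functor $-\otimes_\tcR \tcV$, and conservativity of the forgetful functor) is handled correctly.
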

\begin{proof}
Analogous to the proof of Corollary~\ref{lemma:im-pi}.
\end{proof}


\begin{lemma}
\label{lemma:descent-twisted}
Let $\cF \in \Db(\tX)$.
The following properties are equivalent:
\begin{enumerate}
\item
for each $i$ one has $\cF\vert_{E_i} \in \langle \cO_{E_i}(b_{E_i}) \rangle$;
\item
for each $i$ one has $\Ext^\bullet(\cF\vert_{E_i}, \cO_{E_i}(-1 + b_{E_i})) = 0$;
\item
there exists $\cG \in \Dperf(X,\cR)$ such that $\cF \cong \pi_\tcV^*\cG$;
\item
one has $\pi^\tcV_*\cF \in \Dperf(X,\cR)$ and $\cF \cong \pi_\tcV^*(\pi^\tcV_*\cF)$.
\end{enumerate}
If additionally $\cF$ is a pure sheaf, or a locally free sheaf, then so is $\pi^\tcV_*\cF$.
\end{lemma}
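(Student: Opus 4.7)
The plan is to reduce Lemma~\ref{lemma:descent-twisted} to the untwisted Lemma~\ref{lemma:descent} by tensoring $\cF$ with $\tcV^\vee$. Setting $\cF' := \cF \otimes_{\cO_\tX} \tcV^\vee \in \Db(\tX)$, I will establish that each of the four conditions on $\cF$ in the twisted setting is equivalent to the analogous condition on $\cF'$ from Lemma~\ref{lemma:descent}; the desired equivalence of (1)--(4) for $\cF$ then follows formally, and similarly the additional purity and local freeness claim.

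Conditions (1) and (2) are local on each exceptional divisor $E_i \cong \P^1$. Since $\tcV^\vee\vert_{E_i} \cong \cO_{E_i}(-b_{E_i})^{\oplus r}$, the functor $- \otimes \tcV^\vee\vert_{E_i}$ takes $\langle \cO_{E_i}(b_{E_i}) \rangle$ into $\langle \cO_{E_i} \rangle$. For the converse direction I would use that $\langle \cO_{E_i}(b) \rangle \subset \Db(E_i)$ is closed under direct summands together with $\tcV \otimes_{\cO_\tX} \tcV^\vee \cong \tcR$ (whose restriction to $E_i$ is a trivial rank-$r^2$ bundle), so from $\cF'\vert_{E_i} \in \langle \cO_{E_i} \rangle$ one recovers $\cF\vert_{E_i}^{\oplus r^2} \cong \cF'\vert_{E_i} \otimes \tcV\vert_{E_i} \in \langle \cO_{E_i}(b_{E_i}) \rangle$ and extracts the summand $\cF\vert_{E_i}$. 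The identity $\Ext^\bullet(A \otimes L, B) \cong \Ext^\bullet(A, B \otimes L^\vee)$ on $E_i$ then gives $\Ext^\bullet(\cF'\vert_{E_i}, \cO_{E_i}(-1))^{\oplus r} \cong \Ext^\bullet(\cF\vert_{E_i}, \cO_{E_i}(-1 + b_{E_i}))$, showing the $\Ext$-vanishings for $\cF$ and $\cF'$ are equivalent.

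For conditions (3) and (4) I would rely on two identities. First, $\pi^\tcV_*\cF = \pi_*\cF'$ by the very definition of $\pi^\tcV_*$. Second, for any $\cG \in \Dm(X,\cR)$ with underlying $\cO_X$-module $\cG_0$, one has $\pi^*_\tcV(\cG) \otimes_{\cO_\tX} \tcV^\vee \cong \pi^*\cG_0$ as $\cO_\tX$-modules; this is the associativity chain $((\pi^*\cG) \otimes_\tcR \tcV) \otimes_{\cO_\tX} \tcV^\vee \cong (\pi^*\cG) \otimes_\tcR (\tcV \otimes_{\cO_\tX} \tcV^\vee) \cong (\pi^*\cG) \otimes_\tcR \tcR \cong \pi^*\cG$. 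Combined with the observation recalled at the beginning of Section~\ref{subsection:resolution-twisted}---that a complex of $\cR$-modules is perfect if and only if it is perfect as a complex of $\cO_X$-modules---these translate the twisted conditions (3) and (4) on $\cF$ into the corresponding untwisted conditions on $\cF'$. The step requiring the most care is to promote an untwisted descent $\cF' \cong \pi^*\cG_0$ with $\cG_0 \in \Dperf(X)$ to a twisted descent $\cF \cong \pi^*_\tcV\cG$: I would take $\cG := \pi^\tcV_*\cF$, verify that its underlying $\cO_X$-module equals $\pi_*\cF' \cong \pi_*\pi^*\cG_0 \cong \cG_0$, and recover the isomorphism by tensoring $\cF' \cong \pi^*\cG_0$ with $\tcV$ over $\tcR$.

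For the final claim, $\cF$ is a pure sheaf, respectively locally free, if and only if $\cF'$ is, because $\tcV^\vee$ is locally free. The corresponding part of Lemma~\ref{lemma:descent} applied to $\cF'$ then shows that $\pi_*\cF' = \pi^\tcV_*\cF$ is pure, respectively locally free, as an $\cO_X$-module, and the same conclusion holds at the $\cR$-module level since purity depends only on the underlying $\cO_X$-module, and local freeness over $\cR$ is equivalent to local freeness over $\cO_X$ by the Azumaya property. The main obstacle throughout will be the careful bookkeeping of $\cR$-module versus $\cO_X$-module structures in the dictionary between the two settings; once this is in order, the equivalences follow formally from Lemma~\ref{lemma:descent}.
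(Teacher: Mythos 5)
Your reduction is correct, but it is a genuinely different route from the paper's: the paper's proof of Lemma~\ref{lemma:descent-twisted} is a step-by-step rerun of the proof of Lemma~\ref{lemma:descent} inside the twisted setting (with the twisted kernel description of Lemma~\ref{lemma:ker-pi-twisted} and the semiorthogonal decomposition coming from $\pi^\tcV_*\circ\pi_\tcV^*\cong\id$ playing the roles of Lemma~\ref{lemma:ker-pi} and~\eqref{eq:dbtx-dbx}), whereas you transport the untwisted statement through the Morita equivalence $-\otimes_{\cO_\tX}\tcV^\vee$. Your approach buys economy — it never touches the twisted kernel lemma and makes the four equivalences formal consequences of Lemma~\ref{lemma:descent} — at the price of two extra verifications that the rerun does not need: (a) the passage from $\cF'\vert_{E_i}\in\langle\cO_{E_i}\rangle$ back to $\cF\vert_{E_i}\in\langle\cO_{E_i}(b_{E_i})\rangle$, where your direct-summand extraction is fine but should be justified by idempotent completeness/Krull--Schmidt in $\Db(E_i)$ (every object of the subcategory generated by the exceptional object $\cO_{E_i}(b_{E_i})$ is a direct sum of its shifts); and (b) the return from the untwisted descent $\cF'\cong\pi^*\cG_0$ to the twisted one, where ``tensoring the isomorphism with $\tcV$ over $\tcR$'' is not quite legitimate as written, because the isomorphism supplied by Lemma~\ref{lemma:descent} is only an isomorphism of $\cO_\tX$-complexes and carries no a priori $\tcR$-linearity. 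The standard fix, consistent with the bookkeeping you flag, is to use instead the counit $\pi_\tcV^*(\pi^\tcV_*\cF)\to\cF$ of the adjunction already established in Section~\ref{subsection:resolution-twisted}: applying the conservative functor $-\otimes_{\cO_\tX}\tcV^\vee$ identifies it with the untwisted counit $\pi^*(\pi_*\cF')\to\cF'$, which is an isomorphism once $\cF'$ lies in $\pi^*(\Dm(X))$ (as $\pi_*\pi^*\cong\id$), and conservativity then yields $\cF\cong\pi_\tcV^*(\pi^\tcV_*\cF)$ with $\pi^\tcV_*\cF$ perfect over $\cR$ because its underlying $\cO_X$-complex $\pi_*\cF'\cong\cG_0$ is perfect. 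With these two points made explicit, your argument is complete, and the final purity/local freeness claim goes through exactly as you say.
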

\begin{proof}
Analogous to the proof of Lemma~\ref{lemma:descent}.
\end{proof}

\subsection{Grothendieck groups of twisted derived categories}\label{subsec:Groth-twisted}

Let $X$ be a normal projective rational surface with rational singularities.
Let $\beta \in \Br(X)$ be a Brauer class.
We denote by $\rG_0(X, \beta) = \rK_0(\Db(X, \beta))$ the Grothendieck group of twisted coherent sheaves on $X$.
By Proposition~\ref{proposition:br-cl} there is an element
\begin{equation*}
f = \sum_{i=1}^m b_{E_i} \delta_{E_i} \in \Cl(\wt{X}/X)^\vee
\end{equation*}
such that $\beta = \rB(f)$, where $\rB$ is the Brauer class map defined in~\eqref{eq:def-rb}.

To study $\rG_0(X, \beta)$
we consider the following subgroup of $\rG_0(\wt{X})$:
\begin{equation}\label{eq:cl-beta}
\rG_0(\wt{X}/X,\beta)
= \big\langle [\OO_{E_1}(-1 + b_{E_1})], \dots, [\OO_{E_m}(-1 + b_{E_m})] \big\rangle \subset \rG_0(\wt{X})
\end{equation}
(see Proposition~\ref{prop:g0-quotient} below for a conceptual interpretation of this subgroup).
Under the isomorphism $(\rk, \rc_1, \chi) \colon \rG_0(\wt{X}) \simeq \Z \oplus \Cl(\wt{X}) \oplus \Z$ of Lemma \ref{lem:k0-g0-rational},
this subgroup can be written as
\begin{equation*}
\rG_0(\wt{X}/X,\beta) \simeq \big\langle (0, [E_1], b_{E_1}), \dots, (0, [E_m], b_{E_m}) \big\rangle \subset \Z \oplus \Cl(\wt{X}) \oplus \Z.
\end{equation*}
Note that when all $b_{E_i} = 0$, so that $\beta = 0$ we have $\rG_0(\wt{X}/X,\beta) = \Cl(\wt{X}/X) \subset \rG_0(X)$.
Also note a slight abuse of notation: strictly speaking $\rG_0(\wt{X}/X,\beta) \subset \rG_0(\wt{X})$
depends not only on $\beta$ but also on the choice of $f \in \Cl(\wt{X}/X)^\vee$ representing $\beta$.
Note however that $\rG_0(\wt{X}/X,\beta) \subset \rG_0(\wt{X})$ is well-defined up to multiplication
by a class of a line bundle on $\wt{X}$.

\begin{proposition}
\label{prop:g0-quotient}
We have an exact sequence of abelian groups
\begin{equation}
\label{eq:g0-sequence}
0 \to \rG_0(\wt{X}/X,\beta) \to \rG_0(\tX) \xrightarrow{\ \pi^\tcV_*\ } \rG_0(X,\beta) \to 0.
\end{equation}
In particular, $\rG_0(X,\beta)$ is a finitely generated abelian group.
\end{proposition}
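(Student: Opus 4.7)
The plan is to read the exact sequence off the essential surjectivity of $\pi^\tcV_*$ on bounded derived categories, combined with a Verdier quotient argument that identifies its kernel with the subgroup $\rG_0(\wt{X}/X,\beta)$. Note that the first map of \eqref{eq:g0-sequence} is injective by construction, since $\rG_0(\wt{X}/X,\beta) \subset \rG_0(\tX)$ is defined as a subgroup.

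First, surjectivity of $\pi^\tcV_*\colon \rG_0(\tX) \to \rG_0(X,\beta)$ is immediate from Lemma~\ref{lemma:im-pi-twisted}: every object of $\Db(X,\beta)$ admits a lift along $\pi^\tcV_*$. For the inclusion $\rG_0(\wt{X}/X,\beta) \subset \Ker(\pi^\tcV_*)$ it suffices to check vanishing on the generators, and using $\tcV\vert_{E_i} \cong \cO_{E_i}(b_{E_i})^{\oplus r}$ I compute
\begin{equation*}
\pi^\tcV_*(\cO_{E_i}(-1+b_{E_i})) = \pi_*\bigl(\cO_{E_i}(-1+b_{E_i}) \otimes \tcV^\vee\bigr) \cong \pi_*\bigl(\cO_{E_i}(-1)^{\oplus r}\bigr) = 0,
\end{equation*}
since $H^\bullet(\P^1,\cO(-1))=0$.

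The technical core is the reverse inclusion $\Ker(\pi^\tcV_*) \subset \rG_0(\wt{X}/X,\beta)$. Set $\cS := \Db(\tX) \cap \Ker(\pi^\tcV_*)$; this is a thick triangulated subcategory. Paralleling the proof of Proposition~\ref{proposition:sod-dbx}$(iii)$, I would show that $\pi^\tcV_*$ descends to an equivalence of Verdier quotients
\begin{equation*}
\Db(\tX)/\cS \xrightarrow{\ \simeq\ } \Db(X,\beta).
\end{equation*}
Essential surjectivity is Lemma~\ref{lemma:im-pi-twisted}; full faithfulness on the quotient follows from the adjunction $\pi_\tcV^* \dashv \pi^\tcV_*$ with $\pi^\tcV_* \pi_\tcV^* \cong \id$, combined with Verdier's criterion \cite[Theorem~2.4.2]{verdier} applied as in \cite[Lemma~2.31]{Pavic-Shinder}: one reduces to checking that any morphism $\cG \to \cG'$ in $\Dm(\tX)$ with $\cG \in \Dm(\tX) \cap \Ker(\pi^\tcV_*)$ and $\cG' \in \Db(\tX)$ factors through an object of $\cS$, which is achieved by applying the projection onto $\Ker(\pi^\tcV_*)$ to the canonical truncation $\tau^{\ge -N}\cG$ for $N$ sufficiently large. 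The standard Grothendieck group sequence for a Verdier quotient then yields exactness of
\begin{equation*}
\rK_0(\cS) \to \rG_0(\tX) \xrightarrow{\ \pi^\tcV_*\ } \rG_0(X,\beta) \to 0.
\end{equation*}

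To finish, I identify the image of $\rK_0(\cS)$ in $\rG_0(\tX)$ with $\rG_0(\wt{X}/X,\beta)$. By Lemma~\ref{lemma:ker-pi-twisted} every object $\cF \in \cS$ has cohomology sheaves that are iterated extensions of the sheaves $\cO_{E_i}(-1+b_{E_i})$, so in $\rG_0(\tX)$ we have $[\cF] = \sum_t (-1)^t [\cH^t\cF]$, which is an integer combination of the classes $[\cO_{E_i}(-1+b_{E_i})]$; the reverse inclusion is clear since these generators themselves lie in $\cS$. Finite generation of $\rG_0(X,\beta)$ then follows because it is a quotient of $\rG_0(\tX) \cong \Z \oplus \Cl(\tX) \oplus \Z$ (Lemma~\ref{lem:k0-g0-rational}), which is finitely generated since $\tX$ is a smooth projective rational surface. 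The main obstacle is the Verdier equivalence $\Db(\tX)/\cS \simeq \Db(X,\beta)$: constructing the bounded kernel object required by Verdier's criterion demands a careful twisted analogue of the projection functor $\alpha_i$ and the truncation argument used in Proposition~\ref{proposition:sod-dbx}$(iii)$.
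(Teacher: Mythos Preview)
Your approach is correct and genuinely different from the paper's.  You establish the categorical equivalence $\Db(\tX)/\cS \simeq \Db(X,\beta)$ via the Verdier criterion and then pass to Grothendieck groups.  The paper instead works directly at the level of $\rG_0$: it introduces the open set $U = X \setminus \bigcup \pi(E_i) = \tX \setminus D$, observes that the restriction functors give a commutative triangle
\begin{equation*}
\xymatrix{
\rG_0(\tX) \ar[rr]^-{\pi^\tcV_*} \ar[dr]_{\tilde{u}^*} && \rG_0(X,\beta) \ar[dl]^{u_\tcV^*} \\
& \rG_0(U)
}
\end{equation*}
and hence $\Ker \pi^\tcV_* \subset \Ker \tilde{u}^*$.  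The right-hand kernel is computed explicitly from the localization description of Lemma~\ref{lem:k0-g0-rational}: it equals $0 \oplus \Cl(\tX/X) \oplus \Z[\cO_\tx]$, which is freely generated by the classes $[\cO_{E_i}(-1+b_{E_i})]$ together with~$[\cO_\tx]$.  Since $\pi^\tcV_*[\cO_\tx]$ is non-torsion (one checks $\chi(\mathrm{Forget}(\pi^\tcV_*\cO_\tx)) = \rk\tcV$), any class $\xi \in \Ker \pi^\tcV_*$ must have zero $[\cO_\tx]$-coefficient and hence lie in $\rG_0(\wt{X}/X,\beta)$.

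Your route is more categorical and yields the equivalence $\Db(\tX)/\cS \simeq \Db(X,\beta)$ as a byproduct, which is of independent interest; the paper's route is more elementary in that it avoids the Verdier quotient machinery entirely, at the cost of invoking the explicit Mukai-lattice description of $\rG_0(\tX)$ and $\rG_0(U)$.  One simplification to your argument: the ``projection onto $\Ker\pi^\tcV_*$'' is unnecessary.  By Lemma~\ref{lemma:ker-pi-twisted} the kernel of $\pi^\tcV_*$ on $\Dm(\tX)$ is detected cohomology-sheaf by cohomology-sheaf, so if $\cG \in \Dm(\tX) \cap \Ker\pi^\tcV_*$ then already $\tau^{\ge -N}\cG \in \cS$ for every $N$; the factorization through a bounded kernel object is immediate and no analogue of the projection $\alpha_i$ is needed.
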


\begin{remark}
If all $b_{E_i} = 0$, then $\beta = 0$ and~\eqref{eq:g0-sequence} follows
from~\eqref{eq:cl-pic-cl} and Lemma~\ref{lem:k0-g0-rational} as $\rG_0(\wt{X}/X, 0)$ corresponds
to $0 \oplus \Cl(\wt{X}/X) \oplus 0 \subset \Z \oplus \Pic(\wt{X}) \oplus \Z \simeq \rG_0(\wt{X})$.
\end{remark}

\begin{proof}
The morphism $\pi^\tcV_* \colon \rG_0(\tX) \to  \rG_0(X,\beta)$ is surjective by Lemma~\ref{lemma:im-pi-twisted}
and the subgroup $\rG_0(\wt{X}/X,\beta)$ is contained in its kernel by Lemma~\ref{lemma:ker-pi-twisted},
so it remains to identify the kernel of $\pi^\tcV_*$ with the image of the first arrow in~\eqref{eq:g0-sequence}.
Let $E_1, \dots, E_m$ be the components of the exceptional divisor of~$\pi$.
Consider the open subscheme
\begin{equation*}
U := X \setminus \left( \bigcup_{i=1}^m \pi(E_i) \right) = \tX \setminus \left( \bigcup_{i=1}^m E_i \right)
\end{equation*}
and the natural embeddings $u \colon U \to X$ and $\tilde{u} \colon U \to \tX$, so that $\pi \circ \tilde{u} = u$.
The Azumaya algebra $\cR_U := u^*\cR \cong \tilde{u}^*\pi^*\cR \cong \tilde{u}^*\tcR$ is Morita trivial,
and trivialized by the vector bundle $\tcV_U := \tilde{u}^*\tcV$, so we have a functor
\begin{equation*}
u_\tcV^* \colon \Db(X,\cR) \to \Db(U), \qquad
\cG \mapsto (u^*\cG) \otimes_{\cR_U} \tcV_U,
\end{equation*}
such that $u_\tcV^* \circ \pi^\tcV_* \cong \tilde{u}^* \colon \Db(\tX) \to \Db(U)$.
Therefore, we have a commutative diagram
\begin{equation*}
\xymatrix{
\rG_0(\tX) \ar[rr]^-{\pi^\tcV_*} \ar[dr]_{\tilde{u}^*} &&
\rG_0(X,\beta) \ar[dl]^{u_\tcV^*} \\
& \rG_0(U)
}
\end{equation*}
giving an exact sequence
\begin{equation*}
0 \to \Ker \pi^\tcV_* \to \Ker \tilde{u}^* \xrightarrow{\ \pi^\tcV_*\ } \Ker u_\tcV^*.
\end{equation*}
On the other hand, using isomorphisms \eqref{eq:g0}, \eqref{eq:g0-open} of Lemma~\ref{lem:k0-g0-rational}
we have
\begin{equation*}
\Ker \tilde{u}^* = 0 \oplus \Cl(\tX/X) \oplus \Z[\cO_\tx] \subset \Z \oplus \Cl(\tX) \oplus \Z = \rG_0(\tX),
\end{equation*}
where $\tx$ is a point on $\tX$.
So, it remains to note that this subgroup is freely generated by the elements $[\cO_{E_i}(-1 + b_{E_i})]$ and by~$[\cO_\tx]$,
and that $\pi^\tcV_*([\cO_\tx])$ is a non-torsion element of~$\rG_0(X,\beta)$
(for the last fact just note that the composition
\begin{equation*}
\rG_0(\tX) \xrightarrow{\ \pi^\tcV_*\ } \rG_0(X,\beta)
{} = \rG_0(X,\cR) \xrightarrow{\ \mathrm{Forget}\ } \rG_0(X)
\xrightarrow{\ \chi\ } \Z
\end{equation*}
(where $\mathrm{Forget}$ is the linear map induced by the functor forgetting the structure of $\cR$-module)
takes $[\cO_\tx]$ to~\mbox{$\chi(\pi_*(\cO_\tx \otimes \tcV^\vee)) = \rk(\tcV)$}).
\end{proof}

In the next proposition we relate the torsion part of $\rG_0(X,\beta)$ to the Brauer group of $X$
providing a generalization of Proposition~\ref{prop:torsion} to the twisted case.
We use more or less the same proof as that of Proposition~\ref{prop:torsion}, but instead of
$\Cl(X)$ and $\Pic(X)$ we use $\rG_0(X)$ and $\rK_0(X)$ respectively.
Recall the direct sum decomposition
\begin{equation*}
(\rk, \rc_1, \chi) \colon \rK_0(\tX) {} = \rG_0(\tX) \simeq \Z \oplus \Pic(\tX) \oplus \Z
\end{equation*}
described in Lemma~\ref{lem:k0-g0-rational}, and consider on it the {\sf Mukai pairing} defined by:
\begin{equation}\label{eq:def-MP}
\MP((r, D, s), (r', D', s')) := rs' + r's - D \cdot D'
\end{equation}
Note that it is unimodular; in particular it gives an identification $\rK_0(\tX)^\vee \cong \rK_0(\tX)$.
Note also that for elements $\xi_1 = (0,D_1,s_1)$ and $\xi_2 = (0,D_2,s_2)$ of $\rG_0(\wt{X}/X,\beta)$ we have
\begin{equation}
\label{eq:mp-ip}
\MP(\xi_1,\xi_2) = - D_1 \cdot D_2 = - \IP(\rc_1(\xi_1),\rc_1(\xi_2)),
\end{equation}
so the two pairings agree up to a sign.
In particular, the map
\begin{equation*}
\rc_1 \colon \rG_0(\wt{X}/X,\beta) \to \Cl(\tX/X)
\end{equation*}
is an isomorphism of lattices up to a sign.

\begin{proposition}
\label{prop:g0-twisted}
We have a natural isomorphism
\begin{equation*}
\Ext^1(\rG_0(X,\beta), \Z) \simeq \Br(X) / \langle \beta \rangle.
\end{equation*}
In particular $\rG_0(X,\beta)$ is torsion-free if and only if $\beta$ is a generator of $\Br(X)$.
\end{proposition}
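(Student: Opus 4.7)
The plan is to dualize the short exact sequence from Proposition~\ref{prop:g0-quotient} and identify the resulting cokernel with $\Br(X)/\langle\beta\rangle$ using the explicit description of $\Br(X)$ from Remark~\ref{remark:brpic-general} and the Mukai pairing on $\rG_0(\tX)$.

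First I would apply the functor $\Hom(-,\Z)$ to the exact sequence
\begin{equation*}
0 \to \rG_0(\wt{X}/X,\beta) \to \rG_0(\tX) \xrightarrow{\pi^\tcV_*} \rG_0(X,\beta) \to 0.
\end{equation*}
Since $\tX$ is a smooth projective rational surface, Lemma~\ref{lem:k0-g0-rational} shows that $\rG_0(\tX) \cong \Z\oplus\Pic(\tX)\oplus\Z$ is free of finite rank, so $\Ext^1(\rG_0(\tX),\Z)=0$ and the long Ext sequence collapses to
\begin{equation*}
\Ext^1(\rG_0(X,\beta),\Z) \;\cong\; \Coker\bigl(\rG_0(\tX)^\vee \to \rG_0(\wt{X}/X,\beta)^\vee\bigr).
\end{equation*}

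Next I would identify both sides via the Mukai pairing~\eqref{eq:def-MP}: since it is unimodular, it induces $\rG_0(\tX)^\vee \cong \rG_0(\tX)$, and by the remark following~\eqref{eq:mp-ip} the map $\rc_1\colon \rG_0(\wt{X}/X,\beta) \to \Cl(\tX/X)$ is an isomorphism up to sign, hence dualizes to an isomorphism $\Cl(\tX/X)^\vee \cong \rG_0(\wt{X}/X,\beta)^\vee$. Under these identifications the composite map $\rG_0(\tX) \to \Cl(\tX/X)^\vee$ can be computed directly on the generators $(0,[E_i],b_{E_i})$ of $\rG_0(\wt{X}/X,\beta)$: for $\xi=(r,D,s)\in\rG_0(\tX)$ one has $\MP(\xi,(0,[E_i],b_{E_i})) = r\,b_{E_i} - D\cdot E_i$, so up to sign the map sends
\begin{equation*}
(r,D,s) \;\longmapsto\; r f - \IP(D), \qquad f = \sum_i b_{E_i}\delta_{E_i}.
\end{equation*}
As $r$ and $D$ vary independently, the image of this map is the subgroup $\Z\cdot f + \IP(\Pic(\tX)) \subset \Cl(\tX/X)^\vee$.

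Combining this with the description
$\Br(X) \cong \Cl(\tX/X)^\vee / \IP(\Pic(\tX))$ from Remark~\ref{remark:brpic-general} and the fact that $\rB(f)=\beta$ by definition of the Brauer class map~\eqref{eq:def-rb}, I conclude
\begin{equation*}
\Ext^1(\rG_0(X,\beta),\Z) \;\cong\; \Cl(\tX/X)^\vee \big/ \bigl(\Z f + \IP(\Pic(\tX))\bigr) \;\cong\; \Br(X)/\langle\beta\rangle.
\end{equation*}
The second assertion is then immediate: Proposition~\ref{prop:g0-quotient} implies $\rG_0(X,\beta)$ is finitely generated, so a standard fact of homological algebra gives $\rG_0(X,\beta)_{\tors} \cong \Ext^1(\rG_0(X,\beta),\Z)$, and this vanishes exactly when $\beta$ generates $\Br(X)$. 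The main delicate point to check carefully is step three, namely that the composite $\rG_0(\tX) \to \rG_0(\wt{X}/X,\beta)^\vee \cong \Cl(\tX/X)^\vee$ really gives the formula $rf - \IP(D)$, and in particular that the ambiguity coming from the sign in~\eqref{eq:mp-ip} as well as from the ignored summand $\Z$ (the $\chi$-factor) of $\rG_0(\tX)$ does not affect the resulting image subgroup; once this is verified, the identification with $\Br(X)/\langle\beta\rangle$ is essentially formal.
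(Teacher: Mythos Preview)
Your proof is correct and follows essentially the same route as the paper: dualize the short exact sequence~\eqref{eq:g0-sequence}, identify $\rG_0(\tX)^\vee$ with $\rK_0(\tX)$ via the Mukai pairing and $\rG_0(\wt{X}/X,\beta)^\vee$ with $\Cl(\tX/X)^\vee$, compute the resulting map as $(r,D,s)\mapsto r f - \IP(D)$, and read off the cokernel as $\Br(X)/\langle\beta\rangle$ using Remark~\ref{remark:brpic-general}. The paper handles your ``delicate point'' by treating the three summands $\Z\oplus\Pic(\tX)\oplus\Z$ of $\rK_0(\tX)$ one at a time --- the $\chi$-summand maps to zero, the $\Pic(\tX)$-summand gives $\Br(X)$ via $\IP$, and the rank summand contributes $\rB(f)=\beta$ --- but the content is the same as yours.
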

\begin{proof}
Using~\eqref{eq:g0-sequence} as a free resolution for $\rG_0(X,\beta)$,
we obtain
\begin{equation*}
\rG_0(\tX)^\vee \to \rG_0(\wt{X}/X,\beta)^\vee \to \Ext^1(\rG_0(X, \beta),\Z) \to 0.
\end{equation*}
Using the Mukai pairing to identify $\rG_0(\tX)^\vee = \rK_0(\tX)^\vee$ with $\rK_0(\tX)$, we rewrite this as
\begin{equation*}
\rK_0(\tX) \xrightarrow{\ \MP\ } \rG_0(\wt{X}/X,\beta)^\vee \to \Ext^1(\rG_0(X, \beta),\Z) \to 0,
\end{equation*}
where the first map takes $[\cF] = (r,D,s)$ to
\begin{equation}
\label{eq:mp-explicit}
\sum \MP([\cF],[\cO_{E_i}(-1 + b_{E_i})]) \delta_{E_i} =
\sum (r \cdot b_{E_i} - D \cdot E_i) \delta_{E_i} =
r \sum b_{E_i}\delta_{E_i} - \IP(D).
\end{equation}
Here $\delta_{E_i} \in \rG_0(\wt{X}/X, \beta)^\vee$ is defined
as the dual basis for the basis $[\OO_{E_i}(-1 + b_i)]$ of~$\rG_0(\wt{X}/X, \beta)$.
Note that we have an isomorphism
\begin{equation*}
\rG_0(\wt{X}/X, \beta)^\vee \cong \Cl(\tX/X)^\vee
\end{equation*}
that identifies their bases $\delta_{E_i}$.
Thus, to compute the cokernel of $\MP$, we should take successively
the quotients of $\Cl(\tX/X)^\vee$ by the images of the three summands of
\begin{equation*}
\rK_0(\tX) = \Z \oplus \Pic(\tX) \oplus \Z.
\end{equation*}
It follows from~\eqref{eq:mp-explicit} that the third summand is mapped to zero.
Furthermore, the map on the second summand agrees up to sign with the map~$\IP$,
hence by Remark~\ref{remark:brpic-general} the quotient is isomorphic to~$\Br(X)$
via the Brauer class map~$\rB$ defined in~\eqref{eq:def-rb}.
Finally, comparing~\eqref{eq:def-rb} with~\eqref{eq:mp-explicit} we see that
the composition $\rB \circ \MP$ takes the generator~$1$ of the first summand of $\rK_0(\tX)$ to~$\rB(\sum b_{E_i} \delta_{E_i}) = \beta$.
Therefore, the cokernel is isomorphic to $\Br(X) / \langle \beta \rangle$.
\end{proof}

\begin{remark}
Propositions \ref{prop:g0-quotient} and \ref{prop:g0-twisted} show how to compute $\rG_0(X, \beta)$
using the sublattice $\rG_0(\wt{X}/X,\beta) \subset \rG_0(\wt{X}) = \rK_0(\wt{X})$.
This sublattice can be also used to compute the group~$\rK_0(X, \beta) = \rK_0(\Dperf(X, \beta))$.
Indeed one can show that there is a natural exact sequence
\begin{equation*}
0 \to \rK_0(X, \beta) \xrightarrow{\ \pi_\tcV^*\ } \rK_0(\wt{X}) \xrightarrow{\ \MP\ }
\rG_0(\wt{X}/X,\beta)^\vee \to \Br(X) / \langle \beta \rangle \to 0.
\end{equation*}
Injectivity of $\pi_\cV^*$ is nontrivial; it can be established as in \cite[Theorem 2.19]{Pavic-Shinder}.
\end{remark}

\subsection{Semiorthogonal decompositions of twisted derived categories}
\label{subsection:adherent-twisted}

Assume that~$X$ is a normal projective rational surface with cyclic quotient singularities,
and let~\mbox{$\pi \colon \tX \to X$} be its minimal resolution
with the exceptional divisor
\begin{equation*}
D = \bigsqcup_{i=1}^n D_i = \bigsqcup_{i=1}^n \left( \bigcup_{p=1}^{m_i} E_{i,p} \right),
\end{equation*}
so that $D_i = \cup_{p=1}^{m_i} E_{i,p}$ for each $i$ is a chain (possibly empty) of smooth rational curves.
The next result is a twisted analogue of Corollary~\ref{corollary:dbx}.

\begin{theorem}
\label{theorem:dbx-twisted}
Let $X$ be a normal projective rational surface with cyclic quotient singularities
and let $\pi \colon \tX \to X$ be its minimal resolution.
Let
\begin{equation*}
\Db(\tX) = \langle \tcA_1, \tcA_2, \dots, \tcA_n \rangle
\end{equation*}
be a semiorthogonal decomposition, such that every component $\tcA_i$ is $(b_{i,p})$-twisted adherent to a chain of rational curves $D_i$
where $D = \sqcup D_i$ is the exceptional locus of $\pi$.
Let $\tcV$ be a vector bundle on $\tX$ such that
\begin{equation*}
\tcV\vert_{E_{i,p}} \cong \cO_{E_{i,p}}(b_{i,p})^{\oplus r}
\end{equation*}
for some positive integer $r$ and all $i$ and $p$, see Corollary~\textup{\ref{corollary:restriction-exists}(1)}.
Let
\begin{equation*}
\beta = \rB\left(\sum b_{i,p} \delta_{E_{i,p}}\right) \in \Br(X)
\end{equation*}
be the corresponding Brauer class.
Let~$K_i$ be the Kalck--Karmazyn algebras associated with the components $D_i$ of $D$, see~\eqref{eq:def-k}.
Define the functors
\begin{equation*}
\gamma^\tcV_i := \pi^\tcV_* \circ \tilde\gamma_i \circ \rho_i^* \colon \Dm(K_i\md) \to \Dm(X,\beta).
\end{equation*}
Then the functors $\gamma^\tcV_i$ are fully faithful, preserve boundedness, and induce a semiorthogonal decomposition
\begin{equation}
\label{eq:dbx-dbk-twisted}
\Db(X,\beta) = \langle \Db(K_1\md), \Db(K_2\md), \dots, \Db(K_n\md) \rangle.
\end{equation}
If $\pi$ is crepant along $D_j$ for $j > i$ then $\gamma^\tcV_i$ also induces a fully faithful functor
\begin{equation*}
\Dperf(K_i\md) \xrightarrow{\ \ } \Dperf(X,\beta),
\end{equation*}
and if $\pi$ is crepant along $D_j$ for $j \ge 2$
there is a semiorthogonal decomposition
\begin{equation}
\label{eq:perfx-perfk-twisted}
\Dperf(X,\beta) = \langle \Dperf(K_1\md), \Dperf(K_2\md), \dots, \Dperf(K_n\md) \rangle.
\end{equation}
\end{theorem}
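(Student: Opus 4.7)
The plan is to reproduce, in the twisted setting, the two-step proof of Corollary \ref{corollary:dbx}: first descend the given semiorthogonal decomposition of $\Db(\tX)$ along the twisted pushforward $\pi^\tcV_*$ to obtain a decomposition of $\Db(X,\beta)$, then identify each component with $\Db(K_i\md)$ via $\gamma^\tcV_i$. The key observation that makes the untwisted arguments of Sections \ref{section:semiorth}--\ref{section:components} transfer almost verbatim is that twisted adherence matches the Hille--Ploog simple modules to the generators of $\Ker\pi^\tcV_*$ provided by Lemma \ref{lemma:ker-pi-twisted}: by \eqref{eq:tgamma-p-s} one has $\tilde\gamma_i(S_{i,p}) \cong \cO_{E_{i,p}}(-1+b_{i,p})$, and this twisted line bundle on the exceptional curve is precisely what generates the twisted kernel.

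For the descent step I would reproduce Sections \ref{subsection:dm-decomposition}--\ref{subsection:db-decomposition} with $(\pi^*,\pi_*)$ replaced by $(\pi_\tcV^*,\pi^\tcV_*)$. The twisted analog of compatibility with the contraction holds automatically because each $\tcA_i$ contains $\cO_{E_{i,p}}(-1+b_{i,p})$ for all $p$ by the adherence hypothesis; combined with the twisted analog of Lemma \ref{lemma:e-e-orthogonal} this yields disjointness $D_i\cap D_{i'}=\varnothing$ for $i\ne i'$ and hence, via Lemma \ref{lemma:ker-pi-twisted}, a canonical splitting $\cF=\bigoplus \cF_i$ of any $\cF\in\Ker\pi^\tcV_*$ with $\cF_i\in\tcA_i^-\cap\Ker\pi^\tcV_*=\langle \cO_{E_{i,p}}(-1+b_{i,p})\rangle_{1\le p\le m_i}^{\oplus}$. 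Using this together with $\pi^\tcV_*\pi_\tcV^*\cong\id$ and Lemma \ref{lemma:im-pi-twisted}, the arguments of Propositions \ref{proposition:sod-dbx-} and \ref{proposition:sod-dbx} produce a semiorthogonal decomposition of $\Db(X,\beta)$ into components $\cA_i^\beta:=\pi^\tcV_*(\tcA_i)$ with projection functors of finite cohomological amplitude.

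For the identification step I would imitate the proof of Theorem \ref{theorem:sod-cai}: verify the twisted analogs of the two diagrams in \eqref{eq:gamma-tgamma} commute (the first uses that $\tilde\gamma_i$ sends $\Ker\rho_{i*}$ into $\Ker\pi^\tcV_*$, which is exactly the matching recalled above, and the second uses the twisted analog of Corollary \ref{cor:ap-p-adjunction}), and then deduce full faithfulness, boundedness preservation, and essential surjectivity of $\gamma^\tcV_i$ onto $\cA_i^\beta$ exactly as in the untwisted case, yielding \eqref{eq:dbx-dbk-twisted}. The perfect-complex version is obtained by replaying the chain of equivalences in the proof of Corollary \ref{corollary:dbx} with Lemma \ref{lemma:descent-twisted} in place of Corollary \ref{cor:descent}. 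The main obstacle I anticipate is the twisted analog of Lemma \ref{lemma:ker-pis}(iii), namely $\Ext^\bullet(\tcA_i^-,\tcA_j^-\cap\Ker\pi^\tcV_*)=0$ under crepancy of $\pi$ along $D_j$: the proof in the untwisted case twists by $K_\tX$ and uses $\cO_{E_{j,p}}(-1)(K_\tX)\cong\cO_{E_{j,p}}(-1)$, which still holds as $\cO_{E_{j,p}}(-1+b_{j,p})(K_\tX)\cong\cO_{E_{j,p}}(-1+b_{j,p})$ under crepancy, so the argument should go through with care but requires verifying that the rotation of the decomposition by $K_\tX$ preserves the twisted kernel-splitting structure needed for the admissibility and perfectness conclusions in Theorem \ref{theorem:dbx}(ii),(iv).
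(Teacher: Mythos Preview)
Your proposal is correct and takes essentially the same approach as the paper, whose proof reads in its entirety: ``The proof repeats the proof of Corollary~\ref{corollary:dbx} in the twisted setting.'' You have faithfully spelled out what that one-line proof means, correctly identifying the key point that twisted adherence makes the Hille--Ploog simples $\tilde\gamma_i(S_{i,p})\cong\cO_{E_{i,p}}(-1+b_{i,p})$ coincide with the generators of $\Ker\pi^\tcV_*$ from Lemma~\ref{lemma:ker-pi-twisted}, and your anticipated obstacle with the crepancy twist is handled exactly as you say.
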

\begin{proof}
The proof repeats the proof of Corollary~\ref{corollary:dbx} in the twisted setting.
\end{proof}

\begin{corollary}
\label{corollary:beta-generator}
If $\Db(\tX)$ has a semiorthogonal decomposition such that every its component is $(b_{i,p})$-twisted adherent
to a connected component $D_i$ of $\pi$, the corresponding Brauer class $\beta = \rB( \sum b_{i,p} \delta_{E_{i,p}} ) \in \Br(X)$
is a generator of $\Br(X)$.
\end{corollary}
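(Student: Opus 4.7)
The plan is to deduce the corollary directly by combining Theorem~\ref{theorem:dbx-twisted} with the Grothendieck group computation of Proposition~\ref{prop:g0-twisted}. Under the hypotheses, Theorem~\ref{theorem:dbx-twisted} produces a semiorthogonal decomposition
\begin{equation*}
\Db(X,\beta) = \langle \Db(K_1\md), \Db(K_2\md), \dots, \Db(K_n\md) \rangle,
\end{equation*}
with all $K_i$ local finite-dimensional algebras (the Kalck--Karmazyn algebras).

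Next, I would compute $\rG_0(X,\beta)$. Since the Grothendieck group is additive across semiorthogonal decompositions, and since for any local finite-dimensional algebra $K$ one has $\rK_0(\Db(K\md)) \cong \Z$ (generated by the class of the unique simple module, detected by the dimension function, exactly as in the proof of Lemma~\ref{lemma:semiorth-torsion}), one gets $\rG_0(X,\beta) \cong \Z^n$. In particular, $\rG_0(X,\beta)$ is a free finitely generated abelian group, hence torsion-free.

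Finally, I would invoke Proposition~\ref{prop:g0-twisted}, which gives a natural isomorphism
\begin{equation*}
\Ext^1(\rG_0(X,\beta),\Z) \cong \Br(X)/\langle \beta \rangle.
\end{equation*}
Since $\rG_0(X,\beta) \cong \Z^n$ is free, the left-hand side vanishes, and therefore $\Br(X) = \langle \beta \rangle$, which is exactly the claim that $\beta$ generates $\Br(X)$.

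The argument is essentially immediate from the two cited results, so there is no real obstacle; the only thing to double-check is that the hypotheses of Theorem~\ref{theorem:dbx-twisted} and Proposition~\ref{prop:g0-twisted} match the hypotheses of the corollary (in particular, that $X$ is rational with cyclic quotient singularities so that Proposition~\ref{prop:g0-twisted} applies, and that the Brauer class $\beta$ used in both statements is the same $\rB(\sum b_{i,p}\delta_{E_{i,p}})$ constructed from the twists of the adherent components), which is immediate from the setup.
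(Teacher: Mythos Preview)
Your proof is correct and follows essentially the same approach as the paper: apply Theorem~\ref{theorem:dbx-twisted} to obtain the decomposition~\eqref{eq:dbx-dbk-twisted}, argue as in Lemma~\ref{lemma:semiorth-torsion} that $\rG_0(X,\beta)$ is torsion-free, and conclude via Proposition~\ref{prop:g0-twisted}.
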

\begin{proof}
If such semiorthogonal decomposition exists, then using~\eqref{eq:dbx-dbk-twisted} and arguing
as in Lemma~\ref{lemma:semiorth-torsion} we see that~$\rG_0(X,\beta)$ is torsion-free.
Then Proposition~\ref{prop:g0-twisted} proves that~$\beta$ generates $\Br(X)$.
\end{proof}


\begin{remark}
\label{remark:components-twist}
Let $\Db(\tX) = \langle \tcA_1, \tcA_2, \dots, \tcA_n \rangle$ be a semiorthogonal decomposition
which is adherent to the exceptional divisor $D = \sqcup D_i$ with a twist $(b_{i,p})$.
Let $\tcL$ be a line bundle on $\tX$.
Then $\Db(\tX) = \langle \tcA_1 \otimes \tcL, \tcA_2 \otimes \tcL, \dots, \tcA_n \otimes \tcL \rangle$ is a semiorthogonal decomposition
which is adherent to the exceptional divisor $D = \sqcup D_i$ with a twist $(b_{i,p} + \tcL \cdot E_{i,p})$.
Note that
\begin{equation*}
\rB\left( \sum (b_{i,p} + \tcL \cdot E_{i,p}) \delta_{E_{i,p}} \right) =
\rB\left( \sum b_{i,p} \delta_{E_{i,p}} + \IP(\tcL) \right) =
\rB\left( \sum b_{i,p} \delta_{E_{i,p}} \right),
\end{equation*}
so the Brauer class corresponding to the new decomposition coincides with the original Brauer class $\beta$.
If $\tcV$ is a vector bundle
as in Corollary~\ref{corollary:restriction-exists}
and $\cR$ is an Azumaya algebra on $X$ such that $\pi^*\cR \cong \CEnd(\tcV)$
then we also have $\pi^*\cR \cong \CEnd(\tcV \otimes \tcL)$, hence
we have a commutative diagram
\begin{equation*}
\xymatrix{
\tcA_i \ar[rr]^-{-\otimes\tcL} \ar[dr]_(.3){\pi^\tcV_*} &&
\tcA_i \otimes \tcL \ar[dl]^(.3){\pi^{\tcV \otimes \tcL}_*}
\\
&
\Db(X,\cR),
}
\end{equation*}
which shows that with these choices we obtain the same semiorthogonal decomposition of $\Db(X,\beta) {} = \Db(X,\cR)$ in the end.
On the other hand, if $\cL$ is a line bundle on~$X$, we can replace the functor $\pi^\tcV_*$ by $\pi^{\tcV \otimes \pi^*\cL}_*$.
Then we have a commutative diagram
\begin{equation*}
\xymatrix{
&
\tcA_i \ar[dr]^{\pi^\tcV_*} \ar[dl]_{\pi^{\tcV \otimes \pi^*\cL}_*}
\\
\Db(X,\cR) \ar[rr]^-{-\otimes\cL} &&
\Db(X,\cR),
}
\end{equation*}
which shows that a different choice of the resolution functor results in a twist
of the resulting semiorthogonal decomposition of $\Db(X,\cR)$.
%
%
\end{remark}

\section{Application to toric surfaces}
\label{section:toric-surfaces}

In this section we apply the results of previous sections to projective toric surfaces.
We refer to~\cite{Danilov, Fulton} for general information about toric varieties.
While reading this section it is instructive to keep Example~\ref{ex:hirzebruch} in mind.

\subsection{Notation}

A toric surface is an irreducible normal surface $X$ endowed with an action of a two-dimensional torus $\rT \cong \Gm^2$ with a free $\rT$-orbit.
In particular, every toric surface is rational.
We only consider toric surfaces which are projective.
Note that for each resolution $\tX$ of such surface the condition~\eqref{eq:hotx} is satisfied.

We denote by~$\rM \cong \Z^2$ the lattice of characters of $\rT$, and by $\rN {}:={} \rM^\vee \cong \Z^2$ its dual lattice.
A projective toric surface $X$ is determined by a complete fan $\Sigma$ in $\rN \otimes \R$.
We denote the primitive generators of the rays (one-dimensional cones) in the fan by
\begin{equation}
\label{eq:fan}
v_1,v_2,\dots,v_n \in \rN.
\end{equation}
We assume that $v_i$ are indexed in the counterclockwise order on the plane $\rN \otimes \R$.
We also set
\begin{equation*}
v_{n+1} = v_1.
\end{equation*}
The rays of $\Sigma$
correspond to irreducible torus-invariant divisors $C_i \subset X$.
Since $X$ is projective, each $C_i$ is isomorphic to $\P^1$.

Similarly, two-dimensional cones in the fan $\Sigma$ correspond to $\rT$-invariant points on~$X$.
We denote by $x_i \in X$ the $\rT$-invariant point corresponding to the cone~$(v_i,v_{i+1})$ of the fan, so that
\begin{equation}
\label{eq:x-c-c}
x_i = C_i \cap C_{i+1},
\end{equation}
and let $U_i$
be the toric chart containing $x_i$.
Since the vector $v_i$ is primitive, we can choose a basis in $\rN$ such that $v_i = (1,0)$.
Then we can write $v_{i+1} = (r_i - a_i,r_i)$, where
\begin{equation}
\label{eq:ri}
r_i = \det(v_i,v_{i+1})
\end{equation}
is positive and $a_i \in \Z$.
Moreover, changing the second basis vector in $\rN$ and taking into account the primitivity of $v_{i+1}$,
we can assume that
\begin{equation*}
0 \le a_i < r_i
\qquad\text{and}\qquad
\gcd(r_i,a_i)  = 1.
\end{equation*}
Then the dual cone to $\R_{\ge 0} \cdot v_i + \R_{\ge 0} \cdot v_{i+1} {} \subset \rN \otimes \R$ is isomorphic to the cone
generated by $(1,0)$, $(a_i, r_i)$ in the dual space $\rM \otimes \R$,
and one has $U_i \cong \A^2 / \bmu_{r_i}$, where $\bmu_{r_i}$ acts on~$\A^2$ with weights $(1,a_i)$,
so that $x_i$ is a cyclic quotient singularity of type $1/{r_i}(1,a_i)$.
In particular $X$ has cyclic quotient singularities.
We call~$r_i$ {\sf the order} of the $\rT$-invariant point~$x_i$.

Note that if we change the orientation of $\rN \otimes \R$, that is if we replace the counterclockwise
ordering of vectors~$v_i$ by the clockwise ordering, the construction above will describe the singular point~$x_i$
as of type~$1/{r_i}(1, a_i')$ where~$a_i'$ is the inverse of~$a_i$ modulo~$r_i$
(cf.~Remark~\ref{remark:kk-duality}).

\subsection{The Brauer group of toric surfaces}

As it was explained in Section~\ref{subsection:torsion-surfaces}
the Brauer group of a smooth projective toric surface is zero;
the following lemma describes the Brauer group of a \emph{singular} toric surface in terms of its singularities.
See~\cite{DeMeyerFord, DeMeyerFordMiranda} for more general statements about the Brauer group of a toric variety.

Denote by $\upsilon$ the canonical map
\begin{equation*}
\upsilon \colon \Z^n \to \rN,
\end{equation*}
that takes the $i$-th basis vector of $\Z^n$ to the corresponding vector $v_i \in \rN$.

\begin{lemma}[\cite{DeMeyerFord}, Corollary 2.9(c)]\label{lemma:toric-br}
Let $\rN_\Sigma {} := \Im(\upsilon) \subset \rN$ be the sublattice generated by all ray generators $v_i$
of the fan~$\Sigma$ of a projective toric surface $X$.
Then
\begin{equation*}
\Br(X) \cong \rN/\rN_\Sigma.
\end{equation*}
Moreover, $\Br(X)$ is a cyclic group of order equal to~$\gcd(r_1,\dots,r_{n})$, the greatest common divisor
of the orders $r_i$ of the $\rT$-invariant points of $X$.
In particular, if~$X$ has a smooth $\rT$-invariant point then $X$ is torsion-free.

Furthermore, $\Pic(X) \cong \Ker(\upsilon)$ and $\Cl(X) \cong \Coker(\upsilon^T \colon \rM \to \Z^n)$.
\end{lemma}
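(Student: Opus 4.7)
The plan is to derive everything from the standard toric short exact sequence
\begin{equation*}
0 \to \rM \xrightarrow{\upsilon^T} \Z^n \to \Cl(X) \to 0,
\end{equation*}
in which the first map is $m \mapsto (\langle m, v_i\rangle)_i$ and the second sends the $i$th basis vector of $\Z^n$ to $[C_i]$. Exactness is standard toric geometry: $X \setminus \bigcup_i C_i \cong \rT$ has trivial class group, while the principal divisor of a character $\chi^m$ is $\sum_i \langle m, v_i\rangle C_i$. This at once gives $\Cl(X) \cong \Coker(\upsilon^T)$. Dualizing by $\Hom(-,\Z)$, and using that $\rM$ and $\Z^n$ are free of finite rank, produces the four-term exact sequence
\begin{equation*}
0 \to \Hom(\Cl(X),\Z) \to \Z^n \xrightarrow{\upsilon} \rN \to \Ext^1(\Cl(X),\Z) \to 0,
\end{equation*}
whose middle map is $\upsilon$ because the dual of $m \mapsto (\langle m, v_i\rangle)$ sends $e_i \mapsto v_i$. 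Invoking Proposition~\ref{prop:torsion}, which identifies $\Hom(\Cl(X),\Z) \cong \Pic(X)$ via the intersection pairing and $\Ext^1(\Cl(X),\Z) \cong \Br(X)$, one reads off $\Pic(X) \cong \Ker(\upsilon)$ and $\Br(X) \cong \rN/\rN_\Sigma$.

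What remains is to show that $\rN/\rN_\Sigma$ is cyclic of order $d := \gcd(r_1,\dots,r_n)$, and this is the step I expect to require the most care. By Smith normal form the cokernel of $\upsilon\colon \Z^n \to \Z^2$ is $\Z/d_1 \oplus \Z/d_2$ with $d_1 \mid d_2$, where $d_1$ is the gcd of the entries of the matrix of $\upsilon$ and $d_1 d_2$ is the gcd of its $2\times 2$ minors $\det(v_i,v_j)$. Primitivity of any single $v_i$ already gives $d_1 = 1$, so $\rN/\rN_\Sigma$ is cyclic, and since the $r_i = \det(v_i, v_{i+1})$ are themselves minors, its order $d_2$ divides $d$.

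For the reverse divisibility $d \mid \det(v_i,v_j)$ I would argue one prime power at a time. Fixing $p$ and $k$ with $p^k \mid d$, I reduce modulo $p^k$: each $\bar v_i \in (\Z/p^k)^2$ has a coordinate that is a unit mod $p$ (by primitivity of $v_i$), and the relation $\det(\bar v_i, \bar v_{i+1}) \equiv r_i \equiv 0 \pmod{p^k}$ then forces $\bar v_{i+1}$ to be a scalar multiple of $\bar v_i$ by solving for the remaining coordinate using the invertible one. Iterating places every $\bar v_i$ in the cyclic subgroup $\langle \bar v_1\rangle$, so $p^k \mid \det(v_i, v_j)$ for all $i, j$; combining over all prime powers $p^k \mid d$ yields $d \mid \det(v_i,v_j)$ and hence $d_2 = d$. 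The ``in particular'' clause is then immediate: if some $r_i = 1$ then $d = 1$, so $\Br(X) = 0$ and $X$ is torsion-free by Corollary~\ref{corollary:tf}.
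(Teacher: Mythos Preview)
The paper does not supply its own proof of this lemma; it simply cites \cite{DeMeyerFord}. Your argument is correct and in fact fits the paper's internal logic better than an external citation would: you derive the statement from the standard toric sequence together with Proposition~\ref{prop:torsion}, which the paper has just established. The dualization step is clean (the transpose of $\upsilon^T$ is $\upsilon$), and the Smith normal form computation is right: primitivity of any $v_i$ kills the first invariant factor, and your mod-$p^k$ chaining argument correctly shows that all $\bar v_j$ lie in $\langle \bar v_1\rangle$, forcing $d\mid\det(v_i,v_j)$ for every pair. The only remark is that the isomorphism $\Pic(X)\cong\Ker(\upsilon)$ you obtain is abstract (via $\Pic(X)\cong\Cl(X)^\vee$ from Proposition~\ref{prop:torsion}), which matches what the lemma asserts; the paper does not claim a specific identification here either.
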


\begin{remark}\label{remark:toric-gcd}
One can also show
that for every $1 \le i \le n$
\begin{equation*}
\gcd(r_1, \dots, r_n) = \gcd(r_1, \dots, \widehat{r_i}, \dots, r_n).
\end{equation*}
\end{remark}

\begin{example}
\label{ex:p123}
Let $w_1,w_2,w_3 \ge 1$ be pairwise coprime.
Then the weighted projective plane $\P(w_1,w_2,w_3)$
has three torus-invariant points of
types $1/w_1(w_2,w_3)$, $1/w_2(w_3,w_1)$, and $1/w_3(w_1,w_2)$ of orders~$w_1$, $w_2$, and~$w_3$ respectively,
in particular it is torsion-free.
One interesting special case is~$\P(1,2,3)$ which is given by the fan
\begin{equation*}
v_1 = (1,1),\quad
v_2 = (-2,1),\quad
v_3 = (1,-1).
\end{equation*}
It has one $A_1$ and one $A_2$ singularity, $\Pic(X) = \Cl(X) = \Z$, $\Br(X) = 0$.
\end{example}

\begin{example}
\label{ex:p1p1z2}
Let $X = (\P^1 \times \P^1) \big/ \bmu_2$, where $\bmu_2$ is acting diagonally
and the action on each factor is given by $[x:y] \mapsto [-x:y]$.
Then $X$ is a projective toric surface with the fan given by
\begin{equation*}
v_1 = (1,1),\quad
v_2 = (-1,1),\quad
v_3 = (-1,-1),\quad
v_4 = (1,-1).
\end{equation*}
It has four $A_1$ singularities and $\Pic(X) = \Z^2$, $\Br(X) = \Z/2$, $\Cl(X) = \Z^2 \oplus \Z/2$.
In particular, $X$ is not torsion-free.
\end{example}

\begin{example}
\label{ex:p2z3}
Let $X = \P^2 \big/ \bmu_3$, where $\bmu_3$ is acting on $\P^2$ with three different weights.
Then $X$ is a projective toric surface with the fan given by
\begin{equation*}
v_1 = (1,1),\quad
v_2 = (-2,1),\quad
v_3 = (1,-2).
\end{equation*}
It has three $A_2$ singularities and $\Pic(X) = \Z$, $\Br(X) = \Z/3$, $\Cl(X) = \Z \oplus \Z/3$.
In particular, $X$ is not torsion-free.
\end{example}

The fans of the above toric surfaces are presented below.
\begin{center}
\begin{tabular}{ccc}
$\P(1,2,3)$ & $(\P^1 \times \P^1) \big/ \bmu_2$ & $\P^2 \big/ \bmu_3$
\\[1ex]
\hfill
\begin{tikzpicture}[xscale = .5pt, yscale = .5pt]
\draw[step=1,gray,very thin,dashed] (-3,-3) grid (3,3);
\draw[thick,->] (0,0)--(1,1) node[above right] {$v_1$};
\draw[thick,->] (0,0)--(-2,1) node[above left] {$v_2$};
\draw[thick,->] (0,0)--(1,-1) node[below left] {$v_3$};
\end{tikzpicture}
\hfill&\hfill
\begin{tikzpicture}[xscale = .5pt, yscale = .5pt]
\draw[step=1,gray,very thin,dashed] (-3,-3) grid (3,3);
\draw[thick,->] (0,0)--(1,1) node[above right] {$v_1$};
\draw[thick,->] (0,0)--(-1,1) node[above left] {$v_2$};
\draw[thick,->] (0,0)--(-1,-1) node[below left] {$v_3$};
\draw[thick,->] (0,0)--(1,-1) node[below right] {$v_4$};
\end{tikzpicture}
\hfill&\hfill
\begin{tikzpicture}[xscale = .5pt, yscale = .5pt]
\draw[step=1,gray,very thin,dashed] (-3,-3) grid (3,3);
\draw[thick,->] (0,0)--(1,1) node[above right] {$v_1$};
\draw[thick,->] (0,0)--(-2,1) node[above left] {$v_2$};
\draw[thick,->] (0,0)--(1,-2) node[below left] {$v_3$};
\end{tikzpicture}
\hfill
\end{tabular}
\end{center}

\subsection{Minimal resolution}

Let $X$ be a toric surface and let
$\pi \colon \tX \to X$ be the minimal resolution of singularities of $X$.
Then $\tX$ is also a toric surface, with an action of the same torus $\rT$.
The fan~$\tilde\Sigma$ of~$\tX$ is a refinement of $\Sigma$.

To be more precise, $\tilde\Sigma$ is obtained from $\Sigma$ as follows:
for each two-dimensional cone~$(v_i,v_{i+1})$ of $\Sigma$
consider the convex hull of all nonzero integral points of the cone
generated by~$v_i$ and~$v_{i+1}$, i.e.,
\begin{equation*}
\rP_{i,i+1} := \operatorname{Conv}\Big((\R_{\ge 0}v_i + \R_{\ge 0}v_{i+1}) \cap (\rN \setminus \{0\})\Big),
\end{equation*}
where the sum in the right side is the Minkowski sum of two rays (see~\cite[8.4]{Danilov}
or~\cite[Exercise (a) on page 46]{Fulton}).
Consider all integral points on the boundary of~$\rP_{i,i+1}$ that do not lie on two infinite rays segments $\R_{\ge 1}v_i$ and $\R_{\ge 1}v_{i+1}$.
Let
\begin{equation*}
v_{i,1},\dots,v_{i,m_i}
\end{equation*}
be these points indexed in the counterclockwise order.
Also set
\begin{equation*}
v_{i,0} = v_i.
\end{equation*}
Then $v_{i,p}$, $1 \le i \le n$, $0 \le p \le m_i$, are all generators of rays in the fan $\tilde\Sigma$.
Note that if~$\det(v_i,v_{i+1}) = 1$, i.e., if $x_i$ is a smooth point of $X$, we have $m_i = 0$.

The fans of the resolutions of toric surfaces in Examples~\ref{ex:p123}, \ref{ex:p1p1z2}, \ref{ex:p2z3} are presented below.
\begin{center}
\begin{tabular}{ccc}
$\P(1,2,3)$ & $(\P^1 \times \P^1) \big/ \bmu_2$ & $X = \P^2 \big/ \bmu_3$
\\[1ex]
\hfill
\begin{tikzpicture}[xscale = .5pt, yscale = .5pt]
\draw[step=1,gray,very thin,dashed] (-3,-3) grid (3,3);
\draw[very thick,->] (0,0)--(1,1) node[above right] {$v_{1,0}$};
\draw[thick,->] (0,0)--(0,1) node [above] {\tiny$v_{1,1}$};
\draw[thick,->] (0,0)--(-1,1) node [above left] {\tiny$v_{1,2}$};
\draw[very thick,->] (0,0)--(-2,1) node[left] {$v_{2,0}$};
\draw[very thick,->] (0,0)--(1,-1) node[below left] {$v_{3,0}$};
\draw[thick,->] (0,0)--(1,0) node [right] {\tiny$v_{3,1}$};
\end{tikzpicture}
\hfill&\hfill
\begin{tikzpicture}[xscale = .5pt, yscale = .5pt]
\draw[step=1,gray,very thin,dashed] (-3,-3) grid (3,3);
\draw[very thick,->] (0,0)--(1,1) node[above right] {$v_{1,0}$};
\draw[thick,->] (0,0)--(0,1) node [above] {\tiny$v_{1,1}$};
\draw[very thick,->] (0,0)--(-1,1) node[above left] {$v_{2,0}$};
\draw[thick,->] (0,0)--(-1,0) node [left] {\tiny$v_{2,1}$};
\draw[very thick,->] (0,0)--(-1,-1) node[below left] {$v_{3,0}$};
\draw[thick,->] (0,0)--(0,-1) node [below] {\tiny$v_{3,1}$};
\draw[very thick,->] (0,0)--(1,-1) node[below right] {$v_{4,0}$};
\draw[thick,->] (0,0)--(1,0) node [right] {\tiny$v_{4,1}$};
\end{tikzpicture}
\hfill&\hfill
\begin{tikzpicture}[xscale = .5pt, yscale = .5pt]
\draw[step=1,gray,very thin,dashed] (-3,-3) grid (3,3);
\draw[very thick,->] (0,0)--(1,1) node[above right] {$v_{1,0}$};
\draw[thick,->] (0,0)--(0,1) node [above] {\tiny$v_{1,1}$};
\draw[thick,->] (0,0)--(-1,1) node [above left] {\tiny$v_{1,2}$};
\draw[very thick,->] (0,0)--(-2,1) node[left] {$v_{2,0}$};
\draw[thick,->] (0,0)--(-1,0) node [left] {\tiny$v_{2,1}$};
\draw[thick,->] (0,0)--(0,-1) node [left] {\tiny$v_{2,2}$};
\draw[very thick,->] (0,0)--(1,-2) node[below left] {$v_{3,0}$};
\draw[thick,->] (0,0)--(1,-1) node [right] {\tiny$v_{3,1}$};
\draw[thick,->] (0,0)--(1,0) node [right] {\tiny$v_{3,2}$};
\end{tikzpicture}
\hfill
\end{tabular}
\end{center}

The exceptional divisor of the resolution $\pi$ is the union
\begin{equation*}
D = \bigsqcup_{i=1}^n \left( \bigcup_{p=1}^{m_i}  E_{i,p} \right)
\end{equation*}
of the irreducible toric divisors $E_{i,p}$ corresponding to the rays $v_{i,p}$ of $\tilde\Sigma$ that are not in~$\Sigma$,
i.e., with $1 \le p \le m_i$.
Its connected components are
\begin{equation*}
D_i = \bigcup_{p=1}^{m_i}  E_{i,p}
\end{equation*}
(so that if the point $x_i$ on $X$ is smooth then $D_i$ is empty).
As usual we denote by
\begin{equation*}
d_{i,p} := - E_{i,p}^2
\end{equation*}
the self-intersections of the exceptional divisors.
Furthermore, we denote by $E_{i,0}$ the irreducible toric divisors on $\tX$ corresponding to the rays $v_{i,0}$
(they are the strict transforms of the toric divisors $C_i$ on $X$).


\subsection{Adherent exceptional collections}

There is a standard way \cite{Hille} to construct a full exceptional collection of line bundles on a smooth
toric surface.
One should choose a ray in the fan and a direction (counterclockwise or clockwise) and starting with
any line bundle
add at each step the divisor corresponding to the next ray in the fan.

In case of the minimal resolution $\tX$ of a toric surface $X$ to make this collection (twisted) adherent
to the connected components $D_i$ of the exceptional divisor~$D$,
one can start for example with $\cO_\tX(E_{1,0})$, add $E_{1,1}$ at the first step, and go in the counterclockwise direction.

This procedure gives the collection~$\cL_{i,p}$, $1 \le i \le n$, \mbox{$0 \le p \le m_i$},
defined by
\begin{align}
\label{eq:def-lij2}
\cL_{i,0} &= \cO_\tX\left(\sum_{j=1}^{i-1} \sum_{q=0}^{m_j} E_{j,q} + E_{i,0}\right),\\
\label{eq:def-lip}
\cL_{i,p} &= \cL_{i,0}(E_{i,1} + \dots + E_{i,p}).
\end{align}
Then we have the following equality
\begin{equation}
\label{eq:Lip-Eip}
\cL_{i,0} \cdot E_{k,p} =
\delta_{i,k}\delta_{p,1} + \delta_{k,n}\delta_{p,m_n},
\qquad \text{if $1 \le i \le k \le n$}.
\end{equation}
Indeed, if
$i \le k$ and $1 \le p \le m_k$ the only summands in the right hand side of~\eqref{eq:def-lij2} that
have nontrivial intersection with the curve $E_{k,p}$
are the last summand $E_{i,0}$ (if~$i = k$ and~$p = 1$) and,
since the curves form a cycle, the first summand $E_{1,0}$ (if~$k = n$ and~$p = m_n$).
This allows us to deduce the following.

%

\begin{proposition}
\label{proposition:toric-sequence}
The collection of line bundles $\cL_{i,p}$, $1 \le i \le n$, $0 \le p \le m_i$, defined by~\eqref{eq:def-lij2} and~\eqref{eq:def-lip}
is a full exceptional collection consisting of $n$ blocks and its $i$-th block
\begin{equation*}
\tcA_i = \langle \cL_{i,0}, \cL_{i,1}, \dots, \cL_{i,m_i} \rangle
\end{equation*}
is adherent to the connected component $D_i$ of the exceptional divisor $D$ with the twist
\begin{equation}
\label{eq:aip-toric}
b_{i,p} =
\begin{cases}
2 - d_{i,p}, & (i,p) \ne (n,m_n) \\
3 - d_{n,m_n}, & (i,p) = (n,m_n). \\
\end{cases}
\end{equation}
In particular, if $m_n = 0$, i.e., if there are no exceptional curves between~$E_{n,0}$ and~$E_{1,0}$,
the formula~\eqref{eq:aip-toric} simplifies to $b_{i,p} = 2 - d_{i,p}$ for all $1 \le i \le n, 1 \le p \le m_i$.


\end{proposition}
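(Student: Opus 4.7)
The plan is to separate the three assertions: adherence of each block $\tcA_i$ with the stated twist, exceptionality across blocks, and fullness of the total collection.

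First I would deal with adherence. By construction \eqref{eq:def-lip}, within each block the line bundles $\cL_{i,0},\cL_{i,1},\dots,\cL_{i,m_i}$ are already built from $\cL_{i,0}$ by successive twists by $E_{i,1},\dots,E_{i,m_i}$, so they fit the shape~\eqref{eq:sequence-lip} required by condition~(2) of Lemma~\ref{lemma:adherence-equivalence}. Thus checking adherence reduces to checking the intersection conditions~\eqref{eq:cli0-oeip}. Substituting the proposed twist~\eqref{eq:aip-toric} into the right-hand sides of~\eqref{eq:cli0-oeip}, the required values become $\cL_{i,0}\cdot E_{i,1}=1$ and $\cL_{i,0}\cdot E_{i,p}=0$ for $2\le p\le m_i$, with an additional $+1$ correction precisely when $(i,p)=(n,m_n)$. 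This is exactly the specialization $k=i$ of the intersection formula~\eqref{eq:Lip-Eip}, which is justified in the paragraph following its statement. As an immediate consequence of Lemma~\ref{lemma:adherence-equivalence}, each block is a genuine exceptional sequence and $\tcA_i$ is (twisted) adherent to $D_i$ with the asserted twist.

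Second, for cross-block exceptionality and fullness I would observe that, concatenated in the natural order, the whole collection is obtained from $\cL_{1,0}=\cO_\tX(E_{1,0})$ by iteratively tensoring with the toric boundary curves of $\tX$ taken counterclockwise, each curve appearing exactly once before one returns to $E_{n,m_n}$. Up to the initial twist by $\cO_\tX(E_{1,0})$, this is precisely the Hille collection on the smooth projective toric surface $\tX$ from~\cite{Hille}, and that reference provides the cross-block vanishing and fullness on any smooth complete toric surface. The length $\sum_{i=1}^n(m_i+1)$ matches the number of rays of $\tilde\Sigma$, hence the rank of $\rK_0(\tX)$, so no generator is missing.

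The main obstacle is bookkeeping: one has to verify that starting the iterative twisting with $\cL_{1,0}=\cO_\tX(E_{1,0})$ rather than $\cO_\tX$, and going all the way around the fan, really reproduces Hille's setup. This is immediate since tensoring a full exceptional collection of line bundles by a single line bundle yields another such collection. The cohomological vanishings underlying Hille's argument are toric Bott-type vanishings for effective combinations of boundary divisors and apply identically to $\tX$ regardless of the large negative self-intersections of the exceptional components $E_{i,p}$.
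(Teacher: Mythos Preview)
Your proposal is correct and follows essentially the same approach as the paper: both invoke \cite[Theorem~5.1]{Hille} for fullness and exceptionality of the iterative boundary-twist collection on $\tX$, note that~\eqref{eq:def-lip} gives the shape required for adherence, and match~\eqref{eq:cli0-oeip} against the case $k=i$ of~\eqref{eq:Lip-Eip} to read off the twist~\eqref{eq:aip-toric}. The paper presents this last step as solving for $b_{i,p}$ from the equality $d_{i,p}+b_{i,p}-2+\delta_{p,1}=\delta_{p,1}+\delta_{i,n}\delta_{p,m_n}$, whereas you verify the same identity by substituting the claimed $b_{i,p}$; your additional remarks on the $\rK_0$-rank and the harmlessness of the initial twist by $\cO_\tX(E_{1,0})$ are correct but not needed.
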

\begin{proof}
The collection is full and exceptional by~\cite[Theorem~5.1]{Hille},
and adherence follows from~\eqref{eq:def-lip} by definition.
Comparing~\eqref{eq:cli0-oeip} with~\eqref{eq:Lip-Eip} for $k = i$ we get
\begin{equation*}
d_{i,p} + b_{i,p} - 2 + \delta_{p,1} = \delta_{p,1} + \delta_{i,n}\delta_{p,m_n}.
\end{equation*}
which gives the formula~\eqref{eq:aip-toric} for the twist.
\end{proof}

\begin{remark}
One can use the same method to construct a full exceptional collection of line bundles with $n$ blocks
adherent to components $D_i$ of the exceptional locus $D$ of the minimal resolution
for any normal projective rational surface~$X$ with cyclic quotient singularities,
if the chains $D_i$ can be included into a \emph{cycle of smooth rational curves} summing up to $-K_\tX$.
\end{remark}

We consider the element
\begin{equation*}
f := {} \sum_{i,p} b_{i,p} \delta_{E_{i,p}} \in \Cl(\tX/X)^\vee
\end{equation*}
corresponding to the twist~\eqref{eq:aip-toric}.
Note that by adjunction formula $K_\tX \cdot E_{i,p} = d_{i,p} - 2$, hence $f + \IP(K_\tX) = \delta_{E_{n,m_n}}$.
This means that the Brauer class corresponding to the function $f$ under the Brauer class map \eqref{eq:def-rb} is equal to
\begin{equation}
\label{eq:B-delta}
\rB\left( \sum b_{i,p}\delta_{E_{i,p}} \right) = \rB(\delta_{E_{n,m_n}}) \in \Br(X).
\end{equation}
In particular, if $m_n = 0$ (i.e., if $D_n = \varnothing$) then $\rB(f) = 0$.

\begin{remark}
\label{remark:untwisting}
More generally, if $X$ is torsion-free, $\rB(f) = 0$ (because $\Br(X) = 0$),
and we are able to explicitly untwist the exceptional collection of Proposition~\ref{proposition:toric-sequence} as follows.
Let $\cM$ be a line bundle on $\tX$ such that
for all $1 \le i \le n$, $1 \le p \le m_i$
\begin{equation}
\label{eq:M-def}
\cM \cdot E_{i,p} =
\begin{cases}
0, & (i,p) \neq (n,m_n) \\
-1, & (i,p) = (n,m_n). \\
\end{cases}
\end{equation}
Such a line bundle exists by Corollary \ref{corollary:restriction-exists} (2).
Note that as usual, if $x_n$ is smooth then the second case in
\eqref{eq:M-def} does not occur and we can take $\cM = \OO_{\tX}$.
Now using~\eqref{eq:aip-toric} it follows that if $\cL_{i,p}$ are defined by~\eqref{eq:def-lij2} and~\eqref{eq:def-lip},
the full exceptional collection
\begin{equation}
\label{eq:M-seq}
\{ \cM_{i,p} = \cL_{i,p} \otimes \cM(K_{\wt{X}})\}_{i,p}
\end{equation}
consists of $n$ blocks \emph{untwisted} adherent to $D_1, \dots, D_n$ respectively.
Note also that it follows easily from~\eqref{eq:Lip-Eip} and~\eqref{eq:M-def} that
\begin{equation}
\label{eq:mi0-ekp}
\cM_{i,0} \cdot E_{k,p} = \delta_{i,k} \delta_{p,1} + d_{k,p} - 2
\end{equation}
for all $i \le k$ and $1 \le p \le m_k$.
\end{remark}

The next theorem summarizes our results on semiorthogonal decompositions for singular surfaces in the toric case.
We use the notation and conventions introduced above.

\begin{theorem}
\label{theorem:toric}
Let $X$ be a projective toric surface with $\rT$-invariant points $x_1,\dots,x_n$,
and let $\pi \colon \tX \to X$ be its minimal resolution with the exceptional divisor $D = \sqcup_{i=1}^n D_i$, where $\pi(D_i) = x_i$.
Let $K_i$ be the Kalck--Karmazyn algebra corresponding to the chain of rational curves $D_i$.
Let
\begin{equation*}
\beta = \rB ( \delta_{E_{n,m_n}} ) \in \Br(X)
\end{equation*}
be the corresponding Brauer class.
Then there is a semiorthogonal decomposition
\begin{equation*}
\Db(X,\beta) = \langle \Db(K_1\md), \Db(K_{2}\md), \dots, \Db(K_n\md) \rangle.
\end{equation*}
Moreover, if the $\rT$-invariant points $x_j$ for $j \ge 2$ are Gorenstein, this decomposition induces a semiorthogonal decomposition
\begin{equation*}
\Dperf(X,\beta) = \langle \Dperf(K_1\md), \Dperf(K_{2}\md), \dots, \Dperf(K_n\md) \rangle.
\end{equation*}
\end{theorem}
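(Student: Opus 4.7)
The plan is to assemble Theorem~\ref{theorem:toric} as a direct corollary of Proposition~\ref{proposition:toric-sequence} together with Theorem~\ref{theorem:dbx-twisted}; almost no new work should be required beyond identifying the Brauer class and checking the crepancy hypothesis.

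First, I would invoke Proposition~\ref{proposition:toric-sequence} to produce a full exceptional collection of line bundles on $\tX$ whose $i$-th block $\tcA_i = \langle \cL_{i,0},\dots,\cL_{i,m_i}\rangle$ is adherent to the chain $D_i$ with twist $(b_{i,p})$ given by~\eqref{eq:aip-toric}. This yields a semiorthogonal decomposition $\Db(\tX) = \langle \tcA_1, \dots, \tcA_n\rangle$ whose components are twisted-adherent to the connected components of the exceptional divisor of $\pi$, which is exactly the input required for Theorem~\ref{theorem:dbx-twisted}.

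Next, I would identify the associated Brauer class. By definition this class is $\rB(\sum b_{i,p}\delta_{E_{i,p}})$, and by the computation in~\eqref{eq:B-delta}, which uses the adjunction formula $K_\tX \cdot E_{i,p} = d_{i,p} - 2$ together with $\IP(K_\tX) \in \Ker(\rB)$, this class equals $\rB(\delta_{E_{n,m_n}})$. Thus the Brauer class produced by Theorem~\ref{theorem:dbx-twisted} is precisely the class $\beta$ appearing in the statement. Applying Theorem~\ref{theorem:dbx-twisted} then gives the semiorthogonal decomposition
\begin{equation*}
\Db(X,\beta) = \langle \Db(K_1\md), \Db(K_2\md), \dots, \Db(K_n\md)\rangle,
\end{equation*}
where the algebras $K_i$ are the Kalck--Karmazyn algebras of the chains $D_i$, as claimed.

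For the second statement, I would use the crepancy part of Theorem~\ref{theorem:dbx-twisted}, which requires $\pi$ to be crepant along $D_j$ for $j \ge 2$. By Lemma~\ref{lemma:crepancy}, crepancy of the minimal resolution along the component $D_j$ is equivalent to the singularity $(X,x_j)$ being Gorenstein (and $D_j$ empty when $x_j$ is smooth, which is vacuously crepant). Thus the Gorenstein hypothesis on $x_j$ for $j \ge 2$ translates directly into the crepancy assumption of Theorem~\ref{theorem:dbx-twisted}, giving the induced semiorthogonal decomposition of $\Dperf(X,\beta)$.

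I do not expect serious obstacles: the theorem is genuinely a packaging of the previously established machinery. The only subtle bookkeeping is the identification~\eqref{eq:B-delta} of the Brauer class and the reliance on Proposition~\ref{proposition:toric-sequence}, which does the combinatorial work of producing a full exceptional collection of line bundles on~$\tX$ whose blocks have precisely the adherence data dictated by the chains $D_i$; both of these are already in place, so the proof will be short.
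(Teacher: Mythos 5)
Your proposal is correct and follows the same route as the paper, which proves the theorem simply by citing Theorem~\ref{theorem:dbx-twisted} as applied to the twisted adherent collection of Proposition~\ref{proposition:toric-sequence}, with the Brauer class identification~\eqref{eq:B-delta} and the Gorenstein-crepancy translation of Lemma~\ref{lemma:crepancy} supplying exactly the bookkeeping you describe. No gaps; your write-up just makes explicit what the paper leaves implicit in the phrase ``special case in the toric situation.''
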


\begin{proof}
This is a special case of Theorem~\ref{theorem:dbx-twisted} in the toric situation.
\end{proof}

When $X$ is torsion-free so that $\Br(X) = 0$ we get the following:

\begin{corollary}\label{cor:toric}
If $X$ is a projective torsion-free toric surface,
there is a semiorthogonal decomposition
\begin{equation*}
\Db(X) = \langle \Db(K_1\md), \Db(K_{2}\md), \dots, \Db(K_n\md) \rangle.
\end{equation*}
If the $\rT$-invariant points $x_j$ for $j \ge 2$ are Gorenstein, this decomposition
induces a semiorthogonal decomposition
\begin{equation*}
\Dperf(X) = \langle \Dperf(K_1\md), \Dperf(K_{2}\md), \dots, \Dperf(K_n\md) \rangle.
\end{equation*}
\end{corollary}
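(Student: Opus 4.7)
The plan is to deduce this corollary directly from Theorem~\ref{theorem:toric} by checking that the Brauer class $\beta = \rB(\delta_{E_{n,m_n}})$ appearing there is trivial under the torsion-free hypothesis. Since $X$ is torsion-free, Corollary~\ref{corollary:tf} gives $\Br(X) = 0$, so automatically $\beta = 0$ and the twisted derived categories appearing in Theorem~\ref{theorem:toric} coincide with the usual derived categories: $\Db(X,\beta) = \Db(X)$ and $\Dperf(X,\beta) = \Dperf(X)$. This immediately yields the first claimed semiorthogonal decomposition.

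For the second statement, I would invoke the crepancy part of Theorem~\ref{theorem:toric} (itself coming from Theorem~\ref{theorem:dbx-twisted}, which in turn rests on Theorem~\ref{theorem:dbx}$(iv)$). The relevant condition is that $\pi$ is crepant along $D_j$ for all $j \ge 2$. By Lemma~\ref{lemma:crepancy}, crepancy of the minimal resolution at a singular point is equivalent to that singular point being Gorenstein; and at smooth points $x_j$ the corresponding $D_j$ is empty so the crepancy condition holds vacuously. Thus the hypothesis ``$x_j$ is Gorenstein for $j \ge 2$'' translates exactly into the crepancy condition required to descend the decomposition to perfect complexes, giving the decomposition of $\Dperf(X)$.

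The only mild subtlety, which is more a matter of bookkeeping than a real obstacle, is that Theorem~\ref{theorem:toric} is stated in the twisted setting; one must observe that when $\beta = 0$ the functors $\pi^\tcV_*$ and $\pi_\tcV^*$ reduce (after possibly replacing $\tcV$ by a line bundle, using Corollary~\ref{corollary:restriction-exists}$(2)$ which is available precisely because $X$ is torsion-free) to the usual $\pi_*$ and $\pi^*$ up to a line bundle twist, and the categories $\Db(X,0) = \Db(X)$, $\Dperf(X,0) = \Dperf(X)$ agree with the untwisted ones. Thus no genuinely new argument is needed: the corollary is a specialization of Theorem~\ref{theorem:toric} to the case $\Br(X) = 0$, with the Gorenstein hypothesis serving only to ensure crepancy along $D_2, \dots, D_n$.
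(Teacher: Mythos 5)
Your proposal is correct and matches the paper's own (implicit) argument: the paper states Corollary~\ref{cor:toric} exactly as the specialization of Theorem~\ref{theorem:toric} to the case $\Br(X)=0$, where $\beta$ is forced to vanish and $\Db(X,\beta)\simeq\Db(X)$, $\Dperf(X,\beta)\simeq\Dperf(X)$, with the Gorenstein hypothesis supplying crepancy along $D_j$ for $j\ge 2$ via Lemma~\ref{lemma:crepancy}. No gaps.
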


\begin{remark}
If $X$ is not torsion-free, the Brauer class $\beta$ is always non-trivial by Corollary~\ref{corollary:beta-generator}.
\end{remark}

Recall that weighted projective planes are torsion-free (Example~\ref{ex:p123}).

\begin{example}[cf.\ Example~\ref{ex:hirzebruch}]
\label{Example:P11n}
If $X$ is the weighted projective plane $\P(1,1,d)$, we obtain a semiorthogonal decomposition
\begin{equation*}
\Db(X) = \langle \Db(\Bbbk[z_1, \dots, z_{d-1}]/(z_1,\dots, z_{d-1})^2), \Db(\Bbbk), \Db(\Bbbk) \rangle
\end{equation*}
and a similar decomposition for the category~$\Dperf(X)$.
Recall that the isomorphism $K_{d,1} \simeq \Bbbk[z_1, \dots, z_{d-1}]/(z_1,\dots, z_{d-1})^2$
has been explained in Example~\ref{example:k}(2).
Note that we placed the non-Gorenstein singular point of $X$ to the first position
to achieve the decomposition of $\Dperf(X)$.
\end{example}

\begin{example}
\label{Example:P123}
If $X$ is the weighted projective plane $\P(1,2,3)$, see Example~\ref{ex:p123},
we obtain a semiorthogonal decomposition
\begin{equation*}
\Db(X) = \langle \Db(\Bbbk), \Db(\Bbbk[z]/z^2), \Db(\Bbbk[z]/z^3) \rangle
\end{equation*}
and again a similar decomposition for~$\Dperf(X)$.
\end{example}

\begin{example}
If $X$ is the surface of Example~\ref{ex:p1p1z2}, we obtain
\begin{equation*}
\Db(X,\beta) = \langle \Db(\Bbbk[z]/z^2), \Db(\Bbbk[z]/z^2), \Db(\Bbbk[z]/z^2), \Db(\Bbbk[z]/z^2) \rangle,
\end{equation*}
where $\beta$ is the non-trivial element of $\Br(X) = \Z/2$,
and a similar decomposition for~$\Dperf(X)$.
\end{example}

\begin{example}
If $X$ is the surface of Example~\ref{ex:p2z3}, we obtain
\begin{equation*}
\Db(X,\beta) = \langle \Db(\Bbbk[z]/z^3), \Db(\Bbbk[z]/z^3), \Db(\Bbbk[z]/z^3) \rangle,
\end{equation*}
where $\beta$ is a non-trivial element of $\Br(X) = \Z/3$,
and again a similar decomposition for~$\Dperf(X)$.
\end{example}

\subsection{Special Brauer classes}

The exceptional collection of Proposition~\ref{proposition:toric-sequence},
as well as the Brauer class~$\beta$ on~$X$ and the semiorthogonal decomposition of $\Db(X,\beta)$ of Theorem~\ref{theorem:toric}
depend on some choices.
First, they depend on the choice of a cyclic order of rays of the fan of $X$
(this is equivalent to a choice of orientation of the plane~$\rN \otimes \R$).
Second, they depend on the choice of the linear order~$x_1,\dots,x_n$ of torus-invariant points on $X$
compatible with the chosen cyclic order.
Changing these choices we obtain a semiorthogonal decomposition of a differently twisted category
with the same components up to reordering.
The next lemma explains how twist changes.

We denote by $\beta_i$ the Brauer class corresponding to the choice of the point $x_i$ as \emph{the last} point,
so that the linear ordering of the torus-invariant points is
\begin{equation*}
x_{i+1}, \dots, x_{n-1}, x_n, x_1, \dots, x_{i-1}, x_{i-2}, x_i
\end{equation*}
and by $\beta_i'$ the Brauer class corresponding to the same choice of the last point
with the opposite cyclic order,
so that the linear ordering is
\begin{equation*}
x_{i-1}, x_{i-2}, \dots, x_1, x_n, x_{n-1} \dots, x_{i+1}, x_i.
\end{equation*}

\begin{lemma}
\label{lemma:delta-functions}
If for each $1 \le i \le n$ the point $x_i$ is a cyclic quotient singularity
of type~$\frac1{r_i}(1,a_i)$
then the following relations are satisfied
by the classes $\beta_1, \dots, \beta_n$, and~$\beta_1', \dots, \beta_n' \in \Br(X)$: for every
$1 \le i \le n$ we have
\begin{equation*}
\beta_i = a_i \beta_i^{'},
\qquad
\beta_{i+1} = -a_{i+1} \beta_i.
\end{equation*}
In particular, if $X$ is Gorenstein then
$\beta_i = \beta_1$, and
$\beta_i' = -\beta_1$ for all $1 \le i \le n$.
\end{lemma}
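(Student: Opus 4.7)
The plan is to identify each $\beta_i$ and $\beta_i'$ as the image under the Brauer class map $\rB \colon \Cl(\tX/X)^\vee \to \Br(X)$ of an explicit delta-function, and then to exploit the linear relations that come from the identification $\Br(X) \cong \Cl(\tX/X)^\vee / \IP(\Pic(\tX))$ provided by Remark~\ref{remark:brpic-general}. First I would show that $\beta_i = \rB(\delta_{E_{i, m_i}})$ and $\beta_i' = \rB(\delta_{E_{i, 1}})$. For $\beta_i$ this is immediate from Theorem~\ref{theorem:toric} after relabeling the torus-invariant points so that $x_i$ becomes the last one. For $\beta_i'$ the reversal of cyclic orientation exchanges the two ends of the chain $D_i = E_{i,1} \cup \dots \cup E_{i,m_i}$, so the role played by $E_{n, m_n}$ in the counterclockwise ordering is now played by $E_{i, 1}$.

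Second, I would produce two families of linear relations in $\Br(X)$ by running $\IP$ through toric divisors on $\tX$. Applied to an exceptional toric divisor $\cO_\tX(E_{i, p})$ with $1 \le p \le m_i$, the identity $\IP(\cO_\tX(E_{i, p})) \equiv 0$ yields the three-term recurrence
\begin{equation*}
\delta_{E_{i, p+1}} = d_{i, p}\,\delta_{E_{i, p}} - \delta_{E_{i, p-1}}
\end{equation*}
in $\Br(X)$, with boundary conventions $\delta_{E_{i, 0}} = \delta_{E_{i, m_i + 1}} = 0$ coming from the fact that the neighboring toric divisors are not exceptional. Applied to the strict transform $\cO_\tX(C_{i+1}) = \cO_\tX(E_{i+1, 0})$, the relation reads $\delta_{E_{i, m_i}} + \delta_{E_{i+1, 1}} = 0$, since $C_{i+1}$ meets the exceptional divisor only transversally at its two endpoints.

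Third, this last relation immediately gives $\beta_{i+1}' = -\beta_i$. The three-term recurrence is the standard continuant recursion, so setting $u_p := \delta_{E_{i, p}}$ and using $u_0 = 0$ yields by induction $u_p = \tridet(d_{i, 1}, \dots, d_{i, p-1})\,u_1$ in $\Br(X)$. Specialising to $p = m_i$ and invoking Lemma~\ref{lemma:frac-sing} together with the symmetry of continuants identifies this scalar with $a_i$, after matching the convention for the presentation $\frac{1}{r_i}(1,a_i)$ with the direction of the chain dictated by the counterclockwise orientation; this gives $\beta_i = a_i \beta_i'$. Combining with the previous relation, $\beta_{i+1} = a_{i+1}\beta_{i+1}' = -a_{i+1}\beta_i$, as claimed. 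The Gorenstein statement is then a direct substitution: by Lemma~\ref{lemma:crepancy} the singularity $(X, x_i)$ is Du Val, and a cyclic Du Val is necessarily of type $A_{r_i-1} = \frac{1}{r_i}(1, r_i - 1)$, so $a_i \equiv -1 \pmod{r_i}$. The two relations then degenerate to $\beta_i' = -\beta_i$ and $\beta_{i+1} = \beta_i$, forcing $\beta_i = \beta_1$ and $\beta_i' = -\beta_1$ uniformly in $i$.

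The main subtlety I expect will be bookkeeping the two orientation-sensitive conventions—namely which end of $D_i$ carries the index $p = 1$, and which element of the inverse-pair $\{a_i, a_i^{-1}\mod r_i\}$ is the $a_i$ of the paper—so that the continuant evaluation produces $a_i$ and not its inverse modulo $r_i$. Note however that since $r_i - 1$ is self-inverse modulo $r_i$, the Gorenstein corollary is insensitive to this subtlety, and all other steps are purely formal manipulations inside the cyclic group $\Br(X)$ generated by the explicit relations produced above.
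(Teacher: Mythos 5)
Your proposal is correct and takes essentially the same route as the paper's proof: identify $\beta_i=\rB(\delta_{E_{i,m_i}})$ and $\beta_i'=\rB(\delta_{E_{i,1}})$, apply $\rB\circ\IP=0$ to exceptional curve classes to get the continuant relation between the two delta-functions (your three-term recurrence is exactly the paper's ``standard recursion for tridiagonal determinants'', the paper merely writing the solved closed form $\IP\bigl(\sum_{p\ge 2}\tridet(d_{i,2},\dots,d_{i,p-1})E_{i,p}\bigr)=\delta_{E_{i,1}}-a_i\,\delta_{E_{i,m_i}}$), use $\IP(E_{i+1,0})=\delta_{E_{i,m_i}}+\delta_{E_{i+1,1}}$ for $\beta_{i+1}'=-\beta_i$, and in the Gorenstein case combine $a_i=r_i-1$ with the fact that $r_i$ annihilates the cyclic group $\Br(X)$ (Lemma~\ref{lemma:toric-br} and Remark~\ref{remark:toric-gcd}). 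The orientation/inverse bookkeeping you flag as the main subtlety is likewise left implicit in the paper, which simply reads the continuant off as $a_i$ via~\eqref{eq:r-a-d}.
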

\begin{proof}
By~\eqref{eq:B-delta} we have
\begin{equation*}
\beta_i = \rB(\delta_{E_{i,m_i}}),
\qquad
\beta_i' = \rB(\delta_{E_{i,1}}).
\end{equation*}
Using standard recursions for determinants of tridiagonal matrices is easy to check that
\begin{equation*}
\IP \left( \sum_{p=2}^{m_i} \tridet(d_{i,2},\dots,d_{i,p-1}) E_{i,p} \right) =
\delta_{E_{i,1}} - \tridet(d_{i,2},\dots,d_{i,m_i}) \delta_{E_{i,m_i}}.
\end{equation*}
Using~\eqref{eq:r-a-d} and $\rB \circ \IP = 0$, we conclude that $\beta_i = a_i \beta_i'$.
Similarly
\begin{equation*}
\IP(E_{i+1,0}) = \delta_{E_{i,m_i}} + \delta_{E_{i+1,1}},
\end{equation*}
hence $\beta_{i+1}' = -\beta_i$.
Finally, if $X$ is Gorenstein then $a_i = r_i - 1$, and since $\Br(X)$
is a cyclic group of order $\gcd(r_1, \dots, r_n)$ by Lemma \ref{lemma:toric-br} and Remark \ref{remark:toric-gcd},
so that $r_i$ acts trivially on $\Br(X)$, we see that $\beta_i = \beta_1$, $\beta_i' = -\beta_1$ for all $1 \le i \le n$.
\end{proof}

In the Gorenstein case we thus obtain a bunch of semiorthogonal decompositions of~$\Db(X,\beta_1)$ and $\Db(X,-\beta_1)$.
Using Remark~\ref{remark:components-twist}
one can show that all these decompositions of~$\Db(X,\beta_1)$ are the same up to line bundle twist,
and all decompositions of~$\Db(X,-\beta_1)$ are obtained from the above decompositions by dualization.

\section{Reflexive sheaves}
\label{section:reflexive}

An alternative approach to construct a semiorthogonal decomposition of $\Db(X)$
was suggested by Kawamata in \cite{Kawamata}.
Starting with an object $F \in \Coh(X)$, take $G_0 = F$ and consider a sequence $G_i$ of
iterated non-trivial extensions
\begin{equation*}
0 \rightarrow F \rightarrow G_i \rightarrow G_{i-1} \rightarrow 0,
\end{equation*}
in $\Coh(X)$.
If it terminates with some  maximal element $\wt{G} = G_m$, i.e.,
\begin{equation*}
\Ext^1_X(\wt{G},F) = 0,
\end{equation*}
then $\wt{G}$ is said to be a \emph{maximal iterated extension} of $F$.
When such an object exists it is unique, see~\cite[Corollary 3.4]{Kawamata},
and can be interpreted as the versal noncommutative deformation of $F$ over $\End_X(\wt{G})$; see~\cite[Corollary 4.11]{Kawamata}.
Furthermore, Kawamata proves that, under appropriate conditions the sheaf $\wt{G}$ generates
an admissible subcategory of $\Db(X)$, and in some examples
(for the weighted projective planes~$\P(1,1,n)$ and~$\P(1,2,3)$, see~\cite[Examples 5.5, 5.7, 5.8]{Kawamata})
he constructs a semiorthogonal decomposition into derived categories of finite dimensional algebras.
In this section we explain the relation between our approach and that of Kawamata.

\subsection{Criteria of reflexivity and purity}
\label{subsection:criteria}

Let $X$ be a normal surface.
Recall that by definition a reflexive sheaf is a coherent sheaf $\FF$ on $X$ satisfying
\begin{equation*}
(\FF^\vee)^\vee \simeq \FF
\end{equation*}
via the natural morphism (here the duality is underived);
an equivalent definition is that
\begin{equation*}
\cF \simeq R^0u_* \cF_U
\end{equation*}
where $u\colon U \to X$ is the embedding of an open subset with zero-dimensional complement,
and $\cF_U$ is a locally free sheaf on $U$.
Furthermore, recall that the set of isomorphism classes
of reflexive sheaves on $X$ is a group with respect to the operation
\begin{equation*}
(\cF_1 , \cF_2)
\mapsto \left((\cF_1 \otimes \cF_2)^\vee\right)^\vee
\end{equation*}
(the tensor product and duality are underived), and this group is isomorphic
to the class group $\Cl(X)$ via the first Chern class map~\eqref{eq:c1-G0}.
If $D$ is a Weil divisor on $X$ we write $\OO(D)$ for
the reflexive sheaf with the first Chern class $D$; equivalently
$\OO(D)$ may be defined as $R^0 u_* \OO(D \cap U)$, where $U$
is the nonsingular locus of $X$ as above.

We will be interested in coherent sheaves on $X$ obtained by pushing forward line bundles from a resolution~$\tX$,
and we will rely on the following criterion for reflexivity.

\begin{lemma}
\label{lemma:reflexivity-criterion}
Let $(X,x)$ be a cyclic quotient surface singularity
and let $\pi \colon \tX \to X$ be its minimal resolution
with the chain $E_1,\dots,E_m$ of exceptional divisors with self-intersections $E_i^2 = -d_i$.
Set $D = \sum E_j$.

If a line bundle $\LL$ on $\wt{X}$ satisfies
\begin{equation}
\label{eq:reflexivity-conditions}
\cL(D) \cdot E_j \le 0
\quad\text{for all $1 \le j \le m$,\quad and } \qquad
\sum_{j=1}^m \cL(D) \cdot E_j < 0,
\end{equation}
then $R^0\pi_*(\cL)$ is a reflexive sheaf of rank~$1$ on $X$.
\end{lemma}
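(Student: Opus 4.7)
The plan is to verify reflexivity of $\cF := R^0\pi_*\cL$ locally at the singular point $x \in X$, the only place where reflexivity is not automatic (since $\pi$ is an isomorphism elsewhere, $\cF$ is a line bundle on the smooth locus, in particular of rank $1$ and torsion-free). As $\cF$ is a rank-$1$ torsion-free coherent sheaf on a normal surface, reflexivity amounts to the canonical map $\cF \to \cF^{\vee\vee}$ being an isomorphism. Denoting $u\colon X \setminus \{x\} \hookrightarrow X$ and $\tilde u\colon \tX \setminus D \hookrightarrow \tX$, I will compute
\begin{equation*}
\cF^{\vee\vee} \;=\; u_* u^*\cF \;=\; \pi_* \tilde u_*(\cL|_{\tX \setminus D}) \;=\; \varinjlim_k R^0\pi_*\cL(kD),
\end{equation*}
using $\tilde u_*(\cL|_{\tX \setminus D}) = \varinjlim_k \cL(kD)$ and that pushforward commutes with filtered colimits. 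So the task reduces to showing that each transition $R^0\pi_*\cL((k-1)D) \to R^0\pi_*\cL(kD)$ is an isomorphism in a neighborhood of $x$ for every $k \ge 1$. Extracting this from the short exact sequence $0 \to \cL((k-1)D) \to \cL(kD) \to \cL(kD)|_D \to 0$, and using that $R^0\pi_*(\cL(kD)|_D)$ is a skyscraper at $x$ with stalk $H^0(D,\cL(kD)|_D)$, it suffices to prove
\begin{equation*}
H^0(D, \cL(kD)|_D) \;=\; 0 \qquad \text{for every } k \ge 1.
\end{equation*}

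The key auxiliary input will be an elementary lemma on chains of smooth rational curves: if $\cM$ is any line bundle on the reduced chain $D = E_1 \cup \cdots \cup E_m$ with $\cM \cdot E_j \le 0$ for every $j$ and strict inequality at some $j_0$, then $H^0(D,\cM) = 0$. The proof goes by propagation along the chain: a global section restricts to sections on each $\P^1$-component that agree at the nodes; on $E_{j_0}$ the section vanishes because the restriction has strictly negative degree; it therefore vanishes at the node with an adjacent $E_{j_0 \pm 1}$, forcing its restriction there (a section of a degree-$\le 0$ line bundle on $\P^1$ with a zero) to vanish identically; iteration along the connected chain gives zero globally.

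Finally, I will verify these numerical hypotheses for $\cM = \cL(kD)|_D$ and every $k \ge 1$. The fact specific to the minimal resolution of a cyclic quotient singularity, namely $d_j = -E_j^2 \ge 2$ for all $j$, yields $D \cdot E_j = -d_j + 2 \le 0$ at interior components and $D \cdot E_j = -d_j + 1 \le -1$ at the two endpoints of the chain; in particular $D \cdot E_j \le 0$ throughout. Combined with the hypothesis $\cL(D) \cdot E_j \le 0$ this gives
\begin{equation*}
\cL(kD) \cdot E_j \;=\; \cL(D) \cdot E_j + (k-1)\, D \cdot E_j \;\le\; 0
\end{equation*}
for all $j$ and $k \ge 1$, while the hypothesis $\sum_j \cL(D) \cdot E_j < 0$ forces strict inequality at some $j_0$ when $k = 1$, and this strictness persists for all $k \ge 1$ because $(k-1)\,D \cdot E_{j_0} \le 0$. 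The step requiring the most care is the chain-propagation lemma; everything else is local bookkeeping plus the standard colimit description of the reflexive hull.
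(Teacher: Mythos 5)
Your proposal is correct and follows essentially the same route as the paper: both identify the reflexive hull with $\varinjlim_k R^0\pi_*\cL(kD)$ via $u_*u^*\cF$, reduce to the vanishing $H^0(D,\cL(kD)|_D)=0$ for all $k\ge 1$, and verify the numerical hypotheses using $d_j\ge 2$ from minimality. The only differences are cosmetic bookkeeping: you show the transition maps of the colimit are isomorphisms directly rather than inducting on the quotients $\cL(tD)/\cL$, and you spell out the chain-propagation vanishing lemma that the paper invokes implicitly.
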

\begin{proof}
Let
\begin{equation*}
U = X \setminus \{x\} = \tX \setminus D.
\end{equation*}
Let $u \colon U \to X$ and $\tilde{u} \colon U \to \tX$ be the embeddings and set $\cL_U := \tilde{u}^*\cL$.
Since $\tilde{u}$ is an affine morphism, we have
\begin{equation*}
\tilde{u}_*(\cL_U) {} = R^0\tilde{u}_*(\cL_U) \cong \ilim \cL(tD),
\end{equation*}
where in the right-hand side the colimit is taken with respect to the sequence of the natural embeddings $\cL \to \cL(D) \to \cL(2D) \to \dots$.
Therefore, there is an exact sequence
\begin{equation*}
0 \to \cL \to \tilde{u}_*(\cL_U) \to \ilim \left( \frac{\cL(tD)}\cL \right) \to 0.
\end{equation*}
Pushing it forward to $X$ we obtain an exact sequence
\begin{equation}
\label{eq:pushforward-sequence}
0 \to R^0\pi_*(\cL) \to R^0\pi_*(\tilde{u}_*(\cL_U)) \to \ilim R^0\pi_* \left( \frac{\cL(tD)}\cL \right)
\end{equation}
(the pushforward functor commutes with the colimit since $\pi$ is quasicompact).

Let us show that the last term in~\eqref{eq:pushforward-sequence} is zero.
We prove that $R^0\pi_* \left( \frac{\cL(tD)}\cL \right)$ = 0 for all $t \ge 0$ by induction.
The short exact sequence
\begin{equation*}
0 \to \frac{\cL((t-1)D)}\cL \to \frac{\cL(tD)}\cL \to \cL(tD)\vert_D \to 0
\end{equation*}
shows that it is enough to check that $H^0(D,\cL(tD)\vert_D) = 0$ for all $t \ge 1$.
Since~$D$ is a chain of curves $E_j$, it is enough to check that all the degrees $\cL(tD) \cdot E_j$ are nonpositive
and their sum is negative.
For $t = 1$ this is ensured by~\eqref{eq:reflexivity-conditions}.
For $t > 1$ the same follows from~\eqref{eq:reflexivity-conditions} combined with inequalities
\begin{equation*}
D \cdot E_j =
2 - \delta_{j1} - \delta_{jm} - d_j \le 0,
\end{equation*}
that hold true since $d_j \ge 2$ for all $j$ by minimality of the resolution.

Now we conclude from~\eqref{eq:pushforward-sequence} that
\begin{equation*}
R^0\pi_*(\cL) \cong R^0\pi_*(\tilde{u}_*(\cL_U)) \cong R^0u_*(\cL_U).
\end{equation*}
Since $X$ is a normal surface, it follows that $R^0\pi_*(\cL)$ is reflexive.
\end{proof}

\begin{corollary}\label{cor:reflexivity}
In the notation of Lemma~\textup{\ref{lemma:reflexivity-criterion}}, if the multidegree vector $(\cL \cdot E_j)_{j=1}^m$
is equal to one of the vectors
\begin{align*}
(d_1 - 2, \; d_2 - 2, \; \dots, \; d_{m-1} - 2, \; d_m - 2), \\
(d_1 - 2, \; d_2 - 2, \; \dots, \; d_{m-1} - 2, \; d_m - 1), \\
(d_1 - 1, \; d_2 - 2, \; \dots, \; d_{m-1} - 2, \; d_m - 2),
\end{align*}
then the derived pushforward $\pi_* \LL$ is a reflexive sheaf of rank~$1$ on $X$.
\end{corollary}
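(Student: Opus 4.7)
The plan is to deduce the corollary from Lemma~\ref{lemma:reflexivity-criterion} in two steps: first, verify the numerical hypotheses of the lemma in each of the three cases; second, establish $R^{\ge 1}\pi_*\cL = 0$ so that the derived pushforward coincides with $R^0\pi_*\cL$.

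For the first step, the chain structure of $D$ gives the identity
\[
D \cdot E_j = 2 - d_j - \delta_{j,1} - \delta_{j,m},
\]
so that
\[
\cL(D) \cdot E_j = (\cL \cdot E_j) + 2 - d_j - \delta_{j,1} - \delta_{j,m}.
\]
Plugging in the three multidegree vectors from the statement one finds: in the first case $\cL(D) \cdot E_j = -\delta_{j,1} - \delta_{j,m}$, with total $-2$; in the second case the same expression for $j < m$ together with $\cL(D) \cdot E_m = -\delta_{m,1}$, with total $-1$; in the third case, symmetrically, $\cL(D) \cdot E_1 = -\delta_{1,m}$ together with $\cL(D) \cdot E_j = -\delta_{j,1} - \delta_{j,m}$ for $j > 1$, with total $-1$. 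In every case both conditions $\cL(D) \cdot E_j \le 0$ and $\sum_j \cL(D) \cdot E_j < 0$ of Lemma~\ref{lemma:reflexivity-criterion} are met, so $R^0\pi_*(\cL)$ is a reflexive sheaf of rank one on $X$.

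For the second step, observe that in each of the three cases $\cL \cdot E_j \ge d_j - 2 \ge 0$, using $d_j \ge 2$ from the minimality of $\pi$. Thus $\cL$ is $\pi$-nef. Combined with the rationality of the cyclic quotient singularity $(X,x)$, this forces $R^1\pi_*\cL = 0$: by the theorem on formal functions it suffices to show $H^1(Z,\cL|_Z) = 0$ for every effective exceptional thickening $Z$, which follows by induction on the length of $Z$ from the short exact sequences
\[
0 \to \cL|_{E_j}(-Z') \to \cL|_{Z' + E_j} \to \cL|_{Z'} \to 0
\]
together with $H^1(E_j, \cL|_{E_j}(-Z')) = 0$ on $E_j \cong \P^1$ (for a suitable addition order) and $H^1(Z, \cO_Z) = 0$ from rationality. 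Since $\pi$ has fibers of dimension at most one, $R^{\ge 2}\pi_*\cL$ vanishes automatically, so $\pi_*\cL \cong R^0\pi_*\cL$ is reflexive of rank one, as required. The numerical checks are routine; the only genuinely nontrivial input is the classical vanishing of $R^1\pi_*\cL$ for $\pi$-nef line bundles over rational surface singularities.
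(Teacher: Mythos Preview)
Your proposal is correct. The verification of Lemma~\ref{lemma:reflexivity-criterion}'s hypotheses is essentially the same as the paper's, just presented differently: the paper reformulates~\eqref{eq:reflexivity-conditions} as explicit inequalities on $l_j = \cL \cdot E_j$, whereas you compute $\cL(D) \cdot E_j$ directly; these are equivalent calculations.

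The vanishing step is where the two arguments diverge. The paper notes that $K_{\tX} \cdot E_j = d_j - 2$, so $\cL(-K_{\tX})$ is $\pi$-nef in all three cases, and then invokes relative Kawamata--Viehweg vanishing~\cite[Theorem~1-2-3]{KMM} to kill $R^{>0}\pi_*\cL$ in one line. You instead use the weaker fact that $\cL$ itself is $\pi$-nef and appeal to the classical result that $R^1\pi_*$ of a $\pi$-nef line bundle vanishes over a rational surface singularity. Your route is more elementary in spirit, but the inductive sketch via formal functions is a bit loose: the existence of a ``suitable addition order'' with the required degree bound at each step is precisely Laufer's characterization of rationality, and the role you assign to $H^1(Z,\cO_Z)=0$ is not made explicit. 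Since you correctly flag this as the one nontrivial input and it is indeed a standard fact, the argument stands; the paper's citation of Kawamata--Viehweg is simply a cleaner way to package the same vanishing.
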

\begin{proof}
Let us write $l_j = \cL \cdot E_j$.
It is easy to see that the condition (\ref{eq:reflexivity-conditions}) is equivalent
to the following condition: if $m > 1$ the degrees $l_j$ satisfy
\begin{equation*}
l_1 \le d_1 - 1, \quad l_j \le d_j - 2 \quad (2 \le j \le m-1), \quad l_m \le d_m - 1
\end{equation*}
and requiring that one of these inequalities is strict; and if $m = 1$ then $l_1 < d_1$.
This condition holds by assumption and Lemma \ref{lemma:reflexivity-criterion} implies that $R^0\pi_* \cL$ is a reflexive sheaf.

To show that higher direct images vanish we note that for every $1 \le j \le m$ we have~$K_{\wt{X}} \cdot E_j = d_j - 2$
so that by assumption on the degrees $l_j$ we deduce that~$\cL(-K_{\wt{X}})$ is~$\pi$-nef.
By the Kawamata--Viehweg vanishing, see~\cite[Theorem~1-2-3]{KMM},
we have the required vanishing~$R^p \pi_* \cL = 0$ for all $p > 0$.
\end{proof}

\begin{example}
For the $\frac12(1,1)$ singularity $X = \Spec(\Bbbk[x^2, xy, y^2])$
let $E$ be the single exceptional curve on the minimal resolution $\wt{X}$, so that $d = 2$.
Let $\cL$ be a line bundle on $\wt{X}$, and let $l$ be the degree of $\LL$ restricted to $E$.
By Corollary \ref{cor:reflexivity}, for~$l = 0$ and~$l = 1$, $\pi_* \cL$ is a reflexive sheaf
of rank~1. In fact one can see that the same is true for~$l = -1$.

However, for $l {}\ge{} 2$, $\pi_* \cL$ is a pure sheaf which is not reflexive,
while for $l \le -2$, $R^1 \pi_* \cL$ is a non-zero torsion sheaf so that $\pi_* \cL$ is not a sheaf.
\end{example}

\begin{remark}
In the case of a Gorenstein cyclic quotient singularity of type~\mbox{$\frac1{m}(1,m-1)$} with $m > 1$
the exceptional divisor of the minimal resolution is a chain $E_1,\dots,E_{m-1}$ and
we have $d_j = 2$ for all $1 \le j \le m-1$.
The condition of the~Corollary~\ref{cor:reflexivity} is that~$\cL$ restricts trivially to all exceptional curves
except possibly either $E_1$ or $E_m$ where it may have degree~1.
\end{remark}

\begin{remark}
Other criteria for reflexivity can be found in the literature,
e.g. \cite[\S A1, Theorem, part~(b)]{WunramReflexive}, \cite[Lemma~2.1]{E85}.
\end{remark}

\subsection{Extension of reflexive rank~$1$ sheaves}
\label{subsection:reflexive-sheaves}

Now we relate the components of the semiorthogonal decomposition~\eqref{eq:dbx-dbk}
to the subcategories defined by Kawamata in~\cite{Kawamata}.
So, we assume that $X$ is a normal projective rational surface
with cyclic quotient singularities and~$\pi \colon \tX \to X$ is its minimal resolution.
Let $D_i$ be a connected component of the exceptional divisor $D$ of $\pi$ and
let $x_i = \pi(D_i) \in X$ be a quotient cyclic singularity of type $\frac1{r_i}(1,a_i)$.
Assume that the subcategory $\tcA_i \subset \Db(\tX)$ is \emph{untwisted} adherent to~$D_i$,
let $\cL_{i,0}$ be the corresponding line bundle on $\tX$ so that~\eqref{eq:tcai-l-o} holds
and let~$\gamma_i \colon \Db(K_i\md) \to \Db(X)$ be the functor of Theorem~\ref{theorem:sod-cai}.
We define a complex of sheaves
\begin{equation}
\label{eq:def-ri}
R_i := \gamma_i(\Bbbk) {} \in \Db(X),
\end{equation}
the image of the simple $K_i$-module $\Bbbk$ under the functor $\gamma_i$,
and
\begin{equation}
\label{eq:def-mi}
M_i := \gamma_i(K_i) {} \in \Db(X),
\end{equation}
the image of the free $K_i$-module $K_i$ under the same functor.
Since $K_i$ is an iterated extension of $\Bbbk$, $M_i$ is an iterated extension of $R_i$.
Here, given $F, G \in \Db(X)$ by an extension of $G$ by $F$ we mean
a cone of any morphism $G[-1] \to F$, so that if $F$ and $G$ are coherent sheaves
extensions are precisely those in the sense of abelian categories.
In particular, if $R_i \in \Db(X)$ is a pure sheaf, then so is $M_i$.
Denote by $\langle R_i \rangle$ and $\langle M_i \rangle$ the minimal triangulated
subcategory of $\Db(X)$ containing~$R_i$ and~$M_i$ respectively.

\begin{proposition}
\label{prop:semiort-ri-mi}
There is a semiorthogonal decomposition
\begin{equation*}
\Db(X) = \big\langle \langle R_1 \rangle,  \dots, \langle R_n \rangle \big\rangle
\end{equation*}
with $\langle R_i \rangle \simeq \Db(K_i\md)$,
and if $x_2, \dots, x_n \in X$ are Gorenstein then there is a semiorthogonal decomposition
\begin{equation*}
\Dperf(X) = \big\langle \langle M_1 \rangle,  \dots, \langle M_n \rangle \big\rangle
\end{equation*}
with $\langle M_i \rangle \simeq \Dperf(K_i\md)$.
\end{proposition}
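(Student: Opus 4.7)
The plan is to reduce the proposition to two elementary generation statements on the algebraic side and then transport them through the equivalences $\gamma_i$ established in Theorem~\ref{theorem:sod-cai} and Corollary~\ref{corollary:dbx}.

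First I would invoke Corollary~\ref{corollary:dbx}, applied to \emph{any} untwisted adherent semiorthogonal decomposition $\Db(\tX)=\langle\tcA_1,\dots,\tcA_n\rangle$ compatible with $\pi$: under the assumption that $X$ has cyclic quotient singularities and $\pi$ is minimal, this yields the semiorthogonal decomposition $\Db(X)=\langle\cA_1,\dots,\cA_n\rangle$ with $\gamma_i\colon\Db(K_i\md)\xrightarrow{\simeq}\cA_i$, and, under the Gorenstein hypothesis on $x_2,\dots,x_n$ (which forces $\pi$ to be crepant along $D_j$ for $j\ge 2$ by Lemma~\ref{lemma:crepancy}), the decomposition $\Dperf(X)=\langle\cA_1^\perf,\dots,\cA_n^\perf\rangle$ with $\gamma_i$ restricting to an equivalence $\Dperf(K_i\md)\xrightarrow{\simeq}\cA_i^\perf$. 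So the whole question becomes: identify $\langle R_i\rangle$ with $\cA_i$ and $\langle M_i\rangle$ with $\cA_i^\perf$ through the equivalence $\gamma_i$.

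Second, since $\gamma_i$ is a triangulated equivalence and $R_i=\gamma_i(\Bbbk)$, $M_i=\gamma_i(K_i)$, it suffices to prove the two purely algebraic assertions
\begin{equation*}
\Db(K_i\md)=\langle\Bbbk\rangle,\qquad \Dperf(K_i\md)=\langle K_i\rangle.
\end{equation*}
For the first, because $K_i$ is a local finite-dimensional algebra its Jacobson radical~$\fm$ is nilpotent and $\Bbbk=K_i/\fm$ is the unique simple module; any finitely generated $K_i$-module $N$ carries the finite $\fm$-adic filtration $N\supset N\fm\supset N\fm^2\supset\cdots$ whose subquotients are direct sums of copies of $\Bbbk$, so $N\in\langle\Bbbk\rangle$ (a triangulated subcategory is closed under finite direct sums, since $F\oplus G$ is the cone of the zero map $G[-1]\to F$). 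Any object of $\Db(K_i\md)$ is then built up from its cohomology modules by extensions, hence lies in $\langle\Bbbk\rangle$. For the second, a finitely generated projective module over the local algebra $K_i$ is free by Nakayama, so a bounded complex of finitely generated projective modules is a bounded complex of finite sums of $K_i$, and hence lies in $\langle K_i\rangle$ by iterated cones.

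Combining these with step one gives $\langle R_i\rangle=\gamma_i(\langle\Bbbk\rangle)=\gamma_i(\Db(K_i\md))=\cA_i\simeq\Db(K_i\md)$ and, under the Gorenstein hypothesis, $\langle M_i\rangle=\gamma_i(\langle K_i\rangle)=\gamma_i(\Dperf(K_i\md))=\cA_i^\perf\simeq\Dperf(K_i\md)$, which substituted into the two decompositions of Corollary~\ref{corollary:dbx} proves the proposition. No step is genuinely difficult here: the only point that requires a moment's care is the observation that $K_i$ being local makes every finitely generated projective automatically free, so that $\langle K_i\rangle$ really exhausts $\Dperf(K_i\md)$; the rest is transport of structure through the equivalences already in hand.
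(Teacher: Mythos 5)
Your proof is correct and follows essentially the same route as the paper: both invoke Corollary~\ref{corollary:dbx} and then identify $\cA_i=\gamma_i(\Db(K_i\md))=\gamma_i(\langle\Bbbk\rangle)=\langle R_i\rangle$, respectively $\cA_i^\perf=\gamma_i(\Dperf(K_i\md))=\langle M_i\rangle$ using that $\Dperf(K_i\md)$ is generated by $K_i$. The only difference is that you spell out the elementary facts $\Db(K_i\md)=\langle\Bbbk\rangle$ (radical filtration) and $\Dperf(K_i\md)=\langle K_i\rangle$ (projectives over a local algebra are free), which the paper leaves implicit.
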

\begin{proof}
Consider the semiorthogonal decomposition of Corollary~\ref{corollary:dbx}.
We have
\begin{equation*}
\AA_i \simeq \gamma_i(\Db(K_i\md)) = \gamma_i(\langle \Bbbk \rangle) \simeq \langle R_i \rangle,
\end{equation*}
so that the first decomposition of the proposition
follows from~\eqref{eq:dbx-dbk}.
For the second decomposition we use~\eqref{eq:perfx-perfk} together with the fact
that $\Dperf(K_i\md)$ is generated by~$K_i$ so that the essential
image of $\Dperf(K_i\md)$ in~$\Db(X)$ is~$\langle M_i \rangle$.
\end{proof}

As $\gamma_i$ is fully faithful it is immediate to see that the objects $R_i$ and $M_i$ satisfy
\begin{equation}\label{eq:ext-mi}
\Ext_X^\bullet(M_i,R_i) = \Bbbk,
\qquad
\Ext_X^\bullet(M_i,M_i) \cong K_i,
\end{equation}
where the vector spaces in the right-hand side are concentrated in degree $0$.
In this sense decompositions of Proposition~\ref{prop:semiort-ri-mi} provide an analog
of a full exceptional collection for the singular surface $X$.
In particular if $D_i = \varnothing$, then $R_i = M_i$ is actually an  exceptional object.

As we already noticed, $M_i$ is an iterated extension of $R_i$.
As $\Ext_X^1(M_i,R_i) = 0$ this extension is maximal;
in the language of \cite[Definition 5.1]{Kawamata} $M_i$ is said to be \emph{relative exceptional}.
We are going to investigate under which conditions $R_i$ (resp.~$M_i$) are reflexive (resp.\ locally free)
sheaves on $X$.

Recall the locally free sheaves $\cP_{i,0} \in \tcA_i$ defined in Theorem~\ref{theorem:hille-ploog}.

\begin{proposition}\label{prop:ri-mi}
{$(i)$} For any $1 \le i \le n$ we have natural isomorphisms of complexes
\begin{equation*}
R_i \cong \pi_*(\cL_{i,0}),
\qquad
M_i \cong \pi_*(\cP_{i,0}).
\end{equation*}

\noindent{$(ii)$}
Each $R_i$ is a locally free sheaf of rank~$1$ at $X \setminus \{x_i, \dots, x_n\}$ and a reflexive sheaf of rank~$1$ at $x_i$.
Each $M_i$ is a locally free sheaf of rank~$r_i$ at $X \setminus \{x_{i+1}, \dots, x_n\}$.

\noindent{$(iii)$}
If singular points $x_{i+1}, \dots, x_n$ are Gorenstein, then $R_i$ is locally free of rank~$1$ on~$X \setminus \{x_i\}$
and reflexive of rank~$1$ at $x_i$, and $M_i$ is a locally free sheaf on $X$.

\noindent{$(iv)$}
If $R_i$ is a reflexive sheaf on $X$, then $M_i$ is the unique maximal iterated extension of $R_i$
in the sense of Kawamata~\cite{Kawamata}.
\end{proposition}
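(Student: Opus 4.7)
For part $(i)$, the plan is to apply the commutative square $\gamma_i \circ \rho_{i*} \cong \pi_* \circ \tilde\gamma_i$ from the first diagram in~\eqref{eq:gamma-tgamma}. Evaluating it at the simple $\Lambda_i$-module $S_{i,0}$ and combining $\rho_{i*}(S_{i,0}) \cong \Bbbk$ from Lemma~\ref{lemma:rho-surjective} with $\tilde\gamma_i(S_{i,0}) \cong \cL_{i,0}$ from Theorem~\ref{theorem:hille-ploog}$(iii)$ yields $R_i \cong \pi_*\cL_{i,0}$; the same argument applied to $P_{i,0}$, using $\rho_{i*}(P_{i,0}) \cong K_i$ and $\tilde\gamma_i(P_{i,0}) \cong \cP_{i,0}$, gives $M_i \cong \pi_*\cP_{i,0}$.

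For part $(ii)$, I would extract both intersection data and Ext vanishings from the semiorthogonality of the decomposition and then feed them into Lemma~\ref{lemma:descent}. For every $j < i$ and every irreducible component $E$ of $D_j$, the containments $\cL_{i,0},\cP_{i,0}\in\tcA_i$ and $\cO_E(-1)\in\tcA_j$ together with semiorthogonality give $\Ext^\bullet_{\tX}(\cL_{i,0},\cO_E(-1))=0$ and $\Ext^\bullet_{\tX}(\cP_{i,0},\cO_E(-1))=0$; a direct computation on $E \cong \P^1$ shows the first vanishing forces $\cL_{i,0}\cdot E=0$, so the $(2)\Rightarrow(4)$ direction of Lemma~\ref{lemma:descent} applies to both $\cL_{i,0}$ and $\cP_{i,0}$ and yields local freeness of $R_i$ and $M_i$ at $x_j$. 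At $x_i$ itself, the analogous Ext vanishing $\Ext^\bullet_\tX(\cP_{i,0},\cO_{E_{i,p}}(-1)) = \Ext^\bullet_{\Lambda_i}(P_{i,0},S_{i,p})=0$ for $p\ge 1$, coming from~\eqref{eq:ext-p-s} via $\tilde\gamma_i$, again triggers Lemma~\ref{lemma:descent} and makes $M_i$ locally free at $x_i$; its rank is $\dim_\Bbbk e_0\Lambda_i e_0 = \dim_\Bbbk K_i = r_i$. Reflexivity of $R_i$ at $x_i$ follows from the untwisted adherence condition~\eqref{eq:cli0-oeip}, which gives the multidegree $(\cL_{i,0}\cdot E_{i,p})_p = (d_{i,1}-1, d_{i,2}-2, \dots, d_{i,m_i}-2)$, to which Corollary~\ref{cor:reflexivity} applies directly.

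For part $(iii)$, the Gorenstein hypothesis at $x_{i+1},\dots,x_n$ translates to $\pi$ being crepant along $D_j$ for $j > i$, so Lemma~\ref{lemma:twisted-compatible} supplies the rotated decomposition $\Db(\tX)=\langle\tcA_{i+1}(K_\tX),\dots,\tcA_n(K_\tX),\tcA_1,\dots,\tcA_i\rangle$ in which $\tcA_i$ is the rightmost block. Since $K_\tX\cdot E=0$ on every component $E \subset D_j$ with $j>i$, the twist by $K_\tX$ does not affect intersection numbers or Ext computations on these components, so the argument of $(ii)$ applies verbatim in the rotated order and gives $\Ext^\bullet_\tX(\cL_{i,0},\cO_E(-1))=0$ and $\Ext^\bullet_\tX(\cP_{i,0},\cO_E(-1))=0$; Lemma~\ref{lemma:descent} then yields local freeness of $R_i$ and $M_i$ at the remaining singular points $x_{i+1},\dots,x_n$.

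For part $(iv)$, the filtration of $K_i$ by powers of its maximal ideal is a filtration of $K_i$-modules with simple quotients $\Bbbk$, and the fully faithful $\gamma_i$ transports it into an ascending filtration in $\cA_i \subset \Db(X)$ whose successive cones are all copies of $R_i$; maximality reduces to $\Ext^1_X(M_i,R_i) \cong \Ext^1_{K_i\md}(K_i,\Bbbk)=0$, which holds because $K_i$ is projective over itself. Under the assumption that $R_i$ is a reflexive sheaf on $X$, the isomorphisms $\pi_*\cL_{i,q} \cong R_i$ for all $q$ (obtained by applying $\pi_*$ to the sequences~\eqref{eq:lip-lip-bip} and using $R^\bullet\pi_*\cO_{E_{i,p}}(-1)=0$), combined with the vanishing of $R^p\pi_*\cL_{i,q}$ for $p>0$ encoded by reflexivity of $R_i$, force every intermediate term of the filtration to be a coherent sheaf, so that the iterated extension takes place in $\Coh(X)$ in Kawamata's sense; uniqueness of the maximal iterated extension is then~\cite[Corollary~3.4]{Kawamata}. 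I expect the trickiest step to be precisely this sheafiness argument in $(iv)$: turning the triangles produced by $\gamma_i$ into honest short exact sequences of sheaves requires simultaneously controlling $R^1\pi_*$ of every line bundle $\cL_{i,q}$ appearing in the iterated construction of $\cP_{i,0}$, not just of $\cL_{i,0}$ itself.
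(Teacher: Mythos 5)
Your parts (i)--(iii) follow the paper's own proof almost exactly: (i) is precisely the evaluation of the first diagram in \eqref{eq:gamma-tgamma} at $S_{i,0}$ and $P_{i,0}$, using \eqref{eq:rho-s0} and \eqref{eq:tgamma-p-s}; (ii) uses the same semiorthogonality vanishings together with \eqref{eq:ext-p-s} fed into Lemma~\ref{lemma:descent}, and the same application of Corollary~\ref{cor:reflexivity} to the multidegree coming from \eqref{eq:cli0-oeip}; in (iii) you invoke the rotated decomposition of Lemma~\ref{lemma:twisted-compatible} where the paper cites \eqref{eq:semiorthogonality-tcai-tcaj-ker}, but that vanishing is itself proved by the rotation, so the mechanism is identical. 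Two small glosses: Lemma~\ref{lemma:descent} and Corollary~\ref{cor:reflexivity} concern a resolution of the whole base (resp.\ of one singular point), so to conclude anything at an individual point $x_j$ you should first restrict to $V_j = X \setminus \{x_k\}_{k \ne j}$ and its preimage, as the paper does with $\tV_i$, $V_i$; and the rank of $M_i$ is not literally $\dim_\Bbbk e_0\Lambda_i e_0$ --- it is $r_i$ because $M_i = \gamma_i(K_i)$ is an iterated extension of $\dim_\Bbbk K_i = r_i$ copies of the generically rank-one sheaf $R_i$.

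The one place you go astray is the sheafiness step in (iv), which you single out as the trickiest point; it is in fact immediate, and your proposed mechanism does not prove it. The intermediate terms of the filtration of $M_i$ are the images under $\gamma_i$ of the quotients of a composition series of $K_i$ (note the radical filtration need not have one-dimensional graded pieces, so refine it), not pushforwards of the line bundles $\cL_{i,q}$; hence controlling $R^{>0}\pi_*\cL_{i,q}$ is beside the point, and the isomorphisms $\pi_*\cL_{i,q} \cong R_i$ (true by \eqref{eq:lip-lip-bip} and $\pi_*\cO_{E_{i,p}}(-1)=0$) do not by themselves make those intermediate terms sheaves. The correct observation, made in the paper right after \eqref{eq:def-mi}, is that each step is the cone of a morphism $G_{j-1}[-1] \to R_i$, i.e.\ a class in $\Ext^1_X(G_{j-1},R_i)$; if $R_i$ and, inductively, $G_{j-1}$ are coherent sheaves, this cone is exactly the corresponding extension in $\Coh(X)$. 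So once $R_i$ is assumed reflexive (in particular a sheaf), all intermediate terms and $M_i$ are sheaves and the iterated extension takes place in $\Coh(X)$ in Kawamata's sense; your maximality argument via $\Ext^1_X(M_i,R_i)=0$, which is \eqref{eq:ext-mi}, and the uniqueness from \cite[Corollary~3.4]{Kawamata} then go through as stated.
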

\begin{proof}
{$(i)$}
We rely on~\eqref{eq:tgamma-p-s}, \eqref{eq:rho-s0} and the first diagram in~\eqref{eq:gamma-tgamma} to compute
\begin{equation*}
\begin{aligned}
R_i &= \gamma_i(\Bbbk) &&\cong \gamma_i(\rho_{i*}(S_{i,0})) &&\cong \pi_*(\tilde\gamma_i(S_{i,0})) &&\cong \pi_*(\cL_{i,0}), \\
M_i &= \gamma_i(K_i)   &&\cong \gamma_i(\rho_{i*}(P_{i,0})) &&\cong \pi_*(\tilde\gamma_i(P_{i,0})) &&\cong \pi_*(\cP_{i,0}). \\
\end{aligned}
\end{equation*}

{$(ii)$ and $(iii)$}
We start by noticing that since $\pi$ induces an isomorphism on the nonsingular locus of $X$, both $R_i$ and $M_i$ are locally free sheaves
on $X \setminus \{x_1, \dots, x_n\}$.

Let us now check that $R_i$ is reflexive of rank~$1$ in the neighbourhood of $x_i$.
Set
\begin{equation*}
\tV_i := \tX \setminus \bigcup_{j \ne i} D_j
\qquad\text{and}\qquad
V_i := \pi(\tV_i),
\end{equation*}
so that $V_i$ is an open neighborhood of $x_i$ and
the restriction of $\pi$ to $\tV_i$ (which we still denote by $\pi$) is a resolution of the singularity~$(V_i,x_i)$.

It follows from~\eqref{eq:cli0-oeip} where all $b_{i,p} = 0$ that
the condition of Corollary~\ref{cor:reflexivity} for the morphism~\mbox{$\pi \colon \tV_i \to V_i$} is satisfied so
that~$R_i$ is a reflexive sheaf of rank~$1$ on the neighborhood $V_i$ of $x_i$.

To show that $R_i$ (resp. $M_i$) is locally free at $x_j$ for some $j$ by Lemma~\ref{lemma:descent}
we need to check that $\Ext^\bullet(\cL_{i,0}, \OO_{E_{j,p}}(-1)) = 0$ (resp. $\Ext^\bullet(\cP_{i,0}, \OO_{E_{j,p}}(-1)) = 0$)
for all~$1 \le p \le m_j$.
For $j < i$, or for $j > i$ under the Gorenstein condition, the $\Ext$-groups are vanishing by~\eqref{eq:semiorthogonality-tcai-tcaj-ker}.

Finally, for $j = i$, we have for every $1 \le p \le m_i$
\begin{equation*}
\Ext^\bullet(\cP_{i,0},\cO_{E_{i,p}}(-1)) =
\Ext^\bullet(\tilde\gamma_i(P_{i,0}),\tilde\gamma_i(S_{i,p})) =
\Ext^\bullet_{\Lambda_i}(P_{i,0},S_{i,p}) = 0
\end{equation*}
by~\eqref{eq:ext-p-s} and the correspondence between projective and simple $K_i$-modules,
so that~$M_i$ is locally free at~$x_i$.

Each $M_i$ has rank~$r_i$ since by Lemma~\ref{lemma:kk} we have $\dim_{\Bbbk}(K_i) = r_i$
so that $M_i$ has a filtration with subquotients being $r_i$ copies of $R_i$, and $R_i$ has rank~1.

{$(iv)$}
We know that $M_i$ is an iterated extension of $R_i$ and~\eqref{eq:ext-mi} shows that it is maximal.
The uniqueness follows, see~\cite[Corollary 3.4]{Kawamata}.
\end{proof}

We note that if a point $x_i$ is nonsingular, then $K_i = \Bbbk$ and $M_i = R_i$.

\subsection{Toric case}

Assume now that $X$ is a torsion-free toric surface
and let $\pi\colon \wt{X} \to X$ be the minimal resolution.
Let $\MM$ be a line bundle on $\wt{X}$ satisfying conditions~\eqref{eq:M-def} and set
\begin{equation*}
C := \pi_*(\rc_1(\cM)) \in \Cl(X).
\end{equation*}
We note since $\cM$ is well-defined up to twist by a line bundle from $\pi^*(\Pic(X))$,
the class $C$ is well-defined up to adding a Cartier divisor class on $X$.

The full exceptional collection~\eqref{eq:M-seq}
provides a semiorthogonal decomposition of the category~$\Db(\wt{X})$
untwisted adherent to the components~$D_i$ of the exceptional locus~$D$ of~$\pi$.
Recall the objects~$R_i$ and~$M_i$ of~$\Db(X)$ associated in~\eqref{eq:def-ri} and~\eqref{eq:def-mi}
to this semiorthogonal decomposition.


\begin{proposition}
\label{proposition: toric maximal extension}
Let $X$ be a torsion-free projective toric surface with torus-invariant points $x_1,\dots,x_n$,
and with torus-invariant divisors $C_1, \dots, C_n$ with the ordering fixed by the condition~\eqref{eq:x-c-c}.

\noindent{$(i)$}
For any $1 \le i \le n$ the object $R_i$ is a reflexive sheaf of rank~$1$ on $X$, explicitly
\begin{equation}
\label{eq:Ri-expl}
R_i \simeq \OO(K_X + C + C_1 + \dots + C_i),
\end{equation}
Moreover, for every $j < i$ the sheaf $R_i$ is locally free at the torus-invariant point~$x_j$.
If~\mbox{$x_2, \dots, x_n$} are Gorenstein, then each $R_i$ is locally free at $X \setminus \{x_i\}$.

\noindent{$(ii)$}
For any $1 \le i \le n$ the object $M_i$ is a reflexive sheaf on $X$ and is locally free at all~$x_j$ with $j \le i$.
If $x_2, \dots, x_n$ are Gorenstein, then each $M_i$ is locally free on~$X$.

\noindent{$(iii)$}
Each $M_i$ is the unique maximal iterated extension of $R_i$.
\end{proposition}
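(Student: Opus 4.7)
The plan is to apply Proposition~\ref{prop:ri-mi} to the specific untwisted adherent exceptional collection~\eqref{eq:M-seq} produced by Remark~\ref{remark:untwisting}, and to verify the remaining toric-specific assertions via explicit intersection-number computations on $\wt{X}$ combined with the reflexivity criterion of Corollary~\ref{cor:reflexivity}.

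For part~(i), after identifying $R_i \cong \pi_*(\cM_{i,0})$ via Proposition~\ref{prop:ri-mi}(i), I will compute the intersection numbers $\cM_{i,0} \cdot E_{k,p}$ for all~$k,p$ by decomposing $\cM_{i,0} = \cL_{i,0} \otimes \cM(K_{\wt{X}})$ and combining~\eqref{eq:Lip-Eip}, the expression~\eqref{eq:def-lij2} for $\cL_{i,0}$, the defining condition~\eqref{eq:M-def} of~$\cM$, and the adjunction formula $K_{\wt{X}} \cdot E_{k,p} = d_{k,p} - 2$. I expect the multidegree vector of $\cM_{i,0}$ on each chain $D_k$ to match one of the patterns in Corollary~\ref{cor:reflexivity}: all zeros for $k < i$, the ``third'' vector $(d_{i,1}-1, d_{i,2}-2, \dots, d_{i,m_i}-2)$ for $k = i$, and the ``first'' vector $(d_{k,1}-2, \dots, d_{k,m_k}-2)$ for $k > i$. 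This simultaneously yields reflexivity of $R_i$ near every singular point~$x_k$ and local freeness at $x_k$ for $k < i$ (via Lemma~\ref{lemma:descent} applied to the resulting trivial restrictions). The explicit formula~\eqref{eq:Ri-expl} then follows by computing $\pi_*c_1(\cM_{i,0}) = C_1 + \dots + C_i + C + K_X$, using $\pi_* E_{j,q} = \delta_{q,0}\, C_j$. For the Gorenstein case $k > i$ I will invoke Lemma~\ref{lemma:twisted-compatible} to rotate the decomposition so that $\tcA_i$ lies to the right of each $\tcA_k(K_{\wt{X}})$; crepancy along $D_k$ makes $\cO_{E_{k,p}}(-1)$ lie in $\tcA_k(K_{\wt{X}})$, whence semiorthogonality gives $\Ext^\bullet(\cM_{i,0}, \cO_{E_{k,p}}(-1)) = 0$, and Lemma~\ref{lemma:descent} produces local freeness at~$x_k$.

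Part~(ii) will combine Proposition~\ref{prop:ri-mi}(ii)--(iii), which already supplies the asserted local freeness of $M_i$ at $x_k$ for $k \le i$ (and on all of $X$ under the Gorenstein hypothesis), with an independent argument that $M_i$ is globally a reflexive sheaf. For the latter I will use part~(i) to know that $R_i$ is a reflexive sheaf on~$X$, and then invoke the fact that $M_i$ is an iterated extension of $R_i$ in $\Coh(X)$ (coming from the composition series of $K_i$ as a module over itself, whose simple subquotients are $\Bbbk$). On the normal surface $X$, iterated extensions of a reflexive sheaf are automatically reflexive: torsion-freeness is inherited, and the depth lemma applied to a short exact sequence $0 \to \cF_1 \to \cF_2 \to \cF_3 \to 0$ with $\cF_1, \cF_3$ reflexive yields $\operatorname{depth}_x \cF_2 \ge 2$ at every closed point, which is Serre's condition $S_2$ and hence reflexivity. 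Part~(iii) is then immediate from Proposition~\ref{prop:ri-mi}(iv) once we know from~(i) that $R_i$ is reflexive.

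The main technical obstacle is the intersection calculation $\cL_{i,0} \cdot E_{k,p} = 2 - d_{k,p}$ for $k < i$, since~\eqref{eq:Lip-Eip} does not directly handle this range. The point is to identify which summands of~\eqref{eq:def-lij2} meet $E_{k,p}$, namely the three adjacent chain divisors $E_{k,p-1}, E_{k,p}, E_{k,p+1}$, with the proviso that at the boundary $p = m_k$ the role of $E_{k,m_k+1}$ is played by the neighboring strict transform $E_{k+1,0}$, which appears in~\eqref{eq:def-lij2} with coefficient one precisely because $k + 1 \le i$. The clean cancellation this produces with $K_{\wt{X}} \cdot E_{k,p}$, together with the vanishing $\cM \cdot E_{k,p} = 0$ for $(k,p) \ne (n, m_n)$, is the arithmetic reason $R_i$ ends up locally free at $x_k$ for $k < i$, and it reflects the fact that the components $\tcA_k$ for $k < i$ sit on the appropriate semiorthogonal side relative to $\tcA_i$.
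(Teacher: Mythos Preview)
Your proposal is correct and follows essentially the same route as the paper: invoke Proposition~\ref{prop:ri-mi}, then use the intersection formula~\eqref{eq:mi0-ekp} together with Corollary~\ref{cor:reflexivity} to get reflexivity of $R_i$ at the remaining points $x_{i+1},\dots,x_n$, and push forward $\rc_1(\cM_{i,0})$ to obtain~\eqref{eq:Ri-expl}. Your depth-lemma argument for global reflexivity of $M_i$ makes explicit a step the paper's proof leaves implicit (Proposition~\ref{prop:ri-mi} by itself does not assert reflexivity of $M_i$ at $x_{i+1},\dots,x_n$), and your direct intersection computation for $k<i$ is correct but redundant, since Proposition~\ref{prop:ri-mi}(ii) already supplies local freeness there via semiorthogonality.
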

\begin{proof}
Using Proposition~\ref{prop:ri-mi} we only have to check that~$R_i$ is
a reflexive sheaf of rank~1 at the points~$x_{i+1}, \dots, x_n$ and to prove~\eqref{eq:Ri-expl}.
Recall that by Proposition~\ref{prop:ri-mi}$(i)$ we have~$R_i = \pi_*\MM_{i,0}$.
To prove reflexivity we rely on the criterion of Corollary~\ref{cor:reflexivity}.
By~\eqref{eq:mi0-ekp} we have
\begin{equation*}
\cM_{i,0} \cdot E_{k,p} = d_{k,p} - 2
\end{equation*}
for $i < k \le n$,
and Corollary \ref{cor:reflexivity} applies to show that $R_i = \pi_*(\cM_{i,0})$ is reflexive at $x_k$.

To get the expression~\eqref{eq:Ri-expl} for $R_i$, note that a reflexive sheaf is determined by its first Chern class
and that the functor $\pi_*$ commutes with $\rc_1$ by~\eqref{eq:c1-G0}, hence
\begin{equation*}
\rc_1(R_i) =
\rc_1(\pi_*(\cM_{i,0})) =
\pi_*(\rc_1(\cM_{i,0})) =
\pi_*(\rc_1(\cL_{i,0} \otimes \cM(K_\tX))).
\end{equation*}
Taking into account~\eqref{eq:M-seq} and~\eqref{eq:def-lij2}, and using equalities
$\pi_*(K_\tX) = K_X$, $\pi_*(E_{i,p})  = 0$ for $p {}\ge{} 1$ and $\pi_*(E_{i,0}) = C_{i}$, and the definition of~$C$,
we deduce~\eqref{eq:semiorth-toric-explicit}.
%
%
%
\end{proof}

We now explain how one can compute $C$, and
hence the reflexive sheaves
\eqref{eq:Ri-expl}
in some important special cases.

\begin{proposition}
\label{prop:toric-gcd}
Let $X$ be a projective toric surface with $\rT$-invariant points
$x_1, \dots, x_n$ of orders $r_1,\dots,r_n$ and $\rT$-invariant divisors $C_1, \dots, C_n$
with convention~\eqref{eq:x-c-c}.
Assume that~$\gcd(r_n,r_1) = 1$.
Let~$s$ be an integer such that
\begin{equation*}
s \equiv 0 \mod r_1
\qquad\text{and}\qquad
s \equiv -1 \mod r_n.
\end{equation*}
Then $C = sC_1$ is a Weil divisor class~on~$X$ corresponding to a line bundle~$\cM$ on~$\tX$ satisfying~\eqref{eq:M-def}.
In particular, reflexive sheaves defined by~\eqref{eq:Ri-expl} generate~\eqref{eq:dbx-dbk}.
\end{proposition}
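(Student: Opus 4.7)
The plan is to write $\cM$ explicitly as an integer toric divisor on $\wt{X}$ and reduce condition~\eqref{eq:M-def} to three-term linear recurrences whose solvability is governed by continued fractions. Since $\wt{X}$ is smooth, every divisor is Cartier, and the toric divisors $E_{i,0}$ (strict transforms of $C_i$) and the exceptional divisors $E_{i,p}$ with $1\le p\le m_i$ generate $\Pic(\wt{X})$; moreover $\pi_*E_{i,0}=C_i$ while $\pi_*E_{i,p}=0$ for $p\ge 1$. I would therefore seek $\cM$ of the form
\begin{equation*}
\cM \;=\; s\,E_{1,0} \;+\; \sum_{i=1}^{n}\sum_{p=1}^{m_i} c_{i,p}\,E_{i,p}
\end{equation*}
with integers $c_{i,p}$ to be determined, so that $\pi_*\rc_1(\cM)=sC_1$ holds automatically.

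Using the standard toric intersection numbers on $\wt{X}$ (namely $E_{i,p}^2=-d_{i,p}$, adjacent chain components meet transversally, and $E_{i,m_i}\cdot E_{i+1,0}=1$ with the cyclic convention $E_{n+1,0}=E_{1,0}$, while distinct chains $D_i$, $D_j$ are disjoint), condition~\eqref{eq:M-def} translates, for each $i$ and each $1\le q\le m_i$, into the three-term relation
\begin{equation*}
a_{i,q-1} \;-\; d_{i,q}\,c_{i,q} \;+\; a_{i,q+1} \;=\; \varepsilon_{i,q},
\end{equation*}
where $a_{i,p}=c_{i,p}$ for $1\le p\le m_i$, the boundary terms $a_{i,0}$, $a_{i,m_i+1}$ equal $s$ exactly when they stand for the coefficient of $E_{1,0}$ and vanish otherwise, and $\varepsilon_{i,q}=-1$ precisely when $(i,q)=(n,m_n)$. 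For the middle chains $2\le i\le n-1$ all boundary data are zero, so the unique solution is $c_{i,\bullet}=0$, and the nontriviality is concentrated on the first and last chains.

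Solving the $i=1$ recurrence backwards from $c_{1,m_1+1}=0$, $c_{1,m_1}=t_1$ and invoking the continuant formula of Lemma~\ref{lemma:frac-sing} yields $c_{1,0}=\tridet(d_{1,1},\dots,d_{1,m_1})\,t_1=r_1\,t_1$, so matching the prescribed boundary $c_{1,0}=s$ forces $s\equiv 0\pmod{r_1}$. Analogously, solving the $i=n$ recurrence forward from $c_{n,0}=0$, $c_{n,1}=t_n$ gives $c_{n,m_n+1}=r_n\,t_n$; after isolating the ``external'' contribution $s\cdot(E_{1,0}\cdot E_{n,m_n})=s$ to $\cM\cdot E_{n,m_n}=-1$, this boundary must equal $s+1$, forcing $s\equiv -1\pmod{r_n}$. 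Under $\gcd(r_1,r_n)=1$ the Chinese remainder theorem supplies such an $s$, the corresponding $t_1$, $t_n$ are integers, and the resulting $\cM$ is an integer toric Cartier divisor satisfying~\eqref{eq:M-def} with $\pi_*\rc_1(\cM)=sC_1$, as required.

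The only delicate point, and the part that demands careful bookkeeping, is the accounting at the $(n,m_n)$ end: the prescribed value $\cM\cdot E_{n,m_n}=-1$ mixes with the external contribution from $s\,E_{1,0}$, and this is precisely what turns an otherwise homogeneous linear system into an inhomogeneous one whose solvability pins down the second congruence $s\equiv-1\pmod{r_n}$. Combining this existence with the uniqueness of $\cM$ modulo $\pi^*\Pic(X)$ from Corollary~\ref{corollary:restriction-exists}(2), we conclude that $C=sC_1\in\Cl(X)$ is independent of the choices up to a Cartier divisor, which finishes the proposition.
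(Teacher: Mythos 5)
Your proof is correct and takes essentially the same route as the paper: you write $\cM$ as $sE_{1,0}$ plus a combination of exceptional curves supported only on the chains over $x_1$ and $x_n$, and the solvability in integers of the resulting tridiagonal systems is exactly the paper's determinant argument (the continuant identity \eqref{eq:r-a-d} gives $\det = \pm r_1$, $\pm r_n$, so $r_1 \mid s$ and $r_n \mid s+1$ yield integral coefficients), with $\pi_*$ killing the exceptional part so that $C = sC_1$. The only difference is cosmetic: you solve the chains explicitly by the three-term recurrence rather than quoting nondegeneracy and divisibility of the determinant.
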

\begin{proof}
Let us check that $\cM$ can be chosen in the form
\begin{equation*}
\cM \cong \cO_\tX(F_n + sE_{1,0} + F_1),
\end{equation*}
where $F_n$ is a linear combination of $E_{n,p}$, $1 \le p \le m_n$, and~$F_1$ is a linear combination of~$E_{1,p}$, $1 \le p \le m_1$.
Indeed, any divisor of the form $F_n + sE_{1,0} + F_1$ has trivial intersection with $E_{i,p}$ for all $2 \le i \le n-1$
and~$1 \le p \le m_i$.
Furthermore, for any $F_n$ we have
\begin{equation*}
F_n \cdot E_{1,p} = 0
\qquad\text{and}\qquad
sE_{1,0} \cdot E_{1,p} = s\delta_{1p},
\end{equation*}
and since the determinant of the intersection matrix $\tridiag(d_{1,1},\dots,d_{1,m_1})$ of~$E_{1,q}$
is equal to~$r_1$ by~\eqref{eq:r-a-d} and~$r_1$ divides~$s$,
it follows that there exists (a unique) $F_1$ such that $F_1 \cdot E_{1,p} = -s\delta_{p,1}$.
Similarly, for any $F_1$ we have
\begin{equation*}
F_1 \cdot E_{n,p} = 0
\qquad\text{and}\qquad
sE_{1,0} \cdot E_{n,p} = s\delta_{p,m_n} = (s+1)\delta_{p,m_n} - \delta_{p,m_n},
\end{equation*}
and since the determinant~$r_n$ of the intersection matrix $\tridiag(d_{n,1},\dots,d_{n,m_n})$ of~$E_{n,q}$ divides~$s + 1$
there exists (a unique) $F_n$ such that $F_n \cdot E_{n,p} = -(s + 1)\delta_{p,m_n}$.

With the choices of $F_1$ and $F_n$ made as above, we have~\eqref{eq:M-def}.
It remains to note that~$\pi_*(F_1) = \pi_*(F_n) = 0$, hence $C = \pi_*(\rc_1(\cM)) = \pi_*(sE_{1,0}) = sC_1$.
\end{proof}

It is curious that the quality of the collection $R_i$ and $M_i$ of objects in $\Db(X)$
may depend on the choice of indexing of torus-invariant points.
For instance, if only one of these points is non-Gorenstein,
then it makes sense to choose indexing such that it is the point $x_1$ (like we did in Example~\ref{Example:P11n}).
With this choice all the sheaves $M_i$ become locally free.

The reflexive sheaves $R_i$ have especially simple form
when one of the $\rT$-invariant points of $X$ is smooth
and we choose an ordering to make this point the last point.

\begin{corollary}
\label{cor:toric-refl-smooth-point}
Let $X$ be a projective toric surface with $\rT$-invariant points
$x_1, \dots, x_n$ and $\rT$-invariant divisors $C_1, \dots, C_n$.
Assume that $x_n$ is smooth.
Then we have the following semiorthogonal decomposition of $\Db(X)$:
\begin{equation}
\label{eq:semiorth-toric-explicit}
\Db(X) = \big\langle \langle \OO_X(K_X + C_1) \rangle, \dots,
\langle \OO_X(K_X + C_1 + \dots + C_n) \rangle \big\rangle.
\end{equation}
\end{corollary}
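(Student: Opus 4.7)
The plan is to deduce this corollary as a direct specialization of Proposition~\ref{proposition: toric maximal extension} combined with Proposition~\ref{prop:toric-gcd}, once we observe that the smoothness of~$x_n$ forces the correction class~$C$ to vanish.

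First, since $x_n$ is smooth, its order is~$r_n = 1$. By Lemma~\ref{lemma:toric-br} the surface $X$ is torsion-free, so Proposition~\ref{proposition: toric maximal extension} applies. Moreover, $\gcd(r_n,r_1) = 1$ trivially, so Proposition~\ref{prop:toric-gcd} applies with the congruences $s \equiv 0 \pmod{r_1}$ and $s \equiv -1 \pmod{1}$. The latter is vacuous, so we may choose $s = 0$, which yields $C = 0 \cdot C_1 = 0$ as a Weil divisor class on $X$ corresponding to a line bundle $\cM$ on $\wt{X}$ satisfying~\eqref{eq:M-def}.

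Next I would feed this into Proposition~\ref{proposition: toric maximal extension}(i), whose formula~\eqref{eq:Ri-expl} then simplifies to $R_i \cong \OO_X(K_X + C_1 + \dots + C_i)$ for each $1 \le i \le n$. Finally, Proposition~\ref{prop:semiort-ri-mi} gives the semiorthogonal decomposition $\Db(X) = \langle \langle R_1 \rangle, \dots, \langle R_n \rangle \rangle$, and substituting the explicit reflexive sheaves produces~\eqref{eq:semiorth-toric-explicit}.

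There is essentially no obstacle here: the only thing to verify is the choice of~$s$, and the fact that $r_n = 1$ makes the second congruence trivial, which is why smoothness of the \emph{last} torus-invariant point is exactly the right hypothesis. (One may also note that when $x_n$ is smooth, the chain $D_n$ is empty, so~$R_n$ is in fact an exceptional line bundle, consistent with the general framework in Remark~\ref{remark:adherent-empty} and Example~\ref{ex:hirzebruch}.)
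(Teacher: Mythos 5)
Your proposal is correct and follows essentially the same route as the paper: smoothness of $x_n$ gives $r_n=1$, so one may take $s=0$ in Proposition~\ref{prop:toric-gcd}, obtain $C=0$, and substitute into Proposition~\ref{proposition: toric maximal extension} (together with Proposition~\ref{prop:semiort-ri-mi}) to get~\eqref{eq:semiorth-toric-explicit}. No gaps.
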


Note that since $K_X = -\sum_{i=1}^n C_i$,
the last reflexive sheaf in the semiorthogonal decomposition
\eqref{eq:semiorth-toric-explicit} is trivial.

\begin{proof}
Since $x_n$ is smooth, so that $r_n = 1$, we can take $s = 0$ in Proposition \ref{prop:toric-gcd} and then obtain $C = 0$.
We plug this
into the semiorthogonal decomposition given by Proposition~\ref{proposition: toric maximal extension},
and~\eqref{eq:semiorth-toric-explicit} follows.
\end{proof}

%


The following two examples generalize~\cite[Example~5.7, Example~5.8]{Kawamata}
and refine the prediction of Kawamata
presented at the end of~\cite[Section~5]{Kawamata}.

\begin{example}
\label{ex:pabc}
Let us construct three reflexive sheaves
which provide a semiorthogonal decomposition of $\P(w_1,w_2,w_3)$
for any pairwise coprime positive integers $w_i$.
Using Lemma~\ref{lemma:toric-br} it is easy to compute that
$\Cl(X) \cong \Z$, and if $\cO(1)$ is the ample generator of~$\Cl(X)$, then~$\Pic(X) \cong \Z$ and is generated by $\cO(w_1w_2w_3)$.
We order the torus invariant points in such a way that $x_i$ has order $w_i$.
Then
\begin{equation*}
\cO(C_1) \cong \cO(w_2),
\qquad
\cO(C_2) \cong \cO(w_3),
\qquad
\cO(C_3) \cong \cO(w_1).
\end{equation*}
Let $s$ be an integer such that $s \equiv 0 \mod w_1$ and $s \equiv -1 \mod w_3$.
Then by Proposition~\ref{prop:toric-gcd} we have
\begin{equation*}
R_1 = \cO(sw_2-w_1-w_3),\qquad
R_2 = \cO(sw_2-w_1),\qquad
R_3 = \cO(sw_2).
\end{equation*}
Note that permuting $w_1,w_2,w_3$ we get various semiorthogonal
decompositions of the same category $\Db(\P(w_1,w_2,w_3))$.
\end{example}

\begin{example}
\label{ex:p1ab}
As a special case of the previous example, let $X:=\mathbb{P}(1,a,b)$ for coprime integers $a,b > 0$.
If we order the singular points such that $x_1$ has type~$\frac1b(1,a)$, $x_2$ has type~$\frac1a(1,b)$, and $x_3$ is smooth,
then
\begin{equation*}
\cO(C_1) \cong \cO(a),
\qquad
\cO(C_2) \cong \cO(1),
\qquad
\cO(C_3) \cong \cO(b),
\end{equation*}
and by Corollary~\ref{cor:toric-refl-smooth-point} we have
\begin{equation*}
R_1 = \cO(- b - 1),\qquad
R_2 = \cO(- b),\qquad
R_3 = \cO.
\end{equation*}
By Proposition~\ref{proposition: toric maximal extension} we have reflexive sheaves $M_1, M_2, M_3$
constructed as maximal iterated extensions of the rank~$1$ reflexive sheaves $R_1, R_2, R_3$ on $X$.

Furthermore, $M_1$ has rank $b$, $\End_X(M_1) \cong K_{b,a}$,
$M_2$ has rank $a$, $\End_X(M_2) \cong K_{a, b'}$, where $b' \equiv b^{-1} \mod a$,
and $M_3 = R_3 = \cO$, $\End_X(M_3) \cong \Bbbk$,
and by Proposition~\ref{prop:semiort-ri-mi} there is a semiorthogonal decomposition
\begin{equation*}
\Db(\mathbb{P}(1,a,b)) =
\Big\langle \langle R_1 \rangle, \langle R_2 \rangle, \langle R_3 \rangle \Big\rangle =
\langle \Db(K_{b,a}), \Db(K_{a,b'}), \Db(\Bbbk) \rangle.
\end{equation*}
In this case $M_3 = R_3$ and $M_2$ are locally free while $M_1$ is locally free at $x_1$ and $x_3$,
but is locally free at $x_2$ if and only if $b \equiv -1 \mod a$ and otherwise is only reflexive.
\end{example}


\appendix

\section{Semiorthogonal decomposition of perfect complexes}

The next result was explained to us by Sasha Efimov.

\begin{theorem}
\label{theorem:efimov}
If $X$ is a projective scheme over $\Bbbk$ and $\cA \subset \Db(X)$ is right admissible
then $\cA^\perf = \cA \cap \Dperf(X)$ is left admissible in $\Dperf(X)$.
\end{theorem}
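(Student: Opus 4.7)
The plan is to construct the left adjoint of the inclusion $\cA^\perf \hookrightarrow \Dperf(X)$ as the restriction to $\Dperf(X)$ of the right-adjoint projection $\alpha \colon \Db(X) \to \cA$ coming from the right admissibility of $\cA$ in $\Db(X)$. For every $F \in \Dperf(X)$ the semiorthogonal decomposition $\Db(X) = \langle \cA^\perp, \cA \rangle$ yields a canonical triangle
\begin{equation*}
\beta F \to F \to \alpha F,
\qquad \beta F \in \cA^\perp,\ \alpha F \in \cA.
\end{equation*}
If one can show that $\alpha F \in \Dperf(X)$, then $\beta F$ is the shifted cone of a morphism of perfect complexes, hence also perfect, and lies in $\cA^\perp \cap \Dperf(X) \subset {}^\perp(\cA^\perf)$ (left orthogonal taken in $\Dperf(X)$). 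The triangle then witnesses a semiorthogonal decomposition $\Dperf(X) = \langle \cA^\perf, \cA^\perp \cap \Dperf(X) \rangle$, which is exactly left admissibility of $\cA^\perf$. So the whole statement reduces to the claim that the projection $\alpha$ preserves perfect complexes.

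To establish this I would invoke Grothendieck--Serre duality for the projective scheme $X$. Its dualizing complex $\omega_X^\bullet \in \Db(X)$ gives for every $F \in \Dperf(X)$ and every $G \in \Db(X)$ a natural isomorphism
\begin{equation*}
\Hom_X(F, G)^* \cong \Hom_X(G, F \otimes^{\mathrm L} \omega_X^\bullet[n]),
\qquad n = \dim X,
\end{equation*}
of finite dimensional $\Bbbk$-vector spaces. The involutive anti-equivalence $\bD = \cRHom(-, \omega_X^\bullet)$ exchanges semi\-ortho\-gonality on $\Db(X)$, so $\bD(\cA) \subset \Db(X)$ is left admissible with left-adjoint projection $\bD \circ \alpha \circ \bD$. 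Combining this left-adjoint description with the duality display above allows one to compute, for any closed point $x \in X$,
\begin{equation*}
\Hom_X(\alpha F, \cO_x[i])^* \cong \Hom_X(\cO_x, \alpha F \otimes^{\mathrm L} \omega_X^\bullet[n - i]),
\end{equation*}
and the latter group is controlled (via the adjunction for $\alpha$ applied on the dual side) by analogous Ext groups computed from $F$, which are finite-dimensional and vanish for $|i|$ sufficiently large because $F$ is perfect. Lemma~\ref{lemma:comm-alg} applied at the local ring $\cO_{X,x}$ then forces $\alpha F$ to be perfect at $x$, and since this holds at every point, $\alpha F \in \Dperf(X)$.

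The main obstacle is the middle step — rigorously transferring ``preservation of perfect complexes'' through $\bD$ and through the adjunction, given that $\bD$ does \emph{not} preserve $\Dperf(X) \subset \Db(X)$ unless $X$ is Gorenstein. I expect the cleanest implementation is to avoid asking $\bD$ to preserve $\Dperf(X)$ and instead work with the mixed Hom-pairing between $\Dperf(X)$ and $\Db(X)$ provided by Serre duality, using a classical perfect generator $P \in \Dperf(X)$ of $\Db(X)$ (for instance $P = \bigoplus_{i=0}^N \cO_X(i)$ for a very ample class and $N \gg 0$) to reduce the perfectness of $\alpha F$ to the finite-dimensionality of $\bigoplus_i \Hom_X(P, \alpha F \otimes \omega_X^\bullet[i]) \cong \bigoplus_i \Hom_X(\alpha F, P[-i])^*$, which follows from the adjunction $\Hom_X(\alpha F, P[-i]) \subset \Hom_X(F, P[-i])$ and properness of $X$.
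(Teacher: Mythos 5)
Your argument reduces the theorem to the claim that the projection $\alpha\colon \Db(X)\to\cA$ (the right adjoint to the inclusion) preserves perfect complexes, and this claim is false in general; it is in fact contradicted by the paper itself. Theorem~\ref{theorem:dbx}$(iv)$ needs crepancy hypotheses precisely to guarantee that projection functors preserve perfectness, and for $\P(2,3,11)$ the paper states explicitly that the intersections of the components with $\Dperf$ do \emph{not} form a semiorthogonal decomposition of $\Dperf(\P(2,3,11))$. Moreover, even if $\alpha$ did preserve perfectness, your bookkeeping of orthogonals is off: for the decomposition $\Db(X)=\langle \cA^\perp,\cA\rangle$ the decomposition triangle of $F$ reads $\alpha F\to F\to\beta F$ with $\beta F\in\cA^\perp$ (your displayed triangle uses the left adjoint, which is not given to exist), and what you would obtain is $\Dperf(X)=\langle \cA^\perp\cap\Dperf(X),\,\cA^\perf\rangle$, i.e.\ \emph{right} admissibility of $\cA^\perf$; the asserted containment $\cA^\perp\cap\Dperf(X)\subset{}^\perp(\cA^\perf)$ confuses right and left orthogonals and fails in general. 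The content of Theorem~\ref{theorem:efimov} is exactly that a left adjoint to $\cA^\perf\hookrightarrow\Dperf(X)$ exists even though it is \emph{not} the restriction of $\alpha$, and the complement ${}^\perp(\cA^\perf)$ inside $\Dperf(X)$ is in general different from $\cA^\perp\cap\Dperf(X)$.

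The Serre-duality step cannot repair this, for two reasons. First, the perfectness test you end up with is too weak: finite-dimensionality of $\bigoplus_i\Hom(\alpha F,P[i])$ for a perfect generator $P$ (or for all perfect $P$) does not imply that $\alpha F$ is perfect. For instance, if $x$ is a Gorenstein singular point, the non-perfect object $\cO_x$ satisfies $\RHom(\cO_x,P)\cong\RHom(\cO_x,\cO_X)\otimes P$ for perfect $P$, which has finite total cohomology. The criteria actually used in the paper (Lemma~\ref{lemma:comm-alg}, or Orlov's homological finiteness in the appendix) require control of $\Hom(\alpha F,G[i])$ for arbitrary bounded $G$, e.g.\ residue fields. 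Second, the adjunction points the wrong way for that: since $\alpha$ is right adjoint to the inclusion, it computes $\Hom(a,\alpha F)\cong\Hom(a,F)$ for $a\in\cA$, i.e.\ morphisms \emph{into} $\alpha F$ from $\cA$, and gives no bound on morphisms \emph{out of} $\alpha F$; in particular the inclusion $\Hom(\alpha F,P[-i])\subset\Hom(F,P[-i])$ you invoke has no justification. The paper's proof proceeds quite differently: it passes to DG enhancements, uses smoothness of $\dg{\cD}^b(X)$ (Lunts) and Orlov's identification of $\Dperf(X)$ with the homologically finite-dimensional objects, and deduces left admissibility of $\cA^\perf$ from right admissibility of the category of finite-dimensional DG modules over the enhancement of $\cA$ — producing a left adjoint that is genuinely new rather than a restriction of $\alpha$.
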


The proof of this result takes the rest of this section and uses the machinery of DG-categories.
We refer to~\cite[Section~3]{KL} and references therein for all prerequisites.

Let $\dg{\cD}$ be an essentially small DG-category.
Recall that a {\sf left DG-module} over $\dg{\cD}$ is a DG-functor $M \colon \dg{\cD} \to \cD(\Bbbk)$
to the DG-category of complexes of $\Bbbk$-vector spaces.
We will say that a DG-module is {\sf finite-dimensional}, resp.\ {\sf acyclic}, if for every $\cF \in \dg{\cD}$
the complex $M(\cF)$ is perfect, i.e., has finite dimensional total cohomology, resp.\ acyclic.

We denote by $\dg{\cD}\md$, $\dg{\cD}\md_\fd$, and $\dg{\cD}\md_\ac$ the DG-categories of \emph{left} DG-modules,
left finite-dimensional DG-modules, and left acyclic DG-modules over~$\dg{\cD}$, respectively.
Furthermore, we denote by
\begin{equation*}
\bD(\dg{\cD}) = [\dg{\cD}\md/\dg{\cD}\md_\ac],
\qquad
\bD_\fd(\dg{\cD}) = [\dg{\cD}\md_\fd/\dg{\cD}\md_\ac],
\end{equation*}
the homotopy categories of the corresponding Drinfeld quotients,
i.e., the derived category of $\dg{\cD}$ and the finite-dimensional derived category of $\dg{\cD}$ respectively.
We note that~$\bD(\dg{\cD})$ is triangulated and $\bD_\fd(\dg{\cD})$ is a full triangulated subcategory in $\bD(\dg{\cD})$.

We say that a full subcategory $\dg{\cA}$ of a DG-category $\dg{\cD}$ is right (resp.\ left) admissible
if its homotopy category $[\dg{\cA}] \subset [\dg{\cD}]$
is right (resp.\ left) admissible, i.e. admits a right (resp.\ left) adjoint functor.

The proof of the theorem is based on the following observation.

\begin{proposition}
\label{proposition:dg-modules-admissible}
Let $\dg{\cD}$ be an essentially small DG-category.
If $\dg{\cA} \subset \dg{\cD}$ is right admissible then $\bD(\dg{\cA}) \subset \bD(\dg{\cD})$ and $\bD_\fd(\dg{\cA}) \subset \bD_\fd(\dg{\cD})$ are right admissible.
\end{proposition}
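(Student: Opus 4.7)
The plan is to analyze the restriction--induction adjunction associated to the DG-inclusion $i \colon \dg{\cA} \hookrightarrow \dg{\cD}$. Restriction $i^* \colon \dg{\cD}\md \to \dg{\cA}\md$ is defined pointwise, and therefore preserves both the finite-dimensional and the acyclic subcategories; consequently it descends to exact functors $i^* \colon \bD(\dg{\cD}) \to \bD(\dg{\cA})$ and $i^* \colon \bD_\fd(\dg{\cD}) \to \bD_\fd(\dg{\cA})$. Its left adjoint at the DG level is the induction $i_!$, whose left derived functor $L i_! \colon \bD(\dg{\cA}) \to \bD(\dg{\cD})$ is computed on h-projective resolutions by the formula $L i_!(M)(D) = M \otimes^L_{\dg{\cA}} \dg{\cD}(i(-), D)$.

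First I would establish that $L i_!$ is fully faithful. For a representable left module $Y_A = \dg{\cA}(A, -)$ with $A \in \dg{\cA}$ one has $L i_!(Y_A) \simeq Y_{i(A)}$, and the Yoneda lemma yields $i^*(Y_{i(A)}) \simeq Y_A$, so the unit $\id \to i^* \circ L i_!$ is an isomorphism on representables. Since the representables split-generate $\bD(\dg{\cA})$ under arbitrary direct sums and cones, and both $L i_!$ and $i^*$ commute with these operations, the unit is an isomorphism on all of $\bD(\dg{\cA})$. Hence $L i_!$ identifies $\bD(\dg{\cA})$ with a right admissible subcategory of $\bD(\dg{\cD})$, the right adjoint of the embedding being given by $i^*$.

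For the finite-dimensional statement the only nontrivial point is that $L i_!$ preserves finite-dimensionality; this is exactly where the right admissibility hypothesis enters. Right admissibility of $[\dg{\cA}] \subset [\dg{\cD}]$ provides a right adjoint $p$ to $[i]$, realized by a DG-bimodule (a quasi-functor) giving quasi-isomorphisms $\dg{\cD}(i(A), D) \simeq \dg{\cA}(A, p(D))$ natural in $A \in \dg{\cA}$ and $D \in \dg{\cD}$. For $M \in \dg{\cA}\md_\fd$ this yields
\[
L i_!(M)(D) \simeq M \otimes^L_{\dg{\cA}} \dg{\cA}(-, p(D)) \simeq M(p(D))
\]
by co-Yoneda, and the right-hand side is a perfect complex since $M$ is finite-dimensional and $p(D) \in \dg{\cA}$. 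Thus $L i_!$ restricts to a functor $\bD_\fd(\dg{\cA}) \to \bD_\fd(\dg{\cD})$, and the adjunction $L i_! \dashv i^*$ restricts accordingly, producing the desired right admissibility in the finite-dimensional setting.

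The main obstacle is this last step: one must explicitly identify the derived left Kan extension in terms of the right adjoint $p$ (which is only a quasi-functor rather than a strict DG-functor) and check that the resulting co-Yoneda reduction is compatible with the h-projective resolutions used to define $L i_!$. Without the right admissibility hypothesis, the induction functor would in general destroy finite-dimensionality, as the tensor product $M \otimes^L_{\dg{\cA}} \dg{\cD}(i(-), D)$ need not have perfect cohomology.
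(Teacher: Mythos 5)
Your proof is correct and follows the same overall strategy as the paper: both embed $\bD(\dg{\cA})$ into $\bD(\dg{\cD})$ via the induction functor ($L i_! = \Ind$), take restriction as its right adjoint, prove full faithfulness of induction by checking the unit on representables, and use the right admissibility hypothesis exactly once, namely to show that induction preserves finite-dimensional modules. The only real difference is how that last step is verified: the paper writes every object $D$ of $\dg{\cD}$ as an extension of an object of $\dg{\cA}$ by an object of $\dg{\cA}^\perp$ and checks the value of the induced module on each piece (using $\Res\circ\Ind\cong\id$ on $\dg{\cA}$ and the vanishing of the restricted diagonal bimodule on $\dg{\cA}^\perp$), whereas you identify the right $\dg{\cA}$-module $\dg{\cD}(i(-),D)$ with the representable module of the projection $p(D)$ and conclude $L i_!(M)(D)\simeq M(p(D))$ by co-Yoneda; these are two packagings of the same semiorthogonal decomposition of $D$, and yours handles all $D$ at once. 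One remark: the ``main obstacle'' you flag is not actually needed. You never require $p$ to be a quasi-functor, nor naturality in $D$: for each fixed $D$, a closed degree-zero representative of the counit $i(p(D))\to D$ gives by composition a genuine map of right $\dg{\cA}$-modules $\dg{\cA}(-,p(D))\to\dg{\cD}(i(-),D)$, natural in $A$, and it is a quasi-isomorphism because the $H^0$-adjunction isomorphism persists after shifts (this uses that $[\dg{\cA}]\subset[\dg{\cD}]$ is a shift-closed triangulated subcategory, the same implicit hypothesis behind the paper's ``extension'' argument); since representables are h-flat and the derived tensor product respects quasi-isomorphisms of the right-module argument, the co-Yoneda reduction is compatible with any h-projective resolution of $M$.
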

\begin{proof}
Let $\alpha \colon \dg{\cA} \to \dg{\cD}$ be the embedding functor.
By $\Res_\alpha \colon \bD(\dg{\cD}) \to \bD(\dg{\cA})$ and~$\Ind_\alpha \colon \bD(\dg{\cA}) \to \bD(\dg{\cD})$
we denote the {\sf restriction of scalars} and {\sf induction} functors:
\begin{equation*}
\Res_\alpha(M)(\cF) = M(\alpha(\cF)),
\qquad
\Ind_\alpha(N) = \Delta_{\dg{\cD}} \otimes_{\dg{\cA}}^{\mathbf{L}} N,
\end{equation*}
where in the second formula the diagonal $\dg{\cD}$-bimodule $\Delta_{\dg{\cD}}$ is considered
as a right $\dg{\cA}$-module by restriction of scalars.
Note that $\Res_\alpha$ is the right adjoint of $\Ind_\alpha$
and full faithfulness of~$\alpha$ implies that of $\Ind_\alpha$, see~\cite[Proposition~3.9]{KL}.
Therefore
\begin{equation}
\label{eq:res-ind}
\Res_\alpha \circ \Ind_\alpha \cong \id_{\bD(\dg{\cA})}
\end{equation}
and in particular, $\bD(\dg{\cA})$ is right admissible in $\bD(\dg{\cD})$.

Now assume that $M$ is a finite-dimensional $\dg{\cD}$-module.
Then $\Res_\alpha(M)$ is a finite-dimensional $\dg{\cA}$-module by definition.

Let us also show that $\Ind_\alpha(N)$ is finite-dimensional for any finite-dimensional $\dg{\cA}$-module $N$.
Let $\dg{\cA}^\perp \subset \dg{\cD}$ be the full DG-subcategory formed by objects with classes
in the right orthogonal $[\dg{\cA}]^\perp \subset [\dg{\cD}]$ of $[\dg{\cA}] \subset [\dg{\cD}]$.
Since $\dg{\cA}$ is right admissible, every object of $\dg{\cD}$ is an extension of an object from $\dg{\cA}$ and an object from~$\dg{\cA}^\perp$.
So, to show that $\Ind_\alpha(N)$ is finite-dimensional it is enough to check that
its values on the objects of $\dg{\cA}$ and of~$\dg{\cA}^\perp$ are finite-dimensional.
In other words, if $\alpha' \colon \dg{\cA}^\perp \to \dg{\cD}$ is the embedding of $\dg{\cA}^\perp$,
it is enough to check that $\Res_\alpha(\Ind_\alpha(N))$ and $\Res_{\alpha'}(\Ind_\alpha(N))$ are both finite-dimensional.
The first follows immediately from~\eqref{eq:res-ind}.
For the second just note that the composition~$\Res_{\alpha'} \circ \Ind_\alpha$ is given
by the derived tensor product with the~$\dg{\cA}^\perp$-$\dg{\cA}$-bimodule
that is equal to the restriction of the diagonal bimodule~$\Delta_{\dg{\cD}}$.
However, this restriction is quasiisomorphic to zero (by semiorthogonality of~$\dg{\cA}$ and~$\dg{\cA}^\perp$),
hence~$\Res_{\alpha'} \circ \Ind_\alpha = 0$, and finite-dimensionality of $\Ind_\alpha(N)$ follows.

From these observations we see that the restriction and induction functors preserve the categories of finite-dimensional DG-modules,
and since they form an adjoint pair and~\eqref{eq:res-ind} holds, we conclude that~$\bD_\fd(\dg{\cA})$
is right admissible in $\bD_\fd(\dg{\cD})$.
\end{proof}

Next we show that the category of left finite-dimensional DG-modules over $\dg{\cD}$
in some cases can be identified with a certain subcategory of $\dg{\cD}$.
We write $\dg{\cD}^\op$ for the opposite DG-category of~$\dg{\cD}$.
For any DG-category $\dg{\cD}$ there is a natural {\sf Yoneda functor}
\begin{equation*}
\rh \colon [\dg{\cD}^\op] \to \bD(\dg{\cD}),
\qquad
\cF \mapsto \rh_\cF(-) := \Hom_{\dg{\cD}}(\cF,-),
\end{equation*}
that takes an object $\cF$ to the corresponding representable left DG-module $\rh_\cF$.
Note that the Yoneda functor is fully faithful, due to the DG-version of the Yoneda Lemma.

We denote by $\dg{\cD}^\hf \subset \dg{\cD}$ the full DG-subcategory formed
by all {\sf homologically finite-dimensional} objects of~$\dg{\cD}$.
By definition this is just the preimage of the category of left finite-dimensional DG-modules over $\dg{\cD}$ under the Yoneda functor, i.e.,
\begin{equation*}
\dg{\cD}^\hf :=
\{ \cF \in \dg{\cD} \mid \rh_\cF \in \bD_\fd(\dg{\cD}) \}.
\end{equation*}

Recall that the minimal subcategory of $\bD(\dg{\cD})$ containing all representable DG-modules and closed under shifts,
cones and homotopy direct summands is called the category of {\sf perfect DG-modules}
(and its objects are called perfect DG-modules over~$\dg{\cD}$).
We will say that a DG-category $\dg{\cD}$ is {\sf perfectly pretriangulated} if the Yoneda functor induces
an equivalence between $[\dg{\cD}^\op]$ and the category of perfect DG-modules over~$\dg{\cD}$.
Recall that a DG-category $\dg{\cD}$ is {\sf smooth} if the diagonal DG-bimodule $\Delta_{\dg{\cD}}$
is perfect.

\begin{lemma}
\label{lemma:hf-fd}
If $\dg{\cD}$ is a smooth perfectly pretriangulated DG-category
then the Yoneda functor induces an equivalence $[(\dg{\cD}^\hf)^\op] \cong \bD_\fd(\dg{\cD})$ of triangulated categories.
\end{lemma}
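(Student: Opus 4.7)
The plan is to prove two things: that $\rh$ restricts to a fully faithful functor $[(\dg{\cD}^\hf)^\op] \to \bD_\fd(\dg{\cD})$, and that this restriction is essentially surjective. The first is essentially automatic from the DG-Yoneda lemma together with the very definition of $\dg{\cD}^\hf$: the functor $\rh$ is fully faithful on $[\dg{\cD}^\op]$, and by construction $\cF \in \dg{\cD}^\hf$ exactly when $\rh_\cF \in \bD_\fd(\dg{\cD})$. So the heart of the argument is essential surjectivity, i.e.\ showing that every finite-dimensional left DG-module $M$ over $\dg{\cD}$ is quasi-isomorphic to a representable one.

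The key step is the intermediate claim that under smoothness of $\dg{\cD}$, every $M \in \bD_\fd(\dg{\cD})$ is perfect as a left DG-module, i.e.\ lies in the thick closure of the representable modules $\rh_\cF$. Once this is established, the perfectly pretriangulated hypothesis on $\dg{\cD}$ immediately gives $M \simeq \rh_\cF$ for some $\cF \in \dg{\cD}^\op$, and the defining property of $\dg{\cD}^\hf$ forces $\cF \in \dg{\cD}^\hf$.

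To prove the intermediate claim, the plan is to compute $M$ as the derived tensor product with the diagonal bimodule:
\begin{equation*}
M \;\simeq\; \Delta_{\dg{\cD}} \otimes^{\mathbf{L}}_{\dg{\cD}} M.
\end{equation*}
By smoothness, $\Delta_{\dg{\cD}}$ lies in the thick subcategory of $\dg{\cD}$-bimodules generated by the representable bimodules of the form $B_{\cF,\cG}(\cF',\cG') = \Hom_{\dg{\cD}}(\cF',\cF) \otimes_\Bbbk \Hom_{\dg{\cD}}(\cG,\cG')$. For such a representable bimodule, a direct computation (contraction of the covariant variable against $M$) yields
\begin{equation*}
B_{\cF,\cG} \otimes^{\mathbf{L}}_{\dg{\cD}} M \;\simeq\; \rh_\cF \otimes_\Bbbk M(\cG),
\end{equation*}
so that, since $M(\cG)$ is a perfect complex of $\Bbbk$-vector spaces (i.e.\ a finite direct sum of shifts of $\Bbbk$), this tensor product lies in the thick closure of $\rh_\cF$. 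Combining these observations, $M$ itself is a perfect left DG-module, as required.

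The main obstacle I expect is the bookkeeping for the sided-ness of modules and the computation $B_{\cF,\cG} \otimes^{\mathbf{L}}_{\dg{\cD}} M \simeq \rh_\cF \otimes_\Bbbk M(\cG)$: one has to make sure that the contraction is over the correct variable of the bimodule and that the thick-subcategory argument transports correctly through the functor $(-) \otimes^{\mathbf{L}}_{\dg{\cD}} M$, which is triangulated and preserves direct summands. Everything else (fully faithfulness, Yoneda, applying perfect pretriangulatedness) is formal.
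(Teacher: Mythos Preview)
Your proposal is correct and follows essentially the same approach as the paper. The paper's proof is more terse, simply asserting that perfectness of $\Delta_{\dg{\cD}}$ together with $M \cong \Delta_{\dg{\cD}} \otimes^{\mathbf{L}}_{\dg{\cD}} M$ yields perfectness of $M$; you have spelled out the standard justification for this step via representable bimodules and the finite-dimensionality of each $M(\cG)$, which is exactly the content behind the paper's one-line claim.
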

\begin{proof}
As we already mentioned, the Yoneda functor $\rh$ is fully faithful.
Moreover, by definition it takes $[(\dg{\cD}^\hf)^\op]$ to $\bD_\fd(\dg{\cD})$.
So, it only remains to show that its restriction to $[(\dg{\cD}^\hf)^\op]$ is
homotopically essentially surjective onto $\bD_\fd(\dg{\cD})$.
Let $M$ be a finite-dimensional left DG-module over $\dg{\cD}$.
Since~$\Delta_{\dg{\cD}}$ is perfect, we conclude that
\begin{equation*}
M \cong \Delta_{\dg{\cD}} \otimes_{\dg{\cD}}^{\mathbf{L}} M
\end{equation*}
is a perfect DG-module over $\dg{\cD}$.
Furthermore, since $\dg{\cD}$ is perfectly pretriangulated, we have $M \cong \rh_\cF$ for some $\cF \in \dg{\cD}$.
Since the DG-module $M$ is finite-dimensional, we see that~$\cF \in \dg{\cD}^\hf$.
\end{proof}

In a combination with Proposition~\ref{proposition:dg-modules-admissible}, this gives the following.

\begin{corollary}
\label{corollary:ca-hf-admissible}
Let~$\dg{\cA} \subset \dg{\cD}$ be a right admissible DG-subcategory
in a smooth perfectly pretriangulated DG-category $\dg{\cD}$.
Then $\dg{\cA}^\hf \subset \dg{\cD}^\hf$ is left admissible.
\end{corollary}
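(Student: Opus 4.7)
The plan is to combine Proposition~\ref{proposition:dg-modules-admissible} with Lemma~\ref{lemma:hf-fd} and then pass to opposite categories. By Proposition~\ref{proposition:dg-modules-admissible} applied to the embedding $\alpha\colon \dg{\cA} \hookrightarrow \dg{\cD}$, the subcategory $\bD_\fd(\dg{\cA}) \subset \bD_\fd(\dg{\cD})$ is right admissible. On the other hand, Lemma~\ref{lemma:hf-fd} applied to $\dg{\cD}$ gives an equivalence of triangulated categories $[(\dg{\cD}^\hf)^\op] \cong \bD_\fd(\dg{\cD})$ via the Yoneda functor, and the analogous statement for $\dg{\cA}$ will give $[(\dg{\cA}^\hf)^\op] \cong \bD_\fd(\dg{\cA})$. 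Combining these three facts, passing to opposite categories turns right admissibility of $\bD_\fd(\dg{\cA})$ in $\bD_\fd(\dg{\cD})$ into left admissibility of $[\dg{\cA}^\hf]$ in $[\dg{\cD}^\hf]$, which is precisely the required statement.

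First I would verify that $\dg{\cA}$ is itself smooth and perfectly pretriangulated, so that Lemma~\ref{lemma:hf-fd} is applicable to it. Perfect pretriangulatedness is inherited because any perfect $\dg{\cA}$-module $N$ induces a perfect $\dg{\cD}$-module $\Ind_\alpha(N)$ (this is essentially the calculation done inside the proof of Proposition~\ref{proposition:dg-modules-admissible}), and since $\dg{\cD}$ is perfectly pretriangulated the induced module is representable by an object $\cF \in \dg{\cD}$; applying the projection functor onto $\dg{\cA}$ exhibits $N$ as representable in $\dg{\cA}$. Smoothness of $\dg{\cA}$ follows from the fact that a right admissible subcategory of a smooth perfectly pretriangulated DG-category is smooth: the diagonal $\Delta_{\dg{\cA}}$ is the restriction along $\alpha \otimes \alpha$ of $\Delta_{\dg{\cD}}$ precomposed with the right-adjoint projection, and perfectness is preserved under these operations.

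Next I would check that the Yoneda equivalences of Lemma~\ref{lemma:hf-fd} intertwine the embedding $[(\dg{\cA}^\hf)^\op] \hookrightarrow [(\dg{\cD}^\hf)^\op]$ with the embedding $\bD_\fd(\dg{\cA}) \hookrightarrow \bD_\fd(\dg{\cD})$. On the level of an object $\cF \in \dg{\cA}^\hf$, the Yoneda module $\rh^{\dg{\cA}}_\cF$ corresponds to $\Res_\alpha(\rh^{\dg{\cD}}_\cF)$; the point is that after the identifications of Lemma~\ref{lemma:hf-fd}, the inclusion~$[(\dg{\cA}^\hf)^\op] \to [(\dg{\cD}^\hf)^\op]$ becomes identified with the induction functor~$\Ind_\alpha \colon \bD_\fd(\dg{\cA}) \to \bD_\fd(\dg{\cD})$, which is the fully faithful embedding coming from right admissibility. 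This compatibility uses full faithfulness of $\Ind_\alpha$ and the relation $\Res_\alpha \circ \Ind_\alpha \cong \id$ established in~\eqref{eq:res-ind}.

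The main obstacle is the last step: establishing that $\dg{\cA}^\hf$ is indeed admissible on the \emph{left} (not the right) in~$\dg{\cD}^\hf$. This is the usual duality principle that a subcategory is right admissible if and only if its opposite is left admissible in the opposite category; since the Yoneda equivalence involves taking opposite, right admissibility of $\bD_\fd(\dg{\cA})$ inside $\bD_\fd(\dg{\cD})$ yields left admissibility of $[\dg{\cA}^\hf]$ inside $[\dg{\cD}^\hf]$. Explicitly, the right adjoint of the embedding $\bD_\fd(\dg{\cA}) \hookrightarrow \bD_\fd(\dg{\cD})$ is provided by $\Res_\alpha$, and under the Yoneda equivalence this becomes a \emph{left} adjoint for the embedding $[\dg{\cA}^\hf] \hookrightarrow [\dg{\cD}^\hf]$, completing the proof.
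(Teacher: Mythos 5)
Your argument follows exactly the paper's route: apply Proposition~\ref{proposition:dg-modules-admissible} to get right admissibility of $\bD_\fd(\dg{\cA})$ in $\bD_\fd(\dg{\cD})$, identify these with $[(\dg{\cA}^\hf)^\op]$ and $[(\dg{\cD}^\hf)^\op]$ via Lemma~\ref{lemma:hf-fd}, and pass to opposite categories; the compatibility of the Yoneda identifications with $\Ind_\alpha$ that you verify is indeed what makes the final dualization legitimate, and the main line of the proof is correct.

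The one place where your write-up needs repair is the verification that $\dg{\cA}$ is perfectly pretriangulated (a fact the paper asserts without proof). After representing $\Ind_\alpha(N) \cong \rh^{\dg{\cD}}_{\cF}$ you propose to ``apply the projection functor onto $\dg{\cA}$'' to $\cF$. But the only projection provided by right admissibility is the right adjoint $\sigma$ of the inclusion, and for $A \in \dg{\cA}$ it satisfies $\Hom(A,\cF) \cong \Hom(A,\sigma\cF)$, not $\Hom(\cF,A) \cong \Hom(\sigma\cF,A)$, which is what representing the \emph{left} module $N \cong \Res_\alpha(\rh^{\dg{\cD}}_\cF)$ would require (take $\cF$ in the right orthogonal $[\dg{\cA}]^\perp$: then $\sigma\cF = 0$ while $\Hom(\cF,A)$ need not vanish). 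The correct argument is more direct: since $\Ind_\alpha(\rh^{\dg{\cA}}_A) \cong \rh^{\dg{\cD}}_A$ for $A \in \dg{\cA}$, the module $\Ind_\alpha(N)$ lies in the thick subcategory of perfect $\dg{\cD}$-modules generated by the representables $\rh^{\dg{\cD}}_A$ with $A \in \dg{\cA}$; as the Yoneda functor is an equivalence onto perfect modules and $[\dg{\cA}]$ is thick in $[\dg{\cD}]$ (being admissible), the representing object $\cF$ already lies in $[\dg{\cA}]$, and then $N \cong \Res_\alpha(\rh^{\dg{\cD}}_\cF) = \rh^{\dg{\cA}}_\cF$ is representable in $\dg{\cA}$. (This same isomorphism $\Ind_\alpha(\rh^{\dg{\cA}}_\cF) \cong \rh^{\dg{\cD}}_\cF$, combined with the finite-dimensionality statement from the proof of Proposition~\ref{proposition:dg-modules-admissible}, is also what guarantees the inclusion $\dg{\cA}^\hf \subset \dg{\cD}^\hf$ in the first place.) With this repair, and with your smoothness argument which is the standard one for admissible subcategories of smooth categories, your proof goes through and coincides with the paper's.
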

\begin{proof}
Note that $\dg{\cA}$ is also smooth and perfectly pretriangulated.
Therefore, from Proposition~\ref{proposition:dg-modules-admissible} and Lemma~\ref{lemma:hf-fd}
we deduce that $(\dg{\cA}^\hf)^\op {} \simeq \bD_\fd(\dg{\cA})$ is right admissible in~$(\dg{\cD}^\hf)^\op$,
hence~$\dg{\cA}^\hf$ is left admissible in~$\dg{\cD}^\hf$.
\end{proof}

Finally, we observe in geometric situations that
the categories $\dg{\cA}^\hf$ and $\dg{\cD}^\hf$ are closely related to the category of perfect complexes.
In the next lemma we consider~$\Db(X)$ as a DG-category (via any appropriate DG-enhancement)
and $\Dperf(X)$
as its DG-subcategory.
To emphasize this we will write $\dg{\cD}^b(X)$ and $\dg{\cD}^\perf(X)$ for these DG-categories.

\begin{lemma}
\label{lemma:hf}
Let $X$ be a projective scheme over $\Bbbk$.
For the DG-category $\dg{\cD} = \dg{\cD}^b(X)$ we have~\mbox{$\dg{\cD}^\hf = \dg{\cD}^\perf(X)$}.
Moreover, if $\dg{\cA} \subset \dg{\cD}^b(X)$ is right admissible then we have~$\dg{\cA}^\hf = \dg{\cA}^\perf$.
\end{lemma}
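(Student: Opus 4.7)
The plan is to first reduce the second statement of the lemma to the first, using right admissibility of $\dg{\cA}$, and then to establish the identification $\dg{\cD}^\hf = \dg{\cD}^\perf(X)$ by a pointwise argument relying on Lemma~\ref{lemma:comm-alg}.

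For the reduction step, I would observe that by definition an object $\cF \in \dg{\cA}$ lies in $\dg{\cA}^\hf$ if and only if $\RHom(\cF, \cG)$ has finite-dimensional total cohomology for every $\cG \in \cA$. Right admissibility of $\cA$ gives a semiorthogonal decomposition $\Db(X) = \langle \cA^\perp, \cA \rangle$, so every $\cG' \in \Db(X)$ fits in a distinguished triangle $\cG'' \to \cG' \to \cG$ with $\cG'' \in \cA^\perp$ and $\cG \in \cA$. For $\cF \in \cA$ we have $\RHom(\cF, \cG'') = 0$ by definition of $\cA^\perp$, hence $\RHom(\cF, \cG') \cong \RHom(\cF, \cG)$. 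Therefore $\cF \in \dg{\cA}^\hf$ if and only if $\RHom(\cF, \cG')$ is finite-dimensional for every $\cG' \in \Db(X)$, i.e., if and only if $\cF \in \dg{\cD}^\hf$. Combined with $\cF \in \cA$ and the first assertion, this yields $\dg{\cA}^\hf = \cA \cap \dg{\cD}^\perf(X) = \dg{\cA}^\perf$.

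For the inclusion $\dg{\cD}^\perf(X) \subset \dg{\cD}^\hf$, I would note that if $\cF$ is perfect then $\cRHom(\cF, \cG) \cong \cF^\vee \otimes \cG$ is a bounded complex of coherent sheaves for any $\cG \in \Db(X)$, and taking $R\Gamma$ on the projective scheme $X$ produces a complex that is bounded and has finite-dimensional cohomology in each degree, hence of finite total dimension. For the converse inclusion $\dg{\cD}^\hf \subset \dg{\cD}^\perf(X)$, suppose $\cF \in \dg{\cD}^\hf$ and let $x \in X$ be any closed point. Applying the defining property to the skyscraper $\cO_x \in \Db(X)$ gives $\Ext^t_X(\cF, \cO_x) = 0$ for $|t| \gg 0$. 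Since $\cRHom_X(\cF, \cO_x)$ is supported at $x$, the local-to-global identification yields $\Ext^\bullet_X(\cF, \cO_x) \cong \Ext^\bullet_{\cO_{X,x}}(\cF_x, \Bbbk)$, and by Lemma~\ref{lemma:comm-alg} the stalk $\cF_x$ is perfect over $\cO_{X,x}$. As this holds at every closed point of the quasi-compact scheme $X$, the complex $\cF$ is perfect.

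The main obstacle is the inclusion $\dg{\cD}^\hf \subset \dg{\cD}^\perf(X)$: the other inclusion is essentially formal from projectivity and boundedness, and the reduction step is elementary, but this direction requires the commutative-algebra input of Lemma~\ref{lemma:comm-alg} and the compatibility of global and local $\RHom$ against a skyscraper. Both ingredients are standard, but they are the only nontrivial content of the proof.
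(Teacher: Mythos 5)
Your argument is correct, but it takes a different route from the paper: the paper proves this lemma purely by citation, observing that for projective $X$ the category $\dg{\cD}^\hf$ coincides with Orlov's category of homologically finite objects and then invoking \cite[Propositions~1.10 and~1.11]{Orl06} for the two assertions, whereas you reprove both statements directly. Your direct proof of $\dg{\cD}^\hf\subset\dg{\cD}^\perf(X)$ -- testing against skyscrapers $\cO_x$, identifying $\Ext^\bullet_X(\cF,\cO_x)$ with $\Ext^\bullet_{\cO_{X,x}}(\cF_x,\Bbbk)$ via the degenerate local-to-global spectral sequence, and applying Lemma~\ref{lemma:comm-alg} -- is essentially the same mechanism the paper itself uses in the proof of Lemma~\ref{lemma:descent} (implication (2)$\implies$(3)), and your reduction of the second assertion to the first via the decomposition triangle of the semiorthogonal decomposition $\Db(X)=\langle\cA^\perp,\cA\rangle$ is the content of Orlov's Proposition~1.10 in this special case. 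What your approach buys is self-containedness (no external input beyond Lemma~\ref{lemma:comm-alg} and properness for the easy inclusion); what the paper's citation buys is brevity and the bridge between the DG-module definition of $\dg{\cD}^\hf$ and Orlov's triangulated-category definition, which you implicitly supply by unwinding $\rh_\cF\in\bD_\fd(\dg{\cD})$ into finite-dimensionality of $\bigoplus_t\Ext^t(\cF,\cG)$ for all $\cG$. Two small points to tidy up: for a right admissible $\cA$ the decomposition triangle runs $\cG\to\cG'\to\cG''$ with $\cG\in\cA$ and $\cG''\in\cA^\perp$, not in the order you wrote (this does not affect the deduction, since you only use vanishing of $\RHom(\cF,\cG'')$); and the passage from perfectness of $\cF_x$ at every closed point to perfectness of $\cF$ uses that the perfect locus is open together with the fact that a nonempty closed subset of a projective scheme contains a closed point -- standard, and glossed in the same way in the paper's own Lemma~\ref{lemma:descent}, but worth a word.
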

\begin{proof}
Since $X$ is projective, the category $\dg{\cD}^\hf$ coincides with the subcategory of homologically bounded objects in $\dg{\cD}$
as defined in~\cite[Definition~1.6]{Orl06}.
Therefore, the  first part is just~\cite[Proposition~1.11]{Orl06}
and the second part is deduced from this by~\cite[Proposition~1.10]{Orl06}.
\end{proof}

\begin{proof}[Proof of Theorem~\textup{\ref{theorem:efimov}}]
The category $\dg{\cD} = \dg{\cD}^b(X)$
is smooth (even when $X$ is singular, see~\cite{Lunts}) and perfectly pretriangulated.
Let $\dg{\cA} \subset \dg{\cD}$ be the full DG-subcategory formed by objects with classes in $\cA \subset [\dg{\cD}] = \Db(X)$.
By Corollary~\ref{corollary:ca-hf-admissible} the subcategory $\dg{\cA}^\hf \subset \dg{\cD}^\hf$ is left admissible.
By Lemma~\ref{lemma:hf} this gives the required result.
\end{proof}

\begin{remark}
One can also prove Theorem~\ref{theorem:efimov} replacing the machinery of DG-categories by the technique of~\cite{Neeman}.
The crucial result here is~\cite[Theorem 0.2]{Neeman} identifying the category~$\Dperf(X)^{op}$ with the category of triangulated functors $\Db(X) \to \Db(\Bbbk)$
for any proper scheme~$X$ over~$\Bbbk$.
\end{remark}

%
%
%

\providecommand{\arxiv}[1]{{\tt{arXiv:#1}}}

\end{document}